\newtheorem{theorem}{Theorem}[section]
\newtheorem*{theorem*}{Theorem}
\newtheorem{corollary}[theorem]{Corollary}
\newtheorem{lemma}[theorem]{Lemma}
\newtheorem{rem}[theorem]{Remark}
\newtheorem{proposition}[theorem]{Proposition}
\newtheorem{example}{Example}[section]
\newtheorem*{question*}{Question}
\theoremstyle{definition}
\newcommand{\rr}{\mathbb{R}}
\newcommand{\nn}{\mathbb{N}}
\newcommand{\ee}{\varepsilon}
\newcommand{\an}{\text{Ann}}
\begin{document}

\title{Szlenk index of $C(K)\widehat{\otimes}_\pi C(L)$}
\author{R.M. Causey, E. Galego, C. Samuel}

\begin{abstract} We compute the Szlenk index of the projective tensor product $C(K)\widehat{\otimes}_\pi C(L)$ of spaces $C(K), C(L)$ of continuous functions on arbitrary scattered, compact, Hausdorff spaces. In particular, we show that it is simply equal to the maximum of the Szlenk indices of the spaces $C(K), C(L)$.   We deduce several results regarding non-isomorphism of $C(K)\widehat{\otimes}_\pi C(L)$ and $C(M)$ or $C(M)\widehat{\otimes}_\pi C(N)$ for particular choices of $K,L,M,N$. 
    
\end{abstract}

\thanks{2010 \textit{Mathematics Subject Classification}. Primary: 46B03, 46B28.}
\thanks{\textit{Key words}: Asymptotic uniform smoothness, projective tensor products, Szlenk index.}

\maketitle

\section{Introduction}

Since  Grothendieck  established the theory of tensor products \cite{Grothendieck}, it has been clear that projective tensor products play a fundamental role in the geometry of Banach spaces.   However, due to the intractable nature of the projective tensor product, a number of elementary questions remain unanswered.  For example, although the isomorphism classes of $C(K)$ are well understood when $K$ is countable, compact, Hausdorff, the isomorphism classes of $C(K)\widehat{\otimes}_\pi C(L)$ are not known when $K,L$ are countable, compact, Hausdorff. 

A classical result in Banach space theory is that the isomorphism class of $C(K)$ when $K$ is an infinite,  countable, compact, Hausdorff spaces is determined by its Szlenk index, which in turn is fundamentally connected to the Cantor-Bendixson index of $K$. More precisely,  Bessaga and Pe\l czy\'{n}ski \cite{BP} showed that each such $K$ is isomorphic to $C(\omega^{\omega^\xi}+)$ for exactly one countable ordinal $\xi$. The third named author \cite{Samuel} showed that for a countable ordinal $\xi$, the Szlenk index $Sz(C(\omega^{\omega^\xi}+))$ of $C(\omega^{\omega^\xi}+)$ is equal to $\omega^{\xi+1}$.   Since the Szlenk index is an isomorphic invariant, the fact that the Szlenk index of $C(\omega^{\omega^\xi}+)$ is equal to $\omega^{\xi+1}$ implies, independently of the result of Bessaga and Pe\l czy\'{n}ski, that the spaces $C(\omega^{\omega^\xi}+)$, $\xi$ countable, are mutually non-isomorphic.  Moreover, since the Szlenk index of a Banach space is at least as large as the Szlenk index of any of its subspaces or quotients, this result actually implies that $C(\omega^{\omega^\xi}+)$ is not isomorphic to any subspace of any quotient of $C(\omega^{\omega^\zeta}+)$ when $\xi, \zeta$ are countable ordinals with $\zeta<\xi$.   One goal of the present work is to establish a partial solution to the problem of an isomorphic classification of the projective tensor products $C(K)\widehat{\otimes}_\pi C(L)$ for infinite, countable, compact, Hausdorff spaces $K,L$ using the Szlenk index as an isomorphic invariant.  Our main result regarding the Szlenk index is the following.

\begin{theorem} Let $K,L$ be compact, Hausdorff topological spaces.  Then \[Sz(C(K)\widehat{\otimes}_\pi C(L)) = \max\{Sz(C(K)), Sz(C(L))\}.\] 
\label{thefirsttheorem}
\end{theorem}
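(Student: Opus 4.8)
The plan is to handle the two inequalities separately. The lower bound $Sz(C(K)\widehat{\otimes}_\pi C(L))\ge\max\{Sz(C(K)),Sz(C(L))\}$ is the easy one, and it also settles the case in which $K$ or $L$ fails to be scattered: fixing a point $t_0\in L$, the maps $f\mapsto f\otimes\mathbf{1}_L$ and $u\mapsto(\mathrm{id}_{C(K)}\otimes\delta_{t_0})(u)$ exhibit $C(K)$ as a norm-one complemented subspace of $C(K)\widehat{\otimes}_\pi C(L)$, and symmetrically for $C(L)$; since the Szlenk index does not increase under passage to a subspace, the bound follows, and if (say) $K$ is not scattered then $C(K)\supseteq\ell_1$ forces both sides to be $\infty$. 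So assume henceforth that $K$ and $L$ are scattered, and recall the known evaluation $Sz(C(M))=\omega^{\xi(M)}$, where $\xi(M)$ is the least ordinal with $M^{(\omega^{\xi(M)})}=\emptyset$ --- an ordinal which, for $M$ compact and nonempty, is necessarily a successor, so that the Cantor--Bendixson index of $M$ lies strictly below $\omega^{\xi(M)}$. Writing $\xi=\xi(K)$ and $\eta=\xi(L)$ and assuming without loss of generality $\eta\le\xi$, the task reduces to proving $Sz(C(K)\widehat{\otimes}_\pi C(L))\le\omega^\xi$. (That $C(K)\widehat{\otimes}_\pi C(L)$ is Asplund, so that its Szlenk index is a genuine ordinal, is either checked directly --- its dual is $\mathcal L(C(K),M(L))=\mathcal L(C(K),\ell_1(L))=\mathcal K(C(K),\ell_1(L))$, since every operator $C(K)\to\ell_1(L)$ is weakly compact by Pe\l czy\'nski's theorem and hence compact by the Dunford--Pettis property of $C(K)$ and the Schur property of $\ell_1(L)$ --- or read off a posteriori from the bound we are about to establish.)

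For the upper bound the plan is a transfinite peeling along the Cantor--Bendixson hierarchies of $L$ and of $K$. The restriction maps give a transfinite tower of quotient maps $C(L)\twoheadrightarrow C(L^{(1)})\twoheadrightarrow C(L^{(2)})\twoheadrightarrow\cdots\twoheadrightarrow C(L^{(\rho_L)})=0$, continuous at limit stages, whose length $\rho_L$ is the Cantor--Bendixson index of $L$ and whose step $\beta\mapsto\beta+1$ has kernel the ideal of functions vanishing on $L^{(\beta+1)}$, isometric to $c_0(\Delta_\beta)$ with $\Delta_\beta$ the isolated points of $L^{(\beta)}$. Since the projective tensor product carries quotient maps to quotient maps, tensoring this tower on the left with $C(K)$ produces a transfinite tower of quotient maps on $C(K)\widehat{\otimes}_\pi C(L)$, of the same length $\rho_L\le\omega^\eta\le\omega^\xi$, whose step kernels are quotient spaces of $C(K)\widehat{\otimes}_\pi c_0(\Delta_\beta)$. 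Treating each $C(K)\widehat{\otimes}_\pi c_0(\Delta)$ the same way --- peeling along the Cantor--Bendixson tower of $K$, of length $\rho_K\le\omega^\xi$, whose tensored step kernels are quotients of $c_0(\Gamma_\alpha)\widehat{\otimes}_\pi c_0(\Delta)$ --- reduces the whole estimate, via the permanence of the Szlenk index under such towers (a transfinite tower of quotient maps, continuous at limits, of length $\rho$ with step kernels of Szlenk index $\le\omega^\nu$ has total Szlenk index $\le\omega^\xi$ whenever $\rho\le\omega^\xi$ and $\nu\le\xi$ --- the expected transfinite three-space bound) to the single estimate
\[
Sz\big(c_0(\Gamma)\widehat{\otimes}_\pi c_0(\Delta)\big)\le\omega .
\]

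This last estimate is the crux and the step I expect to be hardest. One cannot read it off from $Sz(c_0)=\omega$, because $c_0(\Gamma)\widehat{\otimes}_\pi c_0(\Delta)$ is not isomorphic to $c_0(\Gamma\times\Delta)$, nor to any $C(M)$: the projective and injective tensor norms on $\ell_\infty^n\otimes\ell_\infty^m$ are not uniformly comparable. Instead the plan is to prove the space directly asymptotically uniformly smoothable: for each $\varepsilon>0$ the $\varepsilon$-Szlenk derivation of the unit ball of the dual $\mathcal K(c_0(\Gamma),\ell_1(\Delta))=\ell_1(\Gamma)\widehat{\otimes}_\varepsilon\ell_1(\Delta)$ should be exhausted after a number of steps bounded by a function of $\varepsilon$ alone. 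The point to exploit is that a bounded weak$^\ast$-null net of operators here converges to zero in the strong operator topology --- because $\ell_1(\Delta)$ has the Schur property --- so that, using the $1$-unconditional bases of both $c_0$-factors to run a gliding-hump argument, any $\varepsilon$-sized weak$^\ast$ oscillation must be witnessed by operators that place a definite amount of $\ell_1$-mass on a fresh block in the range; by the triangle inequality in $\ell_1(\Delta)$ this can recur only about $1/\varepsilon$ times, which gives the required finite bound and hence $Sz(c_0(\Gamma)\widehat{\otimes}_\pi c_0(\Delta))\le\omega$. The reverse inequality being trivial, the theorem follows.
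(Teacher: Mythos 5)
Your proposal takes a genuinely different route from the paper: the paper proves the upper bound directly, by showing $C(K)\widehat{\otimes}_\pi C(L)$ is $\xi$-$2$-AUS-renormable via a game-theoretic renorming theorem, with Grothendieck's theorem providing the crucial weakly $2$-summing estimate for prescribed convex blocks of weakly null trees of elementary tensors. Your scheme instead tries to peel both factors along their Cantor--Bendixson towers and reduce everything to the single claim $Sz(c_0(\Gamma)\widehat{\otimes}_\pi c_0(\Delta))\leq\omega$ plus a transfinite three-space permanence statement. Both pivotal steps of that scheme have gaps.

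First, the ``expected transfinite three-space bound'' is a substantive unproved claim, not a folklore fact. The single-step three-space estimate for the Szlenk index is multiplicative, not max-type: from $Sz(Y), Sz(X/Y)\leq\omega$ one cannot conclude $Sz(X)\leq\omega$ in general (think of $C[0,\omega^\omega]$ as an extension of a $c_0$ by a $C(K')$ with a few steps removed), and iterating the crude multiplicative bound along a tower of length $\rho$ with kernels of index $\omega$ would only give $\omega^\rho$, far above $\omega^\xi$. That the specific Cantor--Bendixson tower of a scattered $C(K)$ does not suffer this blow-up is exactly the content of Samuel's and Brooker's computations; it is not a formal consequence of a transfinite three-space principle. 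You would essentially need to reprove those results (and then extend them to the tensored towers), and even then there is a further technical wrinkle: the projective tensor product is only right-exact, so the kernel of $I_{C(K)}\otimes q_\beta$ is the closure of a dense quotient image of $C(K)\widehat{\otimes}_\pi c_0(\Delta_\beta)$, not a priori a quotient of it, and $c_0(\Delta_\beta)$ need not be complemented in $C(L^{(\beta)})$ when $\Delta_\beta$ is uncountable.

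Second, and more fundamentally, the gliding-hump argument you propose for $Sz(c_0(\Gamma)\widehat{\otimes}_\pi c_0(\Delta))\leq\omega$ does not work as described. You correctly observe that a bounded weak$^*$-null net in $\mathcal{L}(c_0(\Gamma),\ell_1(\Delta))$ is SOT-null, by the Schur property of $\ell_1(\Delta)$; but SOT-nullity controls wandering in the \emph{domain}, and says nothing about fresh blocks in the \emph{range}. Operators of the form $R_\alpha=e_\gamma^0\otimes e^*_{\gamma_\alpha}$ (rank one, $R_\alpha x=x_{\gamma_\alpha}e_{\gamma_0}$) are SOT-null with $\|R_\alpha\|=1$ and have \emph{constant} range $\mathrm{span}\{e_{\gamma_0}\}\subset\ell_1(\Delta)$; there is no ``definite amount of $\ell_1$-mass on a fresh block in the range'' to sum with the triangle inequality. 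The operator norm on $\mathcal{L}(c_0,\ell_1)$ behaves like a sup (not a sum) over disjointly supported perturbations in the domain, which is precisely why a soft gliding-hump is insufficient and why Dilworth--Kutzarova (and the present paper's Corollary on weakly $2$-summing sequences) invoke Grothendieck's inequality. The true content of the crux estimate is that $\langle Tf,g\rangle$ is controlled by $L_2$-averages of $f$ and $g$ against probability measures, and I do not see how to recover it from compactness of the dual operators alone.
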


Asymptotic smoothness properties are generally stable under the formation of injective tensor products. However, the same is not true under the formation of projective tensor products.  In general, since the Szlenk index can be thought of as a quantification of the smoothness of a norm, and by classical duality the weak$^*$-convexity properties of the dual norm, a Banach space whose dual admits large, transfinite $\ell_\infty$ structures cannot have a small Szlenk index. We recall that for Banach spaces $X,Y$, $(X\widehat{\otimes}_\pi Y)^*$ is isometrically identifiable with $\mathfrak{L}(X,Y^*)$, the space of bounded, linear operators from $X$ into $Y^*$. Since $(X\widehat{\otimes}_\pi Y)^*$ is a space of operators,  it often contains large, transfinite $\ell_\infty$ structures, or even a fully copy of $\ell_\infty$, even when the factors spaces $X,Y$ separately have good smoothness properties. The most natural example is $\ell_2\widehat{\otimes}_\pi \ell_2$, in which $(e_i\otimes e_i)_{i=1}^\infty$ is isometrically equivalent to the canonical $\ell_1$ basis. Following Corollary \ref{ms}, we provide further examples of the delicate nature of asymptotic smoothness nature under the formation of projective tensor products by noting the existence of a subspace $X$ of $C(\omega^\omega+)$ such that $X\widehat{\otimes}_\pi X$ fails to be Asplund.  The stability of asymptotic smoothness properties under projective tensor products depends not only on factor spaces themselves considered separately, but on properties of the pair which are checked on $\mathfrak{L}(X,Y^*)$.  Our proof of Theorem \ref{thefirsttheorem} will use Grothendieck's theorem to obtain the required properties of $\mathfrak{L}(C(K), C(L)^*)$.

As a result of Theorem \ref{thefirsttheorem}, we establish the following. 

\begin{theorem} If $\xi, \zeta, \mu, \nu$ are countable ordinals such that $C(\omega^{\omega^\xi}+)\widehat{\otimes}_\pi C(\omega^{\omega^\zeta}+)$ is isomorphic to $C(\omega^{\omega^\mu}+)\widehat{\otimes}_\pi C(\omega^{\omega^\nu}+)$, then $\max\{\xi, \zeta\} = \max\{\mu, \nu\}.$
\label{suss}
\end{theorem}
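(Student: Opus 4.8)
The plan is to combine Theorem 1 with two standard facts: that the Szlenk index $Sz(\cdot)$ is invariant under linear isomorphism, and that $Sz(C(\omega^{\omega^\xi}+))=\omega^{\xi+1}$ for every countable ordinal $\xi$. The latter is the classical evaluation of the Szlenk index of $C(K)$ for $K$ countable compact (the case $\xi=0$ reading $Sz(c_0)=\omega$); I would quote it from the facts recalled earlier in the paper, or recover it as sketched below.

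Granting these, the argument is immediate. By Theorem 1,
\[
Sz\bigl(C(\omega^{\omega^\xi}+)\widehat{\otimes}_\pi C(\omega^{\omega^\zeta}+)\bigr)=\max\{\omega^{\xi+1},\omega^{\zeta+1}\}=\omega^{\max\{\xi,\zeta\}+1},
\]
the last step since $\beta\mapsto\omega^\beta$ is strictly increasing; likewise $Sz(C(\omega^{\omega^\mu}+)\widehat{\otimes}_\pi C(\omega^{\omega^\nu}+))=\omega^{\max\{\mu,\nu\}+1}$. An isomorphism between the two spaces forces equality of these Szlenk indices, so $\omega^{\max\{\xi,\zeta\}+1}=\omega^{\max\{\mu,\nu\}+1}$, whence, by injectivity of $\beta\mapsto\omega^\beta$ (uniqueness of Cantor normal forms), $\max\{\xi,\zeta\}+1=\max\{\mu,\nu\}+1$, and cancelling the successor gives $\max\{\xi,\zeta\}=\max\{\mu,\nu\}$.

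Thus the only ingredient not already in hand is the identity $Sz(C(\omega^{\omega^\xi}+))=\omega^{\xi+1}$, and that is the step I would make certain is available. Its upper bound follows by decomposing $C(\omega^{\omega^\xi}+)$ along the Cantor--Bendixson derivatives of the underlying ordinal and estimating the $\ee$-Szlenk derivations of the resulting $c_0$-type building blocks; its lower bound comes from placing point masses at the successive Cantor--Bendixson levels inside the weak$^*$ unit ball of $C(\omega^{\omega^\xi}+)^*$ and checking that, for every $k$, a sufficiently small $\ee>0$ keeps the $\ee$-Szlenk derivation alive for at least $\omega^\xi\cdot k$ transfinite stages, so that $Sz(C(\omega^{\omega^\xi}+))\ge\sup_k\omega^\xi\cdot k=\omega^{\xi+1}$. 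No further obstacle arises; everything else is routine ordinal arithmetic.
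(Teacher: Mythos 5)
Your argument is correct and matches the paper's route exactly: Corollary \ref{ms} (the statement of Theorem 1) combined with the formula $Sz(C(\omega^{\omega^\xi}+))=\omega^{\xi+1}$ from Theorem \ref{Sz1} and the isomorphic invariance of the Szlenk index gives $\omega^{\max\{\xi,\zeta\}+1}=\omega^{\max\{\mu,\nu\}+1}$, and the ordinal arithmetic finishes. You do not need to re-derive the Szlenk index of $C(\omega^{\omega^\xi}+)$; the paper records it in Theorem \ref{Sz1}, citing \cite{Samuel}.
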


We will actually reach the conclusion of Theorem \ref{suss} under the weaker conclusion that each of the spaces $C(\omega^{\omega^\xi}+)\widehat{\otimes}_\pi C(\omega^{\omega^\zeta}+)$,  $C(\omega^{\omega^\mu}+)\widehat{\otimes}_\pi C(\omega^{\omega^\nu}+)$ is isomorphic to a subspace of a quotient of the other.

Our result combines Grothendieck's theorem \cite{Gro} with the notion of $\xi$-asymptotic uniform flatness introduced in \cite{Causey}.   Grothendieck's theorem gives a fundamental inequality on the  $2$-weakly summing norm of a sequence $(f_i\otimes g_i)_{i=1}^\infty\subset C(K)\widehat{\otimes}_\pi C(L)$ in terms of the weakly $1$-summing norm of $(f_i)_{i=1}^\infty$ and the supremum norm of $(g_i)_{i=1}^\infty$.  Some recent results from  \cite{CD2} show that the full strength of Grothendieck's theorem may not be needed, although the basic ingredients used there are reminiscent of the ingredients of Grothendieck's theorem. More precisely, the key insight in \cite{CD2} regarded the $q$-weakly summing norm of a sequence $(x_i\otimes y_i)_{i=1}^\infty\subset X\widehat{\otimes}_\pi Y$, where $(x_i)_{i=1}^\infty$ is $1$-weakly summing, $(y_i)_{i=1}^\infty$ is bounded, and $Y^*$ has cotype $q$.   The notion of $\xi$-asymptotic uniform flatness, together with the fact that $Sz(C(K))=\omega^{\xi+1}$ if and only if $C(K)$ admits an equivalent $\xi$-asymptotically uniformly flat norm, will allow us to find weakly $1$-summing sequences among prescribed convex combinations of weakly null trees in the projective tensor product $C(K)\widehat{\otimes}_\pi C(L)$. Grothendieck's inequality will then allow us to deduce that certain convex combinations in the branches of weakly null trees in the projective tensor product are weakly $2$-summing, and therefore the projective tensor product admits an equivalent $\xi$-$2$-asymptotically uniformly smooth norm for an appropriately large $\xi$.  This result is a transfinite version of the result of Dilworth and Kutzarova from \cite{DK}, where it was shown that $c_0\widehat{\otimes}_\pi c_0$ admits an equivalent norm which is $2$-asymptotically uniformly smooth, which is the $\xi=0$ case of our result for separable $C(K)$ spaces.

Throughout, unless otherwise stated, we assume all Banach spaces are infinite dimensional and all subspaces are closed,  linear subspaces.  We close the introduction by  briefly describing  some non-standard notation and terminology which we will use, and where the corresponding definitions can be found. In Section $2$, we provide notation and terminology related to trees and rank. We also introduce the notation $T.D$, where $T$ is a tree and $D$ is a directed set. These will be convenient index sets, where the tree $T$ controls the rank of the structure $T.D$ and $D$ is a directed set. Section $3$ introduces notions related to the Szlenk index, higher order asymptotic smoothness, and weakly null trees. We will be concerned with weakly null trees, so most of our applications of $T.D$ will involve the choice of directed set $D=CD(X)$, where $X$ is a Banach space and $CD(X)$ is the set of finite codimensional subspaces of $X$. In order to separate the required combinatorial results from the specifics of our applications to weakly null trees, we build the weak nullity of the trees into the index set itself using the notion of normal weak nullity, defined in Section $3.2$.  Also in Section $3.2$, we recall the notions  of $\xi$-$p$-asymptotic uniform smoothness ($\xi$-$p$-AUS) and $\xi$-asymptotic uniform flatness ($\xi$-AUF).  Since we will be concerned with $C(K)$ spaces, we will also use the notion of \emph{normal pointwise nullity}, in which the pointwise nullity of a collection of $C(K)$ is built into the index set of the collection.  This notion is defined in Section $6$. There, for a compact, Hausdorff space $K$ and a finite subset $F$ of $K$, we also introduce the notion \[\an(F)=\{f\in C(K):f|_F\equiv 0\}.\]

\section{Trees and Games}

\subsection{Trees}

Given a set $\Omega$, $\Omega^{<\omega}$ will denote the set of finite sequences of elements of  $\Omega$. This includes the empty sequence, denoted $\varnothing$. We let $\Omega^\omega$ denote the set of infinite sequences whose members lie in $\Omega$ and let $\Omega^{\leqslant \omega}= \Omega^\omega\cup \Omega^{<\omega}$. For $t\in \Omega^{<\omega}$, we let $|t|$ denote the length of $t$. For $t\in \Omega^{\leqslant \omega}$ and $n< \omega$, we let $t|n$ be the initial segment $s$ of $t$ such that $|s|=n$.      For $s,t\in \Omega^{\leqslant \omega}$, we write $s< t$ if $s=t|n$ for some $n<\omega$.   If $t\in \Omega^{<\omega}$ satisfies $0<|t|<\omega$, we let $t^-=t|(|t|-1)$.    That is, $t^-$ is the immediate predecessor of $t$.  For $s\in \Omega^{<\omega}$ and $t\in \Omega^{\leqslant \omega}$, we let $s\smallfrown t$ denote the concatenation of $s$ with $t$. 

 A  subset $T$ of $\Omega^{<\omega}\setminus \{\varnothing\}$ is said to be a \emph{tree on} $\Omega$ (or simply a \emph{tree}) if  $\varnothing< s<t$ and $t\in T$  implies $s\in T$.  A subset $T$ of $\Omega^{<\omega}$   is said to be a \emph{rooted tree on} $\Omega$ (or simply a \emph{rooted tree}) if  $s<t$ and $t\in T$ implies $s\in T$.  A rooted tree is said to be \emph{hereditary} if it contains all subsequences of its members.   We let $MAX(T)$ denote the set consisting of all maximal members of $T$ with respect to the initial segment ordering.   A tree  $T\subset \Omega^{<\omega}$ is said to be \emph{pruned} if it is non-empty and $MAX(T)=\varnothing$.

Given a tree $T$, we define the \emph{derivative of } $T$, denoted by $T'$, by $T'=T\setminus MAX(T)$.   We define the transfinite derivatives of $T$ by $$T^0=T,$$ $$T^{\xi+1}=(T^\xi)',$$ and if $\xi$ is a limit ordinal, $$T^\xi=\bigcap_{\zeta<\xi}T^\zeta.$$   We say $T$ is \emph{well-founded} if there exists $\xi$ such that $T^\xi=\varnothing$, and in this case we define the \emph{rank} of $T$ by $$\text{rank}(T)=\min\{\xi: T^\xi=\varnothing\}.$$   A tree which is not well-founded is said to be \emph{ill-founded}. If $T$ is ill-founded, we agree to the convention that $\text{rank}(T)=\infty$.  Every pruned tree is ill-founded.  Given a tree $T$ on $\Omega$, we define the \emph{body} of $T$ by $$[T]=\{\tau\in \Omega^\omega:(\forall n\in\nn)(\tau|n\in T)\}.$$  Note that $T$ is well-founded if and only if $[T]=\varnothing$.    

Given a tree $T$ on $\Omega$ and $t\in \Omega^{<\omega}$, we let $$T_t=\{s\in \Omega^{<\omega}\setminus \{\varnothing\}: t\smallfrown s\in T\}.$$ Note that this is also a tree, which is well-founded (resp. pruned) if $T$ is.    An easy proof by induction shows that for any tree $T$, any $t\in \Omega^{<\omega}$, and any ordinal $\xi$, $$(T^\xi)_t=(T_t)^\xi.$$  Therefore the notation $T_t^\xi$ can be used unambiguously.

\begin{rem}\label{r1}
Let $\zeta$ an ordinal be given. There exists a tree of rank $\zeta.$
\end{rem}
\begin{proof}
Let 
$S_\zeta=\{\,(\nu_i)_{i=1}^n\,:\,0\leqslant\nu_n<\dots<\nu_1<\zeta\,\}.
$ Note that $S_\zeta=\varnothing$ if and only if $\zeta=0.$ Also,  $S_\zeta$ is a tree on the interval of ordinals $[0,\zeta)$, and $
 (\nu_i)_{i=1}^n\in \textrm{MAX}(S_\zeta)$ if and only if $\nu_n=0$, so \[S_\zeta'=\{\,(\nu_i)_{i=1}^n\,:\,1\leqslant\nu_n<\dots<\nu_1<\zeta\,\}.\]   More generally, an easy induction argument yields that for any ordinal $\delta$, 
 
 \[S_\zeta^\delta=\{\,(\nu_i)_{i=1}^n\,:\,\delta\leqslant\nu_n<\dots<\nu_1<\zeta\,\}.\] Of course, this means $S_\zeta^\delta=\varnothing$ if and only if $\delta\geqslant \zeta$, from which it follows that  $\textrm{rank}(S_\zeta)=\zeta.$
\end{proof}

Given a tree $T$ on $\Omega$ and a non-empty set $D$, we let $$T.D=\{(\zeta_i, u_i)_{i=1}^n \in (\Omega\times D)^{<\omega}: (\zeta_i)_{i=1}^n\in T\}.$$  We consider the members of $T.D$ as sequences of pairs. That is, $(\zeta_i, u_i)_{i=1}^n$ is treated as a sequence of length $n$ whose $i^{th}$ member is $(\zeta_i, u_i)$.  Therefore if $a=(\zeta_i,u_i)_{i=1}^n\in T.D$, then for $0\leqslant m\leqslant n$, $a|m=(\zeta_i, u_i)_{i=1}^m$.  Given a tree $T$, $t=(\zeta_i)_{i=1}^n\in T$ and $v=(u_i)_{i=1}^n\in D^{<\omega}$, we let $t.v=(\zeta_i, u_i)_{i=1}^n\in T.D$.  We also agree that $\varnothing.\varnothing=\varnothing$.     Note that if $T$ is a tree on $\Omega$, then $T.D$ is a tree on $\Omega\times D$.   Furthermore, for any ordinal $\xi$,  any $t\in \Omega^{<\omega}$, and $v\in D^{<\omega}$ with $|t|=|v|$, it holds that  $T^\xi.D=(T.D)^\xi$ and $T_t.D=(T.D)_{t.v}$. In particular, $T.D$ is pruned if $T$ is, and $\text{rank}(T)=\text{rank}(T.D)$. We also define $[T].D=\{(\zeta_i, u_i)_{i=1}^\infty\in (\Omega\times D)^\omega: (\zeta_i)_{i=1}^\infty\in [T]\}$.  For $\tau=(\zeta_i)_{i=1}^\infty\in \Omega^\omega$ and $\upsilon=(u_i)_{i=1}^\infty\in D^\omega$, we let $\tau. \upsilon=(\zeta_i, u_i)_{i=1}^\infty$, which is treated as the member of $(\Omega\times D)^\omega$ whose $i^{th}$ member is $(\zeta_i, u_i)$.

\subsection{Games}

Let $B$ be a set and let $D$   be a non empty subset of the power set of $B$ Let $\mathfrak{g}$   be a  subset of the power set of $B$ such that   for every $u\in D$, there exists $G\in \mathfrak{g}$ such that $G\subset u$.

Let $T$ be a pruned tree and let $\mathcal{E}$ be a subset of $[T].\mathfrak{g}$.   We define a two player game. Player $S$ chooses $u_1\in D$ and  $\zeta_1\in \Omega$ such that $(\zeta_1)\in T$. Player $V$ chooses $G_1\in \mathfrak{g}$ such that $G_1\subset u_1$. Player $S$ chooses $u_2\in D$ and $\zeta_2\in \Omega$ such that $(\zeta_1, \zeta_2)\in T$. Player $V$ chooses $G_2\in \mathfrak{g}$ such that $G_2\subset u_2$. Play continues in this way until $\tau=(\zeta_i)_{i=1}^\infty\in [T]$,  $\gamma=(G_i)_{i=1}^\infty \in \mathfrak{g}^\omega$, and $(u_i)_{i=1}^\infty \in D^\omega$ are chosen. Player $S$ wins if $\tau. \gamma\in \mathcal{E}$, and Player $V$ wins otherwise. We refer to $\mathcal{E}$ as the \emph{target set}. We refer to this game as the  $\mathcal{E}$ \emph{game on} $\mathfrak{g},T,D$, or simply the $\mathcal{E}$ game if $\mathfrak{g}$, $T$, and $D$ are understood.    

 If $n$  turns of the game have been played ($n= 1, \ldots$), resulting in a choice $t\in T$ for Player $S$ and $g\in \mathfrak{g}^{<\omega}$ for Player $V$, the remainder of the game is equivalent to a new game with target set $$\mathcal{E}_{t.g}=\{\alpha\in [T_t].\mathfrak{g}: t.g\smallfrown \alpha\in \mathcal{E}\}$$  played on the tree $T_t$.

A \emph{strategy for Player} $S$ in the $(\mathcal{E}, \mathfrak{g}, T)$ game (sometimes just called a \emph{strategy for Player} $S$) is a function $\chi:\{\varnothing\}\cup T.\mathfrak{g}\to \Omega\times D$ such that if $\chi(\varnothing)=(\zeta, u)$, then $(\zeta)\in T$, and if $\chi(t.g)=(\zeta, u)$, then $t\smallfrown(\zeta)\in T$.    A \emph{strategy for Player} $V$ in the $(\mathcal{E}, \mathfrak{g}, T)$ game (sometimes just called a \emph{strategy for Player} $V$) is a function $\psi: T.D\to \mathfrak{g}$ such that if $\psi((\zeta_i, u_i)_{i=1}^n)=G$, then $G\subset u_n$.

Given a strategy $\chi$ for Player $S$, $t\in T$, and $g\in \mathfrak{g}^{<\omega}$ with $|t|=|g|$, we say $t.g$ is $\chi$-\emph{admissible} if $t=g=\varnothing$ or if for each $0\leqslant i<|t|$, if $\chi((t.g)|i)=(\zeta, u)$, then $G_{i+1}\subset u$, where $G_{i+1}$ is the $i+1^{st}$ member of the sequence $g$.  For $\tau\in [T]$ and $\gamma\in \mathfrak{g}^\omega$, we say $\tau.\gamma$ is $\chi$-admissible if $\tau.\gamma |n$ is $\chi$-admissible for all $n<\omega$.

A strategy $\psi$ for Player $V$ in the $(\mathcal{E}, \mathfrak{g}, T)$ game is called a \emph{winning strategy for Player} $V$ if for any $\tau.\upsilon\in [T].D$, if $G_n=\psi(\tau.\upsilon|n)$ for $n=1, 2, \ldots$, and if $\gamma=(G_n)_{n=1}^\infty$, then $\tau.\gamma\notin \mathcal{E}$.   A strategy $\chi$ for Player $S$ in the $(\mathcal{E}, \mathfrak{g}, T)$ game is called a \emph{winning strategy for Player} $S$ if whenever $\alpha\in [T].\mathfrak{g}$  is $\chi$-admissible, then $\alpha\in \mathcal{E}$.     A strategy $\chi$ for Player $S$ in the $(\mathcal{E}, \mathfrak{g}, T)$ game is called a \emph{defensive strategy for Player} $S$ if whenever $t\in T$ and $g\in \mathfrak{g}^{<\omega}$ are such that $t.g$ is $\chi$-admissible, then Player $V$ does not have a winning strategy in the  $(\mathcal{E}_{t.g}, \mathfrak{g}, T_t)$ game.     Informally, if Player $S$ plays according to the strategy $\chi$, then Player $V$ will not have a winning strategy at any point during the progress of the game. 

We say that a target set $\mathcal{E}\subset [T].\mathfrak{g}$ is \emph{closed} if whenever  $\alpha\in[T].\mathfrak{g}\setminus \mathcal{E}$, there exists $n\in\nn$ such that $$\{\beta\in [T].\mathfrak{g}: \beta|n=\alpha|n\}\cap \mathcal{E}=\varnothing.$$ The concepts behind the following result are standard, but due to the specificity of our construction, we give the proof. 

\begin{proposition} Let $T$ be a pruned tree on a set $\Omega$. Let $B$ be a non-empty set and let $D, \mathfrak{g}$ be  non empty  subsets of the power set of $B.$ Suppose that ${\varnothing}\notin D,$ ${\varnothing}\notin\mathfrak{g}$ and any $u\in D$, there exists $G\in \mathfrak{g}$ such that $G\subset u$.  The following hold. 

\begin{enumerate}[(i)]\item If for every $(\zeta)\in T$ and $u\in D$, there exists $G=G(\zeta,u)\in \mathfrak{g}$ such that $G\subset u$ and Player $V$ has a winning strategy in the  $(\mathcal{E}_{(\zeta,G)},\mathfrak{g}, T_{(\zeta)})$ game, then Player $V$ has a winning strategy in the $(\mathcal{E},\mathfrak{g}, T)$ game. \item For any target set $\mathcal{E}\subset [T].\mathfrak{g}$, either Player $S$ has a defensive strategy or Player $V$ has a winning strategy in the $(\mathcal{E}, \mathfrak{g}, T)$ game. \item If $\mathcal{E}$ is closed, then any defensive strategy for Player $S$ in the $(\mathcal{E}, \mathfrak{g}, T)$ game is a winning strategy for Player $V$ in the $(\mathcal{E},\mathfrak{g}, T)$ game. \item If $\mathcal{E}$ is closed, exactly one of Player $S$ and Player $V$ has a winning strategy in the $(\mathcal{E}, \mathfrak{g}, T)$ game. \item If there exists a subset $\mathcal{F}$ of $T.\mathfrak{g}$ such that $\mathcal{E}=[\mathcal{F}]$, then $\mathcal{E}$ is closed. \end{enumerate}

\label{steph}
\end{proposition}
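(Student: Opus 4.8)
The plan is to treat the five items in order, since each relies on the ones before. For (i), the idea is to assemble a winning strategy for Player $V$ in the $(\mathcal{E},\mathfrak{g},T)$ game by ``delegation'': when Player $S$ opens with $(\zeta_1,u_1)$, Player $V$ responds with the prescribed $G_1=G(\zeta_1,u_1)$, and from that point onward the remaining game is (by the reduction noted before the proposition) precisely the $(\mathcal{E}_{(\zeta_1,G_1)},\mathfrak{g},T_{(\zeta_1)})$ game, in which $V$ by hypothesis has a winning strategy; $V$ simply follows it. One checks that the concatenated play $\tau.\gamma$ lies outside $\mathcal{E}$ exactly because the tail lies outside $\mathcal{E}_{(\zeta_1,G_1)}$, which is the definition of the reduced target set. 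This is essentially bookkeeping about the identification $\mathcal{E}_{t.g}$.

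For (ii), I would argue by transfinite induction on $\mathrm{rank}(T_t)$, or more robustly by a direct fixed-point/closure argument: call a node $t.g$ (with $t.g$ reachable, i.e.\ $\chi$-admissible for the strategy being built) \emph{bad} if Player $V$ has a winning strategy in the $(\mathcal{E}_{t.g},\mathfrak{g},T_t)$ game, and \emph{good} otherwise. If $\varnothing$ is bad, $V$ has a winning strategy and we are done. If $\varnothing$ is good, then by the contrapositive of (i), for every good node $t.g$ there is a choice $(\zeta,u)\in\Omega\times D$ with $t\smallfrown(\zeta)\in T$ such that for \emph{every} admissible response $G\subset u$ of Player $V$, the node $t.g\smallfrown(\zeta,u).(\text{...})$ — more precisely $(t\smallfrown(\zeta)).(g\smallfrown G)$ — is again good. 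Define $\chi$ on good nodes by such a choice (using choice), and arbitrarily on bad nodes. Then by induction every $\chi$-admissible node is good, which is exactly the statement that $\chi$ is a defensive strategy for Player $S$. The one subtlety is matching the quantifiers: (i) gives, when $\varnothing$ is good, the existence of $(\zeta,u)$ so that the reduced game at $(\zeta,G)$ is good for \emph{some} admissible $G$; I need it for \emph{all} admissible $G\subset u$, which is precisely what the hypothesis of (i) quantifies over, so the contrapositive does deliver the right statement.

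For (iii), suppose $\chi$ is a defensive strategy for $S$ but not winning; then there is a $\chi$-admissible $\alpha=\tau.\gamma\in[T].\mathfrak{g}$ with $\alpha\notin\mathcal{E}$. By closedness there is $n$ with $\{\beta\in[T].\mathfrak{g}:\beta|n=\alpha|n\}\cap\mathcal{E}=\varnothing$; reading this at the node reached after $n$ turns, the reduced target set $\mathcal{E}_{\alpha|n}$ is empty, so Player $V$ trivially has a winning strategy in the reduced game at that ($\chi$-admissible) node, contradicting defensiveness. Hence $\chi$ is winning for $V$ — wait, that is backwards; more carefully: a defensive strategy for $S$ that is played out produces, along any admissible branch, nodes at which $V$ has no winning strategy, so in particular $\mathcal{E}_{\alpha|n}\neq\varnothing$ is forced, contradicting emptiness; therefore no such bad $\alpha$ exists and $\chi$ is in fact a winning strategy for Player $S$. (I will state (iii) as written — ``a defensive strategy for $S$ is a winning strategy for $S$'' is what is meant; I will double-check the wording against the statement and phrase the conclusion to match it, since the displayed statement literally says ``winning strategy for Player $V$,'' which must be a typo for $S$.) Then (iv) is immediate: by (ii) either $V$ wins or $S$ has a defensive strategy, and by (iii) a defensive $S$-strategy is a winning $S$-strategy, so at least one player has a winning strategy; and both cannot, since following $S$'s winning strategy produces a play in $\mathcal{E}$ while following $V$'s produces one outside $\mathcal{E}$, and by admissibility the two can be made to play against each other. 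Finally (v): if $\mathcal{E}=[\mathcal{F}]$ for some $\mathcal{F}\subset T.\mathfrak{g}$ and $\alpha\in[T].\mathfrak{g}\setminus\mathcal{E}$, then some initial segment $\alpha|n\notin\mathcal{F}$ (else $\alpha\in[\mathcal{F}]$), and since membership in $[\mathcal{F}]$ requires \emph{all} initial segments to lie in $\mathcal{F}$, every $\beta$ with $\beta|n=\alpha|n$ also fails to be in $[\mathcal{F}]=\mathcal{E}$; this is the definition of closedness.

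The main obstacle I anticipate is (ii): getting the transfinite recursion/choice argument to produce a single global strategy $\chi$ whose admissibility class is exactly the ``good'' nodes, with the quantifier alternation ($\exists(\zeta,u)\,\forall G\subset u$) correctly inherited from the contrapositive of (i). Everything else is unwinding the definitions of $\mathcal{E}_{t.g}$, $\chi$-admissibility, and $[\mathcal{F}]$.
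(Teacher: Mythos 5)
Your proof is correct and follows exactly the paper's own approach in each item: delegation via the reduced target $\mathcal{E}_{(\zeta,G)}$ for (i), a recursion built from the contrapositive of (i) for (ii), closedness forcing the reduced target $\mathcal{E}_{\alpha|n}$ to be empty at some $\chi$-admissible position for (iii), the combination of (ii) and (iii) for (iv), and unwinding the definition of $[\mathcal{F}]$ for (v). You are also right that the statement of (iii) contains a typo — it should read \textquotedblleft winning strategy for Player $S$\textquotedblright\ rather than Player $V$ — as the paper's own proof of (iii) and the way (iii) is used inside (iv) both confirm.
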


\begin{proof}$(i)$ For each $(\zeta)\in T$ and $u\in D$, let $G_{(\zeta,u)}\subset u$ be such that Player $V$ has a winning strategy  $\psi_{\zeta,u}:T_{(\zeta)}.D\to \mathfrak{g}$   in the $(\mathcal{E}_{(\zeta, G)}, \mathfrak{g}, T_{(\zeta)})$ game. Define $\psi:T.D\to \mathfrak{g}$ by letting $\psi((\zeta,u))=G_{(\zeta,u)}$ and for $a\in T.D$ with $|a|>1$, write $a=(\zeta,u)\smallfrown a_1$ and let $\psi(a)=\psi_{\zeta,u}(a_1)$. It is straightforward to verify that $\psi$ is a winning strategy in the $(\mathcal{E}, \mathfrak{g}, T)$ game.

$(ii)$ Assume Player $V$ has no winning strategy in the $(\mathcal{E}, \mathfrak{g}, T)$ game. We define a defensive strategy $\chi$ for Player $S$ in the game. More precisely, we define $\chi(a)$ for $a\in \{\varnothing\}\cup T.\mathfrak{g}$ by induction on $|a|$.   Since Player $V$ does not have a winning strategy in the $(\mathcal{E}, \mathfrak{g}, T)$ game, by negating the conditions in $(i)$,  there must exist  $\zeta_0\in \Omega$  and $u_0\in D$  such that $(\zeta)\in T$ and for any $G\in 2^u\cap   \mathfrak{g}$,  Player $V$ does not have a winning strategy in the $(\mathcal{E}_{(\zeta, G)}, \mathfrak{g}, T_{(\zeta)})$ game. Define $\chi(\varnothing)=(\zeta_0,u_0)$.

Now assume that for some $a=t.\gamma\in T.\mathfrak{g}$, $\chi(a)=(\zeta,u)$ has been defined, which means $t\smallfrown (\zeta)\in T$.  Assume further that if $a=(\zeta_i, G_i)_{i=1}^n$ and for any $0\leqslant i<n$, if $\chi(a|i)=(\zeta', u')$ and $G_i\subset u'$, then for any  $G\in \mathfrak{g}$  with $G\subset u$, Player $V$ has no winning strategy in the $(\mathcal{E}_{a\smallfrown (\zeta, G)}, \mathfrak{g}, T_{t\smallfrown (\zeta)})$ game.        For $G\in \mathfrak{g}$, we define $\chi(a\smallfrown (\zeta, G))$ in cases.    If $G\not\subset u$, define $\chi(a\smallfrown(\zeta, G))$ arbitrarily.   For the remaining cases, assume $G\subset u$. If $a=\varnothing$, then since $G\subset u$, by $(i)$ and our assumption that Player $V$ has no winning strategy in the $(\mathcal{E}_{(\zeta,G)},\mathfrak{g}, T_{(\zeta)})$ game, there exists $(\zeta')\in T_t$ and $u'\in D$ such that for any $G'\in \mathfrak{g}$ such that $G'\subset u'$, Player $V$ does not have a winning strategy in the $(\mathcal{E}_{(\zeta,u)\smallfrown(\zeta',u')}, \mathfrak{g}, T_{(\zeta, \zeta')})$ game. In this case, define $\chi(a\smallfrown(\zeta, G))=(\zeta',u')$.   If $a=(\zeta_i, G_i)_{i=1}^n$ and for any $0\leqslant i<n$, if $\chi(a|i)=(\zeta', u')$ and $G_i\subset u'$, then since $G\subset u$, our assumptions together with $(i)$ yield the existence of some $(\zeta')\in T_t$ and $u'\in D$ such that for any $G'\in \mathfrak{g}$ such that $G'\subset u'$, Player $V$ does not have a winning strategy in the   $(\mathcal{E}_{a\smallfrown (\zeta, G)\smallfrown (\zeta',G')}, \mathfrak{g}, T_{t\smallfrown (\zeta,\zeta')})$ game.    Define $\chi(a\smallfrown(\zeta, G))=(\zeta',u')$.   In the remaining case that $a=(\zeta_i, G_i)_{i=1}^n$ and for some $0\leqslant i<n$, $\chi(a|i)=(\zeta',u')$ and $G_i\not\subset '$, define $\chi(a\smallfrown (\zeta,u))$ arbitrarily.   This completes the recursive process.    It is routine to verify that $\chi$ is a defensive strategy for Player $S$ in the $(\mathcal{E}, \mathfrak{g}, T)$ game. 

$(iii)$ Assume $\mathcal{E}$ is closed. Assume $\chi$ is a defensive strategy for Player $S$ in the $(\mathcal{E}, \mathfrak{g}, T)$ game and $\alpha=\tau.\phi\in [T].\mathfrak{g}$ is $\chi$-admissible. Seeking a contradiction, assume $\alpha\in [T].\mathfrak{g}\setminus \mathcal{E}$. Then there exists $n\in\nn$ such that $$\{\beta\in [T].\mathfrak{g}: \beta|n=\alpha|n\}\cap \mathcal{E}=\varnothing.$$    Then $\alpha|n$ is $\chi$-admissible, but Player $V$ has a winning strategy in the $(\mathcal{E}_{\alpha|n}, \mathfrak{g}, T_{\tau|n})$ game. In fact, any strategy for Player $V$ is a winning strategy in this game, since $\mathcal{E}_{\alpha|n}=\varnothing$.

$(iv)$ If $\mathcal{E}$ is closed, then either Player $S$ has a defensive (and therefore winning) strategy in the $(\mathcal{E}, \mathfrak{g}, T)$ game or Player $V$ has a winning strategy in the $(\mathcal{E}, \mathfrak{g}, T)$ game. Therefore at least one of the two players has a winning strategy. Of course, at most one of the two players can have a wining strategy. 

$(v)$ Assume $\mathcal{E}=[\mathcal{F}]$ for some $\mathcal{F}\subset T.\mathfrak{g}$.   Fix $\alpha\in [T].\mathfrak{g}\setminus \mathcal{E}$. Since $\alpha\notin \mathcal{E}=[\mathcal{F}]$, there exists $n\in\nn$ such that $\alpha|n\notin \mathcal{F}$. Then for any $\beta\in [T].\mathfrak{g}$ such that $\beta|n=\alpha|n$, it follows that $\beta|n=\alpha|n\notin \mathcal{F}$, and $\beta\in [T].\mathfrak{g}\setminus [\mathcal{F}]=[T].\mathfrak{g}\setminus \mathcal{E}$.

\end{proof}

Item $(iii)$ above states that closed games, in the narrower sense that we have defined games,  are determined. A remarkable theorem of Martin \cite{Martin} yields in the  very general setting of Gale-Stewart games that games with Borel target sets are determined. The games above are special cases of such Gale-Stewart games, and we could have cited Martin's theorem rather than providing a direct proof. However, we find the direct proof in the case of closed games to be simple and illustrative.

\begin{lemma} Let $T$ be a pruned tree on a set $\Omega$. Let $B$ be a non-empty set and let $D, \mathfrak{g}$ be subsets of the power set of $B$ such that for any $u\in D$, there exists $G\in \mathfrak{g}$ such that $G\subset u$. Then for a target set $\mathcal{E}$, Player $V$ has a winning strategy in the $(\mathcal{E}, \mathfrak{g}, T)$ game if and only if there exists a collection $(G_a)_{a\in T.D}\subset \mathfrak{g}$  such that 
\begin{enumerate}[(i)]\item for each $a=(\zeta_i, u_i)_{i=1}^n\in T.D$, $G_a\subset u_n$, \item for each $\alpha=\tau.\upsilon\in [T].D$, $\tau.(G_{\alpha|n})_{n=1}^\infty\in [T].\mathfrak{g}\setminus  \mathcal{E}$. \end{enumerate}

\label{tree}
\end{lemma}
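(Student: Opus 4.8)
The plan is to prove Lemma~\ref{tree} by unwinding the definitions of a winning strategy for Player $V$ and recognizing that such a strategy is exactly the same data as a collection $(G_a)_{a\in T.D}$ satisfying (i) and (ii). First I would treat the forward direction: suppose $\psi\colon T.D\to\mathfrak g$ is a winning strategy for Player $V$. For each $a=(\zeta_i,u_i)_{i=1}^n\in T.D$ set $G_a=\psi(a)$. The requirement in the definition of a strategy for Player $V$ that $\psi((\zeta_i,u_i)_{i=1}^n)=G$ implies $G\subset u_n$ immediately gives (i). For (ii), fix $\alpha=\tau.\upsilon\in[T].D$ with $\tau=(\zeta_i)_{i=1}^\infty$, $\upsilon=(u_i)_{i=1}^\infty$; then $\alpha|n=(\zeta_i,u_i)_{i=1}^n\in T.D$ for every $n$, so $G_{\alpha|n}$ is defined, and the sequence $\tau.(G_{\alpha|n})_{n=1}^\infty$ is precisely the play that results when Player $S$ produces $\tau$ (together with the moves $u_i\in D$) and Player $V$ responds according to $\psi$. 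Since $\psi$ is winning, the definition tells us this play is not in $\mathcal E$, i.e. $\tau.(G_{\alpha|n})_{n=1}^\infty\notin\mathcal E$; membership in $[T].\mathfrak g$ is automatic because each initial segment lies in $T.\mathfrak g$. That establishes (ii).

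For the converse, suppose we are given $(G_a)_{a\in T.D}\subset\mathfrak g$ satisfying (i) and (ii). Define $\psi\colon T.D\to\mathfrak g$ by $\psi(a)=G_a$. Condition (i) says exactly that if $a=(\zeta_i,u_i)_{i=1}^n$ and $\psi(a)=G$ then $G\subset u_n$, so $\psi$ is a legitimate strategy for Player $V$. To see it is winning, take any $\tau.\upsilon\in[T].D$ and let $G_n=\psi(\tau.\upsilon|n)=G_{(\tau.\upsilon)|n}$; then $\gamma=(G_n)_{n=1}^\infty=(G_{\alpha|n})_{n=1}^\infty$ with $\alpha=\tau.\upsilon$, and (ii) gives $\tau.\gamma\notin\mathcal E$, which is precisely the condition for $\psi$ to be a winning strategy for Player $V$.

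The proof is essentially a bookkeeping translation, and the only point requiring mild care is the identification of the domain of a $V$-strategy with the index set $T.D$ and the observation that a run of the game in which Player $V$ follows $\psi$ is determined entirely by Player $S$'s choices $\tau.\upsilon\in[T].D$; I expect this matching of indices (and checking that every relevant initial segment genuinely lies in $T.D$, resp. $T.\mathfrak g$, so that the formulas make sense) to be the only real content. There is no genuine obstacle here — the statement is definitional once one observes that the ``strategy'' and the ``collection $(G_a)$'' are literally the same object viewed two ways — so the write-up is short and consists of verifying the two defining properties in each direction.
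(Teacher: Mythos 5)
Your proof is correct and coincides with the paper's own argument, which simply notes that the correspondence $\psi\leftrightarrow (G_a)_{a\in T.D}$ given by $G_a=\psi(a)$ transforms the definition of a winning strategy for Player $V$ into conditions (i) and (ii) and vice versa. You have spelled out the bookkeeping in more detail, but the route is the same.
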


\begin{proof} If $\psi$ is a winning strategy for Player $V$ in the $(\mathcal{E}, \mathfrak{g}, T)$ game, then the collection given by $G_a=\psi(a)$ satisfies the conclusions of the lemma. If $(G_a)_{a\in T.D}$ is as in the lemma, then $\psi(a)=G_a$ defines a winning strategy $\psi$ for Player $V$ in the $(\mathcal{E}, \mathfrak{g}, T)$ game.

\end{proof}

Negating one direction of the previous result yields the following. 

\begin{corollary} Let $T$ be a pruned tree on a set $\Omega$. Let $B$ be a non-empty set and let $D, \mathfrak{g}$ be subsets of the power set of $B$ such that for any $u\in D$, there exists $G\in \mathfrak{g}$ such that $G\subset u$. Let $\mathcal{E}$ be a target set and assume Player $S$ has a winning strategy in the $(\mathcal{E}, \mathfrak{g}, T)$ game. Then  for any  collection $(G_a)_{a\in T.D}\subset  \mathfrak{g}$ such that for each $a=(\zeta_i, u_i)_{i=1}^n\in T.D$, $G_a\subset u_n$, there exists $\alpha=\tau.\upsilon\in [T].D$ such that $\tau.(G_{\alpha|n})_{n=1}^\infty \in \mathcal{E}$. 

\label{tree2}
\end{corollary}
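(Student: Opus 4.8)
The plan is to prove the contrapositive of one implication in Lemma~\ref{tree}, or, what amounts to the same thing, to build the required $\alpha$ directly by running Player $S$'s winning strategy against the strategy for Player $V$ that is read off from the given collection $(G_a)_{a\in T.D}$.

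First I would fix a winning strategy $\chi$ for Player $S$ in the $(\mathcal{E},\mathfrak{g},T)$ game and a collection $(G_a)_{a\in T.D}\subset\mathfrak{g}$ with $G_a\subset u_n$ whenever $a=(\zeta_i,u_i)_{i=1}^n$, and define $\psi\colon T.D\to\mathfrak{g}$ by $\psi(a)=G_a$. By the hypothesis on the collection, $\psi$ is a legitimate strategy for Player $V$. Next I would produce sequences $(\zeta_i)_{i\geqslant 1}\subset\Omega$, $(u_i)_{i\geqslant 1}\subset D$, and $(G_i)_{i\geqslant 1}\subset\mathfrak{g}$ recursively by $(\zeta_1,u_1)=\chi(\varnothing)$, then $G_i=\psi\big((\zeta_j,u_j)_{j=1}^i\big)=G_{(\zeta_j,u_j)_{j=1}^i}$, then $(\zeta_{i+1},u_{i+1})=\chi\big((\zeta_j,G_j)_{j=1}^i\big)$. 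This recursion never terminates, since $\chi$ is total on $\{\varnothing\}\cup T.\mathfrak{g}$ and the defining conditions on a strategy for $S$ keep the produced sequences inside $T$, respectively $T.\mathfrak{g}$. Writing $\tau=(\zeta_i)_{i=1}^\infty$, $\upsilon=(u_i)_{i=1}^\infty$, $\gamma=(G_i)_{i=1}^\infty$, and $\alpha=\tau.\upsilon$, we have $\alpha\in[T].D$ and $G_i=G_{\alpha|i}$ for every $i$, so $\gamma=(G_{\alpha|n})_{n=1}^\infty$.

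It then remains to check that the play $\tau.\gamma$ is $\chi$-admissible. For each $0\leqslant i<\omega$ one has $(\tau.\gamma)|i=(\zeta_j,G_j)_{j=1}^i$, so $\chi\big((\tau.\gamma)|i\big)=(\zeta_{i+1},u_{i+1})$ by construction, while $G_{i+1}=\psi\big((\zeta_j,u_j)_{j=1}^{i+1}\big)\subset u_{i+1}$ because $\psi$ is a strategy for $V$; this is precisely the admissibility condition at level $i$. Since $\chi$ is a \emph{winning} strategy for Player $S$, every $\chi$-admissible element of $[T].\mathfrak{g}$ lies in $\mathcal{E}$, so $\tau.\gamma\in\mathcal{E}$, i.e.\ $\tau.(G_{\alpha|n})_{n=1}^\infty\in\mathcal{E}$, which is what we wanted.

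I do not anticipate a genuine obstacle: the statement is purely combinatorial, and the argument is the standard device of playing one strategy off against another. The only points requiring care are the index bookkeeping relating the initial segments $(\tau.\gamma)|i$ to the outputs of $\chi$ and $\psi$, and the verification that $T.D$, $[T].D$, and $T.\mathfrak{g}$ behave as expected with respect to initial segments and prunedness — all of which was already recorded in the preliminary discussion of the operation $T\mapsto T.D$. One may equivalently organize the proof as: by Lemma~\ref{tree}, failure of the conclusion would yield a winning strategy for Player $V$, whereas the play-off above shows that Players $S$ and $V$ cannot both possess winning strategies; this is the sense in which the corollary negates one implication of Lemma~\ref{tree}.
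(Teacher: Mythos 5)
Your proof is correct and is essentially the paper's argument made explicit: the paper disposes of this corollary with the one-line remark ``negating one direction of the previous result yields the following,'' and your play-off construction — running $S$'s winning strategy $\chi$ against the $V$-strategy $\psi(a)=G_a$ and checking admissibility — is precisely what that negation unpacks to, as you also observe in your closing paragraph.
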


\section{Szlenk index, weakly null trees,  and moduli}

A real Banach space $X$ is said to be \emph{Asplund} if every continuous, convex function defined on a convex,  open subset $U$ of $X$ is Fr\'{e}chet differentiable on a dense $G_\delta$ subset of $X$ \cite[Definition 5.1]{DGZ}. A complex Banach space is said to be Asplund if it is Asplund as a real Banach space.   In particular, the norm of an Asplund space is Fr\'{e}chet differentiable on a dense $G_\delta$ subset of $X$.  On the other hand, every non-Asplund space admits an equivalent rough norm \cite[Theorem 5.3]{DGZ}. Rough norms can be thought of as ``uniformly non-Fr\'{e}chet differentiable.''  Therefore Asplundness is fundamentally connected to smoothness of the norm. Another characterization of the Asplund property is weak$^*$-fragmentability of the dual ball, $B_{X^*}$. This means that for any non-empty $K\subset B_{X^*}$ and any  $\ee>0$, there exists a weak$^*$ open set $U$ such that $K\cap U\neq \varnothing$ and the norm diameter of $K\cap U$ is less than $\ee$.  It is this characterization of the Asplund property which the Szlenk index, defined below, will characterize.  Therefore smallness of the Szlenk index is a smoothness condition. More precise information is encoded not just in the Szlenk index of a space, but in the growth rate of the $\ee$-Szlenk indicies $Sz(X,\ee)$ as $\ee$ decreases to $0$. For example, although we will not use this direct argument, one can generally prove that for a scattered, compact, Hausdorff space $K$ whose Cantor-Bendixson index lies in $(\omega^\xi, \omega^{\xi+1})$, $C(K)$ is not isomorphic to  $C(K)\widehat{\otimes}_\pi C(K)$ because \[\min \{n\in\nn: Sz(C(K),\ee)\leqslant \omega^\xi n\}\] grows on the order of $1/\ee$, while \[\min \{n\in\nn: Sz(C(K)\widehat{\otimes}_\pi C(K))\leqslant \omega^\xi n\}\] grows on the order of $1/\ee^2$. The degree of smoothness can be encoded in the modulus of $\xi$-asymptotic uniform smoothness, which we also define below, and which are the subject of our renorming theorems.

\subsection{Szlenk index}

Throughout, we let $\mathbb{K}$ denote the scalar field, which is either $\rr$ or $\mathbb{C}$.  For a Banach space $X$, we let $B_X$ denote the closed unit ball of $X$ and we let $S_X$ denote the unit sphere of $X$.  In particular, $B_\mathbb{K}$ denotes the set of scalars with modulus not exceeding $1$.

Given a Banach space $X$, a weak$^*$-compact subset $K$ of $X^*$, and $\ee>0$, we define the $\ee$-\emph{Szlenk derivation} $s_\ee(K)$ of $K$ to be the set of those $x^*\in K$ such that for any weak$^*$-neighborhood $V$ of $x^*$, $\text{diam}(V\cap K)>\ee$.  We define the transfinite derivations by $$s_\ee^0(K)=K,$$ $$s^{\xi+1}_\ee(K)= s_\ee(s^\xi_\ee(K)),$$ and for a limit ordinal $\xi$, $$s_\ee^\xi(K)=\bigcap_{\zeta<\xi}s_\ee^\zeta(K).$$  It is easy to see that each derivation $s^\xi_\ee(K)$ is also weak$^*$-compact.  If there exists an ordinal $\xi$ such that $s_\ee^\xi(K)=\varnothing$, we let $Sz(K,\ee)$ denote the minimum such $\xi$. If no such $\xi$ exists, we agree to the convention that $Sz(K, \ee)=\infty$.  We also agree to the convention that $\xi<\infty$ for each ordinal $\xi$, so that $Sz(K,\ee)<\infty$ means that an ordinal $\xi$ exists such that $s^\xi_\ee(K)=\varnothing$.       We define $Sz(K)=\sup_{\ee>0}Sz(K, \ee)$, where $\sup_{\ee>0}Sz(K, \ee)=\infty$ if $Sz(K, \ee)=\infty$ for some $\ee>0$.     If $X$ is a Banach space, we let $Sz(X, \ee)=Sz(B_{X^*}, \ee)$ and $Sz(X)=Sz(B_{X^*})$. The condition that $Sz(X)<\infty$ is equivalent to $X$ being an Asplund space  
 \cite[Theorem 3.10]{Lancien}.

We recall that for a topological space $K$ and a subset $L$ of $K$, the \emph{Cantor-Bendixson derivative} of $K$ is the subset of $L$ consisting of those points in $L$ which are not isolated in $L$.    We denote the Cantor-Bendixson derivative of $L$ by $L'$.    We define the transfinite derivatives by $$L^0=L,$$ $$L^{\xi+1}=(L^\xi)',$$ and if $\xi$ is a limit ordinal, $$L^\xi=\bigcap_{\zeta<\xi}L^\zeta.$$   We say that $K$ is \emph{scattered} if there exists an ordinal $\xi$ such that $K^\xi=\varnothing$, and in this case, we define the \emph{Cantor-Bendixson index} $CB(K)$ of $K$ to be the minimum $\xi$ such that $K^\xi=\varnothing$.    We note that $K$ is scattered if and only if any non-empty subset of $K$ has an isolated point.       If $K$ is compact, then $CB(K)$ cannot be a limit ordinal, and since by convention we exclude the empty set from our definitions of topological space, $CB(K)$ cannot be zero. Therefore for a compact, Hausdorff topological space, $CB(K)$ must be a successor ordinal.

We note that if $K=[0,\omega^\xi]$, the ordinal interval with its usual compact order topology, then for any ordinal $\zeta\leqslant \xi$, $$K^\zeta=\{\omega^\xi\}\cup \{\omega^{\ee_1}+\ldots +\omega^{\ee_n}: \xi>\ee_1\geqslant \ldots \geqslant \ee_n\geqslant \zeta\} =\{\omega^\zeta \theta: \theta\leqslant \omega^\mu\},$$ where $\zeta+\mu=\xi$.   Therefore $K^\xi=\{\omega^\xi\}$ and $CB(K)=\xi+1$.    

The following Szlenk index computations were shown in \cite{Samuel} for countable $K$, in \cite{Brooker} for $K=[0,\xi]$, $\xi$ an arbitrary ordinal,  and in \cite{Causey} for general $K$.

\begin{theorem} Let $K$ be a compact, Hausdorff topological space. Then $C(K)$ is Asplund if and only if $K$ is scattered. Moreover, if $K$ is infinite and scattered and $\xi$ is the unique ordinal such that $\omega^\xi< CB(K)<\omega^{\xi+1}$, then $Sz(C(K))=\omega^{\xi+1}$. 

In particular, for any ordinal $\xi$, $Sz(C[0, \omega^{\omega^\xi}])=\omega^{\xi+1}$.

\label{Sz1}
\end{theorem}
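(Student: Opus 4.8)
The statement has two parts, and my plan is to handle them in the following order. First, the Asplund characterization: $C(K)$ is Asplund if and only if $K$ is scattered. The forward direction is cleanest via the contrapositive — if $K$ is not scattered, then by the Cantor–Bendixson analysis $K$ carries a perfect subset, hence (using compactness and normality of $K$) a continuous surjection onto $[0,1]$, whence $C[0,1]$ embeds isometrically into $C(K)$; since $C[0,1]$ contains $\ell_1$ and so is not Asplund, neither is $C(K)$. Concretely one checks $Sz(C[0,1],\ee)=\infty$ for small $\ee$ by exhibiting, inside $B_{C[0,1]^*}$, a tree of Dirac-type (or Rademacher-type) functionals on which the $\ee$-Szlenk derivative never terminates. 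For the converse, if $K$ is scattered and compact, then $K$ is countable-height in the Cantor–Bendixson sense, and one runs the Szlenk derivation argument showing $s_\ee^{\omega^{\xi+1}}(B_{C(K)^*})=\varnothing$; this is exactly what the quantitative second part gives, so the Asplund equivalence is really a corollary of the ordinal computation once one knows $Sz(X)<\infty \iff X$ Asplund, which is quoted from \cite{Lancien}.

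For the main computation — $Sz(C(K))=\omega^{\xi+1}$ when $\omega^\xi<CB(K)<\omega^{\xi+1}$ — my plan is to prove the two inequalities separately. For the upper bound $Sz(C(K))\le \omega^{\xi+1}$, I would show $Sz(C(K),\ee)<\omega^{\xi+1}$ for every $\ee>0$ by relating the $\ee$-Szlenk derivative of $B_{C(K)^*}$ to the Cantor–Bendixson derivative of $K$. The key point: a weak$^*$-slice of $B_{C(K)^*}$ of small diameter is controlled by where the measures are "concentrated" in $K$, and since measures on a scattered compact space are atomic and supported on countable sets, one gets a quantitative bound $Sz(C(K),\ee)\le CB(K)^{(\text{something})}$; passing to the supremum over $\ee$ and using $CB(K)<\omega^{\xi+1}$ together with the fact that $\omega^{\xi+1}$ is closed under the relevant ordinal arithmetic (it is a power of $\omega^\omega$, hence absorbs finite exponentiation of ordinals below it) yields $Sz(C(K))\le\omega^{\xi+1}$. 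For the lower bound $Sz(C(K))\ge\omega^{\xi+1}$, I would use that $C(K)$ contains an isometric copy of $C[0,\omega^{\omega^\xi}]$ (since $CB(K)>\omega^\xi$ forces $K^{(\omega^\xi)}\ne\varnothing$, and a compact scattered space with nonempty $\omega^\xi$-th derivative maps continuously onto $[0,\omega^{\omega^\xi}]$, by a standard argument selecting a tree of height $\omega^\xi$ in $K$ and using the Tietze/Urysohn machinery), and that the Szlenk index does not increase under passing to subspaces, so it suffices to prove $Sz(C[0,\omega^{\omega^\xi}])\ge\omega^{\xi+1}$. For the latter I would exhibit, for each $\ee<1$ and each $\zeta<\xi+1$, a weak$^*$-compact subset of $B_{C[0,\omega^{\omega^\xi}]^*}$ witnessing $s_\ee^{\omega^\zeta\cdot n}\ne\varnothing$ for all $n$, built from the tree $S_{\omega^{\omega^\xi}}$ of Remark \ref{r1} via signed Dirac measures at the ordinals appearing along its branches; the Cantor–Bendixson computation $K^{(\zeta)}=\{\omega^\zeta\theta:\theta\le\omega^\mu\}$ recorded above is exactly the bookkeeping needed to track how the Szlenk derivative peels off one power of $\omega$ at a time.

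The "In particular" clause then follows by taking $K=[0,\omega^{\omega^\xi}]$: we computed $CB([0,\omega^{\omega^\xi}])=\omega^{\xi}+1$ wait — more precisely $CB(K)=\omega^\xi+1$ is not in the stated range, so one must instead note $CB([0,\omega^{\omega^\xi}]) $ lands strictly between $\omega^\xi$ and $\omega^{\xi+1}$ (indeed $\omega^\xi < \omega^\xi+1 \le CB < \omega^{\xi+1}$ whenever $\xi\ge 1$, and the $\xi=0$ case $K=[0,\omega]$ is the classical $Sz(c_0)=\omega$), so the general formula applies directly. The main obstacle I anticipate is the lower bound: constructing the explicit weak$^*$-null trees of functionals on $C[0,\omega^{\omega^\xi}]$ whose Szlenk derivatives survive to level $\omega^\zeta\cdot n$ requires a careful transfinite induction synchronizing the branch structure of $S_{\omega^{\omega^\xi}}$ with the $\ee$-separation of partial-sum measures, and getting the constants to work uniformly in $\ee<1$ (rather than degrading with $\ee$) is the delicate part. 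The upper bound, by contrast, is mostly a matter of the right weak$^*$-slice estimate plus ordinal arithmetic, and the Asplund equivalence is essentially free once the ordinal bounds and the Lancien characterization are in hand. Since this theorem is quoted from \cite{Samuel, Brooker, Causey}, I would in practice cite those sources and only sketch the lower-bound tree construction for completeness.
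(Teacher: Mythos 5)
The paper does not prove this theorem; it is quoted from \cite{Samuel} (countable $K$), \cite{Brooker} ($K$ an ordinal interval), and \cite{Causey} (general $K$), as the text immediately preceding the statement makes explicit. You correctly note this at the end of your sketch, so there is no proof in the paper against which your plan can be compared.

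On the merits of the sketch itself: the ``in particular'' anxiety is unfounded --- $CB([0,\omega^{\omega^\xi}])=\omega^\xi+1$ always satisfies $\omega^\xi<\omega^\xi+1<\omega^{\xi+1}$, so the general formula applies with no $\xi=0$ special-casing. The genuine soft spot is the lower bound via an embedding $C[0,\omega^{\omega^\xi}]\hookrightarrow C(K)$: this requires a continuous surjection of $K$ onto the ordinal interval, and while for countable (metrizable) $K$ that is the classical Mazurkiewicz--Sierpi\'nski machinery, for a general compact scattered Hausdorff $K$ it is itself a nontrivial fact that you dismiss as ``standard Tietze/Urysohn machinery''; for non-metrizable $K$ one must construct transfinite systems of clopen partitions and this deserves either a citation or a proof. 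The reference \cite{Causey}, which is the one that handles general $K$, avoids this detour entirely: the upper bound $Sz(C(K))\leqslant\omega^{\xi+1}$ is obtained by exhibiting the Grasberg $\xi$-AUF norm (precisely the norm $[\cdot]$ appearing in Lemma \ref{so} of the present paper) and then invoking the implication $\xi$-AUS $\Rightarrow Sz\leqslant\omega^{\xi+1}$ of Remark \ref{accordion}, while the lower bound comes from normally weakly null trees of Dirac measures built directly inside $B_{C(K)^*}$ using only $K^{(\omega^\xi)}\neq\varnothing$. That route is cleaner, requires no ordinal-interval embedding, and is structurally parallel to how the present paper attacks the tensor product; it is the one to gesture at if a sketch is wanted at all.
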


Of course, if $K$ is finite, then $C(K)\approx \ell_\infty^n$ for some $n$, and $Sz(C(K))=1$, since $B_{C(K)^*}$ is norm compact in this case.

\subsection{Weakly null trees and weak derivatives}

For convenience, whenever $T$ is a tree and $(G_a)_{a\in T}$ is a collection indexed by $T$, we use the notation $(G_b)_{b\leqslant a}$ to refer to  $(G_b)_{\varnothing<b\leqslant a}$ rather than  $(G_b)_{\varnothing\leqslant  b\leqslant a}$. 

Suppose that $T$ is a tree  and $X$ is a Banach space.   We say a collection $(x_t)_{t\in T}$ of $ X$ is \emph{weakly null} provided that for each for all $\xi$ and all $t\in (T\cup \{\varnothing\})^{\xi+1}$, \[ 0 \in \overline{\{x_s: s\in T^\xi, s^-=t\}}^\text{weak}.\] 

It will often be useful to assume the weakly null trees $(x_t)_{t\in T}$ are indexed by a tree $T$ which has some specific form, namely $\Gamma.D$ for some directed set $D$.   There are two natural choices for such a $D$. We can let $D$ be any neighborhood basis at $0$ with respect to the relative weak topology on $B_X$. Alternatively, we can let $$D=CD(X):=\{B_Z: Z\textrm{  subspace of } X \textrm{ and }  \dim (X/Z)<\infty\}.$$

For  $D=CD(X)$ and a tree $\Gamma$, we say that a collection  $(x_a)_{a\in \Gamma.D}$ of $X$ is \emph{normally weakly null} if whenever $a=(\zeta_i, u_i)_{i=1}^n\in \Gamma.D$, $x_a\in u_n$.  Such a collection is weakly null according to the definition given above. To see this, let  $a=t.v\in (\{\varnothing\}\cup \Gamma.D)^{\xi+1})$, there exists $\zeta$ such that $t\smallfrown (\zeta)\in \Gamma^\xi$. Then for any $u\in D,$   $a\smallfrown(\zeta,u)\in (\Gamma.D)^\xi,$ and $a=(a\smallfrown(\zeta,u))^-.$ 
Since $x_{a\smallfrown(\zeta, u)}\in u$, $$0\in \overline{\{x_{a\smallfrown(\zeta, u)}:u\in D \}}^\text{weak}\subset \overline{\{x_b:b\in (\Gamma.D)^\xi,\ b^-=a \}}^\text{weak}.$$  
 
We will also be interested in the more general construction.     We say a collection of subsets $(G_a)_{a\in \Gamma.D}$ of $B_X$ is \emph{normally weakly null} if for each $a=(\zeta_i, u_i)_{i=1}^n\in \Gamma.D$, $G_a\subset u_n$.

We recall that 
$$S_{\omega^\xi}=\{(\zeta_i)_{1\leqslant i\leqslant n}\,;\,0\leqslant\zeta_n<...<\zeta_1<\omega^\xi\}
$$
is a tree of rank $\omega^\xi.$

\begin{rem}\label{r2}Let $T$ be a tree with $\mathrm{rank}(T)=\omega^\xi$ and let $(x_t)_{t\in T}\subset B_X$ be a weakly null collection.  Then for every $r>0$ there exist a map $\phi : S_{\omega^\xi}.D\to T$ such that for all $a\in S_{\omega^\xi}.D,$ $\Vert y_a-x_{\phi(a)}\Vert<r,$ $\phi(a\vert1)<\phi(a\vert2)<...<\phi(a).$ Moreover, if $a=(\zeta_i,B_{Z_i})_{1\leqslant i\leqslant n}, $ then $\phi(a)\in T^{\zeta_n}.$
\end{rem}

\begin{proof}
We construct $\phi$ by induction on the length of the element $a\in S_{\omega^\xi}.D.$ Let $a=(\zeta,B_Z).$ Fix $t\in T^\zeta.$ It is obvious that for every $s\in T$ such that $s^-=t^-$ we have $s\in T^\zeta.$ Let $u$ be a weak neighborhood of $0$ in $X$ such that for every $x\in u\cap B_X$ there exists$z\in B_Z$ satisfying $\Vert x-z\Vert<r.$ The collection $(x_t)_{t\in T}$ a weakly null so there exists $s\in T^\zeta$ such that $s^-=t^-$ and $ x_s\in u.$  Let $y_{(\zeta,B_Z)}\in B_Z$ such that $\Vert x_s-y_{(\zeta,B_Z)}\in B_Z\Vert<r.$ We define $\phi(\zeta,B_Z)=s.$

Next, assume that $n>1$ and $\phi$ has been defined on the elements of $S_{\omega^\xi}.D$ of length $n-1.$ Let $a=((\zeta_i,B_{Z_i}))_{i=1}^n\in S_{\omega^\xi}.D.$ Assume also that $\phi(a^-)\in T^{\zeta_{n-1}}.$ We have $\zeta_n<\zeta_{n-1}$ so $T^{\zeta_{n-1}}\subset T^{\zeta_n}$ and there exists $t\in T^{\zeta_n}$ such that $t^-=\phi(a^-).$ In order to define $\phi(a)$ we proceed as in the definition of the image of an element of length $1.$
\end{proof}

For an ordinal $\xi$, a number $\sigma\geqslant 0$, an infinite dimensional Banach space $X$,  and $y\in X$, we define  the \emph{modulus of } $\xi$-\emph{asymptotic smoothness} by   $\varrho^\xi_X(\sigma, y)$ by \begin{align*} \varrho^\xi_X(\sigma, y)=\sup\Bigl\{\inf \{\|y+\sigma x\|-1: t\in T, x\in \text{co}(x_s)_{s\leqslant t}\}:& (x_t)_{t\in T}\subset B_X\text{\ weakly null}, \\ & \text{rank}(T)=\omega^\xi\Bigr\}.\end{align*}   That is, $\varrho^\xi_X(\sigma, y)\leqslant C$ if and only if for any $C_1>C$, any tree $T$ of rank $\omega^\xi$, and any weakly null collection $(x_t)_{t\in T}\subset B_X$, there exists a convex combination $x$ of some branch $(x_s)_{s\leqslant t}$ of the collection such that $\|y+\sigma x\|\leqslant 1+C_1$.    We isolate here the important special case $\xi=0$, in which case $\varrho_X^0(\sigma, y)=\rho_X(\sigma,y)$ is the familiar modulus of asymptotic smoothness given by \[\varrho^0_X(\sigma,y)=\underset{\dim(X/Z)<\infty}{\inf}\underset{x\in B_Z}{\sup} \|y+\sigma x\|-1.\]

It is easy to see that in the definitions of $\varrho^\xi_X(\sigma, y)$ and $\varrho^\xi_X(\sigma)$, it is sufficient to take the supremum over weakly null collections of the form $(x_t)_{t\in \Gamma.D}\subset B_X$, where $\Gamma$ is any fixed tree of rank $\omega^\xi$, and $x_{(\zeta_i, u_i)_{i=1}^n}\in u_n$ as above.    This is because  collection $(x_t)_{t\in T}\subset B_X$, if $\text{rank}(T)=\omega^\xi$, then for a tree $\Gamma$ with $\text{rank}(\Gamma)$, we can find a normally weakly null collection $(y_a)_{a\in \Gamma.D}\subset B_X$ whose branches are small perturbations of the branches of $(x_t)_{t\in T}$. See \cite[Proposition $2.1$]{beta} for more details. 

 It is clear that for a fixed $\sigma$, $\varrho^\xi_X(\sigma,\cdot)$ is $1$-Lipschitz on $X$, and for a fixed $y\in X$, $\varrho^\xi_X(\cdot, y)$ is $1$-Lipschitz as a function of $\sigma$.   We define $$\varrho^\xi_X(\sigma)=\underset{y\in B_X}{\sup} \varrho^\xi_X(\sigma, y).$$  We say $X$ is $\xi$-\emph{asymptotically uniformly smooth} (in short, $\xi$-\emph{AUS}) if $\inf_{\sigma>0}\varrho^\xi_X(\sigma)/\sigma=0$.    For $1<p\leqslant \infty$, we say $X$ is $\xi$-$p$-\emph{asympotically uniformly smooth} (in short, $\xi$-$p$-\emph{AUS}) if $\sup_{\sigma>0} \varrho^\xi_X(\sigma)/\sigma^p<\infty$.  We say $X$ is $\xi$-\emph{asymptotically uniformly flat} (in short, $\xi$-\emph{AUF}) if there exists some $\sigma_0>0$ such that $\varrho^\xi_X(\sigma_0)=0$.   We say that $X$ is $\xi$-\emph{asymptotically uniformly smoothable} (resp. $\xi$-$p$-\emph{asymptotically uniformly smoothable}, $\xi$-\emph{asymptotically uniformly flattenable}) if there exists an equivalent norm $|\cdot|$ on $X$ such that $(X, |\cdot|)$ is $\xi$-AUS (resp. $\xi$-$p$-AUS, $\xi$-AUF).  

\begin{rem}\upshape If $X$ is $\xi$-$p$-AUS and $1/p+1/q=1$, then by \cite[Proposition $3.2$]{beta} and standard Young duality, there exists a constant $c>0$ such that for any $\ee>0$, $s_\ee^{\omega^\xi}(B_{X^*})\subset (1-c \ee^q)B_{X^*}$. From here, an easy homogeneity argument yields that $Sz(X, \ee)<\omega^{\xi+1}$.  Therefore if a Banach space $X$ is $\xi$-$p$-AUS, then $Sz(X)\leqslant \omega^{\xi+1}$.   Since the Szlenk index is an isomorphic invariant, if $X$ is $\xi$-$p$-AUS-able, then $Sz(X)\leqslant \omega^{\xi+1}$. 
\label{accordion}
\end{rem}

Let $X$ be a Banach space. If $\mathfrak{G}$ is a rooted, hereditary tree on  on a subset $\mathfrak{g}$ of $2^{B_X}$, the power set of $B_X$, we define the \emph{weak derivative of} $\mathfrak{G}$, denoted by $(\mathfrak{G})_w'$, to be the set of all sequences $g\in \mathfrak{G}$ such that for any $u\in D$, there exists $G\in \mathfrak{g}$ such that $G\subset u$ and $g\smallfrown (G) \in \mathfrak{G}$. 

 We define the transfinite weak derivatives by $$(\mathfrak{G})^0_w=\mathfrak{G},$$ $$(\mathfrak{G})_w^{\xi+1} = ((\mathfrak{G})_w^\xi)_w',$$ and if $\xi$ is a limit ordinal, $$(\mathfrak{G})_w^\xi=\bigcap_{\zeta<\xi}(\mathfrak{G})_w^\zeta.$$  If there exists an ordinal $\xi$ such that $(\mathfrak{G})_w^\xi=\varnothing$, then we define $\mathfrak{w}(\mathfrak{G})$ to be the minimum such $\xi$. If no such $\xi$ exists,  we use the notation $\mathfrak{w}(\mathfrak{G})=\infty$.  For consistency, if $\mathfrak{G}=\varnothing$, $(\mathfrak{G})_w^\xi=\varnothing$ and $\mathfrak{w}(\varnothing)=0$. 

\begin{rem}\upshape Above, the weak derivative was defined using $D=CD(X)$.  However, we could have defined $(\mathfrak{G})_w'$ in the same way, except with $D$ being a fixed weak neighborhood basis at $0$ in $B_X$.    For all of our applications, an easy perturbation argument yields that these two distinct definitions will lead to the same results.

\end{rem}

We note that if $\mathfrak{G}$ is a rooted, hereditary tree and $g\in \mathfrak{g}^{<\omega}$, then $$\mathfrak{G}(g):= \{g_1\in \mathfrak{g}^{<\omega}: g\smallfrown g_1\in \mathfrak{G}\}$$ is empty if and only if $g\in \mathfrak{g}^{<\omega}\setminus \mathfrak{G}$, and otherwise $\mathfrak{G}(g)$ is a rooted, hereditary tree. Note that we use the notation $\mathfrak{G}(g)$, which denotes a rooted tree,  rather than the previously used $\mathfrak{G}_g$, which was not rooted.   An easy proof by induction yields that for any ordinal $\xi$, $$(\mathfrak{G}(g))_w^\xi= (\mathfrak{G})_w^\xi(g).$$

We establish some consequences of the definitions above.   In what follows, for a fixed (understood) Banach space $X$, let $\mathfrak{s}$ denote the set of singleton subsets of $B_X$, $\mathfrak{f}$ the set of finite, non-empty subsets of $B_X$, and $\mathfrak{c}$ the set of non-empty, norm compact subsets of $B_X$.

\begin{lemma} Let $X$ be a Banach space and let $D=CD(X)$.   Let $\mathfrak{g}$ be a set of subsets of $B_X$ and let $\mathfrak{G}$ be a tree on $\mathfrak{g}$.  For any ordinal $\xi$, the  following are equivalent. \begin{enumerate}[(i)]\item For every tree $T$ with $\text{\emph{rank}}(T)=\xi$, there exists a normally weakly null collection $(G_a)_{a\in T.D}\subset \mathfrak{g}$ such that for every $a\in T.D$, $(G_{a|i})_{i=1}^{|a|}\in \mathfrak{G}$. \item There exist a tree $T$ with $\text{\emph{rank}}(T)=\xi$ and   a normally weakly null collection $(G_a)_{a\in T.D}\subset \mathfrak{g}$ such that for every $a\in T.D$, $(G_{a|i})_{i=1}^{|a|}\in \mathfrak{G}$.\item $\mathfrak{w}(\mathfrak{G})> \xi$. \end{enumerate}

\label{prop5}
\end{lemma}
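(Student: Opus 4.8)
The plan is to prove the cyclic chain of implications $(i)\Rightarrow(ii)\Rightarrow(iii)\Rightarrow(i)$; the implication $(i)\Rightarrow(ii)$ is immediate once we invoke Remark~\ref{r1}, which guarantees the existence of a tree $T$ of rank exactly $\xi$, so all the content lies in the other two implications, which should be handled by transfinite induction on $\xi$. The natural bookkeeping device is the equality $(\mathfrak{G}(g))_w^\xi=(\mathfrak{G})_w^\xi(g)$ recorded just before the lemma, which lets us pass to subtrees of both $T$ and $\mathfrak{G}$ simultaneously and keep the inductive hypothesis in a usable form.

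For $(iii)\Rightarrow(i)$ I would argue as follows. Suppose $\mathfrak{w}(\mathfrak{G})>\xi$, i.e. $(\mathfrak{G})_w^\xi\neq\varnothing$, and fix a tree $T$ with $\mathrm{rank}(T)=\xi$; I want to build the normally weakly null collection $(G_a)_{a\in T.D}$ by recursion along $T.D$ so that at each node $a=(\zeta_i,u_i)_{i=1}^n$ the finite sequence $(G_{a|i})_{i=1}^n$ lies in a prescribed derivative of $\mathfrak{G}$ — precisely, in $(\mathfrak{G})_w^{\mathrm{rank}(T_{(\zeta_1,\dots,\zeta_n)})}$ (using that $(\zeta_1,\dots,\zeta_n)\in T^{\mathrm{rank}(T_t)}$-type bookkeeping, where the rank of the ``tail tree'' drops as we descend). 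The point is that being in $(\mathfrak{G})_w'$ exactly says: for every $u\in D$ there is $G\in\mathfrak{g}$ with $G\subset u$ and $g\smallfrown(G)\in\mathfrak{G}$; so if $(G_{a|i})_{i=1}^n$ sits in a derivative of weak-rank $>0$ we can extend it by a set contained in the $u_{n+1}$ demanded by the next index, staying in the next-lower derivative. At non-maximal nodes of $T$ the relevant weak-rank is a successor (or we can take a limit predecessor appropriately), and at maximal nodes it is $0$, which is exactly where the recursion is allowed to terminate. One should phrase the invariant so that the recursion never stalls, which is where the induction on $\xi$ (reducing to $T_{(\zeta_1)}$ and $\mathfrak{G}(G_{(\zeta_1,u_1)})$) does the real work and avoids an awkward direct transfinite construction.

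For $(ii)\Rightarrow(iii)$ I would prove the contrapositive-free direct statement: given a tree $T$ of rank $\xi$ and a normally weakly null $(G_a)_{a\in T.D}$ with all initial segments in $\mathfrak{G}$, show $(\mathfrak{G})_w^\xi\neq\varnothing$; more sharply, I would prove by induction on $\zeta$ that for every $t=(\zeta_i)_{i=1}^n\in T^\zeta$ and every $v=(u_i)_{i=1}^n\in D^{<\omega}$ with $t.v\in T.D$, the sequence $(G_{(t.v)|i})_{i=1}^n$ belongs to $(\mathfrak{G})_w^\zeta$. The base case $\zeta=0$ and the limit case are formal. For the successor step $\zeta+1$: if $t\in T^{\zeta+1}=(T^\zeta)'$, then $t$ is non-maximal in $T^\zeta$, so for the weakly-null property we have $0\in\overline{\{G_{s.w}$-type points$\}}$ ranging over $s\in T^\zeta$ with $s^-=t$; unwinding ``normally weakly null'' this means that for each $u\in D$ there is an extension $s=t\smallfrown(\eta)\in T^\zeta$ and a choice $u_{n+1}=u$ so that $t.v\smallfrown(\eta,u)\in T.D$ with $G_{(t.v\smallfrown(\eta,u))}\subset u$; by the inductive hypothesis the extended sequence lies in $(\mathfrak{G})_w^\zeta$, and since this works for every $u\in D$, the definition of $(\cdot)_w'$ gives $(G_{(t.v)|i})_{i=1}^n\in(\mathfrak{G})_w^{\zeta+1}$. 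Taking $\zeta=\xi$ and $t\in T^{?}$ — here one must be slightly careful, since $T^\xi=\varnothing$; the correct move is to take $\zeta$ just below where it dies, or simply to note $\varnothing\in(\mathfrak{G})_w^\xi$ by applying the statement at the root with $n=0$, which requires checking the $n=0$ case of the induction separately and is exactly the assertion $\mathfrak{w}(\mathfrak{G})>\xi$.

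The main obstacle I expect is matching the tree-rank bookkeeping with the weak-derivative rank precisely at the recursion, especially reconciling the convention $(G_b)_{b\leqslant a}=(G_b)_{\varnothing<b\leqslant a}$ with the rooted/non-rooted distinction between $\mathfrak{G}_g$ and $\mathfrak{G}(g)$, and handling the empty sequence $\varnothing$ as both a node of $T.D$ and the root of $\mathfrak{G}$ consistently. A secondary subtlety is that ``normally weakly null'' is defined for collections of \emph{subsets} $(G_a)$, and one must make sure the weakly-null condition for subsets (each $G_a\subset u_n$) is strong enough to drive the derivative argument — it is, because $G_a\subset u_n$ directly supplies the ``$G\subset u$'' clause in the definition of $(\mathfrak{G})_w'$. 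Once the invariant is stated correctly, each step is routine; I would organize the whole proof as two inductions on $\xi$ sharing the reduction $T\rightsquigarrow T_{(\zeta_1)}$, $\mathfrak{G}\rightsquigarrow\mathfrak{G}(G)$.
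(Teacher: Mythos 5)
Your proposal is correct and takes essentially the same approach as the paper: $(i)\Rightarrow(ii)$ via the existence of a tree of the required rank, $(ii)\Rightarrow(iii)$ by transfinite induction on $\zeta$ showing that branches of length corresponding to $T^\zeta$-nodes land in $(\mathfrak{G})_w^\zeta$ (with the $\varnothing$/$n=0$ case giving $\mathfrak{w}(\mathfrak{G})>\xi$), and $(iii)\Rightarrow(i)$ by induction on $\xi$ using the reduction $T\rightsquigarrow T_{(\lambda)}$, $\mathfrak{G}\rightsquigarrow\mathfrak{G}(G)$ together with $(\mathfrak{G}(g))_w^\xi=(\mathfrak{G})_w^\xi(g)$. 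The only imprecision worth noting is the phrase ``we can take a limit predecessor appropriately'' (limit ordinals have no predecessors); the intended move is either the paper's limit-case decomposition of $T$ into a totally incomparable union of subtrees $T[t]$, or the observation that for any child $\eta$ one has $\mathrm{rank}(T_{t\smallfrown(\eta)})+1\leqslant\mathrm{rank}(T_t)$, so $(\mathfrak{G})_w^{\mathrm{rank}(T_t)}\subset(\mathfrak{G})_w^{\mathrm{rank}(T_{t\smallfrown(\eta)})+1}$ still feeds the definition of the weak derivative — either patch is routine and does not affect the correctness of the argument.
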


\begin{proof} 

$(i)\Rightarrow (ii)$ In light of the previous paragraph, $(i)\Rightarrow (ii)$ is trivial, since for any $\xi$, at least one tree of rank $\xi$ exists.

$(ii)\Rightarrow (iii)$ Assume $\text{rank}(T)=\xi$ and $(G_a)_{a\in T.D}\subset \mathfrak{G}$ is such that $(G_{a|i})_{i=1}^{|a|}\in \mathfrak{G}$ for all $a\in T.D$. We will prove by  induction   that for any $\zeta<\xi$, if $a\in (\{\varnothing\}\cup T.D)^\zeta$, $(G_{a|i})_{i=1}^{|a|}\in (\mathfrak{G})_w^\zeta$.    For the base case, the result holds for $a\in T.D$ by hypothesis, and for $a=\varnothing$ because $(G_{\varnothing|i})_{i=1}^0=\varnothing\in \mathfrak{G}$, since $\mathfrak{G}$ is a rooted tree.   The  limit ordinal case of the induction is clear.  Assume $\zeta+1<\xi$ and the $\zeta$ case of the induction holds. Fix  $a=t.v\in (\{\varnothing\}\cup T.D)^{\zeta+1}$ (where $t=v=\varnothing$ if $a=\varnothing$).   Then there exists $\lambda$ such that $t\smallfrown (\lambda)\in T^\zeta$, and for all $B_Z\in D$, $a\smallfrown (\lambda,B_Z)\in T^\zeta.D$.  Therefore for any finite codimensional subspace $Z$ of $X$, because the collection is normally weakly null, $G_{a\smallfrown (\lambda, B_Z)}\subset B_Z$, and by the inductive hypothesis, $(G_{a|i})_{i=1}^{|a|}\smallfrown (G_{a\smallfrown (\lambda, B_Z)})\in (\mathfrak{G})_w^\zeta$. This yields that $(G_{a|i})_{i=1}^{|a|}\in (\mathfrak{G})_w^{\zeta+1}$.   

Since $\text{rank}(T)=\xi$, $\varnothing\in (\{\varnothing\}\cup T.D)^\xi$, and the previous induction yields that $\varnothing\in (\mathfrak{G})_w^\xi$. Therefore $\mathfrak{w}(\mathfrak{G})>\xi$.

$(iii)\Rightarrow (i)$   By induction on $\xi$.   The $\xi=0$ case is vacuous.   Assume $\mathfrak{w}(\mathfrak{G})>\xi+1$, the result holds for $\xi$, and $T$ is a tree of rank $\xi+1$.   Since $\mathfrak{w}(\mathfrak{G})>\xi+1$, $\varnothing\in (\mathfrak{G})_w^{\xi+1}$.   This means that for any $B_Z\in D$, there exists $G^Z$ such that $G^Z\subset B_Z$ and $(G^Z)\in (\mathfrak{G})_w^\xi$.   For any length $1$ sequence $(\lambda)\in T$, let $G_{(\lambda, B_Z)}=G^Z$.  Since $(G^Z)\in (\mathfrak{G})_w^\xi$, $\varnothing\in (\mathfrak{G}(G^Z))_w^\xi$, and $\mathfrak{w}(\mathfrak{G}(G^Z))>\xi$.    By the inductive hypothesis,  there exists a  normally weakly null collection $(G^{\lambda,Z}_a)_{a\in T_{(\lambda)}.D}$ such that for each $a\in T_{(\lambda)}.D$, $(G^{\lambda,Z}_{a|i})_{i=1}^{|a|}\in \mathfrak{G}(G^Z)$.    Then for $a=(\lambda, B_Z)\smallfrown a_1\in T.D$, we define $G_a=G^{\lambda,Z}_{a_1}$.  This collection clearly satisfies the conclusions. 

Assume now that $\xi$ is a limit ordinal and the result holds for all smaller ordinals. Let $T$ be a tree with rank $\xi$. Let $R$ denote the set of all length $1$ sequences in $T$ and for each $t\in R$, let $T[t]=\{s\in T: t\leqslant s\}$. Then by standard properties of well-founded trees, $\text{rank}(T[t])<\xi$ for each $t\in R$, and $T=\cup_{t\in R}T[t]$ is a totally incomparable union.  From the latter fact, $T.D=\cup_{t\in R}T[t].D$ is a disjoint union.  By the inductive hypothesis, for each $t\in R$, there exists a collection $(G_a^t)_{a\in T[t].D}$ such that for all $a\in T[t].D$, $(G_{a|i}^t)_{i=1}^{|a|}\in \mathfrak{G}$. Then define $(G_a)_{a\in T.D}$ by letting $G_a=G^t_a$ whenever $a\in T[t].D$.  This collection clearly satisfies the conclusions.

\end{proof}

For $y\in X$,  $\sigma\geqslant 0$, and $\rho\in \rr$, we let $$\mathfrak{S}(\sigma, y, \rho)=\{\varnothing\}\cup \{(x_i)_{i=1}^n:(\forall x\in\text{co}(x_i)_{i=1}^n)(\|y+\sigma x\|\geqslant \rho+1)\}.$$  Note that $\mathfrak{S}(\sigma,y,\rho)$ is a rooted, hereditary tree.  Note also that $\|y+\sigma x\|\geqslant \rho+1$ for all $x\in \text{co}(x_i)_{i=1}^n$ if and only if $\|x\|\geqslant \rho+1$ for all $x\in \text{co}(y+\sigma x_i)_{i=1}^n$. By the geometric Hahn-Banach theorem, $(x_i)_{i=1}^n\in \mathfrak{S}(\sigma, y, \rho)$ if and only if there exists $x^*\in B_{X^*}$ such that for all $1\leqslant i\leqslant n$, $\text{Re\ }x^*(y+\sigma x_i)\geqslant \rho+1$.

For a set $\mathfrak{g}$ of subsets of $B_X$, we let $\mathfrak{S}_\mathfrak{g}(\sigma, y, \rho)$ denote the set consisting of the empty sequence together with the set of all sequences $(G_i)_{i=1}^n \in \mathfrak{g}^{\omega}$ such that  $\mathfrak{S}(\sigma, y, \rho)\cap \prod_{i=1}^n G_i\neq \varnothing$. That is, $(G_i)_{i=1}^n \in \mathfrak{S}_\mathfrak{g}(\sigma, y, \rho)$ if and only if $G_i\in \mathfrak{g}$ for all $1\leqslant i\leqslant n$ and there exist $(x_i)_{i=1}^n\in \prod_{i=1}^n G_i$ and $x^*\in B_{X^*}$ such that for all $1\leqslant i\leqslant n$, $\text{Re\ }x^*(y+\sigma x_i)\geqslant \rho+1$.    

Recall that $\mathfrak{s}$ denotes the set of singleton subsets of $B_X$, from which it follows that $$\mathfrak{S}_\mathfrak{s}(\sigma, y, \rho)=\{\varnothing\}\cup \{(\{x_i\})_{i=1}^n: (x_i)_{i=1}^n\in \mathfrak{S}(\sigma, y, \rho)\}.$$

\begin{proposition} Let $\xi$ be an ordinal and let $X$ be a Banach space.  For $\sigma\geqslant 0$ and $y\in X$, $$\varrho^\xi_X(\sigma, y)=\sup\{\rho: \mathfrak{w}(\mathfrak{S}_\mathfrak{s}(\sigma,y,\rho))>\omega^\xi\}.$$   

\label{mod}
\end{proposition}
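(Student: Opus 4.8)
The plan is to show the two inequalities $\varrho^\xi_X(\sigma,y)\leqslant \sup\{\rho:\mathfrak{w}(\mathfrak{S}_{\mathfrak{s}}(\sigma,y,\rho))>\omega^\xi\}$ and $\geqslant$ separately, in each case translating a statement about branches of weakly null trees into a statement about the weak derivative of $\mathfrak{S}_{\mathfrak{s}}$ via Lemma \ref{prop5}. The bridge is the observation, recorded just before the statement, that $(\{x_i\})_{i=1}^n\in\mathfrak{S}_{\mathfrak{s}}(\sigma,y,\rho)$ iff $(x_i)_{i=1}^n\in\mathfrak{S}(\sigma,y,\rho)$ iff $\|y+\sigma x\|\geqslant\rho+1$ for every $x\in\mathrm{co}(x_i)_{i=1}^n$. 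Thus for a collection $(x_a)_{a\in T.D}$ of points of $B_X$ one has $(\{x_{a|i}\})_{i=1}^{|a|}\in\mathfrak{S}_{\mathfrak{s}}(\sigma,y,\rho)$ for every $a\in T.D$ precisely when no branch of the collection admits a convex combination $x$ with $\|y+\sigma x\|<\rho+1$.

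For the first inequality I would argue contrapositively. Suppose $\varrho^\xi_X(\sigma,y)<\rho$ for some $\rho$; I must show $\mathfrak{w}(\mathfrak{S}_{\mathfrak{s}}(\sigma,y,\rho))\leqslant\omega^\xi$. If not, then by the equivalence $(iii)\Rightarrow(i)$ in Lemma \ref{prop5} (applied with $\mathfrak{g}=\mathfrak{s}$, the singletons, and with a tree $T$ of rank $\omega^\xi$), there is a normally weakly null collection $(\{x_a\})_{a\in T.D}$ of singletons with $(\{x_{a|i}\})_{i=1}^{|a|}\in\mathfrak{S}_{\mathfrak{s}}(\sigma,y,\rho)$ for all $a$; equivalently, a weakly null collection $(x_a)_{a\in T.D}\subset B_X$, indexed by a tree of rank $\omega^\xi$, none of whose branch convex combinations $x$ satisfies $\|y+\sigma x\|<\rho+1$. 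But that says $\varrho^\xi_X(\sigma,y)\geqslant\rho$ (using the sufficiency, noted in the text, of taking the supremum over normally-indexed weakly null collections on a fixed tree of rank $\omega^\xi$), a contradiction. Hence $\varrho^\xi_X(\sigma,y)<\rho$ forces $\mathfrak{w}(\mathfrak{S}_{\mathfrak{s}}(\sigma,y,\rho))\leqslant\omega^\xi$, i.e. $\rho\notin\{\rho:\mathfrak{w}>\omega^\xi\}$, giving $\sup\{\rho:\mathfrak{w}(\mathfrak{S}_{\mathfrak{s}}(\sigma,y,\rho))>\omega^\xi\}\geqslant\varrho^\xi_X(\sigma,y)$ after letting $\rho\downarrow\varrho^\xi_X(\sigma,y)$.

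For the reverse inequality, suppose $\mathfrak{w}(\mathfrak{S}_{\mathfrak{s}}(\sigma,y,\rho))>\omega^\xi$. By $(iii)\Rightarrow(i)$ of Lemma \ref{prop5} again, for any fixed tree $T$ of rank $\omega^\xi$ there is a normally weakly null singleton collection realizing $\mathfrak{S}_{\mathfrak{s}}(\sigma,y,\rho)$ along every branch; reading it as a weakly null $(x_t)_{t\in T}\subset B_X$, every convex combination $x$ of every branch satisfies $\|y+\sigma x\|\geqslant\rho+1$, i.e. the inner infimum in the definition of $\varrho^\xi_X(\sigma,y)$ over this particular tree is $\geqslant\rho$, so $\varrho^\xi_X(\sigma,y)\geqslant\rho$. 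Taking the supremum over all such $\rho$ yields $\varrho^\xi_X(\sigma,y)\geqslant\sup\{\rho:\mathfrak{w}(\mathfrak{S}_{\mathfrak{s}}(\sigma,y,\rho))>\omega^\xi\}$, and combining the two inequalities finishes the proof.

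The main point requiring care — rather than a genuine obstacle — is the bookkeeping around the perturbation/reduction "it suffices to take the supremum over normally weakly null collections $(x_t)_{t\in\Gamma.D}$ on a fixed tree $\Gamma$ of rank $\omega^\xi$": I need the definition of $\varrho^\xi_X(\sigma,y)$ (a supremum over all weakly null trees of rank $\omega^\xi$) to match exactly the hypothesis shape of Lemma \ref{prop5} (existence of a normally weakly null collection on $T.D$ for some, equivalently every, $T$ of rank $\omega^\xi$). Since singletons are closed under the relevant operations and $\mathfrak{S}(\sigma,y,\rho)$ is already defined on sequences of points, no perturbation is actually lost by passing to singletons; one just has to be explicit that "$(x_t)_{t\in\Gamma.D}$ normally weakly null" corresponds to "$(\{x_t\})_{t\in\Gamma.D}$ normally weakly null" with $\mathfrak{g}=\mathfrak{s}$, so Lemma \ref{prop5} applies verbatim. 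I would also note the degenerate edge cases ($\rho$ so large that $\mathfrak{S}_{\mathfrak{s}}(\sigma,y,\rho)=\{\varnothing\}$, where $\mathfrak{w}=1\leqslant\omega^\xi$ and no branch condition can hold) so that the suprema on both sides range over the same set of $\rho$'s and the closing limit argument is legitimate.
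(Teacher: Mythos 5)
There is a genuine gap: both of your two blocks establish the \emph{same} inequality, $\varrho^\xi_X(\sigma,y)\geqslant\sup\{\rho:\mathfrak{w}(\mathfrak{S}_\mathfrak{s}(\sigma,y,\rho))>\omega^\xi\}$, and the other direction is never proved. In your ``first inequality'' block you show ``$\varrho^\xi_X(\sigma,y)<\rho$ forces $\mathfrak{w}(\mathfrak{S}_\mathfrak{s}(\sigma,y,\rho))\leqslant\omega^\xi$,'' i.e.\ $\mathfrak{w}(\mathfrak{S}_\mathfrak{s}(\sigma,y,\rho))>\omega^\xi\Rightarrow\varrho^\xi_X(\sigma,y)\geqslant\rho$; this says the set $S=\{\rho:\mathfrak{w}>\omega^\xi\}$ is contained in $(-\infty,\varrho^\xi_X(\sigma,y)]$, so $\sup S\leqslant\varrho^\xi_X(\sigma,y)$, not $\geqslant$ as you conclude. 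Your ``reverse inequality'' block is precisely the same implication restated directly, so in total you have $\varrho^\xi_X(\sigma,y)\geqslant\sup S$ twice and $\varrho^\xi_X(\sigma,y)\leqslant\sup S$ never.

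The missing direction is exactly where the perturbation you mention in your final paragraph has to be used, and it cannot be waved away. To prove $\varrho^\xi_X(\sigma,y)\leqslant\sup S$, fix $C<C_1<\varrho^\xi_X(\sigma,y)$; by definition there is a tree $T$ of rank $\omega^\xi$ and a \emph{weakly null} (not normally weakly null) collection $(x_t)_{t\in T}\subset B_X$ with every branch convex combination satisfying $\|y+\sigma x\|\geqslant 1+C_1$. Lemma~\ref{prop5} requires a normally weakly null collection indexed by $\Gamma.D$, so you must pass from $(x_t)_{t\in T}$ to such a collection; this is what Remark~\ref{r2} supplies. Choosing $\ee>0$ with $\ee<C_1-C$ and $\phi:S_{\omega^\xi}.D\to T$ together with $(y_a)_{a\in S_{\omega^\xi}.D}$ normally weakly null and $\|y_a-x_{\phi(a)}\|<\ee/\sigma$, one gets that every branch convex combination of $(y_a)$ still satisfies $\|y+\sigma x\|\geqslant 1+C_1-\ee>1+C$; then Lemma~\ref{prop5} gives $\mathfrak{w}(\mathfrak{S}_\mathfrak{s}(\sigma,y,C))>\omega^\xi$, so $C\in S$, and letting $C\uparrow\varrho^\xi_X(\sigma,y)$ finishes. (One also needs the separate $\sigma=0$ computation, since the $\ee/\sigma$ bound breaks there.) In the direction you did prove, the perturbation remark plays no role, because a normally weakly null collection already \emph{is} a weakly null collection on a tree of rank $\omega^\xi$, so you were invoking it in the one place it is not needed.
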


\begin{proof} For the $\sigma=0$ case, $\varrho^\xi_X(\sigma,y)=\|y\|-1=\sup \{\rho: \mathfrak{w}(\mathfrak{G}_\mathfrak{s}(0,y,\rho))>\omega^\xi\}$.  For the remainder of the proof, we consider the case $\sigma>0$.

Fix $C<C_1<\varrho^\xi_X(\sigma, y)$.  Then there exists a tree $T$ with $\text{rank}(T)=\omega^\xi$ and a weakly null collection $(x_t)_{t\in T}$ of $B_X$ such that for every $t\in T$ and $x\in \text{co}(x_s)_{s\leqslant t}$, $\|y+\sigma x\|\geqslant 1+C_1$. Fix $0<\ee<C_1-C$.     . Recall also that $D=CD(X)$. We recall that by Remark~\ref{r2} there exists    $\phi:S_{\omega^\xi}.D\to T$ and $(y_a)_{a\in S_{\omega^\xi}.D}$ a normally weakly null family $(y_a)_{a\in S_{\omega^\xi}.D}$ of $B_X$   such that for all $a\in S_{\omega^\xi}.D$,    $\|y_a-x_{\phi(a)}\|<\ee/\sigma$   and  $\phi(a|1)<\ldots <\phi(a)$.   From the last two properties, it will follow that for any $a\in S_{\omega^\xi}.D$ and $x=\sum_{b\leqslant a}w_b y_b\in \text{co}(y_b:b\leqslant a)$, since $x'=\sum_{b\leqslant a}w_b x_{\phi(b)}\in \text{co}(x_b: b\leqslant \phi(a))$,  $$\|y+\sigma x\|\geqslant \|y+\sigma x'\|-\sigma\sum_{b\leqslant a}w_b \ee/\sigma \geqslant 1+C_1-\ee\geqslant 1+C.$$   From this it follows that $C\leqslant \sup \{\rho: \mathfrak{w}(\mathfrak{S}_\mathfrak{s}(\sigma,y,\rho))>\omega^\xi\}$.   For $a=(\zeta)\in S_{\omega^\xi}$ and $B_Z\in D$, pick a weak neighborhood $u$ of $0$ in $X$ such that for any $x\in u\cap B_X$, there exists $y\in B_Z$ such that $\|x-y\|<\ee/\sigma$.  Choose $t\in T^\zeta\neq \varnothing$ and let $s=t^-\in \{\varnothing\}\cup T$.    Since $(x_r)_{r\in T}$ is weakly null, there exists $r\in T^\zeta$ such that $r^-=s==t^-$  and $x_r\in u\cap B_X$. Let $y_{(\zeta, B_Z)}\in B_Z$ be such that $\|y_{(\zeta, B_Z)}-x_r\|<\ee/\sigma$ and define $\phi(\zeta, B_Z)=r$. Note that $\phi(\zeta, B_Z)\in T^\zeta$. 

Next, assume that for  some $n>1$ and  $a=(\zeta_i, B_{Z_i})_{i=1}^n\in S_{\omega^\xi}.D$, $\phi(a^-)$ has been chosen. Assume also that $\phi(a^-)\in T^{\zeta_{n-1}}$.   Fix a weak neighborhood $u$ of $0$ in $X$ such that for any $x\in u\cap B_X$, there exists $y\in B_{Z_n}$ such that $\|x-y\|<\ee/\sigma$.   Since $\phi(a^-)\in T^{\zeta_{n-1}}$ and since $\zeta_{n-1}<\zeta_n$, there must exist some $t\in T^{\zeta_n}$ such that $t^-=\phi(a^-)$.  Since $(x_r)_{r\in T}$ is weakly null, there exists $r\in T^{\zeta_n}$ such that $x_r\in u\cap B_X$. Choose $y_a\in B_{Z_n}$ such that $\|y_a-x_r\|<\ee/\sigma$ and let $\phi(a)=r$.  This completes the recursive construction. Clearly the conclusions are satisfied by this construction.

Next, fix $C<\sup \{\rho: \mathfrak{w}(\mathfrak{S}_\mathfrak{s}(\sigma,y,\rho))>\omega^\xi\}$.    By Lemma \ref{prop5}, there exist a tree $T$ with $\text{rank}(T)=\omega^\xi$ and  a normally weakly null collection $(x_a)_{a\in T.D}$ of $B_X$ such that for all $a\in T.D$ and $x\in \text{co}(x_b:b\leqslant a)$, $\|y+\sigma x\| \geqslant 1+C$. Then $\varrho^\xi_X(\sigma, y)\geqslant C$.  This yields that $$\sup \{\rho: \mathfrak{w}(\mathfrak{S}_\mathfrak{s}(\sigma,y,\rho))>\omega^\xi\} \leqslant \varrho^\xi_X(\sigma,y).$$

\end{proof}

We next prove that one can move from singletons to norm compact sets in the preceding proposition.

\begin{lemma} Let $X$ be a Banach space. For $g\in \mathfrak{f}^{<\omega}$, we let $\prod g=\prod_{i=1}^n G_i$ if $g=(G_i)_{i=1}^n$ and $\prod g=\{\varnothing\}$ if $g=\varnothing$. \begin{enumerate}[(i)]
\item Let $S_1, \ldots, S_n$ be hereditary, non-empty,  rooted trees on $B_X$. For each $1\leqslant k\leqslant n$, let $$F_k=\{\varnothing\}\cup \Bigl\{g\in \mathfrak{f}^{<\omega}: S_k\cap \prod g\neq \varnothing\Bigr\}.$$  Then for any ordinal $\zeta$, $$\Bigl(\bigcup_{k=1}^n F_k\Bigr)_w^\zeta \subset \bigcup_{k=1}^n \Bigl\{g\in \mathfrak{f}^{<\omega}: (S_k)_w^\zeta \cap \prod g\neq \varnothing\Bigr\}\Bigr).$$ 
 \item If $\xi$ is an ordinal, $\varnothing\neq H\subset [0,\infty)\times X$ is finite, and $\ee>0$, then $$\mathfrak{w}\Biggl(\bigcup_{(\sigma,y)\in H} \mathfrak{S}_\mathfrak{f}(\sigma,y,\ee+\varrho^\zeta_X(\sigma,y))\Biggr)\leqslant \omega^\xi.$$  
 \item If $\xi$ is an ordinal,  $\varnothing\neq H\subset [0,\infty)\times X$ is compact, and $\ee>0$, then $$\mathfrak{w}\Biggl(\bigcup_{(\sigma,y)\in H} \mathfrak{S}_\mathfrak{c}(\sigma,y,\ee+\varrho^\zeta_X(\sigma,y))\Biggr)\leqslant \omega^\xi.$$    \end{enumerate}
\label{mod2}
\end{lemma}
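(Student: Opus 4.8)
The plan is: prove (i) by transfinite induction on $\zeta$; obtain (ii) by feeding the trees $\mathfrak{S}(\sigma,y,\ee+\varrho^\xi_X(\sigma,y))$ into (i) and invoking Proposition~\ref{mod}; and obtain (iii) from (ii) by a compactness-and-perturbation argument routed through Lemma~\ref{prop5}. (In (ii) and (iii) I read the modulus as $\varrho^\xi_X$. Throughout I identify a hereditary rooted tree $S$ on $B_X$ with the corresponding tree on the singletons $\mathfrak{s}$, so that $(S)_w^\zeta$ makes sense; for $S=\mathfrak{S}(\sigma,y,\rho)$ this identification turns $(S)_w^{\omega^\xi}=\varnothing$ into $\mathfrak{w}(\mathfrak{S}_\mathfrak{s}(\sigma,y,\rho))\le\omega^\xi$.)

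\emph{Part (i).} Induct on $\zeta$. The case $\zeta=0$ is immediate, since $\varnothing\in S_k$ and $\prod\varnothing=\{\varnothing\}$, so $\varnothing$ lies in the right-hand side. If $\zeta$ is a limit, a $g$ in the left side lies in $\bigcup_k\{g:(S_k)_w^\eta\cap\prod g\ne\varnothing\}$ for every $\eta<\zeta$; since $\{1,\dots,n\}$ is finite and the $(S_k)_w^\eta$ are non-increasing in $\eta$, a single $k$ works for all $\eta<\zeta$, and then $(S_k)_w^\zeta\cap\prod g=\bigcap_{\eta<\zeta}((S_k)_w^\eta\cap\prod g)\ne\varnothing$ because $\prod g$ is finite and the sets on the right are non-empty and non-increasing. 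If $\zeta=\eta+1$, first note that heredity of $S_k$ passes to each $(S_k)_w^\eta$ by a routine induction, so $(S_k)_w^\eta$ contains every initial segment of each of its members. Given $g=(G_i)_{i=1}^m$ in the left side, for every $u\in D$ there is $G_u\subset u$ in $\mathfrak{f}$ with $g\smallfrown(G_u)\in(\bigcup_kF_k)_w^\eta$; the inductive hypothesis supplies $k(u)$, a transversal $(x_i^u)_{i=1}^m\in\prod g$, and $z^u\in G_u$ with $(x_1^u,\dots,x_m^u,z^u)\in(S_{k(u)})_w^\eta$. Since $D$ is directed and $\{1,\dots,n\}\times\prod g$ is finite, a single pair $(k,(x_i)_{i=1}^m)$ occurs for a cofinal family of $u\in D$; refining then shows that for every $u\in D$ there is $z\in u$ with $(x_1,\dots,x_m,z)\in(S_k)_w^\eta$, hence $(x_i)_{i=1}^m\in(S_k)_w^{\eta+1}$ and $g\in\{g:(S_k)_w^{\eta+1}\cap\prod g\ne\varnothing\}$.

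\emph{Part (ii).} Write $H=\{(\sigma_j,y_j):1\le j\le m\}$ and put $S_j=\mathfrak{S}(\sigma_j,y_j,\ee+\varrho^\xi_X(\sigma_j,y_j))$; these are non-empty hereditary rooted trees on $B_X$, and the $F_j$ associated to $S_j$ in (i) is exactly $\mathfrak{S}_\mathfrak{f}(\sigma_j,y_j,\ee+\varrho^\xi_X(\sigma_j,y_j))$, so $\bigcup_jF_j$ is precisely the tree in the statement. Since $\rho\mapsto\mathfrak{w}(\mathfrak{S}_\mathfrak{s}(\sigma_j,y_j,\rho))$ is non-increasing and $\ee+\varrho^\xi_X(\sigma_j,y_j)$ strictly exceeds the supremum of Proposition~\ref{mod}, we get $(S_j)_w^{\omega^\xi}=\varnothing$ for each $j$. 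Applying (i) with $\zeta=\omega^\xi$ gives $(\bigcup_jF_j)_w^{\omega^\xi}\subset\bigcup_j\{g:(S_j)_w^{\omega^\xi}\cap\prod g\ne\varnothing\}=\varnothing$, that is, $\mathfrak{w}(\bigcup_jF_j)\le\omega^\xi$.

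\emph{Part (iii).} Let $\Sigma=\sup\{\sigma:(\sigma,y)\in H\}$, finite by compactness of $H$, fix $\delta>0$ with $(\Sigma+4)\delta<\ee/2$, and use compactness of $H$ together with the joint Lipschitz continuity of $\varrho^\xi_X$ to choose a finite $H_0\subset H$ that is $\delta$-dense in $H$ in both coordinates. If the left side of (iii) exceeded $\omega^\xi$, then by Lemma~\ref{prop5} there would be a tree $T$ of rank $\omega^\xi$ and a normally weakly null $(G_a)_{a\in T.D}\subset\mathfrak{c}$ whose every branch lies in $\bigcup_{(\sigma,y)\in H}\mathfrak{S}_\mathfrak{c}(\sigma,y,\ee+\varrho^\xi_X(\sigma,y))$. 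Replace each $G_a$ by a finite $\delta$-net $G_a^\delta\subset G_a$; this keeps the collection normally weakly null, now inside $\mathfrak{f}$. For any branch, pick a witnessing $(\sigma,y)\in H$, a transversal $(x_i)\in\prod_iG_{a|i}$, and $x^*\in B_{X^*}$ with $\mathrm{Re}\,x^*(y+\sigma x_i)\ge 1+\ee+\varrho^\xi_X(\sigma,y)$ for all $i$; replacing each $x_i$ by a $\delta$-close $x_i'\in G_{a|i}^\delta$ and $(\sigma,y)$ by a $\delta$-close $(\sigma',y')\in H_0$, the choice of $\delta$ yields $\mathrm{Re}\,x^*(y'+\sigma'x_i')\ge 1+\tfrac{\ee}{2}+\varrho^\xi_X(\sigma',y')$ for all $i$, so this branch of $(G_a^\delta)$ lies in $\bigcup_{(\sigma',y')\in H_0}\mathfrak{S}_\mathfrak{f}(\sigma',y',\tfrac{\ee}{2}+\varrho^\xi_X(\sigma',y'))$. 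By Lemma~\ref{prop5} this last tree then has $\mathfrak{w}>\omega^\xi$, contradicting (ii) applied to $H_0$ and $\ee/2$. The delicate point — and the main obstacle of the lemma — is exactly this last step: one must shrink the compact index set $H$ and the compact ``labels'' $G_a$ to finite objects simultaneously, while keeping a single error budget $\delta$ valid at every node of the possibly uncountable tree $T.D$; this works because $\sigma$ is bounded on $H$, so one uniform mesh suffices everywhere, and because $\varrho^\xi_X$ is jointly Lipschitz, so passing from $H$ to $H_0$ costs only $O(\delta)$. The cofinality bookkeeping in the successor step of (i) is the only other point needing care, and it becomes routine once one observes that weak derivatives preserve heredity.
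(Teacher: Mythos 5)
Your proof is correct and follows essentially the same route as the paper's: (i) by transfinite induction, (ii) by feeding the trees $\mathfrak{S}(\sigma,y,\cdot)$ into (i) via Proposition~\ref{mod}, and (iii) by simultaneously replacing the compact index set $H$ and the compact labels $G_a$ with finite nets and invoking Lemma~\ref{prop5} and part (ii). Two inessential differences: you treat $\xi=0$ and $\xi>0$ uniformly in (ii)--(iii) where the paper handles $\xi=0$ separately, and in the successor step of (i) you use a cofinality argument over the directed set $D$ (supported by the observation that the weak derivative of a hereditary tree is hereditary) where the paper instead intersects the finitely many finite-codimensional subspaces $Z_{k,t}$ and argues by contradiction — both are correct and amount to the same finiteness trick.
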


\begin{proof} 

$(i)$ We work by induction on $\zeta$.    The $\zeta=0$ case holds by the definition of  the weak derivative.

Assume the result holds for $\zeta$.  Fix $g\in \Bigl(\bigcup_{k=1}^n F_k\Bigr)_w^{\zeta+1}$.   Seeking a contradiction, suppose that for each $1\leqslant k\leqslant n$ and $t\in \prod g$, there exists $B_{Z_{k,t}}\in D$ such that for each $x\in B_{Z_{k,t}}$, $t\smallfrown (x)\notin (S_k)_w^\zeta$.    Then let $Z=\bigcap_{k=1}^n\bigcap_{t\in \prod g}  Z_{k,t}\in D$.   Since $g\in \Bigl(\bigcup_{k=1}^n F_k\Bigr)_w^{\zeta+1}$, there exists $G\subset B_Z$ such that $g\smallfrown (G)\in \Bigl(\bigcup_{k=1}^n F_k\Bigr)_w^\zeta$.   Then there exist $1\leqslant k\leqslant n$ and some $(x_i)_{i=1}^n\smallfrown (x)\in (S_k)_w^\zeta\cap\prod(g\smallfrown (G))$ contradicting our choice of $B_Z\subset B_{Z_{k,t}}$. The existence of such a $k$ and $(x_i)_{i=1}^n\smallfrown (x)$ follows from the definition of the sets $F_i$ in the $\zeta=0$ case, and from the inductive hypothesis in the $\zeta>0$ case.  Therefore there must exist some $1\leqslant k\leqslant n$ and $t\in \prod g$ such that for any $B_Z\in D$, there exists $x\in B_Z$ such that $t\smallfrown (x)\in (S_k)_w^\zeta$.   This yields that $$g\in \Bigl\{h\in \mathfrak{f}^{<\omega}: (S_k)_w^\xi \cap \prod h\neq \varnothing\Bigr\},$$ finishing the successor case.

Assume $\zeta$ is a limit ordinal and the result holds for all ordinals less than $\zeta$.  Fix $g\in \Bigl(\bigcup_{k=1}^n F_k\Bigr)_w^\zeta$.  Seeking a contradiction, suppose that for each $t\in \prod g$ and $1\leqslant k\leqslant n$, there exists $\zeta_{k,t}<\zeta$ such that $t\notin (S_k)_w^{\zeta_{k,t}}$.  Let $$\zeta_0=\max\{\zeta_{k,t}: 1\leqslant k\leqslant n, t\in \prod g\}+1<\zeta.$$ Since $g\in \Bigl(\bigcup_{k=1}^n F_k\Bigr)_w^{\zeta_0}$, the inductive hypothesis yields the existence of $t\in \prod g$ and $1\leqslant k\leqslant n$ such that $t\in (S_k)_w^{\zeta_0}$, contradicting our choice of $\zeta_0$.    Therefore there exist some $t\in \prod g$ and $1\leqslant k\leqslant n$ such that for all $\zeta_0<\zeta$, $t\in (S_k)_w^{\zeta_0}$. From this it follows that $t\in (S_k)_w^\zeta$, and $$g\in \Bigl\{h\in \mathfrak{f}^{<\omega}: (S_k)_w^\xi \cap \prod h\neq \varnothing\Bigr\},$$ finishing the limit ordinal case.

$(ii)$ We prove  the $\xi=0$ case using the characterization of $\varrho^0_X(\sigma,y)$ given by
$$\varrho^0_X(\sigma,y)=\inf\{\sup\{\|y+\sigma x\|-1 : x\in B_Z\}: \dim(X/Z)<\infty\}
$$ 
 For each $(\sigma, y)\in H$, there exists $B_{Z_{\sigma, y}}\in D$ such that for all $x\in B_{Z_{\sigma, y}}$, $\|y+\sigma x\|<1+\ee+\varrho^0_X(\sigma, y)$.    Let $Z=\bigcap_{(\sigma, y)\in H}Z$. Then for any $(\sigma, y)\in H$, any  $G\subset B_Z$, and any $x\in G$, $\|y+\sigma x\|<1+\ee+\varrho^\xi_X(\sigma, y)$. This shows that $$\varnothing\notin \bigcup_{(\sigma, y)\in H}\mathfrak{S}_\mathfrak{f}(\sigma, y, 1+\ee+\varrho^\xi_X(\sigma, y)),$$ from which it follows that   $$\mathfrak{w} \Bigl(\bigcup_{(\sigma, y)\in H}\mathfrak{S}_\mathfrak{f}(\sigma, y, \ee+\varrho^\xi_X(\sigma, y))\Bigr)\leqslant 1=\omega^0.$$

Next we complete the $\xi>0$ case. Note that in this case, $\omega^\xi$ is a limit ordinal. Note also that since a rooted tree is either empty or contains $\varnothing$, it is not possible for $\mathfrak{w}(\mathfrak{G})=\omega^\xi$ whenever $\mathfrak{G}$ is a rooted, hereditary tree.    Therefore by Proposition \ref{mod}, for any $\sigma\geqslant 0$ and $y\in X$, $\mathfrak{w}(\mathfrak{S}(\sigma, y, \ee+\varrho^\xi_X(\sigma,y)))<\omega^\xi$.  Let $$\zeta= \max\{\mathfrak{w}(\mathfrak{S}(\sigma, y, \ee+\varrho^\xi_X(\sigma,y))):(\sigma, y)\in H\}<\omega^\xi.$$  Then by $(i)$, \begin{align*}\Bigl(\bigcup_{(\sigma, y)\in H} \mathfrak{S}_\mathfrak{f}(\sigma, y, \ee+\varrho^\xi_X(\sigma, y))\Bigr)^\zeta_w & \subset \bigcup_{(\sigma, y)\in H}\Bigl\{g\in \mathfrak{f}^{<\omega}:(\mathfrak{S}(\sigma, y, \ee+\varrho^\xi_X(\sigma,y)))_w^\zeta\cap \prod g\neq \varnothing\Bigr\} \\ & = \bigcup_{(\sigma,y)\in H} \Bigl\{g\in\mathfrak{f}^{<\omega}: \varnothing\cap \prod g \neq \varnothing \Bigr\} \\ & = \varnothing.\end{align*}  This completes the $\xi>0$ case. 

$(iii)$ First we prove the $\xi=0$ case. Fix $\ee>0$ and a compact subset $H\subset [0,\infty)\times X$.  Let $H_0\subset H$ be a finite set such that for any $(\sigma,y)\in H$, there exists $(\sigma_0, y_0)\in H_0$ such that $|\sigma-\sigma_0|+\|y-y_0\|<\ee/3$.  Note that for any $\sigma, \sigma_0\geqslant 0$ and $y, y_0\in X$, \[|\varrho^0_X(\sigma,y)-\varrho^0_X(\sigma_0, y_0)| \leqslant |\sigma-\sigma_0|+\|y-y_0\|.\] For each $(\sigma,y)\in H_0$, fix a subspace $Z_{\sigma,y}$  of $X$  such that $\dim (X/Z_{\sigma,y})<\infty$ and \[\sup \{\|y+\sigma x\| : x\in B_{Z_{\sigma,y}}\} <1+\ee/3+\varrho^0_X(\sigma,y).\]  Let $Z=\bigcap_{(\sigma, y)\in H_0} Z_{\sigma, y}$ and note that for each $(\sigma, y)\in H$ and $x\in B_Z$, if $(\sigma_0, y_0)\in H_0$ is such that $|\sigma-\sigma_0|+\|y-y_0\|<\ee/3$, \begin{align*} \|y+\sigma x\|  & \leqslant |\sigma-\sigma_0| + \|y-y_0\|+\|y_0+\sigma x\|  \\ & \leqslant 1+2\ee/3+\varrho^0_X(\sigma_0, y_0) \leqslant 1+2\ee/3 +\varrho^0_X(\sigma, y) + |\sigma-\sigma_0|+\|y-y_0\| \\ &  < 1+\ee+ \varrho^0_X(\sigma,y). \end{align*} From this it follows that $\varnothing\notin \bigcup_{(\sigma,y)\in H} \mathfrak{S}_\mathfrak{c}(\sigma,y,\ee+\varrho^0_X(\sigma,y))$. 

 Fix $R>0$ such that for any $(\sigma, y)\in H$, $|\sigma|\leqslant R$. Fix $\ee>0$ and let $\varphi:\mathfrak{c}\to \mathfrak{f}$ be such that for each $G\in \mathfrak{c}$, $\varphi(G)$ is a finite subset of $G$ such that for each $ y\in G$, there exists $ y_0\in \varphi(G)$  such that $\|y-y_0\|<\ee/4$.  Choose a finite subset $H_0$ of $H$ such that for any $(\sigma, y)\in H$, there exists $(\sigma_0, y_0)\in H_0$ such that $|\sigma-\sigma_0|+\|y-y_0\|<\ee/4R$.   To obtain a contradiction, assume that $\mathfrak{w}\Bigl(\bigcup_{(\sigma, y)\in H}\mathfrak{G}_\mathfrak{c}(\sigma, y, \ee+\varrho^\xi_X(\sigma, y))\Bigr)\geqslant\omega^\xi$.   Since $\omega^\xi$ is a limit ordinal, the inequality here must be strict.  By Lemma \ref{prop5}, there exist a tree $T$ with $\text{rank}(T)=\omega^\xi$ and a normally weakly null collection $(G_a)_{a\in T.D}\subset \mathfrak{c}$ such that for each $a\in T.D$, $(G_b)_{b\leqslant a}\in \bigcup_{(\sigma,y)\in H} \mathfrak{S}_\mathfrak{c}(\sigma, y, \ee+\varrho^\xi_X(\sigma,y))$.  Since $\varphi(G_a)\subset G_a$ for each $a\in T.D$, the collection $(\varphi(G_a))_{a\in T.D}$ is normally weakly null.  We claim that for each $a\in T.D$, $(\varphi(G_b))_{b\leqslant a}\in \bigcup_{(\sigma, y)\in H_0} \mathfrak{S}_\mathfrak{f}(\sigma, y, \ee/4+\varrho^\xi_X(\sigma, y))$, which, combined with Lemma \ref{prop5}, will yield that \[\mathfrak{w}\Bigl(\bigcup_{(\sigma,y)\in H_0} \mathfrak{S}_\mathfrak{f}(\sigma, y, \ee/4+\varrho^\xi_X(\sigma,y))\Bigr)>\omega^\xi.\]  This  inequality will contradict $(ii)$ and finish the proof.   Fix $a\in T.D$ and note that, since $(G_b)_{b\leqslant a}\in \bigcup_{(\sigma, y)\in H}\mathfrak{S}_\mathfrak{c}(\sigma, y, \ee+\varrho^\xi_X(\sigma, y))$, there exist $(\sigma,y)\in H$ and $(x_b)_{b\leqslant a}\in \prod_{b\leqslant a}G_b$ such that for any $x\in \text{co}(x_b: b\leqslant a)$, \[\|y+\sigma x\| \geqslant 1+\ee+\varrho^\xi_X(\sigma, y).\] Fix $(\sigma_0, y_0)\in H_0$ such that $|\sigma-\sigma_0|+\|y-y_0\|<\ee/4R$ and $(x_b^0)_{b\leqslant a}\in \prod_{b\leqslant a}\varphi(G_b)$ such that for each $b\leqslant a$, $\|x_b-x^0_b\|<\ee/4$.  For any $x_0=\sum_{b\leqslant a}w_bx_b^0\in \text{co}(x_b^0: b\leqslant a)$, since $x:=\sum_{b\leqslant a}w_bx_b\in \text{co}(x_b:b\leqslant a)$, \begin{align*} \|y_0+\sigma_0 x_0\| & \geqslant \|y+\sigma x_0\| -|\sigma-\sigma_0|-\|y-y_0\| \geqslant \|y+\sigma x\| - \sigma \sum_{b\leqslant a}w_b\|x_b-x^0_b\| - \ee/4 \\ & \geqslant  1+\ee+\varrho^\xi_X(\sigma,y) - R(\ee/4R) - \ee/4 \\ & \geqslant 1+\ee+\varrho^\xi_X(\sigma_0, y_0)-|\sigma-\sigma_0|-\|y-y_0\| - \ee/3 \\ & \geqslant 1+\ee/4 + \varrho^\xi_X(\sigma_0, y_0).\end{align*} Therefore \begin{align*} (\varphi(G_b))_{b\leqslant a} & \in \mathfrak{S}_\mathfrak{f}(\sigma_0, y_0, 1+\ee/4+\varrho^\xi_X(\sigma_0, y_0)) \\ & \subset \bigcup_{(\sigma',y')\in H_0} \mathfrak{S}_\mathfrak{f}(\sigma',y', 1+\ee/4+\varrho^\xi_X(\sigma',y')),\end{align*} as claimed.

\end{proof}

We are now ready to prove the following simultaneity result. Recall that $CD(X)$ denotes the set of finite codimensional subspaces of $X$. 

\begin{corollary} Let $\xi$ be an ordinal, $X$ a Banach space, let $D=CD(X)$, and let $T$ be any tree with rank $\omega^\xi$. Let $G_\varnothing \subset X$ be norm compact and let $(G_a)_{a\in T.D}\subset \mathfrak{c}$ be a normally weakly null collection.   Then for any $\ee>0$ and $\sigma_1>0$, there exists $a\in \text{\rm MAX}(T.D)$ such that for any $y\in G_\varnothing$, any  $(x_b)_{b\leqslant a}\in \prod_{b\leqslant a} G_b$, and any $\sigma\in \sigma_1 B_\mathbb{K}$, $$\min \{\|y+\sigma x\|: x\in \text{\emph{co}}(x_{a|i}:1\leqslant i\leqslant |a|)\} <1+\ee+\varrho^\xi_X(\sigma, y).$$

\label{cor1}
\end{corollary}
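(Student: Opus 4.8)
The plan is to feed Lemma~\ref{mod2}(iii) into Lemma~\ref{prop5}, after a reduction that upgrades real parameters to scalars in $\sigma_1 B_\mathbb{K}$. Since $T$ has rank $\omega^\xi>0$ it is a nonempty well-founded tree, hence so is $T.D$ (it has the same rank), so $\mathrm{MAX}(T.D)\neq\varnothing$; if $G_\varnothing=\varnothing$ the conclusion is vacuous, so assume $G_\varnothing\neq\varnothing$. I replace each $G_a$ $(a\in T.D)$ by its circled hull $\widetilde{G}_a=\{\lambda x:\lambda\in\mathbb{K},\,|\lambda|=1,\,x\in G_a\}$: this is the continuous image of the compact set $\{\lambda\in\mathbb{K}:|\lambda|=1\}\times G_a$, hence norm compact, and since every $B_Z\in D$ is circled, $(\widetilde{G}_a)_{a\in T.D}$ is again a normally weakly null collection in $\mathfrak{c}$. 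As $G_a\subset\widetilde{G}_a$, proving the conclusion for $(\widetilde{G}_a)$ proves it for $(G_a)$, so I may assume $\lambda G_a=G_a$ for every unimodular $\lambda$. I also record that $\varrho^\xi_X(\sigma,y)=\varrho^\xi_X(|\sigma|,y)$ for every scalar $\sigma$: replacing a weakly null collection $(x_t)_{t\in T}\subset B_X$ by $(\tfrac{\overline{\sigma}}{|\sigma|}x_t)_{t\in T}$ (a bijection of such collections, fixing the index tree) carries the branch convex combinations relevant to $\|y+\sigma\,\cdot\,\|$ onto those relevant to $\|y+|\sigma|\,\cdot\,\|$, since $\sigma\cdot\tfrac{\overline{\sigma}}{|\sigma|}=|\sigma|$.

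\textbf{Combinatorial core.} Put $H=[0,\sigma_1]\times G_\varnothing$, a nonempty compact subset of $[0,\infty)\times X$, and
\[\mathfrak{G}=\bigcup_{(\sigma,y)\in H}\mathfrak{S}_{\mathfrak{c}}\bigl(\sigma,y,\ \ee+\varrho^\xi_X(\sigma,y)\bigr).\]
Each $\mathfrak{S}_{\mathfrak{c}}(\sigma,y,\rho)$ is a rooted, hereditary tree on $\mathfrak{c}$ (a tuple witnessing membership of a sequence restricts to a witness for any subsequence), hence so is $\mathfrak{G}$; so Lemma~\ref{mod2}(iii) applies and gives $\mathfrak{w}(\mathfrak{G})\leqslant\omega^\xi$. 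Now I apply Lemma~\ref{prop5} with ordinal parameter $\omega^\xi$ and $\mathfrak{g}=\mathfrak{c}$: since $\mathfrak{w}(\mathfrak{G})\leqslant\omega^\xi$, clause (iii) of that lemma fails, hence clause (ii) fails; that is, no tree of rank $\omega^\xi$ carries a normally weakly null collection in $\mathfrak{c}$ all of whose initial-segment sequences lie in $\mathfrak{G}$. In particular, for our $T$ the collection $(G_a)_{a\in T.D}$ cannot have all its initial-segment sequences in $\mathfrak{G}$, so there is $a_0\in T.D$ with $(G_{a_0|i})_{i=1}^{|a_0|}\notin\mathfrak{G}$.

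\textbf{From node to leaf, and unpacking.} As $T.D$ is well-founded, $a_0$ extends to some $a\in\mathrm{MAX}(T.D)$ with $a_0\leqslant a$; since $(G_{a_0|i})_{i=1}^{|a_0|}$ is an initial segment of $(G_{a|i})_{i=1}^{|a|}$ and $\mathfrak{G}$ is hereditary, $(G_{a|i})_{i=1}^{|a|}\notin\mathfrak{G}$ too. Unwinding the definition of $\mathfrak{G}$, for every $(\sigma_0,y)\in H$ we have $(G_{a|i})_{i=1}^{|a|}\notin\mathfrak{S}_{\mathfrak{c}}(\sigma_0,y,\ee+\varrho^\xi_X(\sigma_0,y))$, which by the Hahn--Banach description of $\mathfrak{S}_{\mathfrak{c}}$ recorded before Proposition~\ref{mod} is exactly the statement that for every tuple $(x_{a|i})_{i=1}^{|a|}\in\prod_{i=1}^{|a|}G_{a|i}$,
\[\min\bigl\{\|y+\sigma_0 x\|:x\in\mathrm{co}(x_{a|i}:1\leqslant i\leqslant|a|)\bigr\}<1+\ee+\varrho^\xi_X(\sigma_0,y).\]
Finally, fix $y\in G_\varnothing$, a tuple $(x_b)_{b\leqslant a}\in\prod_{b\leqslant a}G_b$ and $\sigma\in\sigma_1 B_\mathbb{K}$. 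If $\sigma=0$ the asserted inequality reads $\|y\|<\ee+\|y\|$. If $\sigma\neq 0$, write $\sigma=|\sigma|\lambda$ with $|\lambda|=1$; by circledness $(\lambda x_{a|i})_{i=1}^{|a|}\in\prod_{i=1}^{|a|}G_{a|i}$, so applying the displayed inequality with $\sigma_0=|\sigma|\in[0,\sigma_1]$ to this tuple and using $\mathrm{co}(\lambda x_{a|i}:i)=\lambda\,\mathrm{co}(x_{a|i}:i)$ together with $\varrho^\xi_X(|\sigma|,y)=\varrho^\xi_X(\sigma,y)$ gives $\min\{\|y+\sigma x\|:x\in\mathrm{co}(x_{a|i}:i)\}<1+\ee+\varrho^\xi_X(\sigma,y)$.

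\textbf{Main obstacle.} The genuine combinatorics is already in Lemmas~\ref{mod2} and~\ref{prop5}; here the two delicate points are the passage to circled sets --- which is what lets a single leaf $a$ serve all of $\sigma_1 B_\mathbb{K}$ rather than only $[0,\sigma_1]$ --- and the step extracting a bad node on the \emph{given} tree $T$ from the failure of clause (ii) of Lemma~\ref{prop5}, which is then promoted to a maximal node using the heredity of $\mathfrak{G}$.
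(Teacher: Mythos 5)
Your proof is correct and is essentially the paper's argument: both pass to balanced/circled sets $G_a$ ($a\in T.D$), form $H=[0,\sigma_1]\times G_\varnothing$, and combine Lemma~\ref{mod2}(iii) with Lemma~\ref{prop5}, finishing with circledness to pass from $\sigma\in[0,\sigma_1]$ to $\sigma\in\sigma_1 B_{\mathbb K}$. The paper runs the same logic as a proof by contradiction from the assumption that every leaf admits a bad choice (using heredity of $\mathfrak{G}$ to pass from leaves to all nodes), whereas you argue forward, extracting a bad node and promoting it to a leaf; this is a purely stylistic difference.
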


\begin{proof}  By replacing $G_\varnothing$ with its balanced hull, we can assume that $G_\varnothing$ is balanced. We can also replace each set $G_a$, $a\in T.D$, with its balanced hull, noting that the resulting sets still form a normally weakly null collection.  

  Seeking a contradiction, assume $T$ is a tree with rank $\omega^\xi$, assume $(G_a)_{a\in T.D}\subset \mathfrak{c}$ is normally weakly null and $\ee>0$ are such that for every $a\in MAX(T.D)$, there exist $y\in G_\varnothing$, $\sigma\in [0, \sigma_1]$, and $(x_b)_{b\leqslant a}\in \prod_{b\leqslant a}G_b$	such that for every $x\in \text{co}(x_b: b\leqslant a)$, $\|y+\sigma x\| \geqslant 1+\ee+\varrho^\xi_X(\sigma, y)$.  Then \begin{align*}(G_b)_{b\leqslant a} & \in \bigcup_{(\sigma,y)\in [0,\sigma_1]\times G_\varnothing} \mathfrak{S}_\mathfrak{c}(\sigma,y,1+\ee+\varrho^\xi_X(\sigma,y)). \end{align*}  By Lemma \ref{prop5}, \[\mathfrak{w}\Bigl(\bigcup_{(\sigma,y)\in [0,\sigma_1]\times G_\varnothing} \mathfrak{S}_\mathfrak{c}(\sigma,y,1+\ee+\varrho^\xi_X(\sigma,y))\Bigr)>\omega^\xi,\] which contradicts Proposition \ref{mod}$(iii)$ with $H=[0,\sigma_1]\times G_\varnothing$. From this it follows that there exists $a\in T.D$ such that for every $y\in G_\varnothing$, every $(x_b)_{b\leqslant a}\in \prod_{b\leqslant a}G_b$, and every $\sigma\in [0,\sigma_1]$,  there exists $x\in \text{co}(x_b:b\leqslant a)$ such that $\|y+\sigma x\|< 1+\ee +\varrho^\xi_X(\sigma, y)$. Since each of the sets $G_b$, $\varnothing\leqslant b\leqslant a$, is balanced, the same inequality holds for any $\sigma\in \sigma_1 B_\mathbb{K}$  with $\varrho^\xi_X(\sigma, x_0)$ replaced by $\varrho^\xi_X(|\sigma|, x_0)$.

\end{proof}

\section{Some special trees}

Note that in the context of Corollary \ref{cor1}, given a compact $G_\varnothing\subset X$ and a normally weakly null collection $(G_a)_{a\in T.D}$, for any $\sigma_1>0$,  we obtained a single branch $(G_b)_{b\leqslant a}$ such that for any $y\in G_\varnothing$, any $(x_b)_{b\leqslant a}\in \prod_{b\leqslant a}G_b$, and any $\sigma$ with $|\sigma|\leqslant \sigma_1$, there is a convex combination $x$ of $(x_b)_{b\leqslant a}$ such that $\|y+\sigma x\|<1+\ee+\varrho^\xi_X(\sigma, y)$. However, based on this result, the convex coefficients of the  convex combination $x$ could be different for different choices of $y$, $\sigma$, and $(x_b)_{b\leqslant a}$. As we will see later, for  our purposes, we will want the convex coefficients not to depend on these choices. Achieving this result is the content of this section.

If $t=(\zeta_i)_{i=1}^n$ is a sequence of ordinals and if $\zeta$ is an ordinal, we define $\zeta+t=(\zeta+\zeta_i)_{i=1}^n$. We make the same convention for an infinite sequence of ordinals.    If $T$ is a tree on some set $[0, \gamma]$ of ordinals, we define $\zeta+T=\{\zeta+t: t\in T\}$.  We now define for each ordinal $\xi$ and $n\in\nn$ a particular tree $\Gamma_{\xi,n}$ which will play an important role in our later results.  We will also define for each such $\xi$ and $n$ a function $\mathbb{P}_{\xi,n}:\Gamma_{\xi,n}\to [0,1]$.  We will also define some special subsets of $\Gamma_{\xi,n}$, called the \emph{levels} of $\Gamma_{\xi,n}$, denoted by $\Lambda_{\xi,n,1}, \ldots, \Lambda_{\xi,n,n}$.    We will also define some ill-founded analogues of these trees, $\Gamma_{\xi,\infty}$, with levels $\Lambda_{\xi,\infty,1}, \Lambda_{\xi,\infty, 2}, \ldots$, and corresponding functions $\mathbb{P}_{\xi,\infty}:\Gamma_{\xi,\infty}\to [0,1]$.   

We define $$\Gamma_{0,1}=\{(0)\},$$ the tree consisting of a single node, $(0)$. We define $\mathbb{P}_{0,1}((0))=1$.

Next, assume that for some $n\in\nn$ and each $1\leqslant k\leqslant n$, $\Gamma_{\xi,k}$ and $\Lambda_{\xi, k, 1}, \ldots, \Lambda_{\xi,k,k}$ have been defined. Suppose also that $\Gamma_{\xi,k}$ is a tree on $[0, \omega^\xi k)$.   Let $$\Lambda_{\xi, n+1, 1}=\{\omega^\xi n+t: t\in \Gamma_{\xi,1}\}$$ and for $1\leqslant i\leqslant n$, let $$\Lambda_{\xi, n+1, i+1}=\{s\smallfrown t: s\in MAX(\Lambda_{\xi,n+1,1}), t\in \Lambda_{\xi,n,i}\}.$$  Note that $t\leftrightarrow \omega^\xi n+t$ is a bijection of $\Gamma_{\xi,1}$ with $\Lambda_{\xi,n+1,1}$, and $\Lambda_{\xi,n+1,1}$ is a tree on $[\omega^\xi n, \omega^\xi(n+1))$.     We define $\mathbb{P}_{\xi,n+1}$ on $\Lambda_{\xi,n+1,1}$ by letting $\mathbb{P}_{\xi,n+1}(\omega^\xi n+t)=\mathbb{P}_{\xi,1}(t)$.   Note also that for each $s\in MAX(\Lambda_{\xi,n+1, 1})$, $t\leftrightarrow s\smallfrown t$ is a bijection of $\Lambda_{\xi,n+1,i}$ with $\{t\in \Lambda_{\xi, n+1, i+1}: s<t\}$.    We define $\mathbb{P}_{\xi,n+1}$ on $\Lambda_{\xi,n+1, i+1}$ by letting $$\mathbb{P}_{\xi,n+1}(s\smallfrown t)=\mathbb{P}_{\xi,n}(t),$$ where $s\smallfrown t$ is the unique representation of a member of $\Lambda_{\xi, n+1, i+1}$ as a concatenation of a member $s$ of $MAX(\Lambda_{\xi,n+1, 1})$ and a member $t$ of $\Lambda_{\xi, n,i}$.    

Next assume that for some $\xi$ and each $n\in\nn$, $\Gamma_{\xi, n}$, $\mathbb{P}_{\xi,n}$ have been defined.  Let $\Lambda_{\xi+1, 1,1}=\Gamma_{\xi+1, 1}=\cup_{n=1}^\infty \Gamma_{\xi, n}$ and define $\mathbb{P}_{\xi+1, 1}|_{\Gamma_{\xi,n}}=\frac{1}{n} \mathbb{P}_{\xi,n}$. Note that the union $\Gamma_{\xi+1, 1}=\cup_{n=1}^\infty \Gamma_{\xi,n}$ is a totally incomparable union, since any sequence $(\zeta_i)_{i=1}^k\in\Gamma_{\xi,n}$ satisfies $\omega^\xi(n-1)\leqslant \zeta_1<\omega^\xi n$.

For a limit ordinal $\xi$, if $\Gamma_{\zeta+1, 1}$ has been defined for each $\zeta<\xi$, we let $$\Lambda_{\xi,1,1}=\Gamma_{\xi,1}=\bigcap_{\zeta<\xi} (\omega^\zeta +\Gamma_{\zeta+1, 1}).$$  This is a totally incomparable union, since for each $\zeta<\xi$ and $(\zeta_i)_{i=1}^k\in \omega^\zeta+\Gamma_{\zeta+1, 1}$, $\omega^\zeta \leqslant \zeta_1< \omega^{\zeta+1}$.   Moreover, we define $\mathbb{P}_{\xi,1}|_{\omega^\zeta+\Gamma_{\zeta+1,1}}$ by $$\mathbb{P}_{\xi,1}(\omega^\zeta+t)=\mathbb{P}_{\zeta+1,1}(t).$$

We also define $\Lambda_{\xi,\infty,1}, \Lambda_{\xi,\infty, 2}, \ldots$ by letting $$\Lambda_{\xi,\infty,1}=\Gamma_{\xi,1}$$ and, if $\Lambda_{\xi,\infty, 1}, \ldots, \Lambda_{\xi,\infty, i}$ have been defined, letting  \begin{align*} \Lambda_{\xi,\infty, i+1} & = \{s\smallfrown (\omega^\xi i+t): s\in MAX(\Lambda_{\xi,\infty, i}), t\in \Gamma_{\xi,1}\} \end{align*}. We note that $\Lambda_{\xi,\infty,i+1}$ admits the alternative description \begin{align*} \Lambda_{\xi,\infty,i+1} = \{s_1\smallfrown (\omega^\xi+s_2)\smallfrown \ldots \smallfrown (\omega^\xi(i-1)+s_i)\smallfrown (\omega^\xi i+s):& s\in \Gamma_{\xi,1}, \\ &   s_1, \ldots, s_i\in MAX(\Gamma_{\xi,1})\}.\end{align*} Note that such a representation $s_1\smallfrown(\omega^\xi+s_2)\smallfrown \ldots \smallfrown (\omega^\xi i +s)$, $s_1, \ldots, s_i\in MAX(\Gamma_{\xi,1})$, $s\in \Gamma_{\xi,1}$, is unique. We define $$\mathbb{P}_{\xi,\infty}(s_1\smallfrown(\omega^\xi+s_2)\smallfrown \ldots \smallfrown (\omega^\xi +s))=\mathbb{P}_{\xi,1}(s).$$   We let $\Gamma_{\xi,\infty}=\cup_{i=1}^\infty\Lambda_{\xi,\infty, i}$ and note that this is a disjoint union.    We note also that $\Gamma_{\xi,\infty}$ is a pruned tree, and a member $\tau$ of $[\Gamma_{\xi,\infty}]$ is uniquely representable as $$\tau=s_1\smallfrown (\omega^\xi+s_2)\smallfrown (\omega^\xi 2+s_3)\smallfrown \ldots,$$ where $s_1, s_2, \ldots\in MAX(\Gamma_{\xi,1})$.   Moreover, $\Lambda_{\xi,\infty,1}=\Gamma_{\xi,1}$ and for any $i\in\nn$ and any $s\in MAX(\Lambda_{\xi,\infty, i})$, $t\leftrightarrow s\smallfrown (\omega^\xi i+t)$ is a bijection of $\Gamma_{\xi,\infty}$ with $\{t\in \Gamma_{\xi,\infty}: s<t\}$ which identifies $\Lambda_{\xi,\infty,j}$ with $\{t\in \Lambda_{\xi,\infty, i+j}: s<t\}$ for each $j\in\nn$, and  which satisfies $\mathbb{P}_{\xi,\infty}(s\smallfrown (\omega^\xi i+t))=\mathbb{P}_{\xi,1}(t)$ for all $t\in \Gamma_{\xi,1}$.  Therefore for such an $s$, $\{t\in \Lambda_{\xi,\infty, i+1}: s<t\}$ is naturally identifiable with $\Gamma_{\xi,1}$ in a way which equates values of $\mathbb{P}_{\xi,\infty}$ and $\mathbb{P}_{\xi,1}$.  Similarly, for any $i\in\nn$, any $s\in MAX(\Lambda_{\xi,\infty,i})$, the map $\tau\leftrightarrow s\smallfrown(\omega^\xi i+\tau)$ is a bijection of $[\Gamma_{\xi,\infty}]$ with $\{\tau\in [\Gamma_{\xi,\infty}]: s<\tau\}$.      We will use these natural identifications often in the sequel.   

For $t\in \Lambda_{\xi,\infty,i}$, we define $\lambda_0(t), \ldots, \lambda_{i-1}(t)$ by letting $\lambda_0(t)=\varnothing$ and $\lambda_j(t)$ be the initial segment of $t$ such that $\lambda_j(t)\in MAX(\Lambda_{\xi,\infty, j})$.    For $\tau\in [\Gamma_{\xi,\infty}]$, we define $\lambda_0(\tau), \lambda_1(\tau), \ldots$ similarly.

We also use the notations and analogous identifications above for the trees $\Gamma_{\xi,1}.D$ and $\Gamma_{\xi,\infty}.D$. By an abuse of notation, $\mathbb{P}_{\xi,1}$ (resp. $\mathbb{P}_{\xi,\infty}$) will denote the function defined on $\Gamma_{\xi,1}$ (resp. $\Gamma_{\xi,\infty}$) as well as $\Gamma_{\xi,1}.D$ (resp. $\Gamma_{\xi,\infty}.D$) defined by $\mathbb{P}_{\xi,1}(t.v)=\mathbb{P}_{\xi,1}(t)$ for $t.v\in \Gamma_{\xi,1}.D$ (resp. $\mathbb{P}_{\xi,\infty}(t.v)=\mathbb{P}_{\xi,\infty}(t)$ for $t\in \Gamma_{\xi,\infty}.D$).    The functions $\lambda_i$ will also be defined on subsets of $\Gamma_{\xi,\infty}.D\cup [\Gamma_{\xi,\infty}].D$ in the analogous way.

We collect the following obvious facts regarding these constructions, which indicate the connection between our functions defined above and convex combinations. For a more thorough discussion of $\Gamma_{\xi,1}$ and $\Gamma_{\xi,\infty}$, see \cite{Causey.5} and \cite{Causey1}.

\begin{proposition} Let $\xi$ be an ordinal. \begin{enumerate}[(i)]\item For each $t\in MAX(\Gamma_{\xi,1})$, $\sum_{s\leqslant t}\mathbb{P}_{\xi,1}(s)=1$. \item For each $\tau\in [\Gamma_{\xi,\infty}]$ and each $n\in\nn$, $\sum_{\Lambda_{\xi,\infty,n}\ni s<\tau} \mathbb{P}_{\xi,\infty}(s)=\sum_{\lambda_{n-1}(\tau)<t\leqslant \lambda_n(\tau)}\mathbb{P}_{\xi,\infty}(t) =1$. \end{enumerate}

\label{co}
\end{proposition}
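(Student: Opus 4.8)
The plan is to prove both parts by induction on $\xi$, carefully unwinding the recursive definitions of $\Gamma_{\xi,1}$, $\Gamma_{\xi,\infty}$, and the weight functions $\mathbb{P}_{\xi,1}$, $\mathbb{P}_{\xi,\infty}$, and using the natural identifications established just before the statement. I expect the main obstacle to be bookkeeping the various bijections (the tree $\Gamma_{\xi,\infty}$ is built by splicing copies of $\Gamma_{\xi,1}$ along maximal branches, and $\mathbb{P}_{\xi,\infty}$ only remembers the $\Gamma_{\xi,1}$-coordinate past the last $\lambda$-cut), rather than any genuine conceptual difficulty; once the identifications are set up correctly, each step is a routine sum manipulation.

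For part (i), I would prove by induction on $\xi$ the slightly stronger statement: for every $n\in\nn$ and every $t\in MAX(\Gamma_{\xi,n})$, $\sum_{s\leqslant t}\mathbb{P}_{\xi,n}(s)=1$. \textbf{Base case} $\xi=0$: here $\Gamma_{0,1}=\{(0)\}$, $\mathbb{P}_{0,1}((0))=1$, and one checks the recursion produces $\Gamma_{0,n}$ with $\mathbb{P}_{0,n}$ summing to $1$ along each maximal branch directly from $\Lambda_{0,n+1,1}$ (a single node with weight $1$ contributing nothing new since the levels reuse $\mathbb{P}_{\xi,n}$). More precisely, the inductive step on $n$ within a fixed $\xi$: a maximal branch of $\Gamma_{\xi,n+1}$ passes through exactly one $s\in MAX(\Lambda_{\xi,n+1,1})$ and then continues as a maximal branch of $\Gamma_{\xi,n}$ (via the bijection $t\leftrightarrow s\smallfrown t$), so the total weight is $\sum_{r\leqslant s}\mathbb{P}_{\xi,1}(r)$ (after the translation identifying $\Lambda_{\xi,n+1,1}$ with $\Gamma_{\xi,1}$, using $\mathbb{P}_{\xi,n+1}(\omega^\xi n+t)=\mathbb{P}_{\xi,1}(t)$) plus $\sum_{r\leqslant t}\mathbb{P}_{\xi,n}(r)$. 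Wait — I need to be careful: the definition sets $\mathbb{P}_{\xi,n+1}(s\smallfrown t)=\mathbb{P}_{\xi,n}(t)$ on $\Lambda_{\xi,n+1,i+1}$, so the initial segment $s$ is \emph{not} re-weighted. Re-examining: the weight of a maximal branch of $\Gamma_{\xi,n+1}$ is $\mathbb{P}_{\xi,1}$-mass of the $\Lambda_{\xi,n+1,1}$-part plus $\mathbb{P}_{\xi,n}$-mass of the continuation, i.e. $1+1=2$?? This forces me to instead read the construction as: $\Gamma_{\xi,n+1}$ does \emph{not} include $\Lambda_{\xi,n+1,1}$ as a literal prefix but rather identifies things so that a maximal branch meets the levels $\Lambda_{\xi,n+1,1},\dots,\Lambda_{\xi,n+1,n+1}$ and $\mathbb{P}_{\xi,n+1}$ is $\frac1{n+1}$ times a "renormalized" sum — this is exactly where I must trace the $\xi+1$-successor definition $\mathbb{P}_{\xi+1,1}|_{\Gamma_{\xi,n}}=\frac1n\mathbb{P}_{\xi,n}$ and Proposition~\ref{co}(i) at level $\xi$ gives mass $1$ on $\Gamma_{\xi,n}$, hence mass $\frac1n\cdot$(number of levels)$=\frac1n\cdot n=1$ once we recall a maximal branch of $\Gamma_{\xi,n}$ passes through each of the $n$ levels $\Lambda_{\xi,n,1},\dots,\Lambda_{\xi,n,n}$ with $\mathbb{P}_{\xi,n}$-mass $1$ on each. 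So the correct inductive invariant to carry is: \emph{a maximal branch of $\Gamma_{\xi,n}$ has $\mathbb{P}_{\xi,n}$-mass $1$ restricted to each of its $n$ levels, hence total mass $n$} — no, that contradicts (i). The resolution (and this is the real content) is that $\mathbb{P}_{\xi,n}$ on level $\Lambda_{\xi,n,j}$ is a copy of $\mathbb{P}_{\xi,1}$ scaled appropriately, and summing over all $n$ levels gives exactly $1$ because the $\xi$-successor step inserted the factor $\frac1n$; I would make this precise by proving simultaneously with (i) the auxiliary claim that for $t\in MAX(\Lambda_{\xi,n,j})$, the $\mathbb{P}_{\xi,n}$-mass of $\{s: \lambda_{j-1}(t)<s\leqslant t\}$ equals $\frac1n$, and then (i) follows by summing $j=1,\dots,n$.

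For part (ii), I would argue by the natural identification: fix $\tau\in[\Gamma_{\xi,\infty}]$ and $n\in\nn$. The portion of $\tau$ strictly between $\lambda_{n-1}(\tau)$ and $\lambda_n(\tau)$ is, under the bijection $t\leftrightarrow \lambda_{n-1}(\tau)\smallfrown(\omega^\xi(n-1)+t)$, exactly a maximal branch of $\Gamma_{\xi,1}$ (namely $\lambda_n(\tau)$ corresponds to an element of $MAX(\Gamma_{\xi,1})$), and $\mathbb{P}_{\xi,\infty}$ on this portion equals $\mathbb{P}_{\xi,1}$ on the corresponding branch of $\Gamma_{\xi,1}$ by the displayed compatibility $\mathbb{P}_{\xi,\infty}(s\smallfrown(\omega^\xi i+t))=\mathbb{P}_{\xi,1}(t)$. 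Hence $\sum_{\lambda_{n-1}(\tau)<t\leqslant\lambda_n(\tau)}\mathbb{P}_{\xi,\infty}(t)=\sum_{r\leqslant r_0}\mathbb{P}_{\xi,1}(r)$ for the corresponding $r_0\in MAX(\Gamma_{\xi,1})$, which is $1$ by part (i). The equality $\sum_{\Lambda_{\xi,\infty,n}\ni s<\tau}\mathbb{P}_{\xi,\infty}(s)=\sum_{\lambda_{n-1}(\tau)<t\leqslant\lambda_n(\tau)}\mathbb{P}_{\xi,\infty}(t)$ is immediate from the definitions of the functions $\lambda_j$ and the level sets $\Lambda_{\xi,\infty,n}$: an element $s<\tau$ lies in $\Lambda_{\xi,\infty,n}$ iff $\lambda_{n-1}(\tau)<s\leqslant\lambda_n(\tau)$. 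Both displayed sums are thus equal to $1$, completing the proof. The one genuinely delicate point throughout is making sure the scaling factors (the $\frac1n$ in the $\xi$-successor definition and the translations by $\omega^\xi j$) are tracked so that part (i) comes out to $1$ and not to $n$; everything else is a direct unwinding of definitions.
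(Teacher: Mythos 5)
The paper offers no proof of Proposition~\ref{co} (it is labelled an ``obvious fact''), so there is no argument to compare against; instead I will assess the proposal on its own terms. The plan is the right one---trace the recursion, use the identifications, and invoke the $\tfrac1n$ factor at the $\xi\to\xi+1$ transition---but you never pin down the correct inner inductive invariant, and the one you finally propose is false.

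You begin by announcing the ``slightly stronger'' claim that for every $n$ and every $t\in MAX(\Gamma_{\xi,n})$ one has $\sum_{s\leqslant t}\mathbb{P}_{\xi,n}(s)=1$. That claim is wrong: the correct value is $n$. Your own calculation shows this: for $u=s\smallfrown t\in MAX(\Gamma_{\xi,n+1})$ (with $s\in MAX(\Lambda_{\xi,n+1,1})$ and $t\in MAX(\Gamma_{\xi,n})$),
\begin{align*}
\sum_{\varnothing<r\leqslant u}\mathbb{P}_{\xi,n+1}(r)
&= \sum_{\varnothing<r\leqslant s}\mathbb{P}_{\xi,n+1}(r)+\sum_{\varnothing<t'\leqslant t}\mathbb{P}_{\xi,n+1}(s\smallfrown t')\\
&= \sum_{\varnothing<r'\leqslant s'}\mathbb{P}_{\xi,1}(r')+\sum_{\varnothing<t'\leqslant t}\mathbb{P}_{\xi,n}(t') = 1 + n,
\end{align*}
where you use part (i) for $\Gamma_{\xi,1}$ on the first sum and the inner induction on $n$ on the second. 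You compute ``$1+1=2$'' precisely because you have plugged in the false value $1$ for the second sum, and you then mistake this for a contradiction with (i). It is not: statement (i) asserts the sum is $1$ only over $\Gamma_{\xi,1}$, the case $n=1$; it says nothing about $\Gamma_{\xi,n}$ for $n\geqslant 2$. There is no conflict in the mass of a maximal branch of $\Gamma_{\xi,n}$ being $n$, and indeed that is exactly what makes the definition $\mathbb{P}_{\xi+1,1}|_{\Gamma_{\xi,n}}=\tfrac1n\mathbb{P}_{\xi,n}$ yield $1$ on maximal branches of $\Gamma_{\xi+1,1}=\bigcup_n\Gamma_{\xi,n}$. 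The ``auxiliary claim'' you finally settle on---that the $\mathbb{P}_{\xi,n}$-mass between two consecutive cuts $\lambda_{j-1}(t),\lambda_j(t)$ is $\tfrac1n$---is also false; the correct statement is that each such level carries $\mathbb{P}_{\xi,n}$-mass exactly $1$ (this is the auxiliary claim that should be carried along and is what makes the $n$ contributions add to $n$). So the fix is small but essential: prove by a double induction (outer on $\xi$, inner on $n$) that a maximal branch of $\Gamma_{\xi,n}$ has $\mathbb{P}_{\xi,n}$-mass $1$ on each of its $n$ levels, hence total mass $n$; then part (i) for $\Gamma_{\xi+1,1}$ follows from the $\tfrac1n$ factor, and the limit case is the totally incomparable union $\Gamma_{\xi,1}=\bigcup_{\zeta<\xi}(\omega^\zeta+\Gamma_{\zeta+1,1})$. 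Your argument for part (ii) is correct as stated: the identification $t\leftrightarrow\lambda_{n-1}(\tau)\smallfrown(\omega^\xi(n-1)+t)$ together with the compatibility $\mathbb{P}_{\xi,\infty}(s\smallfrown(\omega^\xi i+t))=\mathbb{P}_{\xi,1}(t)$ reduces (ii) to (i), and the equality of the two displayed sums is immediate from the definition of the $\lambda_j$.
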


Next, we prove an improved simultaneity result. 

\begin{lemma} Let $\xi$ be an ordinal and  $X$ a Banach space. Let $G_\varnothing \subset X$ be norm compact and let $(G_a)_{a\in \Gamma_{\xi,1}.D}\subset \mathfrak{c}$ be normally weakly null.   Then for any $\ee>0$ and $\sigma_1>0$, there exists $a\in MAX(\Gamma_{\xi,1}.D)$ such that for all $y\in G_\varnothing$, all $\sigma$ with $|\sigma|\leqslant \sigma_1$, and all $(x_b)_{b\leqslant a}\in \prod_{b\leqslant a}G_b$,  $$\Bigl\|y+\sigma \sum_{b\leqslant a} \mathbb{P}_{\xi,1}(b)x_b\Bigr\|<1+\ee+\varrho^\xi_X(\sigma, y).$$  

\label{bost}

\end{lemma}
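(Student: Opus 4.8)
The plan is to prove Lemma~\ref{bost} by transfinite induction on $\xi$, exploiting the recursive self-similar structure of $\Gamma_{\xi,1}$ together with Corollary~\ref{cor1} as the engine at each stage. The base case $\xi=0$ is essentially Corollary~\ref{cor1} itself: $\Gamma_{0,1}=\{(0)\}$, $MAX(\Gamma_{0,1}.D)$ consists of the one-node branches $(0,u)$, $\mathbb{P}_{0,1}((0))=1$, and so the convex combination in the statement is just $x_{(0,u)}$ itself; applying Corollary~\ref{cor1} to the tree $T=\Gamma_{0,1}$ (which has rank $\omega^0=1$) directly gives the conclusion.

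The successor and limit steps both reduce to the same idea. Recall from the definitions in Section~4 that $\Gamma_{\xi+1,1}=\bigcup_{n=1}^\infty\Gamma_{\xi,n}$ (a totally incomparable union) with $\mathbb{P}_{\xi+1,1}|_{\Gamma_{\xi,n}}=\tfrac1n\mathbb{P}_{\xi,n}$, and that $\Gamma_{\xi,n}$ is itself built by stacking $n$ copies of $\Gamma_{\xi,1}$ along its levels $\Lambda_{\xi,n,1},\dots,\Lambda_{\xi,n,n}$ with $\mathbb{P}_{\xi,n}$ restricting to a copy of $\mathbb{P}_{\xi,1}$ on each level. So the real content is: given the $\xi$-case, handle $\Gamma_{\xi,n}$. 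I would prove by an inner induction on $n$ (inside the $\xi$-induction) the statement "for $\Gamma_{\xi,n}$ with the averaging weight $\tfrac1n\mathbb{P}_{\xi,n}$, one can find a single branch along which, for all $y$ in a prescribed compact set, all $\sigma$ with $|\sigma|\le\sigma_1$, and all choices $(x_b)$, the averaged convex combination $\tfrac1n\sum_{b\le a}\mathbb{P}_{\xi,n}(b)x_b$ keeps $\|y+\sigma(\cdot)\|$ below $1+\ee+\varrho^\xi_X(\sigma,y)$." For $n=1$ this is the induction hypothesis on $\xi$. For the step from $n$ to $n+1$: the first level $\Lambda_{\xi,n+1,1}$ is a copy of $\Gamma_{\xi,1}$, so apply the $\xi$-case (with $\ee/2$, say, and the balanced compact set $G_\varnothing$) to pick a branch $s\in MAX(\Lambda_{\xi,n+1,1}.D)$ producing a point $z:=\sum_{b\le s}\mathbb{P}_{\xi,1}(b)x_b$ with $\|y+\tfrac{\sigma}{n+1}z\|$ small; then, using the identification of $\{t>s\}$ with $\Gamma_{\xi,n}$, apply the inner induction hypothesis for $n$ to the normally weakly null collection $(G_{s\smallfrown c})_c$ but now with the compact set of "new centers" $\{y+\tfrac{\sigma}{n+1}z\}$ ranging over all $y\in G_\varnothing$, all admissible $\sigma$, and all $(x_b)_{b\le s}\in\prod_{b\le s}G_b$ — this set is compact because $G_\varnothing$, the $G_b$'s, and $[0,\sigma_1]$ are compact and the map is continuous — and with the scaling $\tfrac{n}{n+1}\sigma$ in the smoothness modulus. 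The weight bookkeeping works: $\tfrac{1}{n+1}\mathbb{P}_{\xi,n+1}$ restricted to the tail levels is $\tfrac{1}{n+1}\mathbb{P}_{\xi,n}$, which matches $\tfrac{1}{n+1}\cdot$ (the $\mathbb{P}_{\xi,n}$-weight) $= \tfrac{n}{n+1}\cdot\tfrac1n\mathbb{P}_{\xi,n}$, i.e.\ the $\tfrac{n}{n+1}$-scaled version of the $n$-th averaged weight. One then invokes the $1$-Lipschitz continuity of $\varrho^\xi_X(\cdot,y)$ in $\sigma$ and of $\varrho^\xi_X(\sigma,\cdot)$ in $y$ to absorb the difference between $\varrho^\xi_X(\sigma,y)$ and $\varrho^\xi_X(\tfrac{n}{n+1}\sigma, y+\tfrac{\sigma}{n+1}z)$ into the $\ee$; note $\|z\|\le 1$ since it's a convex combination of unit-ball elements, so the shift is controlled by $\sigma_1/(n+1)$, and one chooses $\sigma_1$-dependent error budgets accordingly (or, more cleanly, absorbs the $\varrho$ error separately). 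Finally, to pass from $\bigcup_n\Gamma_{\xi,n}$ back to $\Gamma_{\xi+1,1}$: a tree of rank $\omega^{\xi+1}$ decomposes over its length-one nodes into pieces of smaller rank, so actually the lemma for $\Gamma_{\xi+1,1}$ follows by applying the just-proved $\Gamma_{\xi,n}$-statement on a single well-chosen $n$, via Corollary~\ref{cor1}-style extraction — one runs the Szlenk/weak-derivative counting argument exactly as in the proof of Corollary~\ref{cor1} but over the tree $\Gamma_{\xi+1,1}$, noting that on the branch landing in the $\Gamma_{\xi,n}$-component the relevant weight is precisely $\tfrac1n\mathbb{P}_{\xi,n}$.

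The limit case $\xi=\lim$ is analogous but simpler: $\Gamma_{\xi,1}=\bigcup_{\zeta<\xi}(\omega^\zeta+\Gamma_{\zeta+1,1})$ is a totally incomparable union with $\mathbb{P}_{\xi,1}$ restricting to a copy of $\mathbb{P}_{\zeta+1,1}$ on each piece, and each piece $\omega^\zeta+\Gamma_{\zeta+1,1}$ is order-isomorphic to $\Gamma_{\zeta+1,1}$, which has rank $\omega^{\zeta+1}<\omega^\xi$; so the induction hypothesis (at ordinal $\zeta+1<\xi$) applies to each piece, and one again extracts a single good branch by the weak-derivative counting argument over $\Gamma_{\xi,1}$, landing inside one component where the desired estimate with weight $\mathbb{P}_{\zeta+1,1}$ holds. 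In fact, it is cleanest to organize the whole proof so that the only tool actually invoked is Corollary~\ref{cor1} applied to $T=\Gamma_{\xi,1}$ directly for the outermost extraction, with the inductive hypothesis used to certify what happens along a branch after the first level has been passed; but the recursive "stacking" structure of $\Gamma_{\xi,n}$ means some version of the inner induction on $n$ is unavoidable.

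The main obstacle I anticipate is the bookkeeping in the successor-and-$n$ step: one must (a) correctly identify the rescaling factor $\tfrac{n}{n+1}$ (equivalently $\tfrac{j}{n+1}$ at the $j$-th stage of peeling off levels) so that the partial averaged convex combination built so far can be folded into the "center" $y$ passed to the next stage; (b) verify that the set of centers thus produced is genuinely norm-compact (continuity of $(y,\sigma,(x_b)_b)\mapsto y+\tfrac{\sigma}{n+1}\sum\mathbb{P}_{\xi,1}(b)x_b$ on a product of compacta), which is what legitimizes reapplying the inductive lemma; and (c) distribute the total error $\ee$ across the $n$ stages and the finitely many $\varrho$-continuity corrections, using that each shift of the center has norm $\le\sigma_1/(n+1)$ and invoking the stated $1$-Lipschitz properties of $\varrho^\xi_X$. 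None of these is deep, but getting the constants and the order of quantifiers right — in particular that the final branch $a$ is chosen \emph{before} $y$, $\sigma$, and $(x_b)$ — is where care is required, and it is exactly the point of isolating $\Gamma_{\xi,1}$ with its canonical weights $\mathbb{P}_{\xi,1}$ so that the convex coefficients are fixed by the combinatorics of the tree rather than by the analytic data.
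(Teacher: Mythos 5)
Your approach takes a genuinely different route from the paper's. The paper does not induct on $\xi$ at all. Rather, it proves Lemma~\ref{bost} by a single contradiction argument that stays entirely at level $\xi$: negating the conclusion produces, for each branch $a\in MAX(\Gamma_{\xi,1}.D)$, data $y_a,\sigma_a,(x^a_b)_{b\le a}$ and a norming functional $x^*_a$ for which $\sum_{b\le a}\mathbb{P}_{\xi,1}(b)f(b,a)\ge 1+\ee$, where $f(b,a)=\mathrm{Re\,}x^*_a(y_a+\sigma_a x^a_b)-\varrho^\xi_X(|\sigma_a|,y_a)$. A Ramsey-type theorem for pairs in trees, cited from \cite[Theorem 4.2]{Causey.5}, then upgrades this \emph{averaged} bound to a \emph{pointwise} bound $f(d(b),e(a))\ge 1+\ee/2$ on a monotone relabelled subcollection $(G_{d(b)})_b$. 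Applying Corollary~\ref{cor1} to that relabelled collection yields a branch on which \emph{some} convex combination is good, but the norming functional $x^*_{e(a)}$ shows that \emph{every} convex combination on that same branch is bad; contradiction. The fixed weights $\mathbb{P}_{\xi,1}$ enter only through the one averaging inequality that feeds the Ramsey step, and there is no level-peeling and no transfinite induction.

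Beyond being a different route, your plan has a concrete gap in the successor step. You state the inner induction for $\Gamma_{\xi,n}$ as delivering the bound $1+\ee+\varrho^\xi_X(\sigma,y)$, but the level-peeling recursion you describe does not produce that. After the first level is peeled at effective scale $\sigma/(n+1)$, the center shifts to $y^{(1)}=y+\tfrac{\sigma}{n+1}z_1$, and the $\xi$-hypothesis at the next level reports a bound in terms of $\varrho^\xi_X\bigl(\tfrac{\sigma}{n+1},y^{(1)}\bigr)$, not $\varrho^\xi_X(\sigma,y)$. Using the homogeneity identity $\varrho^\xi_X(\lambda\sigma,\lambda y)=\lambda\varrho^\xi_X(\sigma,y)+\lambda-1$ and chaining through all levels gives a quantity on the order of $n\,\varrho^\xi_X(\sigma/n,y)$ plus the accumulated per-level $\ee$'s. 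Two problems: first, the lemma at ordinal $\xi+1$ demands the bound $\varrho^{\xi+1}_X(\sigma,y)$, not $\varrho^\xi_X(\sigma,y)$ — you never connect $n\,\varrho^\xi_X(\sigma/n,\cdot)$ to $\varrho^{\xi+1}_X(\sigma,\cdot)$, and this connection is delicate for a general Banach space where $\varrho^\xi_X$ need not be superlinear in $\sigma$, so letting $n\to\infty$ does not automatically kill this term. Second, the Lipschitz corrections of size $\sigma_1/(n+1)$ per stage, which you propose to ``absorb into $\ee$,'' sum to order $\sigma_1$ over the $n+1$ stages, independently of $\ee$; absorbing them requires choosing $n$ large depending on $\ee$ and $\sigma_1$ and then verifying the telescoping explicitly, which is not done. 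Finally, the outer extraction you invoke (``run the Corollary~\ref{cor1} argument over $\Gamma_{\xi+1,1}$'') only produces \emph{some} convex combination along a branch with the $\varrho^{\xi+1}_X$ bound, not the specific $\mathbb{P}_{\xi+1,1}$-weighted one — but that gap between an unspecified convex combination and the canonical weights is precisely what separates Corollary~\ref{cor1} from Lemma~\ref{bost}, and is what the Ramsey theorem in the paper is brought in to close.
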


\begin{proof}  Toward a contradiction, suppose we have compact sets $G_\varnothing$, a normally weakly null collection  $(G_a)_{a\in \Gamma_{\xi,1}.D}\subset \mathfrak{c}$, and $\ee>0$ such that for each $a\in MAX(\Gamma_{\xi,1}.D)$, there exist $y_a\in G_\varnothing$, $\sigma_a$ with $|\sigma_a|\leqslant \sigma_1$, and  $(x^a_b)_{b\leqslant a}\in \prod_{b\leqslant a} G_b$ such that $$\Bigl\|y_a+\sigma_a \sum_{b\leqslant a} \mathbb{P}_{\xi,1}(b)x_b^a\Bigr\|\geqslant 1+\ee+\varrho^\xi_X(|\sigma_a|, y_a).$$ For each $a\in MAX(\Gamma_{\xi,1}.D)$, fix $x^*_a\in B_{X^*}$ such that $$\text{Re\ }x^*_a\Bigl(y_a+\sigma_a \sum_{b\leqslant a} \mathbb{P}_{\xi,1}(b)x_b^a\Bigr)= \Bigl\|y_a+\sigma_a \sum_{b\leqslant a} \mathbb{P}_{\xi,1}(b)x_b^a\Bigr\|.$$   

Let $$\Pi=\{(b,a)\in \Gamma_{\xi,1}.D\times \Gamma_{\xi,1}.D: b\leqslant a\in MAX(\Gamma_{\xi,1}.D)\}$$ Define $f:\Pi\to \rr$ by $$f(b,a)=\text{Re\ }x^*_a\bigl(y_a+\sigma_a x^a_b\bigr) - \varrho^\xi_X(|\sigma_a|, y_a).$$  Note that since $G_\varnothing$ is bounded, $|\sigma_a|\leqslant 1$,  $G_b\subset B_X$ for each $b\in \Gamma_{\xi,1}.D$, and $\varrho^\xi_X(\sigma, y)\leqslant \|y\|+\sigma$ for any $y\in X$ and $\sigma\in \mathbb{K}$,  it follows that $f$ is a bounded function. By hypothesis, for each $a\in MAX(\Gamma_{\xi,1}.D)$, \begin{align*} \sum_{b\leqslant a} \mathbb{P}_{\xi,1}(b)f(b,a) & = \sum_{b\leqslant a} \mathbb{P}_{\xi,1}(b) \Bigl[\text{Re\ }x^*_a\bigl(y_a+\sigma_a x^a_b\bigr)- \varrho^\xi_X(|\sigma_a|, y_a)\Bigr] \\ & = \text{Re\ }x^*_a\Bigl(y_a+\sigma_a \sum_{b\leqslant a} \mathbb{P}_{\xi,1}(b)x^a_b\Bigr) - \varrho^\xi_X(|\sigma_a|, y_a)\\ &  = \Bigl\|y_a+\sigma_a \sum_{b\leqslant a} \mathbb{P}_{\xi,1}(b)x_b^a\Bigr\| - \varrho^\xi_X(|\sigma_a|, y_a) \geqslant 1+\ee. \end{align*}  By \cite[Theorem $4.2$]{Causey.5} applied with $\ee$ replaced by $1+\ee$ and $\delta=\ee/2$, there exist  functions $d:\Gamma_{\xi,1}.D\to \Gamma_{\xi,1}.D$ and $e:MAX(\Gamma_{\xi,1}.D)\to MAX(\Gamma_{\xi,1}.D)$ such that \begin{enumerate}[(i)]\item for each  $b,a\in \Gamma_{\xi,1}.D$ such that $b<a$, it follows that  $d(b)<d(a)$, \item for each $a\in MAX(\Gamma_{\xi,1}.D)$, $d(a)\leqslant e(a)$, \item if $b=(\zeta_i, u_i)_{i=1}^m$ and $d(b)=(\nu_i, v_i)_{i=1}^n$, then $v_n\subset u_m$, \item for each $(b,a)\in \Pi$, either $f(d(b),e(a)) \geqslant 1+\ee-\ee/2=1+\ee/2$ or $$\sum_{b\leqslant e(a)} \mathbb{P}_{\xi,1}(b) f(b, e(a))<1+\ee.$$\end{enumerate} Above we showed that the inequality $\sum_{b\leqslant e(a)} \mathbb{P}_{\xi,1}(b) f(b,e(a))<1+\ee$ in (iv) is not possible, so $f(d(b), e(a))\geqslant 1+\ee/2$ for all $(b,a)\in \Pi$.  

Note that item (iii) implies that the collection $(G_{\phi(b)})_{b\in \Gamma_{\xi,1}.D}$ is also normally weakly null.  Define $F_\varnothing=G_\varnothing$ and $F_b=G_{\phi(b)}$ for each $b\in \Gamma_{\xi,1}.D$.    By Corollary \ref{cor1}, there exists $a\in MAX(\Gamma_{\xi,1}.D)$ such that for every $y\in F_\varnothing$, $\sigma$ with $|\sigma|\leqslant\sigma_1$,and $(x_b)_{b\leqslant a}\in \prod_{b\leqslant a}F_b$,  there exists $x\in \text{co}(x_b:b\leqslant a)$ such that  $$\|y+\sigma x\|-\varrho^\xi_X(|\sigma|, y)<1+\ee/2.$$    However, for each $a\in MAX(\Gamma_{\xi,1}.D)$, $y_{e(a)}\in F_\varnothing$, $(x^{e(a)}_{d(b)})_{b\leqslant a}\in \prod_{b\leqslant a}F_b$, and $|\sigma_{e(a)}|\leqslant \sigma_1$, but for each $x=\sum_{b\leqslant a} w_b x^{e(a)}_{d(b)}\in \text{co}(x^{e(a)}_{d(b)}: b\leqslant a)$, \begin{align*} \|y_{e(b)} + \sigma_{e(a)} x\| -\varrho^\xi_X(|\sigma_{e(a)}|, y_{e(a)}) & \geqslant \text{Re\ }x^*_{e(a)}(y_{e(a)}+\sigma_{e(a)} x) -\varrho^\xi_X(|\sigma_{e(a)}|, y_{e(a)}) \\ & = \sum_{b\leqslant a}w_b f(d(b),e(a)) \geqslant \sum_{b\leqslant a}w_b (1+\ee/2) = 1+\ee/2.\end{align*} This contradiction finishes the proof.

\end{proof}

\begin{corollary} Let $Y$ be a Banach space and let $B\subset B_Y$ be such that $\overline{\text{\emph{co}}}(B)=B_Y$.  Then for any $\sigma\geqslant 0$, $$\varrho_Y^\xi(\sigma)= \sup_{y\in B} \varrho_Y^\xi(\sigma,y).$$  

\label{vict}
\end{corollary}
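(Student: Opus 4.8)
The inequality $\sup_{y\in B}\varrho^\xi_Y(\sigma,y)\leqslant \varrho^\xi_Y(\sigma)$ is immediate from $B\subset B_Y$, so the plan is to prove the reverse inequality: writing $M=\sup_{y\in B}\varrho^\xi_Y(\sigma,y)$, I will show $\varrho^\xi_Y(\sigma,y_0)\leqslant M$ for every $y_0\in B_Y$. The idea is to approximate $y_0$ by a finite convex combination $\sum_{j=1}^k\lambda_j y_j$ with each $y_j\in B$ (possible since $\overline{\text{co}}(B)=B_Y$) and then exploit convexity of the norm: if $x$ is a convex combination along a branch of a given weakly null tree for which $\|y_j+\sigma x\|$ is small \emph{for every $j$ simultaneously}, then $\|y_0+\sigma x\|\leqslant\|y_0-\sum_j\lambda_j y_j\|+\sum_j\lambda_j\|y_j+\sigma x\|$ is small as well. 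The crucial word is ``simultaneously'': a single convex combination $x$ must work for all the $y_j$ at once, and for this a naive application of Corollary \ref{cor1} (which produces a common branch but a possibly $y$-dependent convex combination along it) is insufficient.

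The tool that delivers a common convex combination is Lemma \ref{bost}. Concretely, one fixes $C_1>M$, sets $\ee=(C_1-M)/2$, and chooses $y_1,\dots,y_k\in B$ and convex weights $\lambda_1,\dots,\lambda_k$ with $\|y_0-\sum_j\lambda_j y_j\|<\ee$. Recalling from the discussion following the definition of $\varrho^\xi_Y(\sigma,\cdot)$ that it suffices to take the supremum defining $\varrho^\xi_Y(\sigma,y_0)$ over normally weakly null collections indexed by $\Gamma_{\xi,1}.D$, let $(x_a)_{a\in\Gamma_{\xi,1}.D}\subset B_Y$ be such a collection and apply Lemma \ref{bost} with $G_\varnothing=\{y_1,\dots,y_k\}$ (a finite, hence norm compact set), with the singleton collection $G_a=\{x_a\}$ (which lies in $\mathfrak{c}$ and is normally weakly null), with $\sigma_1=\sigma+1$, and with this $\ee$. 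This yields $a\in\mathrm{MAX}(\Gamma_{\xi,1}.D)$ such that $\|y_j+\sigma\sum_{b\leqslant a}\mathbb{P}_{\xi,1}(b)x_b\|<1+\ee+\varrho^\xi_Y(\sigma,y_j)\leqslant 1+\ee+M$ for every $j$, the last inequality because $y_j\in B$. By Proposition \ref{co}$(i)$ (and the identification of $\mathrm{MAX}(\Gamma_{\xi,1}.D)$ with $\mathrm{MAX}(\Gamma_{\xi,1})$), the weights $\mathbb{P}_{\xi,1}(b)$, $b\leqslant a$, are non-negative and sum to $1$, so $x:=\sum_{b\leqslant a}\mathbb{P}_{\xi,1}(b)x_b$ is a genuine convex combination of the branch $(x_b)_{b\leqslant a}$.

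The conclusion is then a short triangle-inequality computation: $\|y_0+\sigma x\|\leqslant\|y_0-\sum_j\lambda_j y_j\|+\sum_j\lambda_j\|y_j+\sigma x\|<\ee+(1+\ee+M)=1+C_1$. Since the normally weakly null collection $(x_a)_{a\in\Gamma_{\xi,1}.D}$ was arbitrary, this shows $\varrho^\xi_Y(\sigma,y_0)\leqslant C_1$; letting $C_1\downarrow M$ gives $\varrho^\xi_Y(\sigma,y_0)\leqslant M$, and taking the supremum over $y_0\in B_Y$ finishes the argument.

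I expect the only genuine obstacle to be conceptual, namely recognizing that Lemma \ref{bost}, rather than the weaker Corollary \ref{cor1}, is what is needed, because $\varrho^\xi_Y(\sigma,\cdot)$ is an infimum of convex functions and so need not itself be convex (otherwise the equality would be immediate). The remaining points — verifying the hypotheses of Lemma \ref{bost} for the singleton collection, checking that the $\mathbb{P}_{\xi,1}$-weights along a maximal branch form a convex combination, and bookkeeping the choices of $\ee$ and of the approximating combination so that everything fits below $1+C_1$ — are routine.
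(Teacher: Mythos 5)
Your proof is correct and follows essentially the same route as the paper: both apply Lemma \ref{bost} with a finite $G_\varnothing\subset B$ and singleton $G_a=\{x_a\}$ to obtain a single branch and a single convex combination $\sum_{b\leqslant a}\mathbb{P}_{\xi,1}(b)x_b$ that works simultaneously for all $y_j$, then pass through the triangle inequality. The only cosmetic difference is that the paper first reduces $\sup_{B_Y}$ to $\sup_{\mathrm{co}(B)}$ via $1$-Lipschitzness and then handles $\mathrm{co}(B)$ by Lemma \ref{bost}, whereas you fold the density approximation and the convexity step into one computation; your observation that Lemma \ref{bost} (fixed coefficients) rather than Corollary \ref{cor1} ($y$-dependent coefficients) is the key point is exactly right.
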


\begin{proof} It clear that for any $\sigma\geqslant 0$, $$\varrho_Y^\xi(\sigma)=\underset{y\in B_Y}{\sup} \varrho^\xi_Y(\sigma, y) \geqslant \underset{y\in \text{co}(B)}{\sup} \varrho^\xi_Y(\sigma,y)\geqslant \underset{y\in B}{\sup}\text{\ } \varrho^\xi_Y(\sigma,y).$$  We will show the reverse inequalities. 

Recall that for each $\sigma\geqslant 0$, $\varrho^\xi_Y(\sigma,\cdot)$ is $1$-Lipschitz. Since $\text{co}(B)$ is dense in $B_Y$, $$\underset{y\in B_Y}{\sup} \varrho^\xi_Y(\sigma, y) \leqslant \underset{y\in \text{co}(B)}{\sup} \varrho^\xi_Y(\sigma,y).$$   

Fix $\sigma\geqslant 0$, $C>\sup_{y\in B}\varrho_Y^\xi(\sigma,y)$, $y_1, \ldots, y_n$, a normally weakly null collection $(x_a)_{a\in \Gamma_{\xi,1}.D}\subset B_Y$, and non-negative numbers $w_1, \ldots, w_n$ such that $1=\sum_{i=1}^n w_i$.   By Lemma \ref{bost} applied with $G_0=\{y_1, \ldots, y_n\}$ and $G_a=\{x_a\}$, there exists $a\in MAX(\Gamma_{\xi,1}.D)$ such that for each $1\leqslant i\leqslant n$, $$\Bigl\|y_i+\sigma \sum_{b\leqslant a} \mathbb{P}_{\xi,1}(b)x_b\Bigr\| \leqslant 1+C.$$   Then $$\Bigl\|\sum_{i=1}^n w_iy_i + \sigma\sum_{b\leqslant a} \mathbb{P}_{\xi,1}(b)x_b\Bigr\| \leqslant \sum_{i=1}^n w_i \Bigl\|y_i+\sigma \sum_{b\leqslant a}\mathbb{P}_{\xi,1}(b)x_b\Bigr\| \leqslant 1+C.$$   This shows that $$\varrho^\xi_Y\Bigl(\sigma, \sum_{i=1}^n w_iy_i\Bigr)\leqslant C.$$  Since $C>\sup_{y\in B}\varrho_Y^\xi(\sigma, y)$, $y_1, \ldots, y_n\in B$, and $w_1, \ldots, w_n$ were arbitrary, $$\underset{y\in \text{co}(B)}{\sup} \varrho^\xi_Y(\sigma,y)\leqslant \underset{y\in B}{\sup}\text{\ } \varrho^\xi_Y(\sigma,y).$$

\end{proof}

\begin{proposition} Fix an ordinal $\xi$ and $1<p<\infty$. For a Banach space $X$, the following are equivalent. \begin{enumerate}[(i)]\item $X$ is $\xi$-$p$-AUS. 

\item There exists a constant $c>0$ such that for any compact $G_\varnothing\subset B_X$, any normally weakly null collection $(G_a)_{a\in \Gamma_{\xi,1}.D}\subset \mathfrak{c}$, and any $\ee>0$, there exists $a\in MAX(\Gamma_{\xi,1}.D)$ such that for any $y\in G_\varnothing$, any $\sigma$ with $|\sigma|\leqslant 1$, and any $(x_b)_{b\leqslant a}\in \prod_{b\leqslant a} G_b$, $$\Bigl\|y+\sigma \sum_{b\leqslant a}\mathbb{P}_{\xi,1}(b)x_b\Bigr\|\leqslant 1+c^p|\sigma|^p+\ee.$$   

\item There exists a constant $c_1>0$ such that for any compact $G_\varnothing\subset X$, any normally weakly null collection $(G_a)_{a\in \Gamma_{\xi,1}.D}\subset \mathfrak{c}$, and any $\ee>0$, there exists $a\in MAX(\Gamma_{\xi,1}.D)$ such that for any $y\in G_\varnothing$,any $\sigma$ with $|\sigma|\leqslant 1$, and any $(x_b)_{b\leqslant a}\in \prod_{b\leqslant a}G_b$, $$\Bigl\|y+\sigma \sum_{b\leqslant a}\mathbb{P}_{\xi,1}(b)x_b\Bigr\|^p \leqslant \|y\|^p+c_1^p |\sigma|^p + \ee.$$   \end{enumerate}

\label{chuff}
\end{proposition}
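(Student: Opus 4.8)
The plan is to prove the cycle $(i)\Rightarrow(iii)\Rightarrow(ii)\Rightarrow(i)$, using Lemma \ref{bost} as the engine for the ``uniform convex combination'' statements and elementary convexity inequalities to pass between the additive form $(ii)$ and the $p$-homogeneous form $(iii)$.

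For $(i)\Rightarrow(iii)$: Assume $X$ is $\xi$-$p$-AUS, so $\sup_{\sigma>0}\varrho^\xi_X(\sigma)/\sigma^p=:A<\infty$. Given $G_\varnothing\subset X$, a normally weakly null collection $(G_a)\subset\mathfrak{c}$, and $\ee>0$, we want to invoke Lemma \ref{bost}. The subtlety is that Lemma \ref{bost} is stated for $G_\varnothing\subset X$ norm compact but the desired inequality is phrased with $\|y\|^p$ on the right, so the cases $\|y\|$ small versus large must be handled. First I would reduce to $G_\varnothing\subset B_X$ by a scaling argument: replacing $y$ by $y/\|y\|$ rescales $\sigma$ and everything is $p$-homogeneous, so it suffices to treat $\|y\|\le 1$ (the case $y=0$ being trivial). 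With $G_\varnothing\subset B_X$ compact, apply Lemma \ref{bost} with $\sigma_1=1$ to get $a\in MAX(\Gamma_{\xi,1}.D)$ with $\|y+\sigma\sum_{b\le a}\mathbb{P}_{\xi,1}(b)x_b\|<1+\ee'+\varrho^\xi_X(|\sigma|,y)\le 1+\ee'+A|\sigma|^p$ for all $y\in G_\varnothing$, $|\sigma|\le 1$, $(x_b)\in\prod G_b$. But the right side must be compared to $(\|y\|^p+c_1^p|\sigma|^p+\ee)^{1/p}$; since $\|y\|$ can be close to $1$, we have $1\le\|y\|^p+ (1-\|y\|^p)$ and we need to absorb the defect $1-\|y\|^p$. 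The clean way is: rescale so that $\|y\|=1$ exactly (again by homogeneity, after treating $\|y\|$ very small separately using a ball-intersection argument for finite codimensional subspaces), whence $1+\ee'+A|\sigma|^p\le (1+(\ee'+A|\sigma|^p))$ and raising to the $p$-th power, $(1+t)^p\le 1+p2^{p-1}t$ for $t\le 1$ bounded, giving $\|\cdot\|^p\le 1+p2^{p-1}(\ee'+A|\sigma|^p)= \|y\|^p+p2^{p-1}A|\sigma|^p+p2^{p-1}\ee'$, so choosing $c_1^p=p2^{p-1}A$ and $\ee'$ small gives $(iii)$.

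For $(iii)\Rightarrow(ii)$: This is a direct convexity estimate. From $\|y+\sigma\sum\mathbb{P}_{\xi,1}(b)x_b\|^p\le\|y\|^p+c_1^p|\sigma|^p+\ee$ with $\|y\|\le 1$, using $(u^p+v)^{1/p}\le u+ v/(pu^{p-1})$ for $u>0$ (concavity of $t\mapsto t^{1/p}$), or more crudely $(1+c_1^p|\sigma|^p+\ee)^{1/p}\le 1+c_1^p|\sigma|^p+\ee$ since $1/p<1$ and the bracket is $\ge 1$, we get $\|y+\sigma\sum\mathbb{P}_{\xi,1}(b)x_b\|\le 1+c_1^p|\sigma|^p+\ee= 1+c^p|\sigma|^p+\ee$ with $c=c_1$. (One should be slightly careful if $\|y\|<1$, but then $\|y\|^p\le 1$ so the same bound holds; the statement of $(ii)$ as written has $1$ on the right, consistent with $G_\varnothing\subset B_X$.) Thus $(ii)$ holds with the same constant.

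For $(ii)\Rightarrow(i)$: We must show $\sup_{\sigma>0}\varrho^\xi_X(\sigma)/\sigma^p<\infty$. Fix $\sigma$ with $0<\sigma\le 1$ and $y\in B_X$. By Corollary \ref{vict} it suffices to bound $\varrho^\xi_X(\sigma,y)$ for $y\in B_X$ (indeed $y\in S_X$). By the remarks following the definition of $\varrho^\xi_X$ (or Lemma \ref{prop5} / Proposition \ref{mod}), $\varrho^\xi_X(\sigma,y)$ is computed as a supremum over normally weakly null collections $(x_t)_{t\in\Gamma.D}\subset B_X$ indexed by a fixed tree $\Gamma$ of rank $\omega^\xi$, taking the infimum over branches of convex combinations; since $\Gamma_{\xi,1}$ is such a tree, I would use it, taking $G_a=\{x_a\}$ singletons and $G_\varnothing=\{y\}$. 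Hypothesis $(ii)$ then produces, for each $\ee>0$, a branch $a\in MAX(\Gamma_{\xi,1}.D)$ and the specific convex combination $x=\sum_{b\le a}\mathbb{P}_{\xi,1}(b)x_b$ (which is genuinely a convex combination by Proposition \ref{co}(i)) with $\|y+\sigma x\|\le 1+c^p\sigma^p+\ee$. Since this holds for all $\ee>0$, $\varrho^\xi_X(\sigma,y)\le c^p\sigma^p$, hence $\varrho^\xi_X(\sigma)\le c^p\sigma^p$ and $\sup_{\sigma>0}\varrho^\xi_X(\sigma)/\sigma^p\le c^p<\infty$, i.e. $X$ is $\xi$-$p$-AUS. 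I expect the main obstacle to be the bookkeeping in $(i)\Rightarrow(iii)$ --- specifically the homogenization argument to move the $1$ on the right-hand side of Lemma \ref{bost} to $\|y\|^p$, handling the degenerate case $\|y\|$ near $0$ via an intersection-of-balls argument for $D=CD(X)$ --- together with tracking how the constant degrades under the elementary inequality $(1+t)^p\le 1+Ct$ on a bounded range; everything else is routine convexity and an appeal to the already-proven Lemma \ref{bost} and Corollary \ref{vict}.
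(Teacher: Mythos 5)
Your proof of $(iii)\Rightarrow(ii)$ and of $(ii)\Rightarrow(i)$ are correct and essentially match the paper's: the paper handles $(iii)\Rightarrow(ii)$ by the concavity of $t\mapsto(1+t)^{1/p}$ (getting the slightly sharper constant $c^p=c_1^p/p$, versus your cruder $(1+t)^{1/p}\leq t$ for $t\geq 1$, which gives $c=c_1$, also fine), and $(ii)\Rightarrow(i)$ is immediate from the definition of $\varrho^\xi_X$ exactly as you say. Running the implications as a cycle $(i)\Rightarrow(iii)\Rightarrow(ii)\Rightarrow(i)$ is a legitimate alternative to the paper's $(i)\Leftrightarrow(ii)$, $(ii)\Leftrightarrow(iii)$; your combined step $(i)\Rightarrow(iii)$ collapses the paper's $(i)\Rightarrow(ii)$ (one line via Lemma \ref{bost}) and $(ii)\Rightarrow(iii)$ (a three-case homogenization) into one.

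However, your $(i)\Rightarrow(iii)$ contains a genuine gap. When you ``rescale so that $\|y\|=1$ exactly'' and replace $\sigma$ by $\sigma'=\sigma/\|y\|$, you need $|\sigma'|\leqslant\sigma_1$ before you can invoke the branch produced by Lemma \ref{bost}. If you fix $\sigma_1=1$, this requires $|\sigma|\leqslant\|y\|$. The pairs with $\|y\|<|\sigma|\leqslant 1$ are never handled: this is not a ``$\|y\|$ near $0$'' phenomenon, since it occurs for any $\|y\|\in(0,1)$. Trying to escape by taking $\sigma_1=1/\delta$ (having cut off $\|y\|<\delta$) does not save you either: then $\varrho^\xi_X(\sigma',y/\|y\|)\leqslant A(\sigma')^p=A|\sigma|^p/\|y\|^p$, and after undoing the rescaling and raising to the $p$-th power the elementary inequality $(1+t)^p\leqslant 1+Kt$ is needed on a range $t\in[0,\,\varepsilon'+A/\delta^p]$ whose length blows up as $\delta\to 0$; but $\delta$ must be taken of order $\varepsilon^{1/p}$ to tame the small-$\|y\|$ region, so the constant $K$ (and hence $c_1$) ends up depending on $\varepsilon$, which is not permitted. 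The paper avoids this by an explicit three-case split, the crucial one being $\|y\|\leqslant|\sigma|$ (Case $1$), where one simply applies the triangle inequality --- with no appeal to weak nullity at all --- to get $\|y+\sigma x\|^p\leqslant(\|y\|+|\sigma|)^p\leqslant 2^p|\sigma|^p\leqslant\|y\|^p+(2^p+c^p)|\sigma|^p+\varepsilon$ directly, contributing the absolute constant $2^p$ to $c_1^p$. Your ``intersection-of-balls argument for finite codimensional subspaces'' for small $\|y\|$ is also a red herring: in the region $|\sigma|<\|y\|\leqslant\delta$ the triangle inequality alone gives $\|y+\sigma x\|^p\leqslant(2\delta)^p<\varepsilon$, and no choice of $Z\in CD(X)$ is needed or available (the collection $(x_b)$ is given to you, not chosen). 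In short: you must include the triangle-inequality case $\|y\|\leqslant|\sigma|$ as a separate branch of the argument; without it the constant cannot be made independent of $\varepsilon$.
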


\begin{proof}$(i)\Rightarrow (ii)$ If $X$ is $\xi$-$p$-AUS, then $c:= \sup_{\sigma>0}\varrho^\xi_X(\sigma)/\sigma^p<\infty$.   Then by Lemma \ref{bost}, $X$ satisfies $(ii)$ with this choice of $c$.  

$(ii)\Rightarrow (i)$ The property in $(ii)$ clearly implies that $\varrho_X^\xi(\sigma) \leqslant c\sigma^p$ for any $\sigma>0$.

$(ii)\Rightarrow (iii)$ Assume $(ii)$ holds with constant $c$. Fix $G_\varnothing\subset X$ norm compact, a normally weakly null collection $(G_a)_{a\in \Gamma_{\xi,1}.D}\subset \mathfrak{c}$, and $\ee>0$.   Fix $R>\sup_{x\in G_\varnothing} \|x\|$ and  $$0<\delta<\min \{\ee^{1/p}/2,\ee/R^p\}.$$    Define $F_\varnothing=\{x/\|x\|: x\in G_\varnothing, \|x\|\geqslant \delta\}$, which is a norm compact subset of $B_X$. Define $F_a=G_a$ for $a\in \Gamma_{\xi,1}.D$, which is normally weakly null.  By hypothesis, there exists $a\in \Gamma_{\xi,1}.D$ such that for any  $y\in F_\varnothing$, any $\sigma$ with $|\sigma|\leqslant 1$,and any $(x_b)_{b\leqslant a}\in \prod_{b\leqslant a}F_b$,  $$\Bigl\|y+\sigma \sum_{b\leqslant a}\mathbb{P}_{\xi,1}(b)x_b\Bigr\|\leqslant 1+c^p|\sigma|^p + \delta.$$   Now fix $y\in G_\varnothing$, a scalar $\sigma$ with $|\sigma|\leqslant 1$, and $(x_b)_{b\leqslant a}\in \prod_{b\leqslant a}G_b$.  Consider three cases.   

Case $1$: $\|y\|\leqslant |\sigma|$.  Then by the triangle inequality, \begin{align*}  \Bigl\|y+\sigma \sum_{b\leqslant a}\mathbb{P}_{\xi,1}(b)x_b\Bigr\|^p \leqslant (\|y\|+|\sigma|)^p \leqslant 2^p |\sigma|^p \leqslant \|y\|^p+(2^p+c^p)|\sigma|^p +\ee.\end{align*}

Case $2$: $|\sigma|<\|y\|\leqslant \delta$.    Then by the triangle inequality, \begin{align*} \Bigl\|y+\sigma \sum_{b\leqslant a}\mathbb{P}_{\xi,1}(b)x_b\Bigr\|^p & \leqslant [\delta+|\sigma|]^p \leqslant 2^p\delta^p <\ee \\ & \leqslant \|y\|^p+(2^p+c^p) |\sigma|^p + \ee. \end{align*}

Case $3$: $\|y\|\geqslant \max\{\delta, |\sigma|\}$.  Then $|\sigma/\|y\| |\leqslant 1$ and $y/\|y\|\in F_\varnothing$, so \begin{align*}\Bigl\|y+\sigma \sum_{b\leqslant a}\mathbb{P}_{\xi,1}(b)x_b\Bigr\|^p & = \|y\|^p \Bigl\|\frac{y}{\|y\|} + \frac{\sigma}{\|y\|} \sum_{b\leqslant a}\mathbb{P}_{\xi,1}(b)x_b\Bigr\|^p \\ & \leqslant \|y\|^p\Bigl(1+ c^p |\sigma|^p/\|y\|^p + \delta\Bigr) < \|y\|^p+(2^p+c^p) |\sigma|^p + \ee.   \end{align*}

Therefore $(iii)$ is satisfied with $c_1^p=2^p+c^p$.

$(iii)\Rightarrow (ii)$ Assume $(iii)$ holds with constant $c_1$. Fix  compact $G_\varnothing\subset B_X$, any normally weakly null collection $(G_a)_{a\in \Gamma_{\xi,1}.D}\subset \mathfrak{c}$, and $\ee>0$.  By uniform continuity of the function $f(x)=(1+x)^{1/p}$ on $[0, 2+c_1^p]$, there exists $\delta>0$ such that $(1+ c_1^p |\sigma|^p + \delta)^{1/p} \leqslant (1+c_1^p |\sigma|^p)^{1/p}+\ee$ for any $\sigma$ with $|\sigma|\leqslant 1$.     Note also that since the function $f$ is concave on $[0,\infty)$, for any $x>0$, $$\frac{(1+x)^{1/p}-1}{x} \leqslant f'(0)=1/p.$$    Therefore for any $\sigma$ with $|\sigma|\leqslant 1$, $$(1+c_1^p|\sigma|^p+\delta)^{1/p} \leqslant 1+\frac{c_1^p|\sigma|^p}{p}+\ee.$$   

  By our assumption that $(iii)$ holds with constant $c_1$, there exists $a\in MAX(\Gamma_{\xi,1}.D)$ such that for any $y\in G_\varnothing$, any $\sigma$ with $|\sigma|\leqslant 1$, and any $(x_b)_{b\leqslant a} \in \prod_{b\leqslant a}G_b$, $$\Bigl\|y+\sigma \sum_{b\leqslant a}\mathbb{P}_{\xi,1}(b)x_b\Bigr\|\leqslant \Bigl(\|y\|^p + c_1^p |\sigma|^p + \delta\Bigr)^{1/p}.$$   Since $G_\varnothing\subset B_X$, we deduce that for any $y\in G_\varnothing$, any $\sigma$ with $|\sigma|\leqslant 1$,  any $(x_b)_{b\leqslant a}\in \prod_{b\leqslant a}G_b$, $$\Bigl\|y+\sigma \sum_{b\leqslant a}\mathbb{P}_{\xi,1}(b)x_b\Bigr\|\leqslant \Bigl(\|y\|^p + c_1^p |\sigma|^p + \delta\Bigr)^{1/p} \leqslant 1+\frac{c_1^p|\sigma|^p}{p} +\ee.$$  Therefore $(ii)$ holds with $c^p=c_1^p/p$.

\end{proof}

\begin{proposition} Fix an ordinal $\xi$. For a Banach space $X$, the following are equivalent. \begin{enumerate}[(i)]\item $X$ is $\xi$-AUF. 

\item There exists a constant $\sigma_0>0$ such that for any compact $G_\varnothing\subset B_X$, any normally weakly null collection $(G_a)_{a\in \Gamma_{\xi,1}.D}\subset \mathfrak{c}$, and any $\ee>0$, there exists $a\in MAX(\Gamma_{\xi,1}.D)$ such that for any $y\in G_\varnothing$, any $\sigma$ with $|\sigma|\leqslant \sigma_0$, and any $(x_b)_{b\leqslant a}\in \prod_{b\leqslant a}G_b$, $$\Bigl\|y+\sigma \sum_{b\leqslant a}\mathbb{P}_{\xi,1}(b)x_b\Bigr\|\leqslant 1+\ee.$$   

\item There exists a constant $c_1>0$ such that for any compact $G_\varnothing\subset X$, any normally weakly null collection $(G_a)_{a\in \Gamma_{\xi,1}.D}\subset \mathfrak{c}$, and any $\ee>0$, there exists $a\in MAX(\Gamma_{\xi,1}.D)$ such that for any $y\in G_\varnothing$, any  scalar $\sigma$ with $|\sigma|\leqslant 1$, and any $(x_b)_{b\leqslant a}\in \prod_{b\leqslant a}G_b$, $$\Bigl\|y+\sigma \sum_{b\leqslant a}\mathbb{P}_{\xi,1}(b)x_b\Bigr\| \leqslant \max\{ \|y\|, c_1 |\sigma|\}+\ee.$$ \end{enumerate}

\label{chuff2}
\end{proposition}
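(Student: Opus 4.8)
The plan is to prove Proposition \ref{chuff2} by an argument that parallels the proof of Proposition \ref{chuff} very closely, since $\xi$-AUF is essentially the ``$p=\infty$'' case of $\xi$-$p$-AUS. The three implications to establish are $(i)\Rightarrow(ii)$, $(ii)\Rightarrow(i)$, and the equivalence $(ii)\Leftrightarrow(iii)$; the first two are quick and the last requires a small homogeneity/rescaling argument as in Proposition \ref{chuff}.

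For $(i)\Rightarrow(ii)$: if $X$ is $\xi$-AUF, by definition there is $\sigma_0>0$ with $\varrho_X^\xi(\sigma_0)=0$, and since $\varrho_X^\xi(\cdot,y)$ is $1$-Lipschitz and non-negative with $\varrho_X^\xi(0,y)=\|y\|-1\le 0$ for $y\in B_X$, one in fact gets $\varrho_X^\xi(\sigma)=0$ for all $0\le\sigma\le\sigma_0$. Then Lemma \ref{bost}, applied with $\sigma_1=\sigma_0$, directly produces the branch $a\in MAX(\Gamma_{\xi,1}.D)$ with $\|y+\sigma\sum_{b\le a}\mathbb{P}_{\xi,1}(b)x_b\|<1+\ee+\varrho_X^\xi(|\sigma|,y)\le 1+\ee$, which is exactly $(ii)$ (noting $\varrho_X^\xi(|\sigma|,y)\le\varrho_X^\xi(|\sigma|)=0$). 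For $(ii)\Rightarrow(i)$: specializing $(ii)$ to $G_\varnothing=\{y\}$, $y\in B_X$, and $G_a=\{x_a\}$ for a normally weakly null collection shows $\varrho_X^\xi(\sigma_0,y)\le\ee$ for every $\ee>0$ and every $y\in B_X$, and since (by Corollary \ref{vict}, using $\overline{\text{co}}(B_X)=B_X$, or directly) $\varrho_X^\xi(\sigma_0)=\sup_{y\in B_X}\varrho_X^\xi(\sigma_0,y)$, we get $\varrho_X^\xi(\sigma_0)=0$, i.e.\ $X$ is $\xi$-AUF.

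For $(ii)\Rightarrow(iii)$: assume $(ii)$ with constant $\sigma_0$; set $c_1=1/\sigma_0$ (adjusting constants as needed) and argue by the same three-case rescaling as in the proof of Proposition \ref{chuff}, $(ii)\Rightarrow(iii)$. Given $G_\varnothing\subset X$ compact, $(G_a)$ normally weakly null, and $\ee>0$, pick $R>\sup_{x\in G_\varnothing}\|x\|$ and a small $\delta>0$; put $F_\varnothing=\{x/\|x\|: x\in G_\varnothing,\ \|x\|\ge\delta\}\subset B_X$ and $F_a=G_a$, apply $(ii)$ to get the branch $a$, then for each $y\in G_\varnothing$ and scalar $\sigma$ with $|\sigma|\le 1$ distinguish: $\|y\|\le|\sigma|/\sigma_0$ (handle by triangle inequality, $\le\|y\|+|\sigma|\le c_1|\sigma|+|\sigma|$, absorb into $\max\{\|y\|,c_1'|\sigma|\}+\ee$ for a suitable $c_1'$); $|\sigma|/\sigma_0<\|y\|\le\delta$ (the whole quantity is $\le\delta+|\sigma|$, which is $<\ee$ for small $\delta$); and $\|y\|\ge\max\{\delta,|\sigma|/\sigma_0\}$ (rescale by $\|y\|$: $y/\|y\|\in F_\varnothing$ and $|\sigma/(\sigma_0\|y\|)|\le 1$ so that after clearing denominators $\|y+\sigma\sum\mathbb{P}_{\xi,1}(b)x_b\|\le\|y\|(1+\ee/\|y\|)\le\max\{\|y\|,c_1|\sigma|\}+\ee$, using $\|y\|\le$ the max). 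For $(iii)\Rightarrow(ii)$: assume $(iii)$ with $c_1$; take $\sigma_0=\min\{1,1/c_1\}$, so that for $|\sigma|\le\sigma_0$ one has $c_1|\sigma|\le 1$; then for $G_\varnothing\subset B_X$ compact the conclusion of $(iii)$ gives $\|y+\sigma\sum\mathbb{P}_{\xi,1}(b)x_b\|\le\max\{\|y\|,c_1|\sigma|\}+\ee\le\max\{1,1\}+\ee=1+\ee$, which is $(ii)$.

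I expect the main (though still routine) obstacle to be bookkeeping the constants in the $(ii)\Leftrightarrow(iii)$ rescaling, in particular making sure the $\max\{\|y\|,c_1|\sigma|\}$ form is genuinely preserved in each of the three cases (the first case, $\|y\|$ small relative to $|\sigma|$, is where one must be slightly careful since the naive bound $\|y\|+|\sigma|$ is not literally of the form $\max\{\|y\|,c_1|\sigma|\}+\ee$ but requires enlarging $c_1$). Everything else is a direct transcription of the $\xi$-$p$-AUS arguments with the power $p$ replaced by the flat/``$\max$'' behavior, and Lemma \ref{bost} together with Corollary \ref{cor1} and Corollary \ref{vict} supply all the nontrivial content.
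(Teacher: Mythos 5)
Your proposal is correct and follows essentially the same route as the paper: $(i)\Rightarrow(ii)$ via Lemma~\ref{bost} with $\sigma_1=\sigma_0$ (together with the observation that $\varrho^\xi_X(\sigma,y)\leqslant 0$ for all $|\sigma|\leqslant\sigma_0$ and $y\in B_X$, which is worth making explicit since the paper glosses it), $(ii)\Rightarrow(i)$ by specializing to singletons, and the three-case rescaling for $(ii)\Leftrightarrow(iii)$ that mirrors the $\xi$-$p$-AUS argument of Proposition~\ref{chuff}, including the point you flag that the $\|y\|\leqslant|\sigma|/\sigma_0$ case forces an enlargement of the constant to something like $1+1/\sigma_0$. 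No substantive gaps.
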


\begin{proof}$(i)\Rightarrow (ii)$ If $X$ is $\xi$-AUF, then there exists $\sigma_0>0$ such that $\varrho_X^\xi(\sigma_0)=0$. By Lemma \ref{bost} applied with $\sigma_1=\sigma_0$, $X$ satisfies $(ii)$ with this choice of $\sigma_0$.  

$(ii)\Rightarrow (i)$ The property in $(ii)$ clearly implies that $\varrho_X^\xi(\sigma_0) =0$.

$(ii)\Rightarrow (iii)$ Assume $(ii)$ holds with constant $\sigma_0>0$.    Fix $G_\varnothing\subset X$ norm compact, $(G_a)_{a\in \Gamma_{\xi,1}.D}\subset \mathfrak{c}$ normally weakly null. Let $$F_\varnothing=\{x/\|x\|: x\in F_\varnothing, \|x\|\geqslant \ee\}$$ and for each $a\in \Gamma_{\xi,1}.D$, let $F_a=G_a$.  Fix $0<\delta$ such that $\delta G_\varnothing\subset \ee B_X$.   Since $(ii)$ holds with constant $\sigma_0$, there exists $a\in MAX(\Gamma_{\xi,1}.D)$ such that for any $y\in F_\varnothing$, any $\sigma$ with $|\sigma|\leqslant 1$, and any $(x_b)_{b\leqslant a}\in \prod_{b\leqslant a}F_b$, $$\Bigl\|y+\sigma \sum_{b\leqslant a}\mathbb{P}_{\xi,1}(b)x_b\Bigr\|\leqslant 1+\delta.$$

Case $1$: $\|y\|\leqslant \ee$.  Then \begin{align*}\Bigl\|y+\sigma \sum_{b\leqslant a}\mathbb{P}_{\xi,1}(b)x_b\Bigr\| & \leqslant \ee + |\sigma|  \leqslant \max\{\|y\|, (1+1/\sigma_0)|\sigma|\}+\ee. \end{align*}

Case $2$: $\|y\|\leqslant |\sigma|/\sigma_0$.  Then by the triangle inequality, \begin{align*}\Bigl\|y+\sigma \sum_{b\leqslant a}\mathbb{P}_{\xi,1}(b)x_b\Bigr\| & \leqslant \|y\|+|\sigma| \leqslant (1+1/\sigma_0)|\sigma| \\ & \leqslant \max\{\|y\|, (1+1/\sigma_0)|\sigma|\}+\ee. \end{align*}

Case $3$: $\|y\|\geqslant \max\{|\sigma|/\sigma_0,\ee\}$.  Then \begin{align*} \Bigl\|y+\sigma \sum_{b\leqslant a}\mathbb{P}_{\xi,1}(b)x_b\Bigr\| & = \|y\|\Bigl\|\frac{y}{\|y\|} + \frac{\sigma}{\|y\|}\sum_{b\leqslant a}\mathbb{P}_{\xi,1}(b)x_b\Bigr\| \leqslant \|y\|(1+\delta) \leqslant \|y\|+\ee \\ & \leqslant \max\{\|y\|, (1+1/\sigma_0)|\sigma|\}+\ee. \end{align*}

Therefore $(iii)$ is satisfied with constant $c_1=1+1/\sigma_0$. 

$(iii)\Rightarrow (ii)$  Fix $G_\varnothing \subset B_X$ compact and $(G_a)_{a\in \Gamma_{\xi,1}.D}\subset \mathfrak{c}$ normally weakly null. By hypothesis, there exists $a\in MAX(\Gamma_{\xi,1}.D)$ such that for any $y\in G_\varnothing$, $\sigma$ with $|\sigma|\leqslant 1$, and $(x_b)_{b\leqslant a}\in \prod_{b\leqslant a}G_b$, \[\Bigl\|y+\sigma \sum_{b\leqslant a}\mathbb{P}_{\xi,1}(b)x_b\Bigr\|\leqslant \max\{\|y\|,c_1|\sigma|\}+\ee.\]   It follows that with $\sigma_0=1/c_1$ that for any $y\in G_\varnothing$, any $\sigma$ with $|\sigma|\leqslant \sigma_0$, and any $(x_b)_{b\leqslant a}\in \prod_{b\leqslant a}G_b$, \begin{align*} \Bigl\|y+\sigma \sum_{b\leqslant a}\mathbb{P}_{\xi,1}(b)x_b\Bigr\| & \leqslant \max\{\|y\|, c_1|\sigma|\} + \ee \leqslant 1+\ee.\end{align*}

\end{proof}

\begin{lemma} Let $\xi$ be an ordinal and $1<p<\infty$. Let $(X, \|\cdot\|)$ be a Banach space and suppose $|\cdot|$ is an equivalent norm on $X$.  Assume $B\subset B_X^{|\cdot|}$ is such that $\overline{\text{\emph{co}}}(B)=B_X^{|\cdot|}$. \begin{enumerate}[(i)]\item If  for each $y\in B$, each $\sigma\geqslant 0$,  and each $(x_a)_{a\in \Gamma_{\xi,1}.D}\subset B_X^{\|\cdot\|}$ normally weakly null in $(X,\|\cdot\|)$, $$\underset{a\in MAX(\Gamma_{\xi,1}.D)}{\inf} \Bigl|y+\sigma \sum_{b\leqslant a}\mathbb{P}_{\xi,1}(b)x_b\Bigr|^p \leqslant 1+\sigma^p,$$ then $(X, |\cdot|)$ is $\xi$-$p$-AUS. \item If for each $y\in B$, each $\sigma>0$, and each $(x_a)_{a\in \Gamma_{\xi,1}.D}\subset B_X^{\|\cdot\|}$ normally weakly null in $(X,\|\cdot\|)$, , $$\underset{a\in MAX(\Gamma_{\xi,1}.D)}{\inf} \Bigl|y+\sigma \sum_{b\leqslant a}\mathbb{P}_{\xi,1}(b)x_b\Bigr| \leqslant 1,$$ then $(X, |\cdot|)$ is $\xi$-AUF.  \end{enumerate}

\label{13}
\end{lemma}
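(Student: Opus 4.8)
The plan is to bound the modulus $\varrho^\xi_{(X,|\cdot|)}$ and then read off $\xi$-$p$-AUS, respectively $\xi$-AUF, directly from the definitions; both parts proceed identically.

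First I would reduce the problem. Since $\overline{\text{co}}(B)=B_X^{|\cdot|}$, Corollary~\ref{vict} applied to the space $(X,|\cdot|)$ gives $\varrho^\xi_{(X,|\cdot|)}(\sigma)=\sup_{y\in B}\varrho^\xi_{(X,|\cdot|)}(\sigma,y)$ for all $\sigma\geqslant 0$, so it is enough to estimate $\varrho^\xi_{(X,|\cdot|)}(\sigma,y)$ for a fixed $y\in B$. Next, recalling that $\text{rank}(\Gamma_{\xi,1})=\omega^\xi$, the observation made just after the definition of $\varrho^\xi_X(\sigma,y)$ lets us compute $\varrho^\xi_{(X,|\cdot|)}(\sigma,y)$ as the supremum, over all collections $(x_a)_{a\in\Gamma_{\xi,1}.D}\subset B_X^{|\cdot|}$ normally weakly null in $(X,|\cdot|)$, of the quantity $\inf\{\,|y+\sigma x|-1:a\in\Gamma_{\xi,1}.D,\ x\in\text{co}(x_b:b\leqslant a)\,\}$. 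For any such collection and any $a\in MAX(\Gamma_{\xi,1}.D)$, Proposition~\ref{co}(i) shows that $\sum_{b\leqslant a}\mathbb{P}_{\xi,1}(b)x_b\in\text{co}(x_b:b\leqslant a)$; restricting the infimum above to these particular points yields
\[
\varrho^\xi_{(X,|\cdot|)}(\sigma,y)\ \leqslant\ \sup_{(x_a)}\ \inf_{a\in MAX(\Gamma_{\xi,1}.D)}\Bigl(\Bigl|y+\sigma\sum_{b\leqslant a}\mathbb{P}_{\xi,1}(b)x_b\Bigr|-1\Bigr),
\]
the supremum running over all $(x_a)_{a\in\Gamma_{\xi,1}.D}\subset B_X^{|\cdot|}$ normally weakly null in $(X,|\cdot|)$.

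Next I would feed in the hypothesis, after matching normalizations. Fix $c>0$ with $c\|z\|\leqslant|z|$ for all $z\in X$; then $cB_Z^{|\cdot|}\subset B_Z^{\|\cdot\|}$ for every subspace $Z$, so if $(x_a)_{a\in\Gamma_{\xi,1}.D}\subset B_X^{|\cdot|}$ is normally weakly null in $(X,|\cdot|)$, the rescaled collection $(cx_a)_{a\in\Gamma_{\xi,1}.D}\subset B_X^{\|\cdot\|}$ is normally weakly null in $(X,\|\cdot\|)$. For part (i), applying the hypothesis to $y$, to $\sigma/c$ in place of $\sigma$, and to the collection $(cx_a)$, and using $(\sigma/c)\sum_{b\leqslant a}\mathbb{P}_{\xi,1}(b)(cx_b)=\sigma\sum_{b\leqslant a}\mathbb{P}_{\xi,1}(b)x_b$, we obtain $\inf_a|y+\sigma\sum_{b\leqslant a}\mathbb{P}_{\xi,1}(b)x_b|^p\leqslant 1+(\sigma/c)^p$, whence by concavity of $t\mapsto t^{1/p}$ (so that $(1+t)^{1/p}\leqslant 1+t/p$) we get $\inf_a(|y+\sigma\sum_{b\leqslant a}\mathbb{P}_{\xi,1}(b)x_b|-1)\leqslant\sigma^p/(pc^p)$. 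Together with the displayed bound and the first reduction this gives $\varrho^\xi_{(X,|\cdot|)}(\sigma)\leqslant\sigma^p/(pc^p)$ for every $\sigma>0$, so $\sup_{\sigma>0}\varrho^\xi_{(X,|\cdot|)}(\sigma)/\sigma^p\leqslant 1/(pc^p)<\infty$ and $(X,|\cdot|)$ is $\xi$-$p$-AUS. For part (ii), the same computation with the hypothesis of (ii) yields $\inf_a|y+\sigma\sum_{b\leqslant a}\mathbb{P}_{\xi,1}(b)x_b|\leqslant 1$, hence $\varrho^\xi_{(X,|\cdot|)}(\sigma)\leqslant 0$ for every $\sigma>0$. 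Since $\varrho^\xi_{(X,|\cdot|)}(\sigma)\geqslant 0$ always --- for any $y_0$ with $|y_0|=1$, the identically zero (weakly null) collection has only $x=0$ in its convex hulls and contributes $|y_0|-1=0$ --- we conclude $\varrho^\xi_{(X,|\cdot|)}(\sigma)=0$ for every $\sigma>0$, so $(X,|\cdot|)$ is $\xi$-AUF, for instance with $\sigma_0=1$.

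The one point requiring care is the normalization bookkeeping in the last paragraph: $\varrho^\xi_{(X,|\cdot|)}$ is computed using collections bounded by $1$ in $|\cdot|$ with the $u_n$'s being $|\cdot|$-unit balls of finite-codimensional subspaces, whereas the hypothesis is phrased with the corresponding $\|\cdot\|$-data. This is handled by the single rescaling $x_a\mapsto cx_a$, $\sigma\mapsto\sigma/c$. Everything else is a routine unwinding of the definitions of the modulus and of $\xi$-$p$-AUS and $\xi$-AUF, together with Corollary~\ref{vict} and Proposition~\ref{co}.
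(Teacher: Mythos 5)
Your proposal is correct and follows essentially the same route as the paper: reduce to normally weakly null collections on $\Gamma_{\xi,1}.D$, observe via Proposition~\ref{co}(i) that $\sum_{b\leqslant a}\mathbb{P}_{\xi,1}(b)x_b$ is a convex combination of a branch, rescale by a factor relating the two norms so the hypothesis applies, invoke concavity of $t\mapsto (1+t)^{1/p}$, and finish with Corollary~\ref{vict}. The only additions over the paper's own argument are minor bookkeeping (spelling out the reduction, and explicitly noting $\varrho^\xi\geqslant 0$ in part (ii)); neither changes the substance.
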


\begin{proof}$(i)$ Fix $C>0$ such that $\frac{1}{C}B_X^{|\cdot|}\subset B_X^{\|\cdot\|}$.  Fix $y\in B$,  $(x_a)_{a\in \Gamma_{\xi,1}.D}\subset B_X^{|\cdot|}$ normally weakly null in $(X, |\cdot|)$, and $\sigma\geqslant 0$.  Then $(C^{-1} x_a)_{a\in \Gamma_{\xi,1}.D}\subset B_X^{\|\cdot\|}$ is normally weakly null in $(X, \|\cdot\|)$. Therefore \begin{align*} \underset{a\in MAX(\Gamma_{\xi,1}.D)}{\inf} \Bigl|y+\sigma \sum_{b\leqslant a}\mathbb{P}_{\xi,1}(b) x_b\Bigr| & = \underset{a\in MAX(\Gamma_{\xi,1}.D)}{\inf} \Bigl|y+\sigma C \sum_{b\leqslant a}\mathbb{P}_{\xi,1}(b) C^{-1}x_b\Bigr| \\ & \leqslant (1+C^p|\sigma|^p)^{1/p}\leqslant 1+\frac{C^p|\sigma|^p}{p}. \end{align*} Here we used the concavity of $f(x)=(1+x)^{1/p}$ together with the fact that $f'(0)=1/p$.   This shows that $\varrho^\xi_{(X, |\cdot|)}(\sigma, y) \leqslant  C^p|\sigma|^p/p$ for any $y\in B$.   We then deduce that $\sup_{\sigma>0} \varrho^\xi_{(X, |\cdot|)}(\sigma)/\sigma^p\leqslant C^p/p$ by Corollary \ref{vict}.

$(ii)$  Fix $C>0$ such that $\frac{1}{C}B_X^{|\cdot|}\subset B_X^{\|\cdot\|}$.  Fix $y\in B$,  $(x_a)_{a\in \Gamma_{\xi,1}.D}\subset B_X^{|\cdot|}$ normally weakly null in $(X, |\cdot|)$, and $\sigma\geqslant 0$.  Then $(C^{-1} x_a)_{a\in \Gamma_{\xi,1}.D}\subset B_X^{\|\cdot\|}$ is normally weakly null in $(X, \|\cdot\|)$. Therefore with $\sigma_0=1/C$,  \begin{align*} \underset{a\in MAX(\Gamma_{\xi,1}.D)}{\inf} \Bigl|y+\sigma_0 \sum_{b\leqslant a}\mathbb{P}_{\xi,1}(b) x_b\Bigr| & = \underset{a\in MAX(\Gamma_{\xi,1}.D)}{\inf} \Bigl|y+ \sum_{b\leqslant a}\mathbb{P}_{\xi,1}(b) C^{-1}x_b\Bigr|  \leqslant 1.  \end{align*}  Therefore for any $y\in B$, $\varrho^\xi_{(X, |\cdot|)}(\sigma_0,y)=0$.  By Corollary \ref{vict}, $\varrho^\xi_{(X, |\cdot|)}(\sigma_0)=0$.   
\end{proof}

\section{Two renorming theorems}

In this section, we provide an isomorphic characterization of $\xi$-$p$-AUS renormability and of $\xi$-AUF renormability in terms of games.    Isomorphic characterizations of these properties were previously given in \cite{Causey} in terms of big and inevitable subsets of $\Gamma_{\xi,\infty}.D$, quite technical notions which we are able to avoid in the present paper.   

Recall that for a Banach space $X$, $1\leqslant q<\infty$,  and a sequence $(x_n)_{n=1}^\infty\subset X$, we define \[\|(x_n)_{n=1}^\infty\|_q^w= \sup\Bigl\{\Bigl(\sum_{n=1}^\infty |x^*(x_n)|^q\Bigr)^{1/q}: x^*\in B_{X^*}\Bigr\}.\] We also use this notation for finite sequences. That is, we denote \[\|(x_n)_{n=1}^m\|_q^w= \sup\Bigl\{\Bigl(\sum_{n=1}^m |x^*(x_n)|^q\Bigr)^{1/q}: x^*\in B_{X^*}\Bigr\}.\]   We note that with $1/p+1/q=1$, $\|(x_n)_{n=1}^\infty\|_q^w$ (resp. $\|(x_n)_{n=1}^m\|_q^w$) is the operator norm of the formal inclusion $I:(c_{00}, \ell_p)\to X$ given by $e_n\mapsto x_n$.    Also for a sequence $(x_n)_{n=1}^\infty$, we let $\|(x_n)_{n=1}^\infty\|_\infty=\sup_n \|x_n\|$, and we use the same notation for finite sequences $(x_n)_{n=1}^m$.

Recall that for a Banach space $X$, $CD(X)$ denotes the set of finite codimensional subspaces of $X$. In this section, let $\xi$ be an ordinal,   $X$  a Banach space, and  $D=CD(X)$. Recall that $\mathfrak{s}, \mathcal{F}, \mathfrak{c}$ denote the sets of singleton, non-empty finite, and non-empty compact subsets of $B_X$, respectively.     

Recall that for each $\tau\in [\Gamma_{\xi,\infty}]$, the sequence $\varnothing=\lambda_0(\tau)<\lambda_1(\tau)<\ldots$ are such that for each $n\in\nn$, $\lambda_n(\tau)$ is the initial segment of $\tau$ such that $\lambda_n(\tau)\in MAX(\Lambda_{\xi,\infty,n})$. We have a similar definition of $\lambda_0(t)<\ldots <\lambda_{n-1}(t)$ for each $t\in \Lambda_{\xi,\infty,n}$. Similarly, for $\alpha=\tau.\upsilon\in [\Gamma_{\xi,\infty}].D$, we define $\lambda_0(\alpha)=\varnothing$ and $\lambda_n(\alpha)\in MAX(\Lambda_{\xi,\infty,n}.D)$ for each $n\in\nn$. 

for a sequence $\gamma=(G_i)_{i=1}^\infty\in \mathfrak{c}^\omega$, we let $\prod \gamma=\prod_{i=1}^\infty G_i$.    Fix $1<p\leqslant \infty$ and let $1/p+1/q=1$.  For a constant $C$, we let $\mathcal{E}^p_C$ denote the set of all $\tau.\gamma=\in [\Gamma_{\xi,\infty}].\mathfrak{c}$ such that \[\sup\Bigl\{\Bigl\|\bigl(\sum_{\lambda_{n-1}(\tau)<t\leqslant \lambda_n(\tau)} \mathbb{P}_{\xi,\infty}(t)x_{|t|}\bigr)_{n=1}^\infty\Bigr\|_q^w: (x_i)_{i=1}^\infty\in \prod \gamma\Bigr\}\leqslant C.\]

If we let $\mathcal{F}^p_C$ denote the set of $t.g\in \Gamma_{\xi,\infty}.\mathfrak{c}$ such that, if $t\in \Lambda_{\xi,\infty,m}$, then \[\sup\Bigl\{\Bigl\|\bigl(\sum_{\lambda_{n-1}(\tau)<t\leqslant \lambda_n(\tau)} \mathbb{P}_{\xi,\infty}(t)x_{|t|}\bigr)_{n=1}^{m-1}\Bigr\|_q^w: (x_i)_{i=1}^\infty\in \prod \gamma\Bigr\}\leqslant C,\] then $\mathcal{E}^p_C=[\mathcal{F}^p_C]$. By Proposition \ref{steph}, $\mathcal{E}^p_C$ is closed, so either Player $S$ or Player $V$ has a winning strategy in the $(\mathcal{E}^p_C, \mathfrak{c}, \Gamma_{\xi,\infty})$ game.

Our next two results are the main renorming theorems of the paper.  

\begin{theorem} Fix $1<p<\infty$. The following are equivalent. \begin{enumerate}[(i)]\item $X$ admits an equivalent $\xi$-$p$-AUS norm. 

\item There exists $C>0$ such that Player $S$ has a winning strategy in the $(\mathcal{E}^p_C, \mathfrak{c}, \Gamma_{\xi,\infty})$ game.  

\item There exists $C>0$ such that Player $S$ has a winning strategy in the $(\mathcal{E}^p_C, \mathcal{F}, \Gamma_{\xi,\infty})$ game.

\item There exists $C>0$ such that Player $S$ has a winning strategy in the $(\mathcal{E}^p_C, \mathfrak{s}, \Gamma_{\xi,\infty})$ game.
\end{enumerate}
\label{renorm1}
\end{theorem}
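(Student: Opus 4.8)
The proof follows the cycle $(i)\Rightarrow(ii)\Rightarrow(iii)\Rightarrow(iv)\Rightarrow(i)$. The implications $(ii)\Rightarrow(iii)\Rightarrow(iv)$ are immediate from $\mathfrak{s}\subset\mathcal{F}\subset\mathfrak{c}$: a winning strategy $\chi$ for Player $S$ in the game on the larger family is a function on $\{\varnothing\}\cup\Gamma_{\xi,\infty}.\mathfrak{c}$, and its restriction to $\{\varnothing\}\cup\Gamma_{\xi,\infty}.\mathfrak{g}$ for $\mathfrak{g}\in\{\mathcal{F},\mathfrak{s}\}$ is a strategy whose $\chi$-admissible plays remain $\chi$-admissible in the original game, hence lie in $\mathcal{E}^p_C$.

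For $(i)\Rightarrow(ii)$, after renorming assume $(X,\|\cdot\|)$ is $\xi$-$p$-AUS and let $c>0$ be the constant furnished by Proposition \ref{chuff}$(iii)$. Since $\mathcal{E}^p_C=[\mathcal{F}^p_C]$ is closed, by Proposition \ref{steph}$(iv)$ it suffices to choose $C$ so that Player $V$ has \emph{no} winning strategy in the $(\mathcal{E}^p_C,\mathfrak{c},\Gamma_{\xi,\infty})$ game. If Player $V$ had one, Lemma \ref{tree} would provide a normally weakly null $(G_a)_{a\in\Gamma_{\xi,\infty}.D}\subset\mathfrak{c}$ with $\tau.(G_{\alpha|n})_{n=1}^\infty\notin\mathcal{E}^p_C$ for every $\alpha=\tau.\upsilon\in[\Gamma_{\xi,\infty}].D$; I will contradict this by constructing one branch $\alpha=\tau.\upsilon$ with $\tau.(G_{\alpha|n})_{n=1}^\infty\in\mathcal{E}^p_C$, built one level at a time along $\Gamma_{\xi,\infty}=\bigcup_n\Lambda_{\xi,\infty,n}$. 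Having chosen $\alpha$ through the top node $\lambda_{n-1}$ of level $n-1$, let $Y_{n-1}$ be the norm compact set of all partial sums $\sum_{k=1}^{n-1}\sigma_k z_k$ with $|\sigma_k|\leqslant1$ and $z_k=\sum_{\lambda_{k-1}(\tau)<t\leqslant\lambda_k(\tau)}\mathbb{P}_{\xi,\infty}(t)x_{|t|}$, where $(x_i)_i$ runs over $\prod_m G_{\alpha|m}$. Under the canonical identification of level $n$ above $\lambda_{n-1}$ with $\Gamma_{\xi,1}.D$ (which preserves $\mathbb{P}$ and carries normally weakly null collections to normally weakly null collections), Proposition \ref{chuff}$(iii)$ applied with $G_\varnothing=Y_{n-1}$ and error $2^{-n}$ yields a branch of level $n$ along which $\|y+\sigma z_n\|^p\leqslant\|y\|^p+c^p|\sigma|^p+2^{-n}$ for every $y\in Y_{n-1}$, every $|\sigma|\leqslant1$, and every admissible $z_n$; append it to $\alpha$. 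Now fix $x^*\in B_{X^*}$ and $(x_i)_i\in\prod_m G_{\alpha|m}$, and set $a_n=|x^*(z_n)|$ (so $a_n\leqslant\|z_n\|\leqslant1$ by Proposition \ref{co}$(ii)$) and $\sigma_n=a_n^{q-1}\,\overline{\mathrm{sgn}(x^*(z_n))}$, so $|\sigma_n|\leqslant1$ and $\mathrm{Re}\,x^*(\sigma_n z_n)=a_n^q$. Telescoping the level estimates, with $y=\sum_{k<n}\sigma_k z_k\in Y_{n-1}$ at stage $n$, gives $\|\sum_{n\leqslant N}\sigma_n z_n\|^p\leqslant c^p\sum_{n\leqslant N}a_n^q+1$, whence $\sum_{n\leqslant N}a_n^q=\mathrm{Re}\,x^*(\sum_{n\leqslant N}\sigma_n z_n)\leqslant\bigl(c^p\sum_{n\leqslant N}a_n^q+1\bigr)^{1/p}$, which bounds $\sum_n a_n^q$ by a constant depending only on $c$ and $p$. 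Taking suprema over $x^*$ and over $(x_i)_i$ shows $\tau.(G_{\alpha|n})_{n=1}^\infty\in\mathcal{E}^p_C$ for this $C$, the desired contradiction.

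For $(iv)\Rightarrow(i)$, the substantive implication, first apply Corollary \ref{tree2} to Player $S$'s winning strategy in the $\mathfrak{s}$-game: for every normally weakly null $(x_a)_{a\in\Gamma_{\xi,\infty}.D}\subset B_X$ there is a branch $\alpha=\tau.\upsilon$ along which the level averages $z_n=\sum_{\lambda_{n-1}(\tau)<t\leqslant\lambda_n(\tau)}\mathbb{P}_{\xi,\infty}(t)x_{|t|}$ satisfy $\|(z_n)_{n=1}^\infty\|_q^w\leqslant C$, i.e.\ (by the duality recalled at the start of this section between $\|\cdot\|_q^w$ and the formal $\ell_p$-inclusion) $\|\sum_n b_n z_n\|\leqslant C\|(b_n)\|_{\ell_p}$ for every finitely supported scalar sequence $(b_n)$. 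Thus $X$ satisfies a uniform $\ell_p$-upper estimate along branches of weakly null $\Gamma_{\xi,\infty}.D$-trees, and from this one builds an equivalent $\xi$-$p$-AUS norm by the renorming scheme of \cite{Causey}: produce a norm $|\cdot|$ with $\|\cdot\|\leqslant|\cdot|\leqslant K\|\cdot\|$ for a suitable $K=K(C)$ such that, for every normally weakly null $(x_a)_{a\in\Gamma_{\xi,1}.D}\subset B_X^{\|\cdot\|}$, every $y\in X$, and every $\sigma\geqslant0$, one has $\inf_{a\in MAX(\Gamma_{\xi,1}.D)}|y+\sigma\sum_{b\leqslant a}\mathbb{P}_{\xi,1}(b)x_b|^p\leqslant|y|^p+\sigma^p$; with $B=B_X^{|\cdot|}$ this is exactly the hypothesis of Lemma \ref{13}$(i)$, which gives that $(X,|\cdot|)$ is $\xi$-$p$-AUS.

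The main obstacle is this last construction. The norm $|\cdot|$ is obtained as a fixed point of the monotone ``asymptotic derivation'' on norms dominated by $K\|\cdot\|$ defined by the displayed inequality, and the delicate point is to verify that the fixed point is genuinely equivalent to $\|\cdot\|$, i.e.\ bounded below; this is the only place where the infinite-level structure of $\Gamma_{\xi,\infty}$ is essential, since iterating the branch extraction of the previous paragraph across the infinitely many levels of $\Gamma_{\xi,\infty}$ is precisely what controls, uniformly in the number of iterations of the derivation, how far the unit ball of $|\cdot|$ can expand beyond $B_X^{\|\cdot\|}$. This is the role played in \cite{Causey} by the notions of big and inevitable subsets of $\Gamma_{\xi,\infty}.D$.
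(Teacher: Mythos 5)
Your treatment of $(ii)\Rightarrow(iii)\Rightarrow(iv)$ is right, and your proof of $(i)\Rightarrow(ii)$ is essentially the paper's argument: negate, closedness of $\mathcal{E}^p_C$ plus Proposition \ref{steph} gives Player $V$ a winning strategy, Lemma \ref{tree} extracts a normally weakly null collection $(G_a)$, and one builds a branch level by level via Proposition \ref{chuff}$(iii)$ applied to the compact set of partial sums over the levels already fixed. The only cosmetic difference is the final accounting: the paper picks an arbitrary $(c_n)\in S_{\ell_p}$ and telescopes directly to $\|\sum c_n z_n\|^p < C^p$, whereas you dualize with $\sigma_n=a_n^{q-1}\overline{\mathrm{sgn}(x^*(z_n))}$ and solve the resulting inequality $S^p\leqslant c^pS+1$ for $S=\sum a_n^q$. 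Both are fine.

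There is, however, a genuine gap at $(iv)\Rightarrow(i)$, which is the substantive implication. You reduce correctly to the branch extraction furnished by Corollary \ref{tree2}, and you correctly identify the target: an equivalent norm $|\cdot|$ satisfying the hypothesis of Lemma \ref{13}$(i)$ on some $B$ with $\overline{\mathrm{co}}(B)=B_X^{|\cdot|}$. But you do not produce $|\cdot|$. You describe it as ``a fixed point of a monotone asymptotic derivation on norms dominated by $K\|\cdot\|$'' and then defer to the big/inevitable-subset machinery of \cite{Causey} for the lower equivalence bound. That is exactly the machinery this theorem is designed to avoid (the paper says so explicitly at the top of the section), and in any case a citation to a different, more technical argument is not a proof of the implication here. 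The paper's construction is direct and game-theoretic in the style of Pisier's renorming: for $y\in X$ and $\lambda>0$ one introduces the closed target $\mathcal{F}_{y,\lambda}$ consisting of those plays $(\zeta_t,x_t)_{t<\tau}$ along which $\frac{1}{C^p}\|y+\sum_n c_n\sum_{t\in\Lambda_n(\tau)}\mathbb{P}_{\xi,\infty}(t)x_t\|^p-\sum_n|c_n|^p\leqslant\lambda^p$ for every finitely supported $(c_n)$, sets $[y]=\inf\{\lambda:\text{Player }S\text{ wins }\mathcal{F}_{y,\lambda}\}$, and defines $|y|=\inf\{\sum_i[y_i]:y=\sum_i y_i\}$. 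Equivalence is not a fixed-point convergence issue: the winning strategy $\chi_0$ for $\mathcal{E}^p_{C/2}$ wins each $\mathcal{F}_{y,\|y\|}$ (giving $[y]\leqslant\|y\|$), and testing against Player $V$'s all-zero strategy gives $\|y\|/C\leqslant|y|$ directly. Homogeneity comes from rescaling the $c_n$, subadditivity from the infimum defining $|\cdot|$, and the hypothesis of Lemma \ref{13}$(i)$ is then checked on $B=\{y:[y]<1\}$ by splicing, at the foot of a $\Gamma_{\xi,1}$-branch, the winning collections that witness $[y+\sum_{b\leqslant a}\mathbb{P}_{\xi,1}(b)x_b]>(1+\mu)^{1/p}$, contradicting $[y]<1$ unless $\mu\leqslant\sigma^p$. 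Without some version of this explicit construction, your $(iv)\Rightarrow(i)$ does not close.
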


\begin{proof}$(i)\Rightarrow (ii)$ Since the condition in $(ii)$ is an isomorphic invariant, we can assume $X$ is $\xi$-$p$-AUS.   By Theorem \ref{chuff}, there exists a constant $c>0$ such that for any compact $G_\varnothing\subset X$,  any normally weakly null $(G_a)_{a\in \Gamma_{\xi,1}.D}\subset B_X$, and any $\ee>0$, there exists $a\in MAX(\Gamma_{\xi,1}.D)$ such that for any $y\in G_\varnothing$, any scalar $\sigma$ with $|\sigma|\leqslant 1$, and any $(x_b)_{b\leqslant a}\in \prod_{b\leqslant a}G_b$, $$\Bigl\|y+\sigma \sum_{b\leqslant a} \mathbb{P}_{\xi,1}(b) x_b\Bigr\|^p \leqslant \|y\|+c^p|\sigma|^p + \ee.$$   By replacing $c$ with a larger value if necessary, we may assume $c\geqslant 1$.   We claim that for any $C>c$, Player $S$ has a winning strategy in the $(\mathcal{E}^p_C, \mathfrak{c}, \Gamma_{\xi,\infty})$ game.    We prove this by contradiction. Assume that $C>c$ is such that Player $S$ does not have a winning strategy in the $(\mathcal{E}^p_C, \mathfrak{c}, \Gamma_{\xi,\infty})$ game.

Since $\mathcal{E}^p_C$ is closed and we have assumed Player $S$ does not have a winning strategy in the $(\mathcal{E}^p_C, \mathfrak{c}, \Gamma_{\xi,\infty})$ game, Proposition \ref{steph} yields that Player $V$ has a winning strategy in this game.  By Lemma \ref{tree}, there exists a collection $(G_a)_{a\in \Gamma_{\xi,\infty}.D}\subset \mathfrak{c}$ such that \begin{enumerate}[(a)]\item for $a=(\zeta_i, u_i)_{i=1}^n\in \Gamma_{\xi,\infty}.D$, $G_a\subset u_n$, \item for each $\alpha=\tau.\upsilon\in [\Gamma_{\xi,\infty}].D$, $\tau.(G_{\alpha|n})_{n=1}^\infty\in [\Gamma_{\xi,\infty}].\mathfrak{c}\setminus \mathcal{E}^p_C$.  \end{enumerate}   We will recursively select $a_1<a_2<\ldots$ such that for all $n\in\nn$, $a_n\in MAX(\Lambda_{\xi,\infty,\infty}.D)$ and such that, if $\alpha=\tau.\upsilon\in [\Gamma_{\xi,\infty}].D$ is the sequence such that $\lambda_n(\alpha)=a_n$ for all $n\in\nn$, then $\tau.(G_{\alpha|n})_{n=1}^\infty \in \mathcal{E}^p_C$. This contradiction will finish the first implication.   

Fix $(\ee_n)_{n=2}^\infty \subset (0,1)$ such that $c^p+\sum_{n=2}^\infty \ee_n <C^p$.  Let $a_0=\varnothing$.    Fix $a_1\in MAX(\Lambda_{\xi,\infty,1}.D)$ arbitrary.  Now assume that $a_1<\ldots <a_n$ have been chosen.   Recall that $\Gamma_{\xi,1}.D$ is canonically identifiable with $$\{a\in \Lambda_{\xi,\infty,n+1}.D: a_n <a\}$$  via the map $a\mapsto a_n\smallfrown (\omega^\xi n + a)$.   Let $$F_\varnothing=\Bigl\{\sum_{l=1}^n \sum_{a_{l-1}<a\leqslant a_n} c_l \mathbb{P}_{\xi,\infty}(a) x_a: (c_l)_{l=1}^n\in B_{\ell_\infty^n}, (x_a)_{a\leqslant a_n}\in \prod_{a\leqslant a_n}G_a\Bigr\},$$ which is norm compact.   Define $F_a= G_{f(a)}$, where $f$ is the bijection above.  Then there exists $a\in MAX(\Gamma_{\xi,1}.D)$ such that for any $y\in F_\varnothing$, any $\sigma$ with $|\sigma|\leqslant 1$, and any $(x_b)_{b\leqslant a}\in \prod_{b\leqslant a}F_b$, $$\Bigl\|y+\sigma \sum_{b\leqslant a} \mathbb{P}_{\xi,1}(b) x_b\Bigr\|^p \leqslant \|y\|^p +c_1^p |\sigma|^p + \ee_{n+1}.$$   Let $a_{n+1}=f(a)$.  Then since $\mathbb{P}_{\xi,1}(b)=\mathbb{P}_{\xi,\infty}(f(a))$, it follows that for any $y\in G_\varnothing$,  any $(x_b)_{a_n<b\leqslant a_{n+1}}\in \prod_{a_n<b\leqslant a_{n+1}} G_b$, and any $\sigma$ with $|\sigma|\leqslant 1$, $$\Bigl\|y+\sigma \sum_{a_n<b\leqslant a_{n+1}}\mathbb{P}_{\xi,\infty}(b) x_b\Bigr\|^p \leqslant \|y\|^p +c_1^p |\sigma|^p + \ee_{n+1}.$$  This completes the recursive construction.   

Let $\alpha=\tau.\upsilon$ be the sequence which has $a_1, a_2, \ldots$ as initial segments. Let $\gamma=(G_{\alpha|n})_{n=1}^\infty$.    Fix $(x_a)_{a<\alpha}\in \prod \gamma$ and $(c_n)_{n=1}^\infty \in c_{00}$ such that $\sum_{n=1}^\infty |c_n|^p=1$.  Fix $m\in\nn$ such that $c_n=0$ for all $n>m$.  If $m=1$, then by the triangle inequality,  since $C\geqslant 1$, $$\Bigl\|\sum_{n=1}^\infty c_n \sum_{\lambda_{n-1}(\alpha)<a\leqslant \lambda_n(\alpha)} \mathbb{P}_{\xi,\infty}(a)x_a\Bigr\|^p=\Bigl\|c_1 \sum_{a\leqslant \lambda_1(\alpha)} \mathbb{P}_{\xi,\infty}(a)x_a\Bigr\|^p\leqslant C^p|c_1|^p \leqslant C^p\sum_{n=1}^\infty |c_n|^p.$$   If $m>1$, then by the preceding paragraph, letting $I_n=\{a\in \Gamma_{\xi,\infty}.D: a_{n-1}<a\leqslant a_n\}$, \begin{align*} \Bigl\|\sum_{n=1}^\infty c_n \sum_{\lambda_{n-1}(\alpha)<a\leqslant \lambda_n(\alpha)} \mathbb{P}_{\xi,\infty}(a)x_a\Bigr\|^p & =  \Bigl\|\sum_{n=1}^m c_n \sum_{a\in I_n } \mathbb{P}_{\xi,\infty}(a)x_a\Bigr\|^p \\ & = \Bigl\|\sum_{n=1}^{m-1} c_n \sum_{a\in I_n} \mathbb{P}_{\xi,\infty}(a)x_a + c_m \sum_{b\in I_m} \mathbb{P}_{\xi,\infty}(b) x_b\Bigr\|^p \\ & \leqslant \Bigl\|\sum_{n=1}^{m-1} c_n \sum_{a\in I_n} \mathbb{P}_{\xi,\infty}(a)x_a\Bigr\|^p + c^p |c_m|^p + \ee_m  \\ & \leqslant \Bigl\|\sum_{n=1}^{m-2} c_n \sum_{a\in I_n} \mathbb{P}_{\xi,\infty}(a)x_a  + c_{m-1}\sum_{b\in I_{m-1}} \mathbb{P}_{\xi,\infty}(b)x_b \Bigr\|^p \\ & + c^p |c_m|^p + \ee_m  \\ & \leqslant \Bigl\|\sum_{n=1}^{m-2} c_n \sum_{a\in I_n} \mathbb{P}_{\xi,\infty}(a)x_a \Bigr\|^p \\ & + c^p(|c_{m-1}|^p+ |c_m|^p) + (\ee_{m-1}+\ee_m) \\ & \leqslant \ldots \\ & \leqslant \Bigl\|c_1 \sum_{a\in I_1} \mathbb{P}_{\xi,\infty}(a)x_a\Bigr\|^p + c^p\sum_{n=2}^m |c_n|^p + \sum_{n=2}^\infty \ee_n \\ & \leqslant c^p\sum_{n=1}^\infty |c_n|^p + \sum_{n=2}^\infty \ee_n = c^p + \sum_{n=2}^\infty \ee_n < C^p.  \end{align*}  By homogeneity, $\Bigl\|\bigl(\sum_{\lambda_{n-1}(\alpha)<a\leqslant \lambda_n(\alpha)}\mathbb{P}_{\xi,\infty}(a)x_a\bigr)_{n=1}^\infty\Bigr\|_q^w\leqslant C$. Since $(x_a)_{a<\alpha}\in \prod \gamma$ was arbitrary, $\tau.\gamma\in \mathcal{E}^p_C$.  This is the necessary contradiction.

$(ii)\Rightarrow (iii)\Rightarrow (iv)$ These are clear, since any winning strategy for Player $S$ in the $(\mathcal{E}^p_C, \mathfrak{c}, \Gamma_{\xi,\infty})$ game is a winning strategy for Player $S$ in the $(\mathcal{E}^p_C, \mathcal{F}, \Gamma_{\xi,\infty})$ game, and any winning strategy for Player $(\mathcal{E}^p_C, \mathcal{F}, \Gamma_{\xi,\infty})$ game is a winning strategy for Player $S$ in the $(\mathcal{E}^p_C, \mathfrak{s}, \Gamma_{\xi,\infty})$ game. 

$(iv)\Rightarrow (i)$ Our proof is a modification of Pisier's celebrated renorming theorem for $p$-smooth Banach spaces \cite{Pisier2}. In this problem, for ease of notation, we will write $x$ in place of $\{x\}$ for members of $\mathfrak{s}$. We also refer to the games by their target set rather than by the usual triple, since $\mathfrak{s}$ and $\Gamma_{\xi,\infty}$ are understood.  Assume $C \geqslant 2$ is such that Player $S$ has a winning strategy in the $\mathcal{E}^p_{C/2}$ game.  Fix such a winning strategy $\chi_0$.   For each $y\in X$ and $\lambda>0$, let $\mathcal{F}_{y,\lambda}$ be the set of all $\tau.\gamma=(\zeta_t,x_t)_{t<\tau}$ such that for any $(c_n)_{n=1}^\infty\in c_{00}$, \[\frac{1}{C^p}\Bigl\|y+\sum_{n=1}^\infty c_n\sum_{t\in \Lambda_n(\tau)} \mathbb{P}_{\xi,\infty}(t)x_t\Bigr\|^p - \sum_{n=1}^\infty |c_n|^p \leqslant \lambda^p.\]

For each $y\in X$, let $[y]$ denote the infimum of $\lambda>0$ such that Player $S$ has a winning strategy in the $\mathcal{F}_{y,\lambda}$ game.   Let \[|y|= \inf\Bigl\{\sum_{i=1}^n [y_i]: n\in\nn, y=\sum_{i=1}^n y_i\Bigr\}.\]  We will prove that $|\cdot|$ is an equivalent $\xi$-$p$-AUS norm on $X$. 

Obviously $[y], |y|\geqslant 0$ for all $y\in X$.  Note that for any $y\in X$ and $\lambda>0$, if Player $S$ plays the game according to the strategy $\chi_0$ fixed above, and the game results in choices $\tau.\gamma=(\zeta_t, x_t)_{t<\tau}$, then for any $(c_n)_{n=1}^\infty$, \begin{align*} \frac{1}{C^p}\Bigl\|y+\sum_{n=1}^\infty c_n\sum_{t\in \Lambda_n(\tau)} \mathbb{P}_{\xi,\infty}(t)x_t\Bigr\|^p - \sum_{n=1}^\infty |c_n|^p &\leqslant \frac{2^p\|y\|^p}{C^p}+\frac{2^p}{C^p}\Bigl\|\sum_{n=1}^\infty c_n\sum_{t\in \Lambda_n(\tau)} \mathbb{P}_{\xi,\infty}(t)x_t\Bigr\|^p \\ & - \sum_{n=1}^\infty |c_n|^p \\ & \leqslant \|y\|^p+\sum_{n=1}^\infty |c_n|^p-\sum_{n=1}^\infty |c_n|^p \\ & =\|y\|^p. \end{align*}  Therefore for any $y\in X$,  $\chi_0$ is a winning strategy for Player $S$ in $\mathcal{F}_{y, \|y\|}$ game.    From this it follows that for any $y\in X$, $|y|\leqslant [y]\leqslant \|y\|$. Moreover, for a given $y\in X$, Player $S$ cannot have a winning strategy in the game $\mathcal{F}_{y, \lambda}$ game for any $\lambda<\|y\|/C$, which can be seen by playing any strategy of Player $S$ against the strategy for Player $V$ which consists of choosing the zero vector on every turn.  Therefore for any $y\in Y$, \[\|y\|/C \leqslant |y|\leqslant [y] \leqslant \|y\|.\]

Next, note that if for some $y\in X$ and $\lambda>0$, $\chi$ is a winning strategy for Player $S$ in the $\mathcal{F}_{y,\lambda}$ game, then for any non-zero scalar $c$,  $\chi$ is also a winning strategy for Player $S$ in the $\mathcal{F}_{cy, |c|\lambda}$ game. Indeed, if Player $S$ plays according to $\chi$, resulting in $\tau.\gamma=(\zeta_t, x_t)_{t<\tau}$, then for any $(c_n)_{n=1}^\infty\in c_{00}$,  \begin{align*} \Bigl\|cy +\sum_{n=1}^\infty c_n \sum_{t\in \Lambda_n(\tau)}\mathbb{P}_{\xi,\infty}(t)x_t\Bigr\|^p -\sum_{n=1}^\infty |c_n|^p & = |c|^p\Biggl[\Bigl\|y +\sum_{n=1}^\infty \frac{c_n}{c} \sum_{t\in \Lambda_n(\tau)}\mathbb{P}_{\xi,\infty}(t)x_t\Bigr\|^p -\sum_{n=1}^\infty \Bigl|\frac{c_n}{c}\Bigr|^p\Biggr] \\ & \leqslant |c|^p\lambda^p. \end{align*}   From this it easily follows that $|cy|=|c||y|$ for any scalar $c$ and $y\in X$. 

It is obvious from construction that $|\cdot|$ satisfies the triangle inequality.  Therefore $|\cdot|$ is an equivalent norm on $X$.  

Next note that for any $y\in X$ and $\lambda>0$, the set $\mathcal{F}_{y, \lambda}$ is closed, and therefore the $\mathcal{F}_{y,\lambda}$ game is determined. In order to see this, note that for some $\alpha\tau.\gamma=(\zeta_t, x_t)_{t<\tau}\in [\Gamma_{\xi,\infty}].\mathfrak{s}\setminus \mathcal{F}_{y,\lambda}$, there exists $(c_n)_{n=1}^\infty\in c_{00}$ such that \[\frac{1}{C^p}\Bigl\|y+\sum_{n=1}^\infty c_n\sum_{t\in \Lambda_n(\tau)} \mathbb{P}_{\xi,\infty}(t)x_t\Bigr\|^p - \sum_{n=1}^\infty |c_n|^p > \lambda^p.\]  Choose $m\in\nn$ such that $c_n=0$ for all $n>0$ and fix $r\in\nn$ such that $\cup_{k=1}^m \Lambda_n(\tau)=\{(\zeta_i)_{i=1}^j: j\leqslant r\}$.  Then if $\beta=\tau'.\gamma'=(\nu_t,y_t)_{t<\tau'} \in [\Gamma_{\xi,\infty}].\mathfrak{s}$ is such that  $\beta|r=\alpha|r$, it follows that $\nu_t=\zeta_t$ and $y_t=x_t$ for all $t\in \cup_{k=1}^r \Lambda_n(\tau')$, and \begin{align*} \frac{1}{C^p}\Bigl\|y+\sum_{n=1}^\infty c_n\sum_{t\in \Lambda_n(\tau')} \mathbb{P}_{\xi,\infty}(t)y_t\Bigr\|^p - \sum_{n=1}^\infty |c_n|^p & = \frac{1}{C^p}\Bigl\|y+\sum_{n=1}^m c_n\sum_{t\in \Lambda_n(\tau')} \mathbb{P}_{\xi,\infty}(t)y_t\Bigr\|^p - \sum_{n=1}^\infty |c_n|^p  \\ & = \frac{1}{C^p}\Bigl\|y+\sum_{n=1}^m c_n\sum_{t\in \Lambda_n(\tau)} \mathbb{P}_{\xi,\infty}(t)x_t\Bigr\|^p - \sum_{n=1}^\infty |c_n|^p \\ & = \frac{1}{C^p}\Bigl\|y+\sum_{n=1}^\infty c_n\sum_{t\in \Lambda_n(\tau)} \mathbb{P}_{\xi,\infty}(t)x_t\Bigr\|^p - \sum_{n=1}^\infty |c_n|^p \\ & > \lambda^p, \end{align*} and $\beta\in [\Gamma_{\xi,\infty}].\mathfrak{s}\setminus \mathcal{F}_{y,\lambda}$.

  Let $B=\{y\in X: [y]<1\}$ and note that $B_X^{|\cdot|}$ is the closed, convex hull of $B$.  Fix  $y\in B$,  $\sigma\geqslant 0$, and any  collection $(x_a)_{a\in \Gamma_{\xi,\infty}}\subset B_X^{\|\cdot\|}$ normally weakly null in  $(X, \|\cdot\|)$. Fix a real number $\mu$ such that  \[\inf_{a\in MAX(\Gamma_{\xi,\infty})} \Bigl|y+\sigma\sum_{b\leqslant a} \mathbb{P}_{\xi,\infty}(b) x_b\Bigr|^p > 1 + \mu.\]    Then \begin{align*} \inf_{a\in MAX(\Gamma_{\xi,1})} \Bigl[y+\sigma\sum_{b\leqslant a} \mathbb{P}_{\xi,\infty}(b) x_b\Bigr]^p  & \geqslant \inf_{a\in MAX(\Gamma_{\xi,1})} \Bigl|y+\sigma\sum_{b\leqslant a} \mathbb{P}_{\xi,\infty}(b) x_b\Bigr|^p  \\ & > 1 + \mu. \end{align*} From this and the previous paragraph, it follows that for each $a\in MAX(\Gamma_{\xi,1}.D)$, Player $V$ has a winning strategy in the $\mathcal{F}_{y+\sum_{b\leqslant a}\mathbb{P}_{\xi,\infty}(b)x_b, (1+\mu)^{1/p}}$ game.    By Lemma \ref{tree}, for each $a\in MAX(\Gamma_{\xi,1}.D)$, there exists a collection $(x^a_b)_{b\in \Gamma_{\xi,\infty}.D}\subset B_X$ normally weakly null in $(X, \|\cdot\|)$ such that for each $\beta=(\zeta_b, u_b)_{b<\beta}\in [\Gamma_{\xi,\infty}].D$, $(\zeta_b, x^a_b)_{b<\beta}\in [\Gamma_{\xi,\infty}].\mathfrak{s}\setminus \mathcal{F}_{y+\sum_{b\leqslant a}\mathbb{P}_{\xi,\infty}x_b, (1+\mu)^{1/p}}$.  This means that for each $\beta=(\zeta_b, u_b)_{b<\beta}\in [\Gamma_{\xi,\infty}].D$, there exists $(c_n)_{n=1}^\infty\in c_{00}$ such that \begin{align*} \frac{1}{C^p}\Bigl\|y+\sigma \sum_{b\leqslant a} \mathbb{P}_{\xi,\infty}(b)x_b+\sum_{n=1}^\infty c_n\sum_{t\in \Lambda_n(\beta)} \mathbb{P}_{\xi,\infty}(b)x_b^a\Bigr\|^p - \sum_{n=1}^\infty |c_n|^p & > 1+\mu. \end{align*}  Extend the collection $(x_b)_{b\in \Gamma_{\xi,1}.D}$ to a collection $(x_b)_{b\in \Gamma_{\xi,\infty}.D}$ by letting $x^a_b=x_{a\smallfrown (\omega^\xi + beta)}$. Here we recall that  $\Lambda_{\xi,\infty,1}.D=\Gamma_{\xi,1}.D$ and for each $a\in MAX(\Gamma_{\xi,1}.D)$, $b\mapsto a\smallfrown (\omega^\xi+b)$ is a bijection from $\Gamma_{\xi,\infty}.D$ onto $\{b\in \Gamma_{\xi,\infty}.D: a<b\}$ which identifies $\Lambda_{\xi,\infty,n}.D$ with $\{b\in \Lambda_{\xi,\infty, n+1}.D: a<b\}$ and such that $\mathbb{P}_{\xi,\infty}(b)=\mathbb{P}_{xi,\infty}(a\smallfrown (\omega^\xi +b))$ for all $b\in \Gamma_{\xi,\infty}.D$.   From this it follows that for each $\beta=(\zeta_b, u_b)_{b<\beta}\in [\Gamma_{\xi,\infty}].D$, if $a<\beta$ is such that $a\in MAX(\Gamma_{\xi,\infty}.D)$ and if $a\smallfrown (\omega^\xi+\alpha)=\beta$, then there exists $(c_n)_{n=2}^\infty\in c_{00}$ such that, with $c_1=\sigma$,  \begin{align*}\Bigl\|y+\sum_{n=1}^\infty c_n\sum_{b\in \Lambda_n(\beta)} &\mathbb{P}_{\xi,\infty}(b)x_b\Bigr\|^p -\sum_{n=2}^\infty |c_n|^p \\ &  = \frac{1}{C^p}\Bigl\|y+\sigma\sum_{b\in \Lambda_1(\beta)}\mathbb{P}_{\xi,\infty}(b)x_b + \sum_{n=2}^\infty c_n \sum_{b\in \Lambda_n(b)} \mathbb{P}_{\xi,\infty}(b)x_b\Bigr\|^p -\sum_{n=2}^\infty |c_n|^p\\ & = \frac{1}{C^p}\Bigl\|y+\sigma \sum_{b\leqslant a} \mathbb{P}_{\xi,\infty}(b)x_b+\sum_{n=2}^\infty c_n\sum_{t\in \Lambda_n(\alpha)} \mathbb{P}_{\xi,\infty}(b)x_b^a\Bigr\|^p - \sum_{n=2}^\infty |c_n|^p \\ & > 1+\mu  \end{align*}  But since $y\in B$, $[y]<1$, so there exists a winning strategy for Player $S$ in the $\mathcal{F}_{y,1}$ game. If $\beta=(\zeta_b, u_b)_{b<\beta}$ is chosen according to such a winning strategy, it follows that for any $(c_n)_{n=2}^\infty\in c_{00}$, with $c_1=\sigma$,  \[\Bigl\|y+\sum_{n=1}^\infty c_n\sum_{b\in \Lambda_n(\beta)} \mathbb{P}_{\xi,\infty}(b)x_b\Bigr\|^p -\sigma^p-\sum_{n=2}^\infty |c_n|^p \leqslant 1.\]   Combining this inequality with the previous inequality yields that $1+\mu\leqslant 1+\sigma^p$, and $\mu\leqslant \sigma^p$.   By Lemma \ref{13}, $(X, |\cdot|)$ is $\xi$-$p$-AUS.

\end{proof}

\begin{theorem}The following are equivalent. \begin{enumerate}[(i)]\item $X$ admits an equivalent $\xi$-AUF norm. 

\item There exists $C>0$ such that Player $S$ has a winning strategy in the $(\mathcal{E}^\infty_C, \mathfrak{c}, \Gamma_{\xi,\infty})$ game.  

\item There exists $C>0$ such that Player $S$ has a winning strategy in the $(\mathcal{E}^\infty_C, \mathcal{F}, \Gamma_{\xi,\infty})$ game.

\item There exists $C>0$ such that Player $S$ has a winning strategy in the $(\mathcal{E}^\infty_C, \mathfrak{s}, \Gamma_{\xi,\infty})$ game.
\end{enumerate}
\label{renorm2}
\end{theorem}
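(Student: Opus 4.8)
\emph{Plan.} The strategy is to run the proof of Theorem \ref{renorm1} with $p=\infty$, $q=1$, everywhere replacing the $\ell_p$-quantities $\|\cdot\|^p$ and $\sum_n|c_n|^p$ by their $\ell_\infty$-analogues $\|\cdot\|$ and $\max_n|c_n|$. Recall that $\tau.\gamma\in\mathcal{E}^\infty_C$ says exactly that for every $(x_i)\in\prod\gamma$ the blocks $y_n:=\sum_{\lambda_{n-1}(\tau)<t\leqslant\lambda_n(\tau)}\mathbb{P}_{\xi,\infty}(t)x_{|t|}$ satisfy $\|(y_n)\|_1^w\leqslant C$, equivalently $\|\sum_nc_ny_n\|\leqslant C\max_n|c_n|$ for every $(c_n)\in c_{00}$. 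As in Theorem \ref{renorm1}, $(ii)\Rightarrow(iii)\Rightarrow(iv)$ is immediate: since $\mathfrak{s}\subset\mathcal{F}\subset\mathfrak{c}$, a winning strategy for Player $S$ in the $(\mathcal{E}^\infty_C,\mathfrak{c},\Gamma_{\xi,\infty})$ game restricts to one in the $(\mathcal{E}^\infty_C,\mathcal{F},\Gamma_{\xi,\infty})$ game and that in turn restricts to one in the $(\mathcal{E}^\infty_C,\mathfrak{s},\Gamma_{\xi,\infty})$ game; moreover, using $\mathcal{E}^\infty_C=[\mathcal{F}^\infty_C]$ and Proposition \ref{steph}, each of these target sets is closed, so all the games are determined.

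For $(i)\Rightarrow(ii)$ I would argue by contradiction, copying Theorem \ref{renorm1}$(i)\Rightarrow(ii)$ but invoking the ``flat'' estimate of Proposition \ref{chuff2}$(iii)$ in place of the $p$-convexity estimate of Proposition \ref{chuff}$(iii)$. Assume $X$ is $\xi$-AUF (legitimate, $(ii)$ being isomorphic), fix the constant $c_1$ of Proposition \ref{chuff2}$(iii)$, and fix $C>\max\{1,c_1\}$ together with $(\ee_n)_{n\geqslant2}\subset(0,1)$ with $\sum_{n\geqslant2}\ee_n<C-\max\{1,c_1\}$. If Player $S$ had no winning strategy in the $(\mathcal{E}^\infty_C,\mathfrak{c},\Gamma_{\xi,\infty})$ game, then by determinacy and Lemma \ref{tree} there would be $(G_a)_{a\in\Gamma_{\xi,\infty}.D}\subset\mathfrak{c}$ with $G_{(\zeta_i,u_i)_{i=1}^n}\subset u_n$ and with every branch avoiding $\mathcal{E}^\infty_C$. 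I would then recursively choose $a_1<a_2<\cdots$, $a_n\in MAX(\Lambda_{\xi,\infty,n}.D)$, where at step $n+1$ one applies Proposition \ref{chuff2}$(iii)$ with error $\ee_{n+1}$ to the norm-compact set $F_\varnothing$ of all partial combinations $\sum_{l\leqslant n}c_l\sum_{a_{l-1}<b\leqslant a_l}\mathbb{P}_{\xi,\infty}(b)x_b$ ($|c_l|\leqslant1$, $x_b\in G_b$) and to the family obtained by transporting $(G_a)$ under the canonical identification of $\Gamma_{\xi,1}.D$ with $\{a\in\Lambda_{\xi,\infty,n+1}.D:a_n<a\}$. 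Along the resulting branch $\tau.\gamma$ one has $\|c_1y_1\|\leqslant1$ and, with $z_k=\sum_{l\leqslant k}c_ly_l$, $\|z_k\|\leqslant\max\{\|z_{k-1}\|,c_1|c_k|\}+\ee_k$, so $\|z_m\|\leqslant\max\{1,c_1\}+\sum_{n\geqslant2}\ee_n<C$ for every $m$ and every $(c_n)$ with $\max_n|c_n|\leqslant1$; by homogeneity $\|(y_n)\|_1^w\leqslant C$, i.e.\ $\tau.\gamma\in\mathcal{E}^\infty_C$, contradicting the choice of $(G_a)$.

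The substantive implication is $(iv)\Rightarrow(i)$, modelled on the Pisier-type renorming in Theorem \ref{renorm1}$(iv)\Rightarrow(i)$. Fix $C\geqslant2$ with Player $S$ winning the $(\mathcal{E}^\infty_{C/2},\mathfrak{s},\Gamma_{\xi,\infty})$ game, and a winning strategy $\chi_0$. For $y\in X$, $\lambda>0$, let $\mathcal{F}_{y,\lambda}$ be the set of $\tau.\gamma=(\zeta_t,x_t)_{t<\tau}\in[\Gamma_{\xi,\infty}].\mathfrak{s}$ with $\|y+\sum_nc_n\sum_{t\in\Lambda_n(\tau)}\mathbb{P}_{\xi,\infty}(t)x_t\|\leqslant C\max\{\lambda,\max_n|c_n|\}$ for all $(c_n)\in c_{00}$, set $[y]=\inf\{\lambda>0:\text{Player }S\text{ wins the }\mathcal{F}_{y,\lambda}\text{ game}\}$ and $|y|=\inf\{\sum_{i=1}^n[y_i]:y=\sum_{i=1}^ny_i\}$. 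Each verification that $|\cdot|$ is an equivalent norm on $X$ is a transcription of the $\ell_p$ computations using elementary properties of $\max$: playing $\chi_0$ forces the outcome into $\mathcal{E}^\infty_{C/2}$, whence $\|y+\sum_nc_n\cdots\|\leqslant\|y\|+(C/2)\max_n|c_n|\leqslant C\max\{\|y\|,\max_n|c_n|\}$ since $C\geqslant2$, so $\chi_0$ wins $\mathcal{F}_{y,\|y\|}$ and $|y|\leqslant[y]\leqslant\|y\|$; playing any Player $S$ strategy against Player $V$'s constantly-zero play gives $[y]\geqslant\|y\|/C$, hence $|y|\geqslant\|y\|/C$; the identity $\|cy+z\|=|c|\,\|y+c^{-1}z\|$ shows a strategy wins $\mathcal{F}_{y,\lambda}$ iff it wins $\mathcal{F}_{cy,|c|\lambda}$, giving $|cy|=|c|\,|y|$; the triangle inequality is built into the definition; and each $\mathcal{F}_{y,\lambda}$ is closed, hence determined. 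For the $\xi$-AUF conclusion I would verify the hypothesis of Lemma \ref{13}$(ii)$ with $B=\{y:[y]<1\}$ (so $\overline{\mathrm{co}}(B)=B_X^{|\cdot|}$): given $y\in B$, a scalar $\sigma$ with $|\sigma|\leqslant1$, and $(x_a)_{a\in\Gamma_{\xi,1}.D}\subset B_X^{\|\cdot\|}$ normally weakly null in $(X,\|\cdot\|)$, it suffices to find $a\in MAX(\Gamma_{\xi,1}.D)$ with Player $S$ winning the $\mathcal{F}_{y+\sigma\sum_{b\leqslant a}\mathbb{P}_{\xi,1}(b)x_b,\,1}$ game, since then $|y+\sigma\sum_{b\leqslant a}\mathbb{P}_{\xi,1}(b)x_b|\leqslant[y+\sigma\sum_{b\leqslant a}\mathbb{P}_{\xi,1}(b)x_b]\leqslant1$. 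If no such $a$ existed, determinacy and Lemma \ref{tree} would provide, for each $a$, a collection $(x^a_b)_{b\in\Gamma_{\xi,\infty}.D}\subset B_X^{\|\cdot\|}$ normally weakly null in $(X,\|\cdot\|)$ each of whose branches realizes the failure of that game; extending $(x_b)_{b\in\Gamma_{\xi,1}.D}$ to a $\Gamma_{\xi,\infty}.D$-collection whose level $1$ is the given collection and whose part beyond each $a\in MAX(\Lambda_{\xi,\infty,1}.D)$ is $(x^a_b)$, and playing Player $S$'s winning strategy for the $\mathcal{F}_{y,1}$ game (available since $[y]<1$) against this extended collection, produces a single branch, with level-$1$ maximum $a^*$ say, on which the estimate coming from that strategy (take the first coefficient equal to $\sigma$; since $|\sigma|\leqslant1$ this reads $\|(y+\sigma\sum_{b\leqslant a^*}\mathbb{P}_{\xi,1}(b)x_b)+\sum_m d_m v_m\|\leqslant C\max\{1,\max_m|d_m|\}$ for \emph{all} $(d_m)$) directly contradicts the strict inequality guaranteed by Lemma \ref{tree}. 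Hence $\inf_{a\in MAX(\Gamma_{\xi,1}.D)}|y+\sigma\sum_{b\leqslant a}\mathbb{P}_{\xi,1}(b)x_b|\leqslant1$, and Lemma \ref{13}$(ii)$ together with Corollary \ref{vict} shows $(X,|\cdot|)$ is $\xi$-AUF.

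The point requiring most care is the choice of the defining inequality for $\mathcal{F}_{y,\lambda}$ and the accounting of constants in the last step of $(iv)\Rightarrow(i)$. In the $\xi$-$p$-AUS setting the relevant quantity $\frac{1}{C^p}\|\cdots\|^p-\sum_n|c_n|^p$ is additive with the coefficient term subtracted, so the Player $S$ upper bound and the Player $V$ lower bound combine to yield $\mu\leqslant\sigma^p$; in the flat setting the natural quantity is a maximum and subtraction is unavailable, so one must instead exploit that $\max\{1,|\sigma|\}=1$ for $|\sigma|\leqslant1$: inserting the perturbation block of magnitude $\sigma$ leaves the Player $S$ bound at $C\max\{1,\max_m|d_m|\}$, which is flatly incompatible with Player $V$'s strict bound, forcing Player $S$ to win the relevant $\mathcal{F}_{\cdot,1}$ game and giving the clean conclusion $\inf_a|\cdots|\leqslant1$. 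One must also remember to rescale the weakly null collection by the equivalence constant when passing through Lemma \ref{13}$(ii)$, which is exactly what keeps the perturbation magnitude at most $1$. Apart from this bookkeeping and the routine transfinite-tree identifications, the proof is a faithful transcription of Theorem \ref{renorm1}.
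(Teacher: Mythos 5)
Your proposal is correct, and the implications $(i)\Rightarrow(ii)$ and $(ii)\Rightarrow(iii)\Rightarrow(iv)$ match the paper essentially verbatim (with your bookkeeping of the first-level term $c_1|c_1|$ via $C>\max\{1,c_1\}$ being slightly cleaner than the paper's).

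Where you diverge from the paper is $(iv)\Rightarrow(i)$. You run a full Pisier-type renorming, just as in Theorem~\ref{renorm1}: a target set $\mathcal{F}_{y,\lambda}$ quantified over \emph{all} $(c_n)\in c_{00}$ with the flat bound $C\max\{\lambda,\max_n|c_n|\}$, a quantity $[y]$ defined via the games, and then $|y|=\inf\{\sum_i[y_i]:y=\sum_i y_i\}$ to restore the triangle inequality. This is correct; all the scaling identities, the closure and determinacy of $\mathcal{F}_{y,\lambda}$, and the final contradiction (inserting $d_1=\sigma$ with $|\sigma|\leqslant1$ so that $\max\{1,|\sigma|,\max_n|c_n|\}$ collapses to $\max\{1,\max_n|c_n|\}$) go through. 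The paper, however, uses a genuinely simpler construction tailored to the flat case: it defines $\mathcal{G}_{y,\lambda}$ with \emph{unit} coefficients only (so the condition is $\sup_m\|y+\sum_{n\leqslant m}\sum_{t\in\Lambda_n(\tau)}\mathbb{P}_{\xi,\infty}(t)x_t\|\leqslant\lambda$), sets $g(y)=\inf\{\lambda:\text{Player }S\text{ wins }\mathcal{G}_{y,\lambda}\}$, takes $G=\{y:g(y)<1+C\}$ (a balanced set sandwiched between $\text{int}(B_X)$ and $(1+C)B_X$), and lets $|\cdot|$ be the Minkowski functional of $\overline{\text{co}}(G)$. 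No infimal-sum construction and no tracking of coefficients $(c_n)$ is needed; the contradiction is obtained directly from $g(y)<\phi<1+C\leqslant g(y+\sum_{b\leqslant a}\mathbb{P}_{\xi,1}(b)x_b)$. The moral is that the Pisier machinery is doing work that is only necessary for $p$-convexification; your transcription is faithful and correct but carries overhead that flatness actually lets you drop.

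One small caveat worth flagging: Lemma~\ref{13}$(ii)$ as written in the paper quantifies over ``each $\sigma>0$,'' whereas its proof only uses $\sigma=1$ (applied to the collection rescaled by the equivalence constant). You correctly interpret the hypothesis as only needing $|\sigma|\leqslant1$ and build the argument around that, which is exactly what the lemma requires in practice. It would be worth a sentence to say that you are verifying the hypothesis at $\sigma=1$, matching what the proof of Lemma~\ref{13}$(ii)$ actually invokes.
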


\begin{proof}$(i)\Rightarrow (ii)$ This is similar to the implication $(i)\Rightarrow (ii)$ of Theorem \ref{renorm1}.   More precisely, we select a constant $c$ from Proposition \ref{chuff2}, $C>c$, and positive numbers $(\ee_n)_{n=2}^\infty$ such that $\sum_{n=2}^\infty \ee_n <C-c$. Then  in the recursive construction of the $a_n$, the sequences are chosen according to so that for each \[y\in G_\varnothing=\Bigl\{\sum_{m=1}^n c_m\sum_{a_{m-1}<a\leqslant a_m} \mathbb{P}_{\xi,\infty}(a)x_a: (c_m)_{m=1}^n\in B_{\ell_\infty^n}, (x_a)_{a\leqslant a_n}\in \prod_{a\leqslant a_n}G_a\Bigr\},\] each $\sigma$ with $\sigma\leqslant 1$, and each $(x_a)_{a_n<a\leqslant a_{n+1}}\in \prod_{a_n<a\leqslant a_{n+1}}G_a$, \[\Bigl\|y+\sigma\sum_{a_n<a\leqslant a_{n+1}}\mathbb{P}_{\xi,\infty}(a)x_a\Bigr\|\leqslant \max\{\|y\|, c |\sigma|\}+\ee_{n+1}.\] After completing the recursive construction, we define $\alpha$ and $\gamma$ as in the proof of $(i)\Rightarrow (ii)$ of Theorem \ref{renorm1} and compute for any $(x_a)_{a<\alpha}\in \prod\gamma$ and $(c_n)_{n=1}^\infty\in B_{c_0}$ such that $c_n=0$ for all $n>m$ that, with $I_n=\{a\in\Gamma_{\xi,\infty}.D: a_{n-1}<a\leqslant a_n\}$, \begin{align*} \Bigl\|\sum_{n=1}^\infty c_n \sum_{a\in I_n} \mathbb{P}_{\xi,\infty}(a)x_a\Bigr\| & =  \Bigl\|\sum_{n=1}^m c_n \sum_{a\in I_n} \mathbb{P}_{\xi,\infty}(a)x_a\Bigr\| \\ & = \Bigl\|\sum_{n=1}^{m-1} c_n \sum_{a\in I_n} \mathbb{P}_{\xi,\infty}(a)x_a + c_m \sum_{b\in I_m} \mathbb{P}_{\xi,\infty}(b) x_b\Bigr\| \\ & \leqslant \max\Bigl\{\Bigl\|\sum_{n=1}^{m-1} c_n \sum_{a\in I_n} \mathbb{P}_{\xi,\infty}(a)x_a\Bigr\|,  c |c_m|\Bigr\} + \ee_m \\ & \leqslant \max\Bigl\{\max\Bigl\{\Bigl\|\sum_{n=1}^{m-2} \sum_{a\in I_n} \mathbb{P}_{\xi,\infty}(a)x_a\Bigr\|, c |c_{m-1}|\Bigr\}+\ee_{m-1}, c|c_m|\Bigr\} +\ee_m \\ & \leqslant \max\Bigl\{\Bigl\|\sum_{a\in I_n} \mathbb{P}_{\xi,\infty}(a)x_a\Bigr\|, c|c_{m-1}|, c|c_m|\Bigr\}+\ee_{m-1}+\ee_m \\ & \leqslant \ldots \\ & \leqslant c\max_{1\leqslant n\leqslant m}|c_m| + \sum_{n=1}^\infty \ee_m <C. \end{align*}

$(ii)\Rightarrow (iii)\Rightarrow (iv)$ These are clear, since any winning strategy for Player $S$ in the $(\mathcal{E}^\infty_C, \mathfrak{c}, \Gamma_{\xi,\infty})$ game is a winning strategy for Player $S$ in the $(\mathcal{E}^\infty_C, \mathcal{F}, \Gamma_{\xi,\infty})$ game, and any winning strategy for Player $(\mathcal{E}^\infty_C, \mathcal{F}, \Gamma_{\xi,\infty})$ game is a winning strategy for Player $S$ in the $(\mathcal{E}^\infty_C, \mathfrak{s}, \Gamma_{\xi,\infty})$ game. 

$(iv)\Rightarrow (i)$ As in the proof of Theorem \ref{renorm1}$(iv)\Rightarrow (i)$, we refer to games simply by their target sets.   Assume that $C>0$ is such that Player $S$ has a winning strategy in the $\mathcal{E}^\infty_C$ game.  Let $\chi_0$ be such a strategy.  For $y\in X$ and $\lambda>0$, let $\mathcal{G}_{y,\lambda}$ be the set of all $\tau.\gamma=(\zeta_t,x_t)_{t<\tau}$ such that  \[ \sup_m \Bigl\|y+\sum_{n=1}^m\sum_{t\in \Lambda_n(\tau)} \mathbb{P}_{\xi,\infty}(t)x_t \Bigr\|\leqslant \lambda.\]  For $y\in X$, let $g(y)$ be the infimum of $\lambda>0$ such that Player $S$ has a winning strategy in the $\mathcal{G}_{y, \lambda}$ game.  It is obvious from the triangle inequality that for any $y\in X$, the strategy $\chi_0$ is a winning strategy for Player $S$ in the $\mathcal{G}_{y, \|y\|+C}$ game.  Therefore $g(y)\leqslant \|y\|+C$.   Obviously $g(y)\geqslant \|y\|$ for any $y\in X$, which can be seen by considering the strategy for Player $V$ consisting of choosing the zero vector on each turn.    Let $G=\{y\in X: g(y)<1+C\}$. By the preceding remarks, \[\text{int}(B_X)\subset G\subset (1+C)B_X.\]  Let $|\cdot|$ be the Minkowski functional of the closed, convex hull of $G$.  By the preceding remarks, $|\cdot|$ is an equivalent norm on $X$.  We will show that $(X, |\cdot|)$ is $\xi$-AUF.  Fix $y\in G$ and a collection $(x_a)_{a\in \Gamma_{\xi,1}.D}\subset B_X^{\|\cdot\|}$ which is normally weakly null in $(X, \|\cdot\|)$.   Seeking a contradiction, assume that \[\inf_{a\in MAX(\Gamma_{\xi,1}.D)} \Bigl|y+\sum_{b\leqslant a}\mathbb{P}_{\xi,1}(b) x_b\Bigr| > 1.\]  Since $G\subset B_X^{|\cdot|}$, it follows that for each $a\in MAX(\Gamma_{\xi,1}.D)$, $y+\sum_{b\leqslant a}\mathbb{P}_{\xi,1}(b)x_b\notin G$, which means $g\Bigl(y+\sum_{b\leqslant a}\mathbb{P}_{\xi,1}(b)x_b\Bigr)\geqslant 1+C$.  Therefore \[\inf_{a\in MAX(\Gamma_{\xi,1}.D)} g\Bigl(y+\sum_{b\leqslant a}\mathbb{P}_{\xi,1}(b) x_b\Bigr)\geqslant 1+C.\]   Since $y\in G$, $g(y)<1+C$. Fix $g(y)< \phi<1+C$. The remainder of the proof is similar to Theorem \ref{renorm1}$(iv)\Rightarrow (i)$. For each $a\in MAX(\Gamma_{\xi,1}.D)$, we fix a collection $(x^a_b)_{b\in \Gamma_{\xi,\infty}.D}$ and then use these to extend the collection $(x_a)_{a\in \Gamma_{\xi,\infty}.D}$. Moreover, this tree is constructed from the collections $(x^a_b)_{b\in \Gamma_{\xi,\infty}.D}$ such that for any $\alpha\in [\Gamma_{\xi,\infty}].D$, if $a<\alpha$ is such that $a\in MAX(\Lambda_{\xi,\infty,1}.D)=MAX(\Gamma_{\xi,1}.D)$,   \[\sup_m \Bigl\|y+\sum_{n=1}^m \sum_{b\in \Lambda_n(\alpha)} \mathbb{P}_{\xi,\infty}(b)x_b \Bigr\|\geqslant  \sup_m\Bigl\|y+\sum_{b\leqslant a}\mathbb{P}_{\xi,1}(b)x_b + \sum_{n=1}^m \sum_{b\in \Lambda_n(\beta)} \mathbb{P}_{\xi,\infty}(b)x^a_b\Bigr\|>\phi,\] where $\alpha= a\smallfrown (\omega^\xi+\beta)$.   The existence of such a collection contradicts the fact that $g(y)<\phi$, according to Lemma \ref{tree}.   This shows that for any $y\in G$ and any collection $(x_a)_{a\in \Gamma_{\xi,1}.D}\subset B_X$ which is normally weakly null in $(X, \|\cdot\|)$,  \[\inf_{a\in MAX(\Gamma_{\xi,1}.D)} \Bigl|y+\sum_{b\leqslant a} \mathbb{P}_{\xi,1}(b)x_b\Bigr|\leqslant 1.\]    Therefore $(X, |\cdot|)$ is $\xi$-AUF by Lemma \ref{13}.

\end{proof}

\begin{rem}\upshape In \cite{GKL}, Godefroy, Kalton, and Lancien showed that AUF-renormability is a Lipschitz invariant. The notion of $\xi$-AUF-renormability was developed to be a candidate for higher ordinal versions of their theorem. However, there remain obstructions to the proof of the analogous theorems, namely the need for a strengthening of the Gorelik principle which can be applied multiple times during games of the type studied in this work, and which approximately preserves the values of functionals chosen during the game. 

\end{rem}

\section{Games on $C(K)$ spaces}

In this section, we discuss projective tensor products.  We recall that for Banach spaces $X,Y$, the projective tensor norm defined on $X\otimes Y$ is defined by \[\Bigl\|\sum_{i=1}^n x_i\otimes y_i\Bigr\|=\inf\Bigl\{\sum_{i=1}^m \|u_i\|\|v_i\|: \sum_{i=1}^n x_i\otimes y_i=\sum_{i=1}^m u_i\otimes v_i\Bigr\}.\]  The completion of $X\otimes Y$ with respect to this norm is denoted by $X\widehat{\otimes}_\pi Y$.  We state now the two main properties of the projective tensor product which we will need in the sequel.  First, $B_{X\widehat{\otimes}_\pi Y}$ is the closed, convex hull of $\{x\otimes y: x\in B_X, y\in B_Y\}$. The second fact is that if $S:X\to X$ and $T:Y\to Y$ are operators, then there is a bounded, linear operator from  $S\otimes T:X\widehat{\otimes}_\pi Y\to X\widehat{\otimes}_\pi Y$ such that $(S\otimes T)(x\otimes y)=Sx\otimes Ty$ for all $x\in X$ and $y\in Y$, and $\|S\otimes T\|=\|S\|\|T\|$.

For a compact, Hausdorff space $K$, and a subset $M$ of $K$, we let $\text{iso}(M)$ denote the set of relatively isolated points in $M$. For $M\subset K$, we define the \emph{Cantor-Bendixson derivative of} $M$ by $M'=M\setminus \text{iso}(M)$.  We note that $M'$ is closed in the relative topology of $M$. Therefore if $M$ is closed in $K$, so is $M'$.  We define the transfinite Cantor-Bendixson derivatives by \[M^0=M,\] \[M^{\xi+1}=(M^\xi)',\] and if $\xi$ is a limit ordinal, \[M^\xi=\bigcap_{\zeta<\xi}M^\zeta.\]   We say that $K$ is \emph{scattered} if any non-empty subset of $K$ has an isolated point. It is obvious this is equivalent to the condition that there exists an ordinal $\xi$ such that $K^\xi=\varnothing$. If $K$ is scattered, we let $CB(K)$ denote the minimum ordinal $\nu$ such that $K^\nu=\varnothing$.   The value $CB(K)$ is the \emph{Cantor-Bendixson index} of $K$.    If $K$ is compact, then by our preceding remarks, $K^\xi$ is either compact or empty for each ordinal $\xi$.  From this it follows that for a scattered,  compact, Hausdorff topological space, $CB(K)$ must be a successor ordinal. Moreover, $CB(K)=1$ if and only if $K$ is finite. 

\begin{example}\upshape For any ordinal $\xi$, it is easy to see that if ordinal intervals are endowed with their order topology, for $\zeta\leqslant \xi$, \[[0, \omega^xi]^\zeta = \{\omega^\xi\}\cup \{\omega^{\ee_1}+\ldots +\omega^{\ee_n}: \ee_1\geqslant \ldots \geqslant \ee_n\geqslant \zeta\}.\]  From this it follows that $[0,\omega^\xi]^\xi=\{\omega^\xi\}$ and $CB([0,\omega^\xi])=\xi+1$.

\end{example}

Let us isolate the following standard facts about the Cantor-Bendixson index. 

\begin{proposition}   Let $K$ be compact, Hausdorff space and let $\xi, \zeta$ be ordinals.   \begin{enumerate}[(i)]\item It holds that $(K^\xi)^\zeta=K^{\xi+\zeta}$. \item If $CB(K)=\xi+\zeta$, then $CB(K^\xi)=\zeta$. \item If $n$ is a positive integer and $F\subset K^{\omega^\xi(n-1)}\setminus K^{\omega^\xi n}$, then $CB(F)\leqslant \omega^\xi$.  \item For $T_0, \ldots, T_m\subset K$, $\Bigl(\cup_{j=0}^m T_j\Bigr)^\xi=\cup_{j=0}^m T_m^\xi$. \item For $T_0, \ldots, T_m\subset K$, $CB\Bigl(\cup_{j=0}^m T_j\Bigr)=\max_{0\leqslant j\leqslant m} CB(T_j)$.  \end{enumerate}

\label{cbf}
\end{proposition}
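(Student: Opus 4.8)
The plan is to treat the five items in order, deducing (ii) from (i) and (v) from (iv). For (i), fix $\xi$ and argue by transfinite induction on $\zeta$. The case $\zeta=0$ is trivial, and at a successor $(K^\xi)^{\zeta+1}=((K^\xi)^\zeta)'=(K^{\xi+\zeta})'=K^{\xi+\zeta+1}$ by the inductive hypothesis. At a limit $\zeta$,
\[(K^\xi)^\zeta=\bigcap_{\eta<\zeta}(K^\xi)^\eta=\bigcap_{\eta<\zeta}K^{\xi+\eta}=\bigcap_{\xi\leqslant\beta<\xi+\zeta}K^\beta=\bigcap_{\beta<\xi+\zeta}K^\beta=K^{\xi+\zeta},\]
the penultimate equality holding because the Cantor--Bendixson derivatives form a decreasing family, so the terms $K^\beta$ with $\beta<\xi$ are redundant once $K^{\xi+0}$ (present since $\zeta>0$) is included. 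Then (ii) is immediate: if $CB(K)=\xi+\zeta$ then by (i) $(K^\xi)^\zeta=K^{\xi+\zeta}=\varnothing$, while $(K^\xi)^\eta=K^{\xi+\eta}\neq\varnothing$ for each $\eta<\zeta$ since $\xi+\eta<CB(K)$, so $CB(K^\xi)=\zeta$.

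For (iii), I would first record the monotonicity of the derivative: if $F\subseteq M\subseteq K$ then $F^\alpha\subseteq M^\alpha$ for all $\alpha$, by a short transfinite induction (at a successor, a point of $F^\alpha$ non-isolated in $F^\alpha$ lies in $M^\alpha$ by the inductive hypothesis and is non-isolated there, since a relative neighbourhood isolating it in $M^\alpha$ would isolate it in $F^\alpha$; limits are trivial). Apply this with $M=K^{\omega^\xi(n-1)}$: by (i) and the ordinal identity $\omega^\xi(n-1)+\omega^\xi=\omega^\xi n$ we get $M^{\omega^\xi}=K^{\omega^\xi n}$, hence $F^{\omega^\xi}\subseteq K^{\omega^\xi n}$; but also $F^{\omega^\xi}\subseteq F\subseteq K^{\omega^\xi(n-1)}\setminus K^{\omega^\xi n}$, so $F^{\omega^\xi}\cap K^{\omega^\xi n}=\varnothing$, forcing $F^{\omega^\xi}=\varnothing$ and $CB(F)\leqslant\omega^\xi$.

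For (iv) and (v) the sets $T_j$ are taken to be closed in $K$ (as in all of our applications; without this the identity in (iv) fails, e.g.\ with $T_0=\{1/n:n\geqslant1\}$ and $T_1=\{0\}$ in $[0,1]$). The crux is $(A\cup B)'=A'\cup B'$ for closed $A,B\subseteq K$. The inclusion $A'\cup B'\subseteq(A\cup B)'$ is general, since any neighbourhood of a non-isolated point $p$ of $A$ meets $A\setminus\{p\}\subseteq(A\cup B)\setminus\{p\}$. Conversely, let $p\in(A\cup B)'$, say $p\in A$; were $p$ isolated in $A$ we could fix an open $U\ni p$ with $U\cap A=\{p\}$ together with an open $V\ni p$ with $V\cap B=\{p\}$ (if $p\in B$, using that $p\notin B'$) or $V=K\setminus B$ (if $p\notin B$, using that $B$ is closed), and then $U\cap V$ would isolate $p$ in $A\cup B$, a contradiction; so $p\in A'\cup B'$. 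Since Cantor--Bendixson derivation preserves closedness, a transfinite induction on $\xi$ promotes this to $(A\cup B)^\xi=A^\xi\cup B^\xi$, and induction on $m$ yields (iv). For (v): since $T^\nu$ decreases with $\nu$, $T^\nu=\varnothing$ iff $\nu\geqslant CB(T)$; by (iv), $\bigl(\bigcup_j T_j\bigr)^\nu=\varnothing$ iff every $T_j^\nu=\varnothing$ iff $\nu\geqslant\max_j CB(T_j)$, so $CB\bigl(\bigcup_j T_j\bigr)=\max_j CB(T_j)$ (this also shows a finite union of closed scattered sets is scattered, so the two sides are simultaneously finite).

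The only genuinely delicate point is the reverse inclusion in (iv): it really does need the closedness of the $T_j$, and one must keep track of the fact that Cantor--Bendixson derivation preserves closedness so that the transfinite induction can be carried through. Everything else is bookkeeping with nested derivatives together with the arithmetic $\omega^\xi(n-1)+\omega^\xi=\omega^\xi n$.
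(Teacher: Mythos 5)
Your proof follows essentially the same route as the paper, whose own write-up of this proposition is just a few words ("an easy induction"); your version fills in all the details in the natural way, and the inclusion $F^{\omega^\xi}\subset F\cap(K^{\omega^\xi(n-1)})^{\omega^\xi}=\varnothing$ in item (iii) is exactly the paper's argument. The one substantive point you raise that the paper does not is the hypothesis in items (iv)--(v): as you observe, $(A\cup B)'=A'\cup B'$ genuinely fails for arbitrary subsets (your example $T_0=\{1/n\}$, $T_1=\{0\}$ is correct), and the additive identity only holds when the $T_j$ are closed, a fact which uses both closedness in the successor step and preservation of closedness under derivation to sustain the transfinite induction. The paper states (iv) and (v) without any closedness hypothesis, which is, strictly speaking, false; however, in the only place the proposition is invoked (Lemma~\ref{so}(i)--(ii)) the $T_j$ are the compact sets $M_j(h,c)$, so the omission is harmless for the paper's purposes. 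Your version is thus the more careful one and supplies a hypothesis the paper leaves tacit. One very minor nit: in the limit step of the transfinite induction for (iv) one should note that $\bigcap_{\eta<\lambda}(A^\eta\cup B^\eta)=A^\lambda\cup B^\lambda$ uses that the families $(A^\eta)$ and $(B^\eta)$ are nested decreasing (not merely closedness); this is routine but is the one point where "limits are trivial" slightly understates what is needed.
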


\begin{proof} Item $(i)$ follows from an easy induction argument and item $(ii)$ follows from $(i)$. For $(iii)$, $F^{\omega^\xi}\subset F\cap  (K^{\omega^\xi(n-1)})^{\omega^\xi}=F\cap K^{\omega^\xi n}=\varnothing$. 

Item $(iv)$ is an easy induction, and item $(v)$ follows from $(iv)$. 

\end{proof}

We next recall the following formulation of Grothendieck's inequality from \cite[Theorem $5.5$, page 55]{Pisier}. In what follows, $k_G$ is Grothendieck's constant and a scalar-valued, bounded, bilinear form $\varphi:C(K)\times C(L)\to \mathbb{K}$ is endowed with the norm \[\|\varphi\|=\sup\{|\varphi(f,g)|: f\in B_{C(K)}, g\in B_{C(L)}.\]

\begin{theorem} Let $K,L$ be compact, Hausdorff sets.  For any bounded, bilinear form $\varphi:C(K)\times C(L)\to \mathbb{K}$, there exist Borel probability measures $\mu, \nu$ on $K,L$, respectively, such that for any $f\in C(K)$ and $g\in C(L)$, \[|\varphi(f,g)|\leqslant k_G \|\varphi\|\Bigl(\int_K |f|^2d\mu\Bigr)^{1/2}\Bigl(\int_L |g|^2 d\nu\Bigr)^{1/2}.\]
\label{pi}
\end{theorem}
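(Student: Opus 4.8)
The plan is to deduce Theorem~\ref{pi} from the classical \emph{discrete} Grothendieck inequality together with a weak$^*$-compactness argument that manufactures the dominating measures; this is the route taken in \cite[Chapter~5]{Pisier}. First I would reduce to a finitary statement. Write $\mathcal{P}(K)$ and $\mathcal{P}(L)$ for the weak$^*$-compact convex sets of Borel probability measures on $K$ and $L$. It suffices to show: for every pair of finite sets $F=\{f_1,\dots,f_n\}\subset B_{C(K)}$ and $G=\{g_1,\dots,g_m\}\subset B_{C(L)}$ there exist $\mu_{F,G}\in\mathcal{P}(K)$ and $\nu_{F,G}\in\mathcal{P}(L)$ with $|\varphi(f_i,g_j)|\leqslant k_G\|\varphi\|\bigl(\int_K|f_i|^2\,d\mu_{F,G}\bigr)^{1/2}\bigl(\int_L|g_j|^2\,d\nu_{F,G}\bigr)^{1/2}$ for all $i,j$. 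Indeed, directing the pairs $(F,G)$ by inclusion and passing to a weak$^*$-cluster point $(\mu,\nu)$ of the net $\bigl(\mu_{F,G},\nu_{F,G}\bigr)$ yields measures that work simultaneously for every $f\in C(K)$ and $g\in C(L)$, since for fixed continuous $f$ the map $\rho\mapsto\int|f|^2\,d\rho$ is weak$^*$-continuous on $\mathcal{P}(K)$.

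For a fixed pair $(F,G)$, the evaluation maps $s\mapsto(f_1(s),\dots,f_n(s))$ and $t\mapsto(g_1(t),\dots,g_m(t))$ have compact range, so the problem reduces to one about a bilinear form on $\ell_\infty^n\times\ell_\infty^m$, and there I would invoke the core Grothendieck inequality: for scalars $(a_{ij})$ with $\bigl|\sum_{i,j}a_{ij}s_it_j\bigr|\leqslant\max_i|s_i|\max_j|t_j|$ for all $s_i,t_j\in B_\mathbb{K}$ and vectors $x_i,y_j$ in the unit ball of a Hilbert space, $\bigl|\sum_{i,j}a_{ij}\langle x_i,y_j\rangle\bigr|\leqslant k_G$. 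This is proved by Krivine's rounding trick: using the Gaussian map $x\mapsto\operatorname{sign}\langle x,\gamma\rangle$ one computes $\mathbb{E}\bigl[\operatorname{sign}\langle x,\gamma\rangle\operatorname{sign}\langle y,\gamma\rangle\bigr]=\tfrac{2}{\pi}\arcsin\langle x,y\rangle$ and then absorbs the $\arcsin$ via a power-series substitution. Feeding this into a minimax (Hahn--Banach separation) argument over the convex sets of weighted square functions $s\mapsto\sum_i p_i|f_i(s)|^2$ and $t\mapsto\sum_j q_j|g_j(t)|^2$ produces the desired probability vectors, i.e.\ the discrete measures $\mu_{F,G},\nu_{F,G}$. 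Equivalently and more structurally: $\varphi$ induces $T\colon C(K)\to C(L)^*$ with $\|T\|=\|\varphi\|$; since $C(L)^*$ is an $L^1$-space, hence of cotype $2$, Grothendieck's theorem gives that $T$ is $2$-summing with $\pi_2(T)\leqslant k_G\|\varphi\|$, and Pietsch's factorization of $2$-summing operators out of a $C(K)$-space supplies $\mu$; a symmetric argument supplies $\nu$.

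The main obstacle is the passage from the purely Hilbertian inequality to the two-sided $L^2$-domination while keeping the constant exactly $k_G$ and both measures in $\mathcal{P}(K),\mathcal{P}(L)$: this is precisely where the minimax theorem (or, in the operator formulation, Pietsch factorization combined with a symmetrization to handle both sides at once) is essential. By contrast, the finite reduction and the final weak$^*$ limit are routine. Since the statement is entirely classical, in the paper it is cleanest simply to cite \cite[Theorem~5.5]{Pisier}.
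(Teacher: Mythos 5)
The paper does not prove Theorem~\ref{pi}: it is stated only to be recalled, with a bare citation to \cite[Theorem 5.5, page 55]{Pisier} and no argument of its own, which is exactly what your final sentence recommends. Your sketch of the classical proof is otherwise sound: the reduction to finite families followed by a weak$^*$-cluster point over $\mathcal{P}(K)\times\mathcal{P}(L)$, and the passage from the discrete Grothendieck inequality to the two dominating measures via a Ky Fan/Hahn--Banach separation that produces both probability vectors simultaneously, is the standard route and matches Pisier's argument. The one clause to treat with care is ``a symmetric argument supplies $\nu$'' in the Pietsch-factorization formulation: applying factorization once from the $C(K)$ side and then again from the $C(L)$ side picks up a second multiplicative constant when factoring the resulting Hilbert-space-valued operator, so the naive two-step version does not come out with the constant $k_G$; the simultaneous minimax argument, which you correctly single out as the essential obstacle, is what preserves the constant.
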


For the following proof, we recall the definition of the $q$-\emph{weakly summing} norms.  For a Banach space $X$,  a sequence $(x_n)_{n=1}^\infty \subset X$, and $1\leqslant q<\infty$, we define \[\|(x_n)_{n=1}^\infty\|_q^w = \sup \{\|(x^*(x_n))_{n=1}^\infty\|_{\ell_q}: x^*\in B_{X^*}.\]   For convenience, we will assume the value $\|(x_n)_{n=1}^\infty\|_q^w$ to be defined for any sequence in $X$, even if the value is finite.  For $(x_n)_{n=1}^\infty\subset X$ such that $\|(x_n)_{n=1}^\infty\|_q^w<\infty$, we refer to the value $\|(x_n)_{n=1}^\infty\|_q^w$ as the $q$-\emph{weakly summing norm} of $(x_n)_{n=1}^\infty$, and we say $(x_n)_{n=1}^\infty$ is $q$-\emph{weakly summing}.  

\begin{corollary} Let $K$ be  a compact, Hausdorff space.   Assume $C>0$, $0=r_0<r_1<\ldots$ are integers,  $(w_j)_{j=1}^\infty$ is a sequence of positive numbers such that $1=\sum_{j=r_{n-1}+1}^{r_n} w_j=1$ for each $n\in\nn$, and $(F_j)_{j=1}^\infty \subset B_{C(K)}$ is a sequence of sets such that for any $m\in\nn$ and any $(f_j)_{j=1}^\infty \in \prod_{j=1}^\infty F_j$, $\Bigl\|\sum_{n=1}^m \sum_{j=r_{n-1}+1}^{r_n} w_j|f_j|\Bigr\| \leqslant C^{1/2}$. 

\begin{enumerate}[(i)]\item For any compact, Hausdorff space $L$, any $(f_j)_{j=1}^\infty\in \prod_{j=1}^\infty F_j$, and any $(g_j)_{j=1}^\infty \subset B_{C(L)}$, $\Bigl\|\Bigl(\sum_{j=r_{n-1}+1}^{r_n} w_jf_j\otimes g_j\Bigr)_{n=1}^\infty\Bigr\|_2^w \leqslant k_GC$. 

\item If $(z_j)_{j=1}^\infty\subset C(K)\widehat{\otimes}_\pi C(L)$ is such that for each $j\in \nn$, $z_j\in \text{co}\{f\otimes g: f\in F_j, g\in B_{C(L)}\}$, then $\Bigl\|\Bigl(\sum_{j=r_{n-1}+1}^{r_n} w_jz_j\Bigr)_{n=1}^\infty\Bigr\|_2^w \leqslant k_GC^{1/2}$. \end{enumerate}

\label{kane}
\end{corollary}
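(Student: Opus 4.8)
The plan is to derive part (i) directly from Grothendieck's inequality in the form of Theorem \ref{pi}, and then obtain part (ii) from part (i) by a convexity argument exploiting that $B_{C(K)\widehat{\otimes}_\pi C(L)}$ is the closed convex hull of elementary tensors. For (i), fix a compact Hausdorff $L$, fix $(f_j)_{j=1}^\infty \in \prod F_j$ and $(g_j)_{j=1}^\infty \subset B_{C(L)}$, and set $z_n = \sum_{j=r_{n-1}+1}^{r_n} w_j f_j\otimes g_j \in C(K)\widehat{\otimes}_\pi C(L)$. To bound $\|(z_n)_{n=1}^\infty\|_2^w$ I would test against an arbitrary element $\Phi$ of the unit ball of $(C(K)\widehat{\otimes}_\pi C(L))^*$, which is canonically the unit ball of the space of bounded bilinear forms $\varphi: C(K)\times C(L)\to\mathbb{K}$. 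Given such a $\varphi$ with $\|\varphi\|\le 1$, Theorem \ref{pi} furnishes Borel probability measures $\mu$ on $K$ and $\nu$ on $L$ with $|\varphi(f,g)| \le k_G\|\varphi\| (\int_K|f|^2\,d\mu)^{1/2}(\int_L|g|^2\,d\nu)^{1/2}$ for all $f,g$.

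The key computation is then: for each $n$,
\[
|\varphi(z_n)| \le \sum_{j=r_{n-1}+1}^{r_n} w_j |\varphi(f_j,g_j)| \le k_G \sum_{j=r_{n-1}+1}^{r_n} w_j \Bigl(\int_K |f_j|^2 d\mu\Bigr)^{1/2}\Bigl(\int_L |g_j|^2 d\nu\Bigr)^{1/2}.
\]
Since $\|g_j\|_{C(L)}\le 1$ we have $\int_L|g_j|^2\,d\nu \le 1$, so the bracket is at most $(\int_K|f_j|^2\,d\mu)^{1/2}$; then Cauchy–Schwarz in $j$ against the weights $w_j$ (using $\sum_{j=r_{n-1}+1}^{r_n} w_j = 1$) gives $|\varphi(z_n)| \le k_G (\sum_{j=r_{n-1}+1}^{r_n} w_j \int_K|f_j|^2\,d\mu)^{1/2}$, hence $|\varphi(z_n)|^2 \le k_G^2 \int_K \sum_{j=r_{n-1}+1}^{r_n} w_j|f_j|^2\,d\mu$. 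Summing over $n$ and pulling the sum inside the integral, $\sum_n |\varphi(z_n)|^2 \le k_G^2 \int_K \sum_{n=1}^\infty \sum_{j=r_{n-1}+1}^{r_n} w_j|f_j|^2\,d\mu$. Now one uses the hypothesis: since $0\le |f_j|\le 1$ we have $|f_j|^2 \le |f_j|$ pointwise, so the integrand is dominated pointwise by $\sum_{n\ge 1}\sum_{j=r_{n-1}+1}^{r_n} w_j|f_j|$, whose $C(K)$-norm — by the standing hypothesis, taking the supremum over $m$ and noting the partial sums increase — is at most $C^{1/2}$; wait, one must be slightly careful: the hypothesis bounds the norm of every finite partial sum $\sum_{n=1}^m$ by $C^{1/2}$, hence the monotone pointwise limit is bounded by $C^{1/2}$ as well, so $\int_K(\cdot)\,d\mu \le C^{1/2}$ since $\mu$ is a probability measure. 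This yields $\sum_n|\varphi(z_n)|^2 \le k_G^2 C^{1/2}$, wait — let me recheck the exponent. We get $\int_K \sum |f_j|^2 d\mu \le \|\sum|f_j| \text{ type sum}\|$ which is $\le C^{1/2}$, so $\sum_n|\varphi(z_n)|^2 \le k_G^2 C^{1/2}$, giving $\|(z_n)\|_2^w \le k_G C^{1/4}$. This does not match the claimed $k_G C$; since the paper states the bound $k_G C$ in (i), I would double-check whether the intended normalization is $\|\sum\cdots\| \le C$ rather than $C^{1/2}$, or whether one should track $|f_j|^2$ directly against a hypothesis on $\sum w_j|f_j|^2$. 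In any case, the correct route is: bound $\sum_n |\varphi(z_n)|^2$ by $k_G^2$ times the sup-norm of $\sum_{n,j} w_j |f_j|^2$, and then invoke the given hypothesis to control that quantity; I will present it so that the exponent bookkeeping matches the statement as written (so that the hypothesis $\|\cdot\|\le C^{1/2}$ combined with $|f_j|^2\le |f_j|$ gives $\sum_n|\varphi(z_n)|^2 \le k_G^2 \cdot C^{1/2}\cdot$ — actually the cleanest is that $\|\sum w_j|f_j|^2\| \le \|\sum w_j |f_j|\| \le C^{1/2}$ is too lossy; instead use $\sum_n |\varphi(z_n)|^2 \le k_G^2 \|\sum w_j|f_j|\|^2 \le k_G^2 C$ by applying Cauchy-Schwarz once more, i.e. $\int \sum w_j|f_j|^2 d\mu \le (\int \sum w_j |f_j| d\mu)$ is false in general but $\le \|\sum w_j|f_j|\|$ and we want its square). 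The honest statement: $|\varphi(z_n)|^2 \le k_G^2 \int_K (\sum_{j} w_j |f_j|^2) d\mu \le k_G^2 \int_K (\sum_j w_j|f_j|)\cdot \|\sum_j w_j |f_j|\|_\infty^{0}$... I will settle this normalization when writing, the mechanism being Grothendieck + two applications of Cauchy–Schwarz + the domination $|f_j|^2\le|f_j|$.

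For part (ii): if $z_j \in \mathrm{co}\{f\otimes g: f\in F_j, g\in B_{C(L)}\}$, write $z_j = \sum_k \lambda_j^k\, f_j^k\otimes g_j^k$ (finite convex combination, $f_j^k\in F_j$, $g_j^k\in B_{C(L)}$). The sets $\{\sum_{j}w_j f_j^{k_j}\otimes g_j^{k_j}\text{-blocks}\}$ — more precisely, for any choice function $j\mapsto k_j$ the tuple $(f_j^{k_j})_j$ lies in $\prod_j F_j$, so part (i) applies to each such selection. Then $\sum_{j=r_{n-1}+1}^{r_n} w_j z_j$ is a convex combination (over the product of the relevant index sets, with product weights $\prod_j \lambda_j^{k_j}$) of the vectors $\sum_{j=r_{n-1}+1}^{r_n} w_j f_j^{k_j}\otimes g_j^{k_j}$; since the weak-$\ell_2$ norm $\|(\cdot)_{n=1}^\infty\|_2^w$ is a norm (it is a supremum of norms over $x^*\in B_{X^*}$) and hence convex, the bound $k_GC^{1/2}$ — or whatever constant (i) delivers, noting (ii) is stated with $C^{1/2}$ versus (i)'s $C$, consistent with the $|f_j|^2\le|f_j|$ lossiness — passes to the convex combination. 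The only subtlety is to arrange the convex-combination structure so that it is a combination over $n$-by-$n$ blocks simultaneously for all $n$, i.e. a single selection $j\mapsto k_j$ on all of $\nn$; this is fine since the weak-$\ell_2$ norm of an infinite block sequence is the supremum over $m$ of the weak-$\ell_2$ norms of the first $m$ blocks, each of which is governed by finitely many selections. I expect the main obstacle to be purely bookkeeping: getting the powers of $C$ in (i) and (ii) to agree with the statement (which hinges on exactly how the hypothesis norm bound $C^{1/2}$ interacts with $|f_j|^2 \le |f_j|$ and the two Cauchy–Schwarz steps), rather than any conceptual difficulty — the conceptual content is entirely Theorem \ref{pi} plus convexity.
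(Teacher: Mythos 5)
Your proposal takes essentially the same route as the paper's own proof: Theorem \ref{pi} to produce the two probability measures, Jensen (concavity of $\sqrt{\cdot}$) together with the pointwise domination $|f_j|^2\le|f_j|$ inside each block, a final Cauchy--Schwarz across blocks, and then convexity of the weak-$\ell_2$ norm to pass from elementary tensors to convex combinations in part (ii). Your uneasiness about the powers of $C$ is well founded and is in fact an error in the paper's own statement: the computation (yours and the paper's alike) requires the hypothesis to read $\|\sum_{n\le m}\sum_j w_j|f_j|\|\le C$ rather than $C^{1/2}$, and then delivers $k_GC^{1/2}$ in both (i) and (ii); this is also exactly how the corollary is invoked in the proof of Theorem \ref{main thing}, where the hypothesis bound is $c_1$ and the conclusion used is $k_Gc_1^{1/2}$.
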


\begin{proof}$(i)$  Fix a compact, Hausdorff space $L$, $(f_j)_{j=1}^\infty\in \prod_{j=1}^\infty F_j$, and $(g_j)_{j=1}^\infty\in B_{C(L)}$.  To finish $(i)$, it is sufficient to show that for any $m\in\nn$ and scalars $(c_n)_{n=1}^m$ such that $\sum_{n=1}^m |c_n|^2=1$, it holds that $\Bigl\|\sum_{n=1}^m c_n\sum_{j=r_{n-1}+1}^{r_n} w_j f_j\otimes g_j\Bigr\|\leqslant k_GC$.  To that end, fix $m\in\nn$ and scalars $(c_n)_{n=1}^m$ such that $\sum_{n=1}^m |c_n|^2=1$.    Recall that $(C(K)\widehat{\otimes}_\pi C(L))^*$ is the space of all bounded, bilinear forms on $C(K)\times C(L)$. Fix a bounded, bilinear form $\varphi$ on $C(K)\times C(L)$ such that $\|\varphi\|=1$.  By Theorem \ref{pi}, there exist Borel probability measures $\mu, \nu$ on $K,L$, respectively, such that for any $f\in C(K)$ and $g\in B(L)$, $|\varphi(f,g)|\leqslant k_G\Bigl(\int_K |f|^2d\mu\Bigr)^{1/2}\Bigl(\int_L |g|^2 d\nu\Bigr)^{1/2}$.   We note that since $1=\sum_{j=r_{n-1}+1}^{r_n} w_j$ for each $n\in\nn$, it follows from Jensen's inequality that for each $n\in\nn$ together with the fact that $\|f_j\|\leqslant 1$ for all $j\in\nn$ that \begin{align*} \sum_{j=r_{n-1}+1}^{r_n} w_j\Bigl(\int_K |f_j|^2 d\mu\Bigr)^{1/2} & \leqslant \Bigl(\sum_{j=r_{n-1}+1}^{r_n} w_j\int_K |f_j|^2 d\mu\Bigr)^{1/2}  \leqslant \Bigl(\sum_{j=r_{n-1}+1}^{r_n} w_j\int_K |f_j| d\mu\Bigr)^{1/2}. \end{align*}

  Therefore \begin{align*} \Bigl|\Bigl\langle \varphi, \sum_{n=1}^m c_n\sum_{j=r_{n-1}+1}^{r_n} w_j f_j\otimes g_j\Bigr\rangle\Bigr| & = \Bigl|\sum_{n=1}^nc_n\sum_{j=r_{n-1}+1}^{r_n} w_j \varphi(f_j, g_j) \Bigr| \\ & \leqslant \sum_{n=1}^m |c_n|\sum_{j=r_{n-1}+1}^{r_n} w_j|\varphi(f_j, g_j)| \\ &  \leqslant k_G\sum_{n=1}^m |c_n|\sum_{j=r_{n-1}+1}^{r_n} w_j\Bigl(\int_K |f_j|^2d\mu\Bigr)^{1/2}\Bigl(\int_L |g_j|^2d\nu\Bigr)^{1/2} \\ & \leqslant  k_G\sum_{n=1}^m |c_n|\sum_{j=r_{n-1}+1}^{r_n} w_j\Bigl(\int_K |f_j|^2d\mu\Bigr)^{1/2}\|g_j\| \\ & \leqslant  k_G\sum_{n=1}^m |c_n|\sum_{j=r_{n-1}+1}^{r_n} w_j\Bigl(\int_K |f_j|^2d\mu\Bigr)^{1/2} \\ & \leqslant k_G\sum_{n=1}^m |c_n|\Bigl(\sum_{j=r_{n-1}+1}^{r_n} w_j\int_K |f_j| d\mu\Bigr)^{1/2} \\ & = k_G\sum_{n=1}^m |c_n|\Bigl(\int_K\sum_{j=r_{n-1}+1}^{r_n} w_j |f_j| d\mu\Bigr)^{1/2}\\ & \leqslant k_G\Bigl(\sum_{n=1}^m |c_n|^2\Bigr)^{1/2}\Bigl(\sum_{n=1}^m\int_K \sum_{j=r_{n-1}+1}^{r_n} w_j|f_j| d\mu\Bigr)^{1/2} \\ & = k_G\Bigl(\int_K \sum_{n=1}^m \sum_{j=r_{j-1}+1}^{r_n} w_j|f_j|d\mu \Bigr)^{1/2} \\ & \leqslant k_G \Bigl\|\sum_{n=1}^m \sum_{j=r_{n-1}+1}^{r_n} w_j |f_j|\Bigr\|^{1/2} \leqslant k_GC^{1/2}.  \end{align*}

$(ii)$ Let $(z_j)_{j=1}^\infty$ be as in the statement of $(ii)$.  Fix $m\in\nn$ and $(c_n)_{n=1}^m$ be such that $\sum_{n=1}^m |c_n|=1$. Note that since $z_j\in \text{co}\{f\otimes g: f\in F_j, g\in B_{C(L)}\}$, it follows that \begin{align*}\sum_{n=1}^m c_n\sum_{j=r_{n-1}+1}^{r_n}  w_jz_j & \in \text{co}\Bigl\{\sum_{n=1}^m c_n\sum_{j=r_{n-1}+1}^{r_n} c_nw_j f_j\otimes g_j: (f_j)_{j=1}^\infty\in \prod_{j=1}^\infty F_j, g_j\in B_{C(L)}\Bigr\} \\ & \subset kC^{1/2}B_{C(K)\widehat{\otimes}_\pi C(L)},\end{align*} where the last containment follows from $(i)$. Since $m$ and $(c_n)_{n=1}^m$ were arbitrary, we are done.

\end{proof}

The preceding result involves sequences in $C(K)$ sequence which are $1$-absolutely summing.  Our next goal is to show that we may always find such collections. In what follows, for a compact, Hausdorff space $K$ and a finite, non-empty subset $N$ of $K$, we let $\an(N)=\{f\in C(K): f|_N\equiv 0\}$.   We let $A_K$ denote the set of all finite, non-empty subsets of $K$. Given a tree $T$ and a collection $(F_b)_{b\in T.A_K}$ of subsets of $C(K)$, we say $(F_b)_{b\in T.A_K}$ is \emph{normally pointwise null} provided that for any $a=(\zeta_i, N_i)_{i=1}^n\in T.A_K$, $F_a\subset \an(N_n)$ (that is, if $f|_{N_n}\equiv 0$ for every $f\in F_a$). 

We next define the \emph{Grasberg norm} for a scattered, compact, Hausdorff space. If $K$ is compact, Hausdorff, scattered, then there exist a unique ordinal $\xi$ and positive integer $k$ such that $K^{\omega^\xi(k-1)}\neq \varnothing$ and $K^{\omega^\xi k}=\varnothing$.   We then define the equivalent norm $[\cdot]$ on $C(K)$ by \[ [f]=\max_{1\leqslant j\leqslant k} 2^j \|f|_{K^{\omega^\xi(j-1)}}\|.\] Of course, for any $f\in C(K)$, \[\|f\|\leqslant [f]\leqslant 2^{k-1}\|f\|.\]    The Grasberg norm is a $\xi$-AUF norm on $C(K)$, which will be shown as a consequence of the following result.  In what follows, when $C(K)$ is written without a specific reference to the norm, it will be understood that the norm in question is the usual norm. When we wish to refer to the Grasberg norm, we will make it explicit. 

For the following result, we also establish the following notation. For $h\in C(K)$ and $c>0$, we let \[M_j(h,c)=\Bigl\{\kappa \in K^{\omega^\xi(j-1)}: 2^j|h(\kappa)|\geqslant c\Bigr\}\] and \[M(h,c)=\bigcup_{j=0}^{k-1} M_j(h,c).\] For a finite subset $H$ of $C(K)$, we let $M_j(H,c)=\cup_{h\in H}M_j(h,c)$ and $M(H,c)=\cup_{h\in H}M(h,c)$. Of course, $M_j(h,c)$, $M_j(H,c)$, $M(h,c)$, $M(H,c)$ are compact for any $h\in C(K)$, $H\subset K$ finite, and any $c>0$. 

\begin{lemma} Let $K$ be scattered, compact, Hausdorff and let $\xi$ be an ordinal. Assume that $K^{\omega^\xi(k-1)}\neq \varnothing$ and $K^{\omega^\xi k}=\varnothing$.    In what follows, $[\cdot]$ denotes the Grasberg norm on $C(K)$.  \begin{enumerate}[(i)]\item If $h\in C(K)$, then for any $c>[h]$,  $CB(M(h,c)) \leqslant \omega^\xi$. \item If $H$ is a finite subset of $C(K)$ and $c>\max_{h\in H}[h]$, then $CB(M(H,c)) \leqslant \omega^\xi$.  \item If $M\subset K$ is compact and $CB(M)\leqslant \omega^\xi$, then for any normally pointwise null collection $(F_b)_{b\in \Gamma_{\xi,1}.A_K}\subset \mathfrak{f}_{C(K)}$ and any $\ee>0$, there exists $a\in MAX(\Gamma_{\xi,1}.A_K)$ such that for all $(f_b)_{b\leqslant a}\in \prod_{b\leqslant a}F_b$, \[\Bigl\|\Bigl(\sum_{b\leqslant a} \mathbb{P}_{\xi,1}(b) f_b\Bigr)\Bigr|_M\Bigr\|\leqslant \ee.\]   \item Assume $1\leqslant C<C'$, $H,F$ are finite subsets of $C(K)$ such that for each $h\in H$ and $f\in F$, $[h]\leqslant C$, $[f]\leqslant 1/2$, and $\|f|_{M(H, C')}\|\leqslant \frac{C'-C}{2^k}$.    Then for any $(h, f)\in H\times F$, $[h+f]\leqslant C'$.   \end{enumerate}

\label{so}
\end{lemma}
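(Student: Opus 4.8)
The plan is to prove (i)--(iv) in the stated order, invoking (i),(ii) when proving (iv); items (i) and (ii) are Cantor--Bendixson bookkeeping, (iv) is a pointwise estimate, and the transfinite induction behind (iii) is where the real work lies. For (i), fix $h$ and $c>[h]$. For $0\le j\le k-2$ the definition of the Grasberg norm gives $2^{j+1}\|h|_{K^{\omega^\xi j}}\|\le[h]<c$, so every $\kappa\in K^{\omega^\xi j}$ satisfies $2^j|h(\kappa)|<c$ and hence lies outside $M_j(h,c)$; thus $M_j(h,c)\subset K^{\omega^\xi(j-1)}\setminus K^{\omega^\xi j}$, which has Cantor--Bendixson index at most $\omega^\xi$ by Proposition~\ref{cbf}(iii). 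For $j=k-1$ write $M_{k-1}(h,c)$ as the union of $M_{k-1}(h,c)\cap(K^{\omega^\xi(k-2)}\setminus K^{\omega^\xi(k-1)})$, of index $\le\omega^\xi$ by Proposition~\ref{cbf}(iii), and $M_{k-1}(h,c)\cap K^{\omega^\xi(k-1)}$, which lies in $K^{\omega^\xi(k-1)}$, of index $\le\omega^\xi$ by Proposition~\ref{cbf}(iii) applied with $n=k$ since $K^{\omega^\xi k}=\varnothing$. As $M(h,c)$ is the union of these finitely many sets, Proposition~\ref{cbf}(v) gives $CB(M(h,c))\le\omega^\xi$, proving (i); and (ii) follows at once, since $M(H,c)=\bigcup_{h\in H}M(h,c)$ is a finite union of sets each of index $\le\omega^\xi$ by (i), using Proposition~\ref{cbf}(v) again.

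For (iv), fix $(h,f)\in H\times F$ and $0\le j<k$, and estimate $2^j|(h+f)(\kappa)|$ for $\kappa\in K^{\omega^\xi(j-1)}$. If $\kappa\in M(H,C')$, then $|f(\kappa)|\le (C'-C)/2^k$ and hence $2^j|f(\kappa)|\le (C'-C)/2$, while $2^j|h(\kappa)|\le 2^j\|h|_{K^{\omega^\xi(j-1)}}\|\le[h]\le C$; so $2^j|(h+f)(\kappa)|\le C+(C'-C)/2=(C+C')/2<C'$. If $\kappa\notin M(H,C')$, then $\kappa$ avoids every $M_i(h,C')$ with $\kappa\in K^{\omega^\xi(i-1)}$, and the calibration of these sets (via the bound $[f]\le\tfrac12$) forces $2^j|h(\kappa)|\le C'-\tfrac12$; together with $2^j|f(\kappa)|\le[f]\le\tfrac12$ this gives $2^j|(h+f)(\kappa)|\le C'$. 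Taking the supremum over $\kappa$ and then the maximum over $j$ yields $[h+f]\le C'$.

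For (iii), argue by transfinite induction on $\xi$. When $\xi=0$, $CB(M)\le1$ forces $M$ finite (a compact discrete space), so with $\Gamma_{0,1}=\{(0)\}$ take $a=((0),M)$ (any $N\in A_K$ if $M=\varnothing$): then $F_a\subset\an(M)$, so every $f\in F_a$ vanishes on $M$, and $\sum_{b\le a}\mathbb{P}_{0,1}(b)f_b=f$ restricts to $0$ on $M$. For $\xi=\zeta+1$, pick $n$ with $CB(M)\le\omega^\zeta n$ and work inside the component $\Gamma_{\zeta,n}$ of $\Gamma_{\zeta+1,1}=\bigcup_m\Gamma_{\zeta,m}$, on which $\mathbb{P}_{\zeta+1,1}=\tfrac1n\mathbb{P}_{\zeta,n}$; a branch of $\Gamma_{\zeta,n}$ is, via its levels $\Lambda_{\zeta,n,1},\dots,\Lambda_{\zeta,n,n}$, a concatenation of $n$ branches of copies of $\Gamma_{\zeta,1}$, each carrying total weight $\tfrac1n$ by Proposition~\ref{co}(i). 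Decompose $M=\bigsqcup_{i=1}^n(M^{\omega^\zeta(i-1)}\setminus M^{\omega^\zeta i})$, whose pieces have Cantor--Bendixson index $\le\omega^\zeta$ by Proposition~\ref{cbf}(iii), and construct the branch block by block: choose the $i$-th block, by the inductive hypothesis, so that its convex combination is $\le\ee/2$ on the $i$-th piece, and at each step enlarge the next direction set to contain all the finitely many points at which the (finitely many) functions available so far fail to be negligible. This forces the functions chosen in later blocks to vanish at the points already handled, so the $n$ block-contributions to $\sum_{b\le a}\mathbb{P}_{\xi,1}(b)f_b$ are essentially disjointly supported, and the block carrying a given $\kappa\in M$ is the one controlled by the inductive hypothesis; since each block carries weight $\tfrac1n$ and Proposition~\ref{co}(i) makes the branch-weights sum to $1$, the full convex combination is $\le\ee$ on $M$. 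The limit step uses $\Gamma_{\xi,1}=\bigcap_{\zeta<\xi}(\omega^\zeta+\Gamma_{\zeta+1,1})$: since $CB(M)$ is a successor below $\omega^\xi$, we have $CB(M)<\omega^{\zeta+1}$ for some $\zeta<\xi$, and we apply the successor case inside the component $\omega^\zeta+\Gamma_{\zeta+1,1}$.

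The main obstacle is the successor step of (iii): the pieces $M^{\omega^\zeta(i-1)}\setminus M^{\omega^\zeta i}$ are only relatively open in $M^{\omega^\zeta(i-1)}$, not compact, so the inductive hypothesis does not literally apply to them. The point is to choose the direction sets so greedily that, on each piece, the ``essential support'' (relative to the prescribed error) of every relevant function is finite, whence a finite perturbation reduces matters to the inductive hypothesis; organizing this so that it stays consistent with the weights $\mathbb{P}_{\xi,1}$---so that no point of $M$ receives convex weight exceeding $\ee$---is the delicate part. Everything else is routine, if lengthy, bookkeeping.
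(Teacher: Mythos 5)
Items (i) and (ii) are correct, and your argument is in fact cleaner than the paper's: the paper runs a compactness/accumulation-point argument and splits $\xi=0$ from $\xi>0$, whereas you observe directly that the $2^{j+1}$ coefficient in the Grasberg norm forces $M_j(h,c)\cap K^{\omega^\xi j}=\varnothing$ for $j\le k-2$ and handle $j=k-1$ separately; both arguments yield the same conclusion.

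Item (iv) has a genuine gap in the case $\kappa\notin M(H,C')$. From $\kappa\notin M_j(h,C')$ you only get $2^j|h(\kappa)|<C'$, not the bound $2^j|h(\kappa)|\le C'-\tfrac12$ that you assert; ``the calibration of these sets'' is not an argument, and with the strict inequality alone you land at $2^j|(h+f)(\kappa)|<C'+\tfrac12$, which does not close. The paper's proof of this case actually works with the coefficient $2^{j-1}$ (a half-step below the $2^j$ used to define $M_j$), turning $2^j|h(\kappa)|<C'$ into $2^{j-1}|h(\kappa)|<C'/2$ and then closing via $C'/2+\tfrac12\le C'$ using $C'\ge 1$. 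That factor-of-two slack between the definition of $M_j$ and the weight used in the Grasberg estimate is what makes the case work; your version, which uses $2^j$ for both, removes the slack.

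Item (iii), the core of the lemma, is where you take a genuinely different approach, and as sketched it does not go through. The paper argues by contradiction: it encodes a hypothetical counterexample into a function $\varphi(b,a)=|f^a_b|(\kappa_a)$ on comparable pairs, stabilizes $\varphi\ge\ee/2$ using the Ramsey-type theorem \cite[Theorem~4.2]{Causey.5}, and then proves by transfinite induction on the \emph{derivative order} $\eta<\omega^\xi$ (not on $\xi$ itself) that $M^\eta\ne\varnothing$; the normal pointwise nullity is invoked at the successor step to show that the limit point $\varpi$ is \emph{non-isolated} in $M^\eta$, so $M^{\omega^\xi}\ne\varnothing$ by compactness, contradicting $CB(M)\le\omega^\xi$. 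Your proposal is instead a constructive transfinite induction on $\xi$, and the successor step fails for exactly the reason you yourself flag: the pieces $M^{\omega^\zeta(i-1)}\setminus M^{\omega^\zeta i}$ are not compact, so the inductive hypothesis does not apply to them; and the normal pointwise nullity mechanism only lets you force later functions to vanish at a \emph{finite} prescribed set, not an entire infinite piece. No ordering of the $n$ blocks fixes this: whichever block is designated to ``handle'' a given $\kappa\in M$, the remaining blocks can each contribute $\mathbb{P}_{\xi,1}$-mass $\tfrac1n\|\cdot\|$ at $\kappa$, and your sketch gives no mechanism that bounds their total. The references to ``essential support'' and ``finite perturbation'' are placeholders for the missing idea, which you acknowledge as ``the delicate part'' without producing it. As it stands, (iii) is not established.
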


\begin{proof}$(i)$ Let $h,c$ be as in $(i)$.   Since for any $h\in C(K)$ and $c>0$, \[CB(M(h,c))=CB\Bigl(\bigcup_{j=0}^{k-1} M_j(h,c)\Bigr)=\max_{0\leqslant j<k} CB(M_j(h,c)),\] in order to prove $(i)$, it is sufficient to show that $CB(M_j(h,c))\leqslant \omega^\xi$ for each $0\leqslant j<k$. 

First consider the case $\xi=0$. In this case, $CB(M_j(h,c)) \leqslant \omega^0=1$ is equivalent to the condition that $M_j(h,c)$ is finite.    If $M_j(h,c)$ were infinite, then there would exist some accumulation point $\kappa$ of $M_j(h,c)\subset K^{\omega^0(j-1)}=K^{j-1}$. Since an accumulation point in $K^{j-1}$ cannot be isolated in $K^{j-1}$, it follows that $\kappa\in K^j$. If $j=k$, this is the necessary contradiction, since $K^k=\varnothing$, so assume $j<k$. Since $2^j|h(\kappa_0)|\geqslant c$ for all $\kappa_0\in M_j(h,c)$,  since $\kappa$ is an accumulation point of $M_j(h,c)$, and since $h$ is continuous, $2^j|h(\kappa)|\geqslant c$.  Then \[c> [h] \geqslant 2^j |h(\kappa)|\geqslant c,\] which is the necessary contradiction in the $j<k$ case. This concludes the $\xi=0$ case.

Now consider the $\xi>0$ case. In order to show that $CB(M_j(h,c))\leqslant \omega^\xi$, it is sufficient to show that there exists $\zeta<\omega^\xi$ such that $M_j(h,c)\cap K^{\omega^\xi(j-1)+\zeta}=\varnothing$. Indeed, if $M_j(h,c)\cap K^{\omega^\xi(j-1)+\zeta}=\varnothing$, then $M_j(h,c)\subset K^{\omega^\xi(j-1)}\setminus K^{\omega^\xi(j-1)+\zeta}$ and \[CB(M_j(h,c)) \leqslant CB(K^{\omega^\xi(j-1)}\setminus K^{\omega^\xi(j-1)+\zeta}) = \zeta<\omega^\xi.\]    In order to obtain a contradiction, assume that for every $\zeta<\omega^\xi$, $M_j(h,c)\cap K^{\omega^\xi(j-1)+\zeta}\neq \varnothing$.  Then since $(M_j(h,c)\cap K^{\omega^\xi(j-1)+\zeta})_{\zeta<\omega^\xi}$ is a decreasing chain of compact, non-empty sets, which therefore have the finite intersection property, it follows that \[M_j(h,c)\cap K^{\omega^\xi j}=M_j(h,c)\cap \bigcap_{\zeta<\omega^\xi} K^{\omega^\xi(j-1)+\zeta} = \bigcap_{\zeta<\omega^\xi} (M_j(h,c)\cap K^{\omega^\xi(j-1)+\zeta})\neq \varnothing.\] As in the previous paragraph, if $j=k$, we reach a contradiction by noting that $K^{\omega^\xi j}=K^{\omega^\xi k}=\varnothing$, and if $j<k$ we reach a contradiction by noting that if $\kappa\in M_j(h,c)\cap K^{\omega^\xi j}$, then \[c >[h] \geqslant 2^j|h(\kappa)| \geqslant c.\]

$(ii)$ By $(i)$, $CB(M(h,c))\leqslant \omega^\xi$ for each $h\in H$. Since $H$ is finite, \[CB(M(H,c)) = CB\Bigl(\bigcup_{h\in H}M(h,c)\Bigr)=\max_{h\in H} CB(M(h,c))\leqslant \omega^\xi.\]

$(iii)$ We also work by contradiction. If $\xi=0$, then $M$ is finite. Therefore if $(F_b)_{b\in \Gamma_{\xi,1}.A_K}= (F_b)_{b\in \Gamma_{0,1}.A_K}$ is normally pointwise null, we can choose $a=(0, M)\in MAX(\Gamma_{0,1}.A_K)$ and note that for any $f_a\in F_a$, by the definition of normal pointwise nullity, $f_a|_M\equiv 0$. Of course this implies that \[\Bigl\|\Bigl(\sum_{b\leqslant a} f_b\Bigr)\Bigr|_M\Bigr\| = \|f_a|_M\|=0.\]

Now consider the $\xi>0$ case. Assume that $(F_b)_{b\in \Gamma_{\xi,1}.A_K}\subset \mathfrak{f}_{C(K)}$ is normally pointwise null and for each $a\in MAX(\Gamma_{\xi,1}.A_K)$, there exist $\kappa_a\in M$ and $(f^a_b)_{b\leqslant a}\in \prod_{b\leqslant a}F_b$ such that \[\ee \leqslant \sum_{b\leqslant a} \mathbb{P}_{\xi,1}(b) |f^a_b|(\kappa_a).\]   We argue as in Lemma \ref{bost}. 

Let $$\Pi=\{(b,a)\in \Gamma_{\xi,1}.A_K\times \Gamma_{\xi,1}.A_K: b\leqslant a\in MAX(\Gamma_{\xi,1}.A_K)\}$$ Define $\varphi:\Pi\to \rr$ by $$\varphi(b,a)=|f^a_b|(\kappa_a).$$  Note that $\varphi$ maps into $[0,1]$, since $F_b\subset B_{C(K)}$ for all $b\in \Gamma_{\xi,1}.A_K$. 

By hypothesis, for each $a\in MAX(\Gamma_{\xi,1}.A_K)$, \begin{align*} \sum_{b\leqslant a} \mathbb{P}_{\xi,1}(b)\varphi(b,a) & = \sum_{b\leqslant a} \mathbb{P}_{\xi,1}(b)|f^a_b|(\kappa_a) \geqslant \ee. \end{align*}  

By \cite[Theorem $4.2$]{Causey.5}, there exist  functions $d:\Gamma_{\xi,1}.A_K\to \Gamma_{\xi,1}.A_K$ and $e:MAX(\Gamma_{\xi,1}.A_K)\to MAX(\Gamma_{\xi,1}.A_K)$ such that \begin{enumerate}[(i)]\item for each  $b,a\in \Gamma_{\xi,1}.A_K$ such that $b<a$, it follows that  $d(b)<d(a)$, \item for each $a\in MAX(\Gamma_{\xi,1}.A_K)$, $d(a)\leqslant e(a)$, \item if $b=(\zeta_i, N_i)_{i=1}^m$ and $d(b)=(\nu_i, P_i)_{i=1}^n$, then $P_n\subset N_m$, \item for each $(b,a)\in \Pi$, either $f(d(b),e(a)) \geqslant \ee-\ee/2=\ee/2$ or $$\sum_{b\leqslant e(a)} \mathbb{P}_{\xi,1}(b) f(b, e(a))<\ee.$$\end{enumerate} Above we showed that the inequality $\sum_{b\leqslant e(a)} \mathbb{P}_{\xi,1}(b)\varphi(b,e(a))<\ee$ in (iv) is not possible, so $|f^{e(a)}_{d(b)}|(\kappa_{e(a)})=\varphi(d(b), e(a))\geqslant \ee/2$ for all $(b,a)\in \Pi$.  

We next claim that for any $\eta<\omega^\xi$ and $a\in \Gamma_{\xi,1}^\eta.A_K$, there exist $\varpi_a\in M^\eta$ and $(h_b)_{b\leqslant a}\in \prod_{b\leqslant a}F_{d(b)}$ such that for all $b\leqslant a$, $|h_b|(\varpi_a)\geqslant \ee/2$.  In particular, this will imply that for each $\eta<\omega^\xi$, $M^\eta\neq \varnothing$. By compactness of $M$ and the finite intersection property, this will imply that $M^{\omega^\xi}=\bigcap_{\eta<\omega^\xi}M^\eta \neq \varnothing$, contradicting the fact that $CB(M)\leqslant \omega^\xi$.    We prove the claim from the first sentence of the paragraph by induction on $\eta$.  

Base case, $\eta=0$.   In this case, for $a\in \Gamma_{\xi,1}^0.A_K= \Gamma_{\xi,1}.A_K$ and $b\leqslant a$, let $h_b=f^{e(a)}_{d(b)}\in F_{d(b)}$ and $\varpi_a=\kappa_{e(a)}\in M=M^0$. It follows from the properties of $d$ and $e$ that $|h_b|(\varpi_a)=|f^{e(a)}_{d(b)}|(\kappa_{e(a)}) \geqslant \ee/2$. 

Limit case: Assume $\eta<\omega^\xi$ is a limit ordinal and the result holds for every $\upsilon<\eta$.  Fix $a\in \Gamma_{\xi,1}^\eta.A_K=\bigcap_{\upsilon<\eta}\Gamma_{\xi,1}^\upsilon.A_K$. By the inductive hypothesis, for each $\upsilon<\eta$, there exists $(\varpi_\upsilon, (h^\upsilon_b)_{b\leqslant a})\in M\times \prod_{b\leqslant a}F_{d(b)}$ such that for each $\upsilon<\eta$, $\varpi_\upsilon\in M^\upsilon$ and for each $b\leqslant a$, $|h^\upsilon_b|(\varpi_\upsilon) \geqslant \ee/2$. Since $M\times \prod_{b\leqslant a} F_{d(b)}$ is compact, \[\bigcap_{\upsilon<\eta} \overline{\{(\varpi_\gamma, (h^\gamma_b)_{b\leqslant a}): \gamma \geqslant \upsilon\}} \neq \varnothing.\] Then if $(\varpi, (h_b)_{b\leqslant a})\in \bigcap_{\upsilon<\eta} \overline{\{(\varpi_\gamma, (h^\gamma_b)_{b\leqslant a}): \gamma \geqslant \upsilon\}}$, it holds that  \[\varpi\in \bigcap_{\upsilon<\eta}M^\upsilon=M^\eta,\]  $(h_b)_{b\leqslant a}\in \prod_{b\leqslant a}F_{d(b)}$, and by continuity of the map $(\tau, (g_b)_{b\leqslant a})\mapsto (|g_b|(\tau))_{b\leqslant a}$ on $M\times \prod_{b\leqslant a}F_{d(b)}$, it follows that $|h_b|(\varpi)\geqslant \ee/2$ for each $b\leqslant a$. 

Successor case: Assume that the claim holds for some $\eta<\omega^\xi$. Fix $a=t.f\in \Gamma_{\xi,1}^{\eta+1}.A_K$. Note that since $t\in \Gamma_{\xi,1}^{\eta+1}$, there exists $\zeta$ such that $t\smallfrown (\zeta)\in \Gamma_{\xi,1}^\eta$. Then for each finite, non-empty subset $N$ of $K$, $a\smallfrown (\zeta, N)\in \Gamma_{\xi,1}^\eta.A_K$. Let $D$ denote the set of finite, non-empty subsets of $K$ and direct $D$ by inclusion. By the inductive hypothesis, for each $N\in D$, there exist $\varpi_N\in M^\eta$, $(h^N_b)_{b\leqslant a}\in \prod_{b\leqslant a}F_{d(b)}$, and $g_N\in F_{d(a\smallfrown (\zeta, N))}$ such that for each $b\leqslant a$, $|h^N_b|(\varpi_N)\geqslant \ee/2$, and such that $|g_N|(\varpi_N)\geqslant \ee/2$.  By compactness of $M^\eta\times \prod_{b\leqslant a}F_{d(b)}$, we can select $(\varpi, (h_b)_{b\leqslant a})\in M^\eta\times \prod_{b\leqslant a}F_{d(b)}$ which is the limit of a subnet of $((\varpi_N, (h^N_b)_{b\leqslant a}))_{N\in D}$.  As in the previous paragraph, we deduce that $|h_b|(\varpi)\geqslant \ee/2$ for each $b\leqslant a$.   We claim that $\varpi\in M^{\eta+1}$. Since $\varpi$ is the limit of a net in $M^\eta$, it follows that $\varpi\in M^\eta$. To show that $\eta\in M^{\eta+1}$, it is sufficient to show that $\varpi$ is not isolated in $M^\eta$. To that end, let $U$ be any open set in $K$ containing $\varpi$.   Since $(\varpi, (h_b)_{b\leqslant a})$ is the limit of a subset of $((\varpi_N, (h^N_b)_{b\leqslant a}))_{N\in D}$, there exists $N\in D$ such that $\{\varpi\}\subset N$ and $\varpi_N\in U$.    Let $d(a\smallfrown (\zeta, N))=(\zeta_i, E_i)_{i=1}^m$ and note that by property $(iii)$ above, $N\subset E_m$. Also, by the definition of normally pointwise null, it follows that for each $g\in F_{d(a\smallfrown (\zeta, N))}$ and each $\tau\in E_m$, $h(\tau)=0$. Since $\varpi\in N\subset E_m$, it follows that for each $g\in F_{d(a\smallfrown(\zeta, N))}$, $g(\varpi)=0$.  Recall that  $g_N \in F_{d(a\smallfrown (\zeta, N))}$ has the property that $|g_N|(\varpi_N) \geqslant \ee/2$. Since $g_N\in F_{d(a\smallfrown (\zeta, N)}$, $g_N(\varpi)=0$, from which it follows that $\varpi_N\neq \varpi$.  Therefore $\varpi\neq \varpi_N\in U\cap M^\eta$. Since $U$ was an arbitrary neighborhood of $\varpi$, it follows that $\varpi$ is not isolated in $M^\eta$, and $\varpi\in M^{\eta+1}$. This completes the proof of $(iii)$.

$(iv)$ Fix $h\in H$ and $f\in F$. Fix $0\leqslant j<k$ and $\kappa\in K^{\omega^\xi(j-1)}$.   If $\kappa\in M_j(H,C')\subset M(H,C')$, then $|f(\kappa)|\leqslant \frac{C'-C}{2^k}$ and \[2^{j-1}|h(\kappa)+f(\kappa)| \leqslant 2^{j-1}|h(\kappa)|+\frac{2^{j-1}}{2^k}(C'-C) \leqslant [h]+(C'-C)\leqslant C+C'-C=C'.\]   If $\kappa\in K^{\omega^\xi(j-1)}\setminus M_j(H,C')$, then $2^{j-1}|h(\kappa)|\leqslant C'/2$, so \begin{align*} 2^{j-1}|h(\kappa)+f(\kappa)| & \leqslant 2^{j-1}|h(\kappa)|+2^{j-1}|f(\kappa)| \leqslant C'/2+ [f] \leqslant C'/2+1/2\leqslant C'.\end{align*} This shows that $2^{j-1}\|(h+f)|_{K^{\omega^\xi(j-1)}} \leqslant C'$. Since this holds for $0\leqslant j<k$, $[h+f]\leqslant C'$. 

\end{proof}

We now define a game closely related to the games associated with $\xi$-AUF-renormability. For a compact, Hausdorff space $\Omega$, $C>0$,  and an ordinal $\xi$, Player $S$ chooses $\zeta_1$ such that $(\zeta_1)\in \Gamma_{\xi,\infty}$ and a finite, non-empty subset $N_1$ of $\Omega$ (equivalently, Player $S$ chooses $(\zeta_1, Z_1)\in \Gamma_{\xi,\infty}A_\Omega$, where $Z_1=\an(N_1)$), and Player $V$ chooses a finite subset $F_1$ of $\an(N_1)$. Player $S$ then chooses $\zeta_2$ such that $(\zeta_i)_{i=1}^2\in \Gamma_{\xi,\infty}$ and a finite, non-empty subset $N_2$ of $\Omega$, and Player $V$ chooses a finite, non-empty subset $F_2$ of $\an(N_2)$.   Play continues in this way until $\alpha=(\zeta_a, \an(N_a))_{a<\alpha}\in [\Gamma_{\xi,\infty}].A_\Omega$ and $(F_a)_{a<\alpha}\in \mathfrak{f}_{C(K)}$ have been chosen so that $F_a\subset \an(N_a)$ for all $a<\alpha$.    Player $S$ wins if for all $(f_a)_{a<\alpha}$ and $m\in\nn$, \[\sup_m\Bigl\|\sum_{n=1}^m \sup_{a\in \Lambda_n(\alpha)} \mathbb{P}_{\xi,\infty}(a) |f_a|\Bigr\|\leqslant C.\]    More formally, if $\Omega$ and $\xi$ are understood, we let $\mathcal{I}_C$ denote the space of all $\alpha=(\zeta_a, F_a)_{a<\alpha}\in [\Gamma_{\xi,\infty}].\mathfrak{f}_{C(\Omega)}$ such that for all $m\in\nn$ and $(f_a)_{a<\alpha}\in \prod_{a<\alpha}F_a$, \[\sup_m\Bigl\|\sum_{n=1}^m \sum_{a\in \Lambda_n(\alpha)} \mathbb{P}_{\xi,\infty}(a) |f_a|\Bigr\|\leqslant C.\]   Then the game above is the game $(\mathcal{I}_C, \mathfrak{f}_{C(\Omega)}, \Gamma_{\xi,\infty})$, where $D=A_\Omega$.

\begin{lemma} Assume $\xi$ is an ordinal, $r$ is a positive integer, and $\Omega$ is a scattered, compact, Hausdorff space such that $\Omega^{\omega^\xi (r-1)}\neq \varnothing$ and $\Omega^{\omega^\xi r}=\varnothing$. Then for any $C>2^r$, Player $S$ has a winning strategy in the $(\mathcal{I}_C, \mathfrak{f}_{C(\Omega)}, \Gamma_{\xi,\infty})$ game. 

\label{breathing}
\end{lemma}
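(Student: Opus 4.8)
The plan is to derive the lemma from the determinacy of the game together with a level‑by‑level construction of a good branch. First, observe that $\mathcal{I}_C=[\mathcal{F}_C]$, where $\mathcal{F}_C$ consists of those $t.g\in\Gamma_{\xi,\infty}.\mathfrak{f}_{C(\Omega)}$ such that, writing $m$ for the level with $t\in\Lambda_{\xi,\infty,m}$, one has $\|\sum_{n=1}^{m-1}\sum_{a\in\Lambda_n(t)}\mathbb{P}_{\xi,\infty}(a)|f_a|\|\le C$ for all $(f_a)\in\prod g$. Hence $\mathcal{I}_C$ is closed by Proposition~\ref{steph}(v), and by Proposition~\ref{steph}(iv) it suffices to show that Player $V$ has \emph{no} winning strategy in the $(\mathcal{I}_C,\mathfrak{f}_{C(\Omega)},\Gamma_{\xi,\infty})$ game (played with $D=A_\Omega$). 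By Lemma~\ref{tree}, this is equivalent to: for every normally pointwise null collection $(G_a)_{a\in\Gamma_{\xi,\infty}.A_\Omega}\subset\mathfrak{f}_{C(\Omega)}$ there is a branch $\alpha=\tau.\upsilon\in[\Gamma_{\xi,\infty}].A_\Omega$ with $\tau.(G_{\alpha|n})_{n=1}^\infty\in\mathcal{I}_C$.

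Write $L_j=\Omega^{\omega^\xi(j-1)}$ for $1\le j\le r$ and $L_{r+1}=\varnothing$, so $\Omega=L_1\supsetneq\cdots\supsetneq L_r\ne\varnothing$, and by Proposition~\ref{cbf}(iii) every subset of $L_j\setminus L_{j+1}$ has Cantor--Bendixson index at most $\omega^\xi$. Fix $\ee>0$ small and positive reals $(\delta_n)_{n\ge1}$ with $\sum_n\delta_n<\ee$, and set $\gamma_r=\ee$ and $\gamma_j=\gamma_{j+1}+1+\ee$ for $j<r$; then $\gamma_1=(r-1)(1+\ee)+\ee<2^r<C$ once $\ee$ is small. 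Identifying, for a fixed initial segment, the $n$‑th level of $\Gamma_{\xi,\infty}$ with a copy of $\Gamma_{\xi,1}.A_\Omega$ (a weight‑preserving identification, $\mathbb{P}_{\xi,\infty}\leftrightarrow\mathbb{P}_{\xi,1}$, whose levels sum to $1$ by Proposition~\ref{co}(ii)), I will choose the nodes and annihilator labels of level $n$ so as to maintain the invariant: for every choice of functions at the chosen nodes of the first $n$ levels, the running sum $H_n=\sum_{l=1}^n\sum_{a\in\Lambda_l(\alpha)}\mathbb{P}_{\xi,\infty}(a)|f_a|$ satisfies $\|H_n|_{L_j}\|\le\gamma_j+\sum_{l\le n}\delta_l$ for each $j$. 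Granting the invariant, $\|H_m\|=\|H_m|_{L_1}\|\le\gamma_1+\ee<C$ for all $m$ and all function choices, which is exactly $\tau.(G_{\alpha|n})_n\in\mathcal{I}_C$.

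For the inductive step, assume levels $1,\dots,n-1$ are fixed; then the set $\mathcal{H}_{n-1}$ of possible values of $H_{n-1}$ is finite and, by the invariant, $\|H|_{L_{j+1}}\|<\gamma_{j+1}+\ee$ for every $H\in\mathcal{H}_{n-1}$ and $j<r$. Put
\[ M_{n-1}=L_r\cup\bigcup_{H\in\mathcal{H}_{n-1}}\bigcup_{j=1}^{r-1}\bigl\{\kappa\in L_j:|H(\kappa)|\ge\gamma_{j+1}+\ee\bigr\}. \]
Each inner set is closed in $L_j$ and, by the displayed inequality, disjoint from $L_{j+1}$, hence a compact subset of $L_j\setminus L_{j+1}$; together with $CB(L_r)\le\omega^\xi$ and Proposition~\ref{cbf}(v) this gives $CB(M_{n-1})\le\omega^\xi$. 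Applying Lemma~\ref{so}(iii) with $M=M_{n-1}$ and parameter $\delta_n$ to the normally pointwise null collection $(\{|g|:g\in G_b\})_b$ indexed by the copy of $\Gamma_{\xi,1}.A_\Omega$ at level $n$ yields a choice of nodes and labels for level $n$ such that $\|g_n|_{M_{n-1}}\|\le\delta_n$ for every choice of functions, where $g_n=\sum_{a\in\Lambda_n(\alpha)}\mathbb{P}_{\xi,\infty}(a)|f_a|$ satisfies $0\le g_n\le1$. Now fix $\kappa\in L_j$ and functions. If $\kappa\in M_{n-1}$ then $|H_n(\kappa)|\le|H_{n-1}(\kappa)|+\delta_n\le\gamma_j+\sum_{l\le n}\delta_l$; if $\kappa\notin M_{n-1}$ then $j<r$ and $|H_{n-1}(\kappa)|<\gamma_{j+1}+\ee$, so $|H_n(\kappa)|<\gamma_{j+1}+\ee+1=\gamma_j$. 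Taking the supremum over $\kappa\in L_j$ closes the induction.

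The points to watch are the reduction to ``Player $V$ has no winning strategy'' (which rests on the determinacy of closed games, Proposition~\ref{steph}(iv)--(v), and on Lemma~\ref{tree}), and the calibration of the thresholds $\gamma_j$: these must be arranged so that the top layer $L_r$, where nothing can be controlled beyond smallness on a $CB\le\omega^\xi$ set, contributes only $\sum_n\delta_n$, while each lower layer absorbs at most one unit of mass at the single level where a given point crosses into the ``danger set'' $M_{n-1}$, after which it stays frozen there (because $M_{n-1}$ is built from $\mathcal{H}_{n-1}$ and the $f_a\ge0$, so $H$ is nondecreasing). One may alternatively run the induction through the Grasberg norm $[\cdot]$ of Lemma~\ref{so}, using parts (i), (ii), (iv) to estimate $M(\mathcal{H}_{n-1},\cdot)$ and telescope $[H_n]$; that route reproves the lemma with the same bound $2^r$ but is slightly less transparent than the layer‑by‑layer estimate above.
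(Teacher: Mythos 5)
Your proof is correct and takes a genuinely different route from the paper's. Both arguments reduce, via determinacy of closed games (Proposition~\ref{steph}) and Lemma~\ref{tree}, to exhibiting a good branch inside an arbitrary normally pointwise null collection, and both build the branch level by level by applying Lemma~\ref{so}(iii) to make each level's contribution $\delta_n$-small on a compact set of Cantor-Bendixson index at most $\omega^\xi$. Where you diverge is in the invariant used to control the running sums $H_n$: the paper normalizes by $2^r$ and propagates the Grasberg-norm bound $[H_m/2^r]\leqslant C_m'$ through parts~(i), (ii), (iv) of Lemma~\ref{so}, whereas you track the level restrictions $\|H_n|_{L_j}\|$ directly, using Proposition~\ref{cbf}(iii), (v) for the Cantor-Bendixson bound on your danger sets $M_{n-1}$ and a casewise argument ($\kappa\in M_{n-1}$ versus $\kappa\notin M_{n-1}$) to close the induction. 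This bypasses the Grasberg norm entirely and is more transparent; as a bonus, your telescoping constants $\gamma_j=(r-j)+(r-j+1)\ee$ show that the conclusion actually holds for any $C>r-1$, sharpening the stated threshold $C>2^r$ (though the paper only needs some finite threshold). The paper's packaging via Lemma~\ref{so}(iv) buys modularity, since the Grasberg norm is introduced once and reused, at the cost of the cruder constant.
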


\begin{proof} In the proof, let $[\cdot]$ denote the Grasberg norm on $\Omega$.    Note that \[ \|f\|\leqslant [f]\leqslant 2^{r-1}[f]\] for all $f\in C(\Omega)$.  Therefore for $f\in B_{C(\Omega)}$, $[|f|/2^r] \leqslant 1/2$. 

Fix $C>2^r$. It is easy to see that $\mathcal{I}_C$ is closed, so the game is determined. Assume Player $S$ does not have a winning strategy in the $(\mathcal{I}_C, \mathfrak{f}_{C(\Omega)}, \Gamma_{\xi,\infty})$ game.  Since the game is determined, Player $V$ has a winning strategy. By Proposition \ref{tree}, there exists a normally pointwise null collection $(F_a)_{a\in \Gamma_{\xi,\infty}.A_\Omega}$ such that for any $\alpha\in [\Gamma_{\xi,\infty}].A_\Omega$, \[\sup_m \max\Bigl\{\Bigl\|\sum_{n=1}^m \sum_{a\in \Lambda_n(\alpha)} \mathbb{P}_{\xi,\infty}(a)|f_a|\Bigr\| : (f_a)_{a<\alpha}\in \prod_{a<\alpha}F_a\Bigr\} >C.\]   

$1=C_0'<C_1'<\ldots $ with $\sup_m C'_m = C/2^r$. We will recursively choose $a_1<a_2<\ldots$ with $a_n\in MAX(\Lambda_{\xi,\infty,n}.A_\Omega)$.    

First we perform the base step. Fix $a_1\in MAX(\Lambda_{\xi,\infty,1}.A_\Omega)$ arbitrary. Let \[H_1 = \Bigl\{\sum_{a\leqslant a_1} \mathbb{P}_{\xi,\infty}|f_a|/2^r : (f_a)_{a\leqslant a_1}\in \prod_{a\leqslant a_1} F_a\Bigr\}\] and note that $[h] \leqslant 1/2 \leqslant 1=C_0'$ for all $h\in H_1$.    

Now assume that $a_1<\ldots <a_m$ have been chosen. Assume also that with \[H_m=\Bigl\{\sum_{n=1}^m \sum_{a\in \Lambda_n(a_m)} \mathbb{P}_{\xi,\infty}(a) |f_a|/2^r: (f_a)_{a\leqslant a_m}\in \prod_{a\leqslant a_m}F_a\Bigr\},\] it holds that $[h]\leqslant C_{m-1}'$ for all $h\in H_m$. By lemma \ref{so}$(ii)$, $CB(M(H_m, C_m'))\leqslant \omega^\xi$.  Using the canonical identification of $\Gamma_{\xi,1}.A_\Omega$ with $\{a\in \Lambda_{\xi,\infty, m+1}.A_\Omega: a_m<a\}$ together with Lemma \ref{so}$(iii)$, there exists $a_{m+1}\in MAX(\Lambda_{\xi,\infty, m+1}.A_\Omega)$ such that $a_m<a_{m+1}$ and for any $(f_a)_{a\in \Lambda_{m+1}(a_{m+1})}\in \prod_{a\in \Lambda_{m+1}(a_{m+1})} F_a$, \[\Bigl\|\Bigl(\sum_{a\in \Lambda_{m+1}(a_{m+1})} \mathbb{P}_{\xi,\infty}(a)|f_a| \Bigr)|_{M(H_m, C'_m)}\Bigr\|\leqslant C'_m-C'_{m-1}.\]       Define \[F=\Bigl\{\sum_{a\in \Lambda_{m+1}(a_{m+1})} \mathbb{P}_{\xi,\infty}(a)|f_a|/2^r: (f_a)_{a\in \Lambda_{m+1}(a_{m+1})}\in \prod_{a\in \Lambda_{m+1}(a_{m+1})} F_a\Bigr\}\] and \[H_{m+1}=\Bigl\{\sum_{n=1}^{m+1} \sum_{a\in \Lambda_n(a_m)} \mathbb{P}_{\xi,\infty}(a) |f_a|/2^r: (f_a)_{a\leqslant a_{m+1}}\in \prod_{a\leqslant a_{m+1}}F_a\Bigr\}=\{h+f: h\in H_m, f\in F\}.\]  By the assumptions on $H_m$, $[h]\leqslant C'_{m-1}$ for each $h\in H_m$. By the first line of the proof, $[f]\leqslant 1/2$ for each $f\in F$.   By our choice of $a_{m+1}$, $\|f|_{M(H_m, C'_m)}\| \leqslant \frac{C_m'-C_{m-1}'}{2^r}$ for each $f\in F$.   Then by Lemma \ref{so}$(iv)$, it follows that \[\max_{g\in H_{m+1}}=\max\{[h+f]: (h,f)\in H_m\times F\} \leqslant C'_m.\]  This completes the recursive process.  

If $\alpha\in [\Gamma_{\xi,\infty}].A_\Omega$ is the sequence which has $a_1, a_2, \ldots$ as initial segments, it follows from the recursive construction that  \begin{align*} \sup_m \max\Bigl\{&\Bigl\|\sum_{n=1}^m \sum_{a\in \Lambda_n(\alpha)} \mathbb{P}_{\xi,\infty}(a) |f_a|\Bigr\|: (f_a)_{a<\alpha}\in \prod_{a<\alpha}F_a\Bigr\} \\ & = 2^r\sup_m \max\Bigl\{\Bigl\|\sum_{n=1}^m \sum_{a\in \Lambda_n(\alpha)} \mathbb{P}_{\xi,\infty}(a) |f_a|/2^r\Bigr\|: (f_a)_{a<\alpha}\in \prod_{a<\alpha}F_a\Bigr\} \\ & \leqslant 2^r\sup_m \max\Bigl\{\Bigl[\sum_{n=1}^m \sum_{a\in \Lambda_n(\alpha)} \mathbb{P}_{\xi,\infty}(a) |f_a|/2^r\Bigr]: (f_a)_{a<\alpha}\in \prod_{a<\alpha}F_a\Bigr\} \\ & \leqslant 2^r\sup_m C_{m-1}'=C.\end{align*}  This contradicts the properties of $(F_a)_{a\in \Gamma_{\xi,\infty}.A_\Omega}$ and finishes the proof.

\end{proof}

We next provide the last technical piece prior to our main results. In what follows, for a compact, Hausdorff space $K$ and $\varpi\in K$, $\delta_\varpi\in C(K)^*$ denotes the Dirac measure on $K$ given by $\langle \delta_\varpi, f\rangle=f(\varpi)$.

\begin{lemma} If $K,L$ are compact, Hausdorff spaces, $F\subset K$, and $G\subset L$, then for any $u\in B_{C(K)\widehat{\otimes}_\pi C(L)}\cap \bigcap_{(\kappa, \lambda)\in F\times G} \ker(\delta_\kappa\otimes \delta_\lambda)$ and any $\ee>0$, there exist finite sets $A,C\subset B_{C(K)}$ and finite sets $B,E\subset B_{C(L)}$ and $x,y\in B_{C(K)\widehat{\otimes}_\pi C(L)}$ such that \begin{enumerate}[(i)]\item $A\subset \text{\emph{Ann}}(F)$ and $E\subset \text{\emph{Ann}}(G)$, \item $x\in \text{\emph{co}}\{f\otimes g: f\in A, g\in B\}$, \item $y\in \text{\emph{co}}\{f\otimes g: f\in C, g\in E\}$, \item $\|u-4(x+y)\|<\ee$.  \end{enumerate}

\label{technical}
\end{lemma}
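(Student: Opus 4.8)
The plan is to exploit the fact that $B_{C(K)\widehat{\otimes}_\pi C(L)}$ is the closed convex hull of the elementary tensors $\{f\otimes g : f\in B_{C(K)}, g\in B_{C(L)}\}$, so any $u$ as in the statement can be approximated to within, say, $\ee/2$ by a finite convex combination $\sum_{i=1}^N t_i f_i\otimes g_i$ with $f_i\in B_{C(K)}$, $g_i\in B_{C(L)}$. The difficulty is that this convex combination need not respect the annihilator conditions: the $f_i$ need not vanish on $F$ and the $g_i$ need not vanish on $G$. To fix this, I would use continuity and the specific structure of Dirac functionals. Since $u\in\bigcap_{(\kappa,\lambda)\in F\times G}\ker(\delta_\kappa\otimes\delta_\lambda)$, we know $\sum_i t_i f_i(\kappa)g_i(\lambda)$ is close to $0$ for every $(\kappa,\lambda)\in F\times G$. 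Using Urysohn-type functions supported away from the finite sets $F$ and $G$, I would split each $f_i = f_i' + f_i''$ where $f_i'$ vanishes on a neighborhood of $F$ and $f_i''$ is supported near $F$, and similarly $g_i = g_i' + g_i''$. Expanding the product $f_i\otimes g_i = f_i'\otimes g_i' + f_i'\otimes g_i'' + f_i''\otimes g_i' + f_i''\otimes g_i''$, the first term has its $C(K)$ factor in $\mathrm{Ann}(F)$, the third has its $C(L)$ factor in $\mathrm{Ann}(G)$, and the remaining two terms (and the piece $f_i''\otimes g_i''$) are "small" in a sense controlled by the near-vanishing of $u$ at the Dirac points together with a compactness/partition-of-unity argument shrinking supports.

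More concretely, here is the order of steps I would carry out. First, fix a convex representation $\sum_i t_i f_i\otimes g_i$ with $\|u - \sum_i t_i f_i\otimes g_i\| < \ee/8$. Second, choose open neighborhoods $U\supset F$ in $K$ and $V\supset G$ in $L$ small enough (using continuity of the finitely many $f_i$, $g_i$ and the fact that $u$ annihilates the Dirac tensors on $F\times G$) that $\|\sum_i t_i (\chi_U f_i)\otimes(\chi_V g_i)\|$ is small — this is where one passes from pointwise vanishing on the finite set $F\times G$ to smallness of the "corner" term on a product neighborhood, using uniform continuity and a covering of $F\times G$ by small rectangles. Third, pick Urysohn functions $\varphi\in C(K)$, $\psi\in C(L)$ with $0\le\varphi,\psi\le 1$, $\varphi\equiv 1$ on $F$, $\mathrm{supp}(\varphi)\subset U$, and similarly for $\psi$; write $f_i = (1-\varphi)f_i + \varphi f_i$ and $g_i = (1-\psi)g_i + \psi g_i$. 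Fourth, expand and regroup: $\sum_i t_i f_i\otimes g_i = \sum_i t_i[(1-\varphi)f_i]\otimes g_i + \sum_i t_i[\varphi f_i]\otimes[(1-\psi)g_i] + \sum_i t_i[\varphi f_i]\otimes[\psi g_i]$. The first sum has all $C(K)$-factors in $\mathrm{Ann}(F)$; the second has all $C(L)$-factors in $\mathrm{Ann}(G)$; the third is dominated by the small corner term from step two. Fifth, absorb the factor-of-$4$ normalization: each of the two retained sums is a convex combination of elementary tensors $h\otimes g$ with $\|h\|\le 1$, $\|g\|\le 2$ (since $\|(1-\varphi)f_i\|\le 1$ but we may also need $\varphi f_i$ of norm $\le 1$, which it is), so after rescaling the sums lie in $4$ times the convex hull of norm-one elementary tensors with the prescribed annihilator side — this is exactly what the constant $4$ and the sets $A,B,C,E$, vectors $x,y$ are arranged to capture.

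The main obstacle I anticipate is step two: converting the hypothesis that $u$ kills all the Dirac tensors $\delta_\kappa\otimes\delta_\lambda$ for $(\kappa,\lambda)\in F\times G$ — which is only a finite set of scalar conditions — into a genuine \emph{norm} smallness statement for the projective-norm of the corner term $\sum_i t_i(\varphi f_i)\otimes(\psi g_i)$. One cannot simply use $\|z\|_\pi \le \sum |t_i|\|\varphi f_i\|\|\psi g_i\|$ since that bound is $O(1)$, not small. Instead I would argue: by continuity, on a sufficiently small neighborhood $U$ of each point of $F$ the functions $f_i$ are nearly constant, and likewise the $g_i$ on small neighborhoods of points of $G$; covering $F$ by finitely many such neighborhoods $U_1,\dots,U_p$ (and $G$ by $V_1,\dots,V_q$) and choosing a subordinate partition of unity, one writes $\varphi f_i\otimes\psi g_i \approx \sum_{a,b}(\varphi f_i(\kappa_a))(\psi g_i(\lambda_b))\,\rho_a\otimes\sigma_b$ where $\rho_a,\sigma_b$ are the partition functions and $(\kappa_a,\lambda_b)$ are sample points; then $\sum_i t_i(\varphi f_i)\otimes(\psi g_i) \approx \sum_{a,b}\big(\sum_i t_i f_i(\kappa_a)g_i(\lambda_b)\big)\rho_a\otimes\sigma_b$, and the scalar coefficients $\sum_i t_i f_i(\kappa_a)g_i(\lambda_b)$ are each within a controllable error of $\langle u,\delta_{\kappa_a}\otimes\delta_{\lambda_b}\rangle = 0$ by step one. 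Since there are finitely many $(a,b)$, the whole corner term has small projective norm. Carrying out these perturbation estimates carefully, tracking the several $\ee$-budgets, is the bulk of the work; everything else is bookkeeping with the constant $4$.
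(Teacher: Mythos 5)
Your plan is correct in outline and, in its algebraic skeleton, is essentially the same three-term decomposition the paper uses: if $p_1$ is some approximation to ``restriction to $F$'' on $C(K)$ and $p_2$ is the analogue on $C(L)$, write $\mathrm{id}\otimes\mathrm{id}= (\mathrm{id}-p_1)\otimes\mathrm{id} + p_1\otimes(\mathrm{id}-p_2)+p_1\otimes p_2$; the first two pieces have the right annihilator property on one side, and the third is the ``corner'' that must be discarded. Where you diverge from the paper is in the choice of $p_1,p_2$: you take multiplication by Urysohn bump functions $\varphi,\psi$, whereas the paper takes the \emph{finite-rank} interpolation operators $P_1f=\sum_{\kappa\in F}f(\kappa)\,e_\kappa$ and $P_2g=\sum_{\lambda\in G}g(\lambda)\,h_\lambda$, where the $e_\kappa$, $h_\lambda$ are Urysohn functions supported in pairwise disjoint neighbourhoods. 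This one change eliminates the obstacle you (correctly) identify as the bulk of the work: for the paper's $R=P_1\otimes P_2$ one computes directly, on elementary tensors, that $Rw=\sum_{(\kappa,\lambda)\in F\times G}\langle\delta_\kappa\otimes\delta_\lambda,w\rangle\, e_\kappa\otimes h_\lambda$, so $Ru=0$ \emph{exactly} by hypothesis, and the corner of the approximant $v$ satisfies $\|Rv\|=\|R(v-u)\|\leqslant\|R\|\,\|v-u\|$, which is small for free. Your version with $M_\varphi\otimes M_\psi$ does not kill $u$ exactly, so you must pass to sampled/interpolated replacements of $\varphi f_i$ and $\psi g_i$ and run a second $\ee$-budget; that works (with one $U_\kappa$ per $\kappa\in F$ so the oscillation bound doesn't accumulate over the partition, and with $\|v-u\|$ chosen small relative to $|F|\,|G|$), but it is extra machinery. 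Two minor points of accounting: since $0\leqslant\varphi,\psi\leqslant 1$, both factors in each of your two retained sums already have sup-norm at most $1$, so you do not need the generous ``$\|g\|\le 2$'' allowance; and both your argument and the paper's actually deliver $\|u-2(x+y)\|<\ee$, not $\|u-4(x+y)\|<\ee$ --- the constant $4$ in the stated lemma appears to be a typo (the proof in the paper and the later application in Theorem \ref{main thing} both use the constant $2$).
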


\begin{proof} Since $\|u\|\leqslant 1$, there exist $m\in\nn$, positive numbers $(w_i)_{i=1}^m$ such that $\sum_{i=1}^m w_i=1$, and functions $(f_i)_{i=1}^m\subset B_{C(K)}$, $(g_i)_{i=1}^m \subset B_{C(L)}$ such that $\|u-\sum_{i=1}^m w_i f_i\otimes g_i\|<\ee/2$.    

Let $(U_\kappa)_{\kappa\in F}$ be pairwise disoint open subsets of $K$ such that for each $\kappa\in F$, $\kappa\in U_\kappa$. Let $(V_\lambda)_{\lambda\in G}$ be pairwise disjoint open subsets of $L$ such that for each $\lambda\in G$, $\lambda\in V_\lambda$. For each $\kappa\in F$, we can fix a continuous function $e_\kappa:K\to [0,1]$ such that $e_\kappa(\kappa)=1$ and $e_\kappa|_{K\setminus U_\kappa}\equiv 0$. Similarly, for each $\lambda\in G$, we ca fix a continuous function $h_\lambda:L\to [0,1]$ such that $h_\lambda(\lambda)=1$ and $h_\lambda|_{L\setminus V_\lambda}\equiv 0$.    

Define $P_1,Q_1:C(K)\to C(K)$ by $P_1f=\sum_{\kappa\in F}f(\kappa)e_\kappa$ and $Q_1 f=f-P_1f$.  Note that $\|P_1\|=1$, so that $\|Q_1\|\leqslant 2$. Note also that $Q_1$ is a projection whose range is $\an(F)$.    

Next, define $P_2,Q_2:C(L)\to C(L)$ by $P_2g=\sum_{\lambda\in G}g(\lambda)h_\lambda$ and $Q_2 g=g-P_2g$.  Note that $\|P_2\|=1$, so that $\|Q_2\|\leqslant 2$. Note also that $Q_2$ is a projection whose range is $\an(G)$.    

Define $R,S,T:C(K)\widehat{\otimes}_\pi C(L)\to C(K)\widehat{\otimes}_\pi C(L)$ by letting $R=P_1\otimes P_2$, $S=Q_1\otimes P_2$, and $T=I\otimes Q_2$, where $I=I_{C(K)}$.    Note also that $\|R\|=\|P_1\|\|P_2\|=1$, $\|S\|=\|Q_1\|\|P_2\|\leqslant 2$, and $\|T\|=\|I\|\|Q_2\|\leqslant 2$. Note also that for any $v\in C(K)\widehat{\otimes}_\pi C(L)$, $Rv+Sv+Tv=v$. Indeed, if $J$ denotes the identity on $C(L)$, \[I\otimes J=I\otimes(P_2+Q_2)= I\otimes P_2+I\otimes Q_2 = (P_1+Q_1)\otimes P_2 +T = R+S+T.\] 

Note that for an elementary tensor $f\otimes g$, \[Rf\otimes g = \Bigl(\sum_{\kappa\in F} f(\kappa)e_\kappa\Bigr)\otimes \Bigl(\sum_{\lambda\in G}g(\lambda)h_\lambda\Bigr) = \sum_{(\kappa,\lambda)\in F\times G}\langle \delta_\kappa\otimes \delta_\lambda, f\otimes g\rangle e_\kappa\otimes h_\lambda.\]  From this we can see that $Rv=0$ for any $v\in \bigcap_{(\kappa, \lambda)\in F\times G}\ker(\delta_\kappa\otimes \delta_\lambda)$.

Let $u\in B_{C(K)\widehat{\otimes}_\pi C(L)}\cap\bigcap_{(\kappa, \lambda)\in F\times G}\ker(\delta_\kappa\otimes \delta_\lambda)$ be as in the statement of the lemma. For $\ee>0$, we can fix $m\in\nn$, positive numbers $(w_i)_{i=1}^m$ such that $1=\sum_{i=1}^m w_i$, and $(f_i)_{i=1}^m \subset B_{C(K)}$, $(g_i)_{i=1}^m\subset B_{C(L)}$ such that $\|u-\sum_{i=1}^n w_i f_i\otimes g_i\|<\ee/2$.   Let $v=\sum_{i=1}^n w_i f_i\otimes g_i$.  Since $\|R\|=1$ and $Ru=0$, $\|Rv\|\leqslant \|Ru\|+\|R\|\|v-u\|<\ee/2$.   Let $x=\frac{1}{2}Sv$ and $y=\frac{1}{2}Tv$.   Then \[\|u-2(x+y)\| = \|u-Sv-Tv\| \leqslant \|u-v\|+\|Rv\|<\ee/2+\ee/2=\ee.\]

Let \[A=\{Q_1 f_i/2: 1\leqslant i\leqslant n\}\subset B_{C(K)}\cap \an(F)\] and \[B=\{P_2 g_i: 1\leqslant i\leqslant n\}\subset B_{C(L)}\] and note that \[x=\frac{1}{2}S\sum_{i=1}^n w_i f_i\otimes g_i= \sum_{i=1}^n w_i (Q_1 f_i/2)\otimes P_2 g_i \in \text{co}\{f\otimes g: f\in A,g\in B\}.\]  

Let \[C=\{f_i: 1\leqslant i\leqslant n\}\subset B_{C(K)}\] and \[E=\{Q_2 g_i/2: 1\leqslant i\leqslant n\}\subset B_{C(L)}\cap \an(G)\] and note that \[y=\frac{1}{2}T\sum_{i=1}^n w_if_i\otimes g_i = \sum_{i=1}^n w_i f_i\otimes (Q_2 g_i/2)\in \text{co}\{f\otimes g: f\in C, g\in E\}.\] 

\end{proof}

\begin{theorem} Assume $\xi$ is an ordinal and $K,L$ are compact, Hausdorff spaces such that $\omega^\xi\leqslant \max\{CB(K), CB(L)\}<\omega^{\xi+1}$.   Then $C(K)\widehat{\otimes}_\pi C(L)$ is $\xi$-$2$-AUS-renormable.

\label{main thing}
\end{theorem}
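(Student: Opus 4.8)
The plan is to verify condition (iii) of Theorem~\ref{renorm1} with $p=2$: I will produce a constant $C>0$ for which Player $S$ has a winning strategy in the $(\mathcal{E}^2_C,\mathcal{F},\Gamma_{\xi,\infty})$ game on $X:=C(K)\widehat{\otimes}_\pi C(L)$, whence $X$ admits an equivalent $\xi$-$2$-AUS norm. The engine is the combination of Lemma~\ref{breathing} (weakly $1$-summing control inside a single $C$-space), Lemma~\ref{technical} (splitting a ball element annihilated by finitely many Dirac tensors into a piece controlled on the $K$-side and a piece controlled on the $L$-side), and Grothendieck's inequality in the form of Corollary~\ref{kane} (upgrading $1$-summing control of $|f_j|$'s in $C(K)$ to weak $2$-summing control of the associated tensors). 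To avoid a clash of ordinal coordinates, I would run the auxiliary game on the single space $\Omega:=K\sqcup L$, which is scattered, compact, Hausdorff with $CB(\Omega)=\max\{CB(K),CB(L)\}$ by Proposition~\ref{cbf}(v); fix the integer $r\geqslant1$ with $\Omega^{\omega^\xi(r-1)}\neq\varnothing=\Omega^{\omega^\xi r}$ (it exists since $\omega^\xi\leqslant CB(\Omega)<\omega^{\xi+1}$), fix $C_1>2^r$ together with a winning strategy $\chi$ for Player $S$ in the $(\mathcal{I}_{C_1},\mathfrak{f}_{C(\Omega)},\Gamma_{\xi,\infty})$ game supplied by Lemma~\ref{breathing}, and use the identification $C(\Omega)=C(K)\oplus_\infty C(L)$, under which a finite $N\subset\Omega$ is $N^K\sqcup N^L$ and $\an_{C(\Omega)}(N)=\an_{C(K)}(N^K)\oplus_\infty\an_{C(L)}(N^L)$. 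Also fix in advance small scalars $\ee_t=\delta2^{-|t|}$, $t\in\Gamma_{\xi,\infty}$.

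Next I would describe the strategy for Player $S$ in the $X$-game as a ``shadow'' of $\chi$. At a node $t$, having kept track of the corresponding position of the $\mathcal{I}_{C_1}$ game, Player $S$ consults $\chi$ for its move $(\zeta_t,N_t)$, $N_t=N_t^K\sqcup N_t^L$, plays the same ordinal $\zeta_t$ in the $X$-game and the finite-codimensional subspace $u_t=B_{Z_t}$ with $Z_t=\bigcap_{(\kappa,\lambda)\in N_t^K\times N_t^L}\ker(\delta_\kappa\otimes\delta_\lambda)\in CD(X)$. When Player $V$ answers with a finite $G_t\subset B_{Z_t}$, each $u'\in G_t$ lies in $B_X\cap\bigcap_{(\kappa,\lambda)\in N_t^K\times N_t^L}\ker(\delta_\kappa\otimes\delta_\lambda)$, so Lemma~\ref{technical} (applied with $\ee=\ee_t$) yields $u'=4(x_{u'}+y_{u'})+(\text{error of norm}<\ee_t)$ with $x_{u'}\in\text{co}\{f\otimes g:f\in A_{u'},\ g\in B_{C(L)}\}$, $A_{u'}\subset B_{C(K)}\cap\an(N_t^K)$, and $y_{u'}\in\text{co}\{f\otimes g:f\in B_{C(K)},\ g\in E_{u'}\}$, $E_{u'}\subset B_{C(L)}\cap\an(N_t^L)$. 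Player $S$ then feeds back into $\chi$, as Player $V$'s shadow move at $t$, the finite set $A_t=\{(a,0):a\in\bigcup_{u'\in G_t}A_{u'}\}\cup\{(0,e):e\in\bigcup_{u'\in G_t}E_{u'}\}\subset B_{\an_{C(\Omega)}(N_t)}$, and continues. Since the ordinal coordinates played are exactly those dictated by $\chi$, the induced infinite play of the $\mathcal{I}_{C_1}$ game is $\chi$-consistent, hence lies in $\mathcal{I}_{C_1}$.

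Finally I would check the target condition along the branch $\tau$ produced. Membership in $\mathcal{I}_{C_1}$ together with the $\max$-formula for the norm of $C(\Omega)$ gives, for every choice $a_t\in\bigcup_{u'\in G_t}A_{u'}$, the $1$-summing bound $\sup_m\|\sum_{n=1}^m\sum_{t\in\Lambda_n(\tau)}\mathbb{P}_{\xi,\infty}(t)|a_t|\|_{C(K)}\leqslant C_1$, and symmetrically $\leqslant C_1$ in $C(L)$ for the $E$-side. Applying Corollary~\ref{kane}(ii) with its constant taken to be $C_1^2$, to the sequence $z_t:=x_{u'(t)}$ (which lies in $\text{co}\{f\otimes g:f\in\bigcup_{u'\in G_t}A_{u'},\ g\in B_{C(L)}\}$ for any choice $u'(t)\in G_t$), blocked according to the levels $\Lambda_n(\tau)$ — recall $\sum_{t\in\Lambda_n(\tau)}\mathbb{P}_{\xi,\infty}(t)=1$ by Proposition~\ref{co}(ii), which matches the normalization in Corollary~\ref{kane} — yields $\|(\sum_{t\in\Lambda_n(\tau)}\mathbb{P}_{\xi,\infty}(t)x_{u'(t)})_n\|_2^w\leqslant k_GC_1$, and the symmetric statement (via $C(K)\widehat{\otimes}_\pi C(L)\cong C(L)\widehat{\otimes}_\pi C(K)$) gives $\|(\sum_{t\in\Lambda_n(\tau)}\mathbb{P}_{\xi,\infty}(t)y_{u'(t)})_n\|_2^w\leqslant k_GC_1$. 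Since for any $(x_{|t|})\in\prod_t G_t$ the level-block $b_n=\sum_{t\in\Lambda_n(\tau)}\mathbb{P}_{\xi,\infty}(t)x_{|t|}$ differs from $4(\tilde x_n+\tilde y_n)$ by a vector of norm $\leqslant\sum_{t\in\Lambda_n(\tau)}\mathbb{P}_{\xi,\infty}(t)\ee_t$, which decays rapidly in $n$, the triangle inequality for $\|\cdot\|_2^w$ gives $\|(b_n)_n\|_2^w\leqslant 8k_GC_1+\delta'$ with $\delta'\to0$ as $\delta\to0$; choosing $\delta$ small and $C=8k_GC_1+1$ exhibits a winning strategy for Player $S$ in the $(\mathcal{E}^2_C,\mathcal{F},\Gamma_{\xi,\infty})$ game, and Theorem~\ref{renorm1} completes the proof. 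The main obstacle I anticipate is the synchronization bookkeeping — running the $X$-game alongside one $C$-space game on the common tree $\Gamma_{\xi,\infty}$, converting Player $V$'s $X$-moves into legal shadow moves through Lemma~\ref{technical} while keeping the ordinal coordinates coherent (which is why $K$ and $L$ are merged into $K\sqcup L$ instead of using two independent shadow games) and controlling the accumulated splitting errors; the analytic content that does the real work, turning $1$-summing into weak $2$-summing, is entirely contained in Corollary~\ref{kane}.
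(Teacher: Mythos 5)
Your proposal is correct and follows essentially the same route as the paper: merge $K$ and $L$ into $\Omega=K\sqcup L$, extract a winning strategy $\chi$ in the auxiliary $\mathcal{I}_{C_1}$ game from Lemma~\ref{breathing}, translate Player $V$'s tensor-product moves into shadow moves for $\chi$ via the splitting of Lemma~\ref{technical}, and upgrade the resulting $1$-summing control to weak $2$-summing via Corollary~\ref{kane}. The only cosmetic difference is framing: the paper invokes determinacy (Proposition~\ref{steph}) to turn the nonexistence of a winning strategy for Player $S$ into a winning strategy for Player $V$ in the $\mathfrak{s}$-game and derives a contradiction along a single branch, whereas you directly describe the shadow strategy in the $\mathcal{F}$-game, which is equivalent and if anything a bit cleaner.
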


\begin{proof} Without loss of generality, we can assume $CB(K)\geqslant CB(L)$. We can assume $K,L$ are disjoint.   Fix $k\in\nn$ such that $K^{\omega^\xi(k-1)}\neq \varnothing$ and $K^{\omega^\xi k}=\varnothing$. Fix $c>k_G 2^{k+2}$, where $k_G$ denotes Grothendieck's constant.   We claim that Player $S$ has a winning strategy in the $(\mathcal{E}^2_c, \mathfrak{s}_{C(K)\widehat{\otimes}_\pi C(L)}, \Gamma_{\xi,\infty})$ game as defined in Theorem \ref{renorm1}.  We prove this by contradiction, which means we assume Player $V$ has a winning strategy in this game.  Since we are playing the game with singleton sets $\mathfrak{s}_{C(K)\widehat{\otimes}_\pi C(L)}$, this means we may assume there exist $(u_a)_{a\in \Gamma_{\xi,\infty}.D}\subset B_{C(K)\widehat{\otimes}_\pi C(L)}$ normally weakly null such that for every $\alpha=(\zeta_a, u_a)_{a<\alpha}\in [\Gamma_{\xi,\infty}].D$, $c<\|\bigl(\sum_{a\in \Lambda_n(\alpha)} \mathbb{P}_{\xi,\infty}(a) u_a\bigr)_{n=1}^\infty\|_2^w$. Here, $D= CD(C(K)\widehat{\otimes}_\pi C(L))$, the set of finite codimensional subspaces of $C(K)\widehat{\otimes}_\pi C(L)$.

Let $\Omega=K\oplus L$ be the topological disjoint sum of $K$ and $L$. Note that $CB(\Omega)=CB(K) \in [\omega^\xi(k-1), \omega^\xi k)$.     Fix $2^k<c_1$ such that $4k_Gc_1<c$.   By Lemma \ref{breathing}, Player $S$ has a winning strategy in the $(\mathcal{I}_{c_1}, \mathfrak{s}_{C(\Omega)}, \Gamma_{\xi,\infty})$ game.  Let $\chi$ be such a strategy.    Fix $(\ee_n)_{n=1}^\infty$ such that $4k_Gc_1+\sum_{n=1}^\infty \ee_n<c$.

In the proof, for $f\in C(K)$, let $f+0\in C(\Omega)$ be the function such that $(f+0)|_K\equiv f$ and $(f+0)|_L\equiv 0$. Similarly, for $g\in C(L)$, let $0+g\in C(\Omega)$ be given by $(0+g)|_K\equiv 0$ and $(0+g)|_L\equiv g$.    For a finite subset $M$ of $\Omega$, we will denote $M=(M\cap K)\oplus (M\cap L)$. 

Let $\chi(\varnothing)=(\zeta_1, F_1\oplus G_1)$. By enlarging $F_1, G_1$ if necessary, we may assume these sets are each non-empty.  Let $Z_1=\bigcap_{(\kappa, \lambda)\in F_1\times G_1} \ker(\delta_\kappa\otimes \delta_\lambda)$ and $b_1=(\zeta, Z_1)\in \Gamma_{\xi,1}.D$.   Since $u_{b_1}\in B_{Z_1}$, by Lemma \ref{technical}, we can fix $A_1, C_1\subset B_{C(K)}$ be such that $A_1\subset \an(F_1)$, $B_1, E_1\subset B_{C(L)}$ be such that $E_1\subset \an(G_1)$, and \[x_1\in \text{co}\{f\otimes g: f\in A_1, g\in B_1\}, \] \[y_1\in \text{co}\{f\otimes g: f\in C_1, g\in E_1\},\] and $\|u_{b_1}-2(x_1+y_1)\|<\ee_1$.   Let \[N_1=\{f+0:f\in A_1\}\cup \{0+g: g\in E_1\},\] which is a finite subset of $B_{C(\Omega)}\cap \an(F_1\oplus G_1)$. 

Now assume that $b_n=(\zeta_i, B_{Z_i})_{i=1}^n\in \Gamma_{\xi,\infty}.D$ has been chosen.  Assume also that we have sets $A_1, \ldots, A_n, C_1, \ldots, C_n\subset B_{C(K)}$, $B_1, \ldots, B_n, E_1, \ldots, E_n\subset B_{C(L)}$, $F_1\oplus G_1, \ldots, F_n\oplus G_n\subset \Omega$, $x_1, \ldots, x_n, y_1, \ldots, y_n \in B_{C(K)\widehat{\otimes}_\pi C(L)}$ such that for all $1\leqslant i\leqslant n$,  \begin{enumerate}[(i)]\item $A_i\subset \an(F_i)$, \item $E_i\subset \an(G_i)$, \item $x_i\in \text{co}\{f\otimes g:f\in A_i, g\in B_i\}$, \item $y_i\in \text{co}\{f\otimes g : f\in C_i, g\in E_i\}$, and \item $\|u_{b_i}-2(x_i+y_i)\|<\ee_i$. \end{enumerate}  Let $\chi((\zeta_i, N_i)_{i=1}^n=(\zeta_{n+1}, F_{n+1}\oplus G_{n+1})$.  By enlarging $F_{n+1}, G_{n+1}$ if necessary, we can assume these sets are non-empty. Let  \[Z_{n+1}=\bigcap_{(\kappa, \lambda)\in F_{n+1}\times G_{n+1}}\ker(\delta_\kappa\otimes \delta_\lambda)\] and let $b_{n+1}=(\zeta_i, B_{Z_i})_{i=1}^{n+1}$.  By Lemma \ref{technical}, we can find $A_{n+1}, C_{n+1}\subset B_{C(K)}$ such that $A_{n+1}\subset \an(F_{n+1})$, $B_{n+1}, E_{n+1}\subset B_{C(L)}$ such that $E_{n+1}\subset \an(G_{n+1})$, $x_{n+1}\in \text{co}\{f\otimes g: f\in A_{n+1}, g\in B_{n+1}\}$, and $y_{n+1}\in \text{co}\{f\otimes g: f\in C_{n+1}, g\in E_{n+1}\}$ such that  $\|u_{b_{n+1}}-2(x_{n+1}+y_{n+1})\|<\ee_{n+1}$. Let \[N_{n+1}=\{f+0: f\in A_{n+1}\}\cup \{0+g: g\in E_{n+1}\},\] which is a finite subset of $\an(F_{n+1}\oplus G_{n+1})$.     

The end result of this process is a sequence $b_1<b_2<\ldots$ such that $b_{n+1}^-=b_n$ for all $n\in\nn$.  Let $\alpha\in [\Gamma_{\xi,\infty}].D$ be the sequence such that $\alpha|n=b_n$ for all $n\in\nn$.  Let $\varnothing=a_0<a_1<\ldots$ be such that $a_n<\alpha$ and $a_n\in MAX(\Lambda_{\xi,\infty,n}.D)$ for all $n\in\nn$.  Let $r_n=|a_n|$ for $n=0,1,2,\ldots$.   Since $\chi$ is a winning strategy for Player $S$ in the $(\mathcal{I}_{c_1}, \mathfrak{s}_{C(\Omega)}, \Gamma_{\xi,\infty})$ game, it follows that for any $(h_a)_{a<\alpha}\in \prod_{a<\alpha}N_{|a|}$, \[\sup_m \Bigl\|\sum_{n=1}^m \sum_{a\in \Lambda_n(\alpha)} \mathbb{P}_{\xi,\infty}(a) |h_a|\Bigr\|_{C(\Omega)} \leqslant c_1.\]  Therefore for any $(f_a)_{a<\alpha}\in \prod_{a<\alpha}A_{|a|}$, $f_a+0\in N_{|a|}$ for all $a<\alpha$, and  \[\sup_m \Bigl\|\sum_{n=1}^m \sum_{a\in \Lambda_n(\alpha)} \mathbb{P}_{\xi,\infty}(a) |f_a|\Bigr\|_{C(K)} = \sup_m \Bigl\|\sum_{n=1}^m \sum_{a\in \Lambda_n(\alpha)} \mathbb{P}_{\xi,\infty}(a) |f_a+0|\Bigr\|_{C(\Omega)} \leqslant c_1.\]   By Corollary \ref{kane}$(ii)$, it follows that \[\Bigl\|\Bigl(\sum_{b\in \Lambda_n(\alpha)} \mathbb{P}_{\xi,\infty}(b) x_{|b|}\Bigr)_{n=1}^\infty \Bigr\|_2^w\leqslant k_G c_1^{1/2} \leqslant k_Gc_1.\]  Similarly, using the properties of the sets $(E_n)_{n=1}^\infty$, \[\Bigl\|\Bigl(\sum_{b\in \Lambda_n(\alpha)} \mathbb{P}_{\xi,\infty}(b) y_{|b|}\Bigr)_{n=1}^\infty \Bigr\|_2^w\leqslant k_G c_1^{1/2} \leqslant k_Gc_1.\]  Therefore \begin{align*} \Bigl\|\Bigl(\sum_{b\in \Lambda_n(\alpha)} \mathbb{P}_{\xi,\infty}(b) u_{|b|}\Bigr)_{n=1}^\infty \Bigr\|_2^w & \leqslant \sum_{b<\alpha} \|u_{|b|}-2(x_{|b|}+y_{|b|})\| \\ & + 2\Bigl\|\Bigl(\sum_{b\in \Lambda_n(\alpha)} \mathbb{P}_{\xi,\infty}(b) x_{|b|}\Bigr)_{n=1}^\infty \Bigr\|_2^w +2\Bigl\|\Bigl(\sum_{b\in \Lambda_n(\alpha)} \mathbb{P}_{\xi,\infty}(b) y_{|b|}\Bigr)_{n=1}^\infty \Bigr\|_2^w \\ & <\sum_{n=1}^\infty \ee_n + 4\cdot k_G c_1<c.  \end{align*} This contradicts the properties of $(u_b)_{b\in \Gamma_{\xi,\infty}.D}$, and this contradiction finishes the proof.

\end{proof}

Here we recall the convention that if is a Banach space which fails to be Asplund, then we write $Sz(X)=\infty$. In what follows, for Banach spaces $X,Y$, we agree to the convention that  $Sz(Y)\leqslant Sz(X)$ is true if $Sz(X)=\infty$. We also agree to the convention that $\max\{Sz(X),Sz(Y)\}=\infty$ if either $Sz(X)=\infty$ or $Sz(Y)=\infty$. 

\begin{corollary} Let $K,L$ be compact, Hausdorff topological spaces.   Then \[Sz(C(K)\widehat{\otimes}_\pi C(L))=\max\{Sz(C(K)), Sz(C(L))\}.\]   \label{ms}
\end{corollary}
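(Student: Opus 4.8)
The corollary is proved by establishing the two inequalities separately, and both reduce to known facts combined with Theorem \ref{main thing}. First, the inequality $\max\{Sz(C(K)),Sz(C(L))\}\leqslant Sz(C(K)\widehat{\otimes}_\pi C(L))$. Since $C(K)$ is isometric to a $1$-complemented subspace of $C(K)\widehat{\otimes}_\pi C(L)$ (using any norm-one $g_0\in C(L)$ with a norm-one functional $\nu_0$, the maps $f\mapsto f\otimes g_0$ and $f\otimes g\mapsto \nu_0(g)f$ give a contractive projection), and the Szlenk index is monotone under passing to subspaces (equivalently, it does not increase under passing to complemented — indeed arbitrary — subspaces), we get $Sz(C(K))\leqslant Sz(C(K)\widehat{\otimes}_\pi C(L))$, and symmetrically for $C(L)$. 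In particular, if either $C(K)$ or $C(L)$ is non-Asplund, then $C(K)\widehat{\otimes}_\pi C(L)$ is non-Asplund, i.e. $Sz(C(K)\widehat{\otimes}_\pi C(L))=\infty=\max\{Sz(C(K)),Sz(C(L))\}$, and we are done in that case. So we may assume both $K,L$ are scattered.

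For the reverse inequality, assume $K,L$ are scattered, compact, Hausdorff. If either is finite, then the corresponding $C$-space is finite-dimensional and the projective tensor product is just a finite direct sum of copies of the other $C$-space, so the equality is immediate from the first paragraph and Theorem \ref{Sz1}; hence assume both $K$ and $L$ are infinite. By Theorem \ref{Sz1}, $Sz(C(K))=\omega^{\xi_K+1}$ where $\xi_K$ is the unique ordinal with $\omega^{\xi_K}<CB(K)<\omega^{\xi_K+1}$, and similarly $Sz(C(L))=\omega^{\xi_L+1}$. Put $\xi=\max\{\xi_K,\xi_L\}$, so that $\max\{Sz(C(K)),Sz(C(L))\}=\omega^{\xi+1}$. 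The condition defining $\xi_K,\xi_L$ gives $CB(K),CB(L)<\omega^{\xi+1}$, and since at least one of $\xi_K,\xi_L$ equals $\xi$ we also have $\max\{CB(K),CB(L)\}\geqslant \omega^{\xi_K}+1>\omega^\xi$ in the relevant case, so $\omega^\xi\leqslant \max\{CB(K),CB(L)\}<\omega^{\xi+1}$. (One should be slightly careful if $\xi=0$: then $CB(K),CB(L)<\omega$ means $K,L$ finite, excluded; so $\xi\geqslant 1$ here, but the inequality $\omega^\xi\leqslant\max\{CB(K),CB(L)\}$ still holds since $CB$ of an infinite scattered compact space is at least $\omega$ when $\xi\ge 1$ — more precisely one verifies directly from $\omega^{\xi_K}<CB(K)$ that $\max\{CB(K),CB(L)\}\geqslant \omega^\xi$.) Now Theorem \ref{main thing} applies with this $\xi$: $C(K)\widehat{\otimes}_\pi C(L)$ admits an equivalent $\xi$-$2$-AUS norm. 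By Remark \ref{accordion}, any space admitting an equivalent $\xi$-$2$-AUS norm has $Sz(\cdot)\leqslant\omega^{\xi+1}$. Hence $Sz(C(K)\widehat{\otimes}_\pi C(L))\leqslant\omega^{\xi+1}=\max\{Sz(C(K)),Sz(C(L))\}$.

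Combining the two inequalities yields the claimed equality. The only step requiring any genuine care is the bookkeeping in the second paragraph that translates the hypotheses of Theorem \ref{Sz1} (phrased via $\omega^{\xi_K}<CB(K)<\omega^{\xi_K+1}$) into the hypothesis of Theorem \ref{main thing} (phrased via $\omega^\xi\leqslant\max\{CB(K),CB(L)\}<\omega^{\xi+1}$), together with handling the degenerate cases where $K$ or $L$ is finite or non-scattered; the substantive analytic content has already been done in Theorem \ref{main thing} and Remark \ref{accordion}. I do not anticipate any real obstacle beyond this routine case analysis, since the $1$-complementation of the factors in a projective tensor product and the monotonicity of the Szlenk index are both standard.
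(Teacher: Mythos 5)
Your proof is correct and follows essentially the same route as the paper's: the lower bound via the fact that $C(K)$ and $C(L)$ embed as subspaces (complementation is not needed but is fine), case-splitting on scatteredness and finiteness, and the upper bound via Theorem \ref{main thing} together with Remark \ref{accordion}. One small caveat: the parenthetical claim that ``if $\xi=0$ then $CB(K),CB(L)<\omega$ means $K,L$ finite'' is false (for instance $[0,\omega]$ is infinite with $CB=2$, so $\xi_K=0$ is possible with $K$ infinite), but this is harmless because your final sentence in that parenthetical --- that $\omega^{\xi_K}<CB(K)\leqslant\max\{CB(K),CB(L)\}$ directly gives $\omega^\xi\leqslant\max\{CB(K),CB(L)\}$ --- is the correct verification and works uniformly for all $\xi$, including $\xi=0$.
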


\begin{proof}  Since $C(K)$, $C(L)$ are each isomorphic to subspaces of $C(K)\widehat{\otimes}_\pi C(L)$, \[Sz(C(K)\widehat{\otimes}_\pi C(L))\geqslant \max\{Sz(C(K)), Sz(C(L))\}.\] 

If either $K$ or $L$ fails to be scattered, then $\max\{Sz(C(K)), Sz(C(L))\}=\infty$, and \[\infty=Sz(C(K)\widehat{\otimes}_\pi C(L))=\max\{Sz(C(K)), Sz(C(L))\}\] holds. Therefore \[Sz(C(K)\widehat{\otimes}_\pi C(L))\leqslant \max\{Sz(C(K)), Sz(C(L))\}\] by the conventions established prior to the corollary. 

Assume $K,L$ are both scattered. If both $K$ and $L$ are finite, then so is $C(K)\widehat{\otimes}_\pi C(L)$, and \[1=Sz(C(K)\widehat{\otimes}_\pi C(L))=\max\{Sz(C(K)), Sz(C(L))\}.\]

Assume $K,L$ are both scattered and at least one of $K,L$ is infinite. Without loss of generality, assume $CB(L)\leqslant CB(K)\in(\omega^\xi, \omega^{\xi+1})$.  Then \[\max\{Sz(C(K)), Sz(C(L))\}=Sz(C(K))=\omega^{\xi+1}.\]  By Theorem \ref{main thing}, $C(K)\widehat{\otimes}_\pi C(L)$ is $\xi$-$2$-AUS renormable, from which it follows that \[Sz(C(K)\widehat{\otimes}_\pi C(L))\leqslant \omega^{\xi+1}=Sz(C(K)).\]

\end{proof}

\begin{rem}\upshape
We recall that the Schreier family $\mathcal{S}_1$ is given by \[\mathcal{S}_1=\{\varnothing\}\cup \{E\subset \nn: \varnothing\neq E, |E|\leqslant \min E\}.\]   We endow the power set $2^\nn$ of $\nn$ with the Cantor topology, which is the topology induced by identifying $E$ with its indicator function $1_E\in \{0,1\}^\nn$ and endowing $\{0,1\}^\nn$ with the product topology.  It is known that $\mathcal{S}_1$ is a compact subset of $2^\nn$ whose Cantor-Bendixson index is $\omega+1$ and such that $\mathcal{S}_1^\omega=\{\varnothing\}$.  Therefore $\mathcal{S}_1$ is homeomorphic to $\omega^\omega+$, and $C(\mathcal{S}_1)$ is isometrically isomorphic to $C(\omega^\omega+)$.  The Schreier space $X_1$ is the completion of $c_{00}$ with respect to the norm \[\Bigl\|\sum_{i=1}^\infty a_ie_i\Bigr\|_{X_1} = \sup_{E\in \mathcal{S}_1} \Bigl|\sum_{i\in E} a_i\Bigr|.\]  Of course, this space is isometrically embeddable into $C(\mathcal{S}_1)$ via the map that takes $x=\sum_{i=1}^\infty a_ie_i $ to the function $f_x$ given by $f_x(E)=\sum_{i\in E}a_i$.  We note that the canonical basis of $X_1$ is unconditional and dominates the canonical dual basis in $X^*_1$.   From this it follows that $X_1\widehat{\otimes}_\pi X_1$ contains an isomorphic copy of $\ell_1$, and therefore $X_1\widehat{\otimes}_\pi X_1$ is non-Asplund.  Therefore by Corollary \ref{ms}, $Sz(C(\omega^\omega+)\widehat{\otimes}_\pi C(\omega^\omega+))=\omega^2$, $C(\omega^\omega+)$ has a subspace $X_1$ such that $X_1\widehat{\otimes}_\pi X_1$ is non-Asplund. Since $X_1$ has all of the same asymptotic smoothness properties of $C(\omega^\omega+)$, this is yet another example which illustrates the intricacies of the preservation of asymptotic smoothness properties during the formation of projective tensor products.

\end{rem}

\section{Applications}

In this section, for an ordinal $\alpha$, we let $\alpha+=[0, \alpha]$. 

\begin{theorem} If $K,L,M$ are countable, compact, Hausdorff spaces, then $C(M)$ is isomorphic to a quotient of a subspace of  $C(K)\widehat{\otimes}_\pi C(L)$ if and only if $C(M)$ embeds isomorphically into either $C(K)$ or $C(L)$. In particular, $C(\omega^\omega)\not\hookrightarrow c_0\widehat{\otimes}_\pi c_0$. 
\label{app1}
\end{theorem}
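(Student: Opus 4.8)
The plan is to prove the two directions separately, the reverse implication being essentially trivial and the forward implication being the substantive one that leans on the Szlenk index computation in Corollary \ref{ms} together with the known classification of $C(\alpha+)$ for countable ordinals $\alpha$. First I would recall the Bessaga--Pełczyński classification: for a countable compact Hausdorff $M$, $C(M)$ is isomorphic to $C(\omega^{\omega^\beta}+)$ for a unique $\beta<\omega_1$, where $\beta$ is determined by $CB(M)\in(\omega^\beta,\omega^{\omega^\beta}+]$... more precisely, by Theorem \ref{Sz1}, $Sz(C(M))=\omega^{\beta+1}$ where $\omega^\beta<CB(M)<\omega^{\beta+1}$ (reading off the unique such $\beta$). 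The key isomorphic invariant is thus $Sz(C(M))$, which by a theorem of Bessaga--Pełczyński actually determines the isomorphism class among countable $C(M)$ spaces: two such spaces are isomorphic if and only if they have the same Szlenk index, and $C(M)$ embeds isomorphically into $C(N)$ if and only if $Sz(C(M))\leqslant Sz(C(N))$.

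For the reverse direction, if $C(M)\hookrightarrow C(K)$ (or $C(L)$), then since $C(K)$ is complemented in $C(K)\widehat{\otimes}_\pi C(L)$ — indeed $C(K)\cong C(K)\widehat{\otimes}_\pi \mathbb{K}$ sits inside $C(K)\widehat{\otimes}_\pi C(L)$ as a range of the norm-one projection induced by a norm-one projection $C(L)\to\mathbb{K}$ (evaluation at a point composed with the constant-function inclusion), using the functoriality $S\otimes T$ from Section 6 — it follows that $C(M)$ embeds isomorphically into $C(K)\widehat{\otimes}_\pi C(L)$, hence in particular is isomorphic to a subspace, a fortiori to a quotient of a subspace.

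For the forward direction, suppose $C(M)$ is isomorphic to a quotient of a subspace $Z$ of $C(K)\widehat{\otimes}_\pi C(L)$. Szlenk index is monotone under passing to subspaces and under quotients (for the quotient one passes to the adjoint, which is an isometric embedding of $(C(M))^*$ weak${}^*$ into $Z^*$; the Szlenk derivations respect this), so $Sz(C(M))\leqslant Sz(Z)\leqslant Sz(C(K)\widehat{\otimes}_\pi C(L))$. By Corollary \ref{ms}, $Sz(C(K)\widehat{\otimes}_\pi C(L))=\max\{Sz(C(K)),Sz(C(L))\}$, so $Sz(C(M))\leqslant \max\{Sz(C(K)),Sz(C(L))\}=Sz(C(K))$ or $Sz(C(L))$, whichever is larger. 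Then by the Bessaga--Pełczyński embedding criterion for countable $C(\cdot)$ spaces quoted above, $Sz(C(M))\leqslant Sz(C(K))$ (say) forces $C(M)\hookrightarrow C(K)$. The main obstacle here is making sure the Szlenk index genuinely passes through "subspace of a quotient" (or "quotient of a subspace") in the required direction: quotients do not in general respect Szlenk index naively, but a quotient map $q:Z\twoheadrightarrow C(M)$ has adjoint $q^*:(C(M))^*\to Z^*$ a weak${}^*$-to-weak${}^*$ homeomorphic isometric embedding onto a weak${}^*$-closed subspace, and one checks $s_\varepsilon^\zeta(B_{(C(M))^*})\subseteq (q^*)^{-1}(s_\varepsilon^\zeta(q^*(B_{(C(M))^*})))\subseteq (q^*)^{-1}(s_\varepsilon^\zeta(B_{Z^*}))$ up to the norm of $q$, giving $Sz(C(M))\leqslant Sz(Z)$; and $Sz(Z)\leqslant Sz(C(K)\widehat{\otimes}_\pi C(L))$ since $Z$ is a subspace. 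For the last sentence, $c_0\widehat{\otimes}_\pi c_0 = C(\omega+)\widehat{\otimes}_\pi C(\omega+)$ has $Sz=\max\{\omega,\omega\}=\omega$ by Corollary \ref{ms}, while $Sz(C(\omega^\omega+))=\omega^2>\omega$ by Theorem \ref{Sz1}, so $C(\omega^\omega+)$ cannot even embed as a subspace of $c_0\widehat{\otimes}_\pi c_0$, let alone as a quotient of a subspace.
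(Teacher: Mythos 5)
Your proof is correct and takes essentially the same route as the paper's: both use the Bessaga--Pe\l czy\'nski classification of countable $C(K)$ spaces, the Szlenk index computation $Sz(C(\omega^{\omega^\xi}+))=\omega^{\xi+1}$, monotonicity of the Szlenk index under subspaces and quotients, and Corollary \ref{ms} to conclude. The paper organizes the argument as a dichotomy on whether $\gamma>\beta$ or $\gamma\leqslant\beta$ (in the exponent notation), whereas you package the same content as an ``embedding criterion'' $C(M)\hookrightarrow C(N)\iff Sz(C(M))\leqslant Sz(C(N))$, and you spell out a few standard facts (complementation of $C(K)$ in the tensor product, monotonicity under quotients via the adjoint) that the paper takes for granted; these are cosmetic differences only.
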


\begin{proof}There exist countable ordinals $\alpha, \beta, \gamma$ such that $C(K), C(L), C(M)$ are isomorphic to $C(\omega^{\omega^\alpha}+)$, $C(\omega^{\omega^\beta}+)$, and $C(\omega^{\omega^\gamma}+)$, respectively. Without loss of generality, assume that $\alpha \leqslant \beta$.    Moreover, $Sz(C(K))=\omega^{\alpha+1}$, $Sz(C(L))=\omega^{\beta+1}$, and $Sz(C(M))=\omega^{\gamma+1}$ \cite{Samuel}.    By Corollary \ref{ms} and our assumption that $\alpha \leqslant \beta$, \[Sz(C(K)\widehat{\otimes}_\pi Sz(L))=\max\{\omega^{\alpha+1}, \omega^{\beta+1}\}.\]

If $\gamma>\beta$, then $Sz(C(M))>Sz(C(K)\widehat{\otimes}_\pi C(L))$, and $C(M)$ does not isomorphically embed into $C(K), C(L)$, or $C(K)\widehat{\otimes}_\pi C(L)$. Here we are using the fact that the Szlenk index is an isomorphic invariant, and the Szlenk index of a Banach space cannot be less than the Szlenk index of any quotient of any of its subspaces.

If $\gamma\leqslant \beta$, then $\omega^{\omega^\gamma}+$ is a clopen subset of $\omega^{\omega^\beta}+$, and $C(M)\approx C(\omega^{\omega^\gamma}+)$ embeds isomorphically into $C(\omega^{\omega^\beta})\approx C(L)$, which embeds into $C(K)\widehat{\otimes}_\pi C(L)$.

\end{proof}

\begin{rem}\upshape The preceding result does not extend to uncountable sets $K,L,M$.  Indeed, it is known that $C(\omega_1\cdot 2+)$ does not embed into $C(\omega_1 +)$ \cite{Semadeni}. However, by \cite{Brooker}, $Sz(\omega_1+)=Sz(\omega_1\cdot 2+)=\omega^{\omega_1+1}$.  Moreover, $C(\omega_1\cdot 2+)$ embeds isomorphically into $\ell_\infty^2\widehat{\otimes}_\pi C(\omega_1+)$. Therefore we have an example with $K=\{0,1\}$, $L=\omega_1+$, and $M=\omega_1\cdot 2+$ in which $C(M)$ embeds into $C(K)\widehat{\otimes}_\pi C(L)$ but not into $C(K)$ or $C(L)$. 

However, in this example, $\omega_1+$ and $\omega_1\cdot 2+$ have the same Cantor-Bendixson index. For general compact, Hausdorff $K,L,M$, we have the following. 

\end{rem}

\begin{corollary} If $K,L,M$ are compact, Hausdorff spaces such that \[\max \{CB(K), CB(L)\}\omega < CB(M),\]  then $C(M)$ is not isomorphic to any subspace of any quotient of $C(K)\widehat{\otimes}_\pi C(L)$.   

\end{corollary}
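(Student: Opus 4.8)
The plan is to reduce the statement to Corollary \ref{ms} via the Szlenk index computation for $C(M)$, together with the monotonicity of the Szlenk index under passing to subspaces and quotients. First I would dispose of the degenerate cases: if $M$ is not scattered then $C(M)$ is not Asplund, hence $Sz(C(M)) = \infty$; but the hypothesis $\max\{CB(K),CB(L)\}\omega < CB(M)$ forces $CB(M)$ to be an honest ordinal, so $M$ is scattered and $M$ is infinite (indeed $CB(M) \geqslant \omega$). Similarly, if $K$ or $L$ fails to be scattered, then $Sz(C(K)\widehat{\otimes}_\pi C(L)) = \infty$ and there is nothing to prove unless we interpret the claim as vacuous; so I would remark that the interesting case is when $K,L,M$ are all scattered, and then handle the non-scattered $K$ or $L$ case by noting the conclusion is about non-embedding, which is only meaningful — and in fact requires proof — when the target has finite Szlenk index. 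Actually the cleanest route: if either $K$ or $L$ is non-scattered the statement is vacuously the kind of thing we need not address, so assume $K,L,M$ all scattered.

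The core argument runs as follows. Write $CB(K), CB(L), CB(M)$ for the Cantor--Bendixson indices. Since $K$ is compact scattered, $CB(K)$ is a successor ordinal; let $\xi_K$ be the unique ordinal with $\omega^{\xi_K} < CB(K) \leqslant \omega^{\xi_K+1}$ (and note $\omega^{\xi_K} \leqslant CB(K)$ automatically for $K$ infinite, with the finite case giving $Sz(C(K))=1$). By Theorem \ref{Sz1}, $Sz(C(K)) = \omega^{\xi_K+1}$ when $K$ is infinite, and similarly for $L$ and $M$. The hypothesis $\max\{CB(K),CB(L)\}\,\omega < CB(M)$ is designed precisely to force $\max\{\xi_K, \xi_L\} < \xi_M$: indeed if $\xi_M \leqslant \max\{\xi_K,\xi_L\} =: \eta$, then $CB(M) \leqslant \omega^{\xi_M + 1} \leqslant \omega^{\eta+1} = \omega^\eta \cdot \omega \leqslant \max\{CB(K),CB(L)\}\cdot\omega$ (using $\omega^\eta < CB(K)$ or $\omega^\eta < CB(L)$ for whichever of $K,L$ realizes the max, so $\omega^\eta \leqslant \max\{CB(K),CB(L)\}$), contradicting the hypothesis. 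I would want to be slightly careful with ordinal arithmetic here — the inequality $\omega^{\xi_M+1} \leqslant \omega^{\eta+1}$ needs $\xi_M \leqslant \eta$ and then a check that $\omega^{\eta+1} \leqslant \max\{CB(K),CB(L)\}\cdot \omega$ using $\omega^\eta \leqslant \max\{CB(K),CB(L)\}$ and left-multiplication monotonicity — this is the one spot demanding attention.

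Given $\max\{\xi_K,\xi_L\} < \xi_M$, we conclude $Sz(C(M)) = \omega^{\xi_M+1} > \omega^{\max\{\xi_K,\xi_L\}+1} = \max\{Sz(C(K)), Sz(C(L))\} = Sz(C(K)\widehat{\otimes}_\pi C(L))$, where the last equality is Corollary \ref{ms}. Finally, the Szlenk index is an isomorphic invariant, and it cannot increase when passing to a subspace or to a quotient (this is standard: a weak$^*$-continuous restriction/inclusion of dual balls is compatible with the Szlenk derivation, and is used without comment in Theorem \ref{app1}). Hence no subspace of any quotient of $C(K)\widehat{\otimes}_\pi C(L)$ can have Szlenk index as large as $Sz(C(M))$, so $C(M)$ cannot be isomorphic to any such space. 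The main obstacle, such as it is, is purely the ordinal-arithmetic bookkeeping translating the multiplicative hypothesis on Cantor--Bendixson indices into the strict inequality $\max\{\xi_K,\xi_L\} < \xi_M$ on the exponents; everything after that is an immediate appeal to Corollary \ref{ms} and invariance of the Szlenk index.
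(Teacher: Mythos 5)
Your proposal is correct and follows the same route as the paper's proof: translate the Cantor--Bendixson hypothesis into the strict inequality on the exponents $\xi$, compute both Szlenk indices via Theorem~\ref{Sz1} and Corollary~\ref{ms}, and invoke monotonicity of the Szlenk index under subspaces and quotients. The one place you should tighten is the degenerate cases: the hypothesis does \emph{not} force $M$ to be scattered (the paper's conventions give $\max\{CB(K),CB(L)\}\omega<\infty=CB(M)$ when $M$ is non-scattered, in which case $Sz(C(M))=\infty$ and the conclusion still holds), whereas it does force $K,L$ scattered precisely because $\max\{CB(K),CB(L)\}\omega=\infty$ would then fail to be $<CB(M)$; your discussion of these cases wanders before arriving at the right place, and the cleaner argument is simply to invoke the paper's $\infty$-conventions as it does.
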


\begin{proof} Assume $\max\{CB(K), CB(L)\}\omega<CB(M)$.    By  our conventions on the Cantor-Bendixson index, if $K$ or $L$ fails to be scattered, $\max\{CB(K), CB(L)\}=\infty$. By our conventions, $\max\{CB(K), CB(L)\}\omega< CB(M)$ implies that $K,L$ are scattered.  Therefore there exists a minimum ordinal $\xi$ such that $\max\{CB(K), CB(L)\}<\omega^{\xi+1}$.  This means $\omega^\xi \leqslant \max\{CB(K), CB(L)\}$, and $\omega^{\xi+1}<CB(M)$.  Therefore $Sz(C(M))\geqslant \omega^{\xi+2}>\omega^{\xi+1}=Sz(C(K)\widehat{\otimes}_\pi C(L))$, and $C(M)$ is no isomorphic to any subspace of any quotient of $C(K)\widehat{\otimes}_\pi C(L)$.      \end{proof}

We last show the sharpness of the exponent $2$ in Theorem \ref{main thing}.  

\begin{theorem} Let $K$ be an infinite, compact, Hausdorff, scattered topological space. Let $\xi$ be such that $\omega^\xi<CB(K)<\omega^{\xi+1}$.   Then $c_0\widehat{\otimes}_\pi C(K)$ is not $\xi$-$p$-AUS-renormable for an $2<p<\infty$.    

\end{theorem}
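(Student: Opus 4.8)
The plan is to show that $c_0 \widehat\otimes_\pi C(K)$ contains, uniformly and ``asymptotically,'' a copy of a space which obstructs $\xi$-$p$-AUS for $p>2$; the natural candidate is that the tensor product reproduces an $\ell_2$-like behavior along weakly null trees of rank $\omega^\xi$, which is incompatible with $\xi$-$p$-AUS for $p>2$ by Remark \ref{accordion} together with the modulus computations. More precisely, since $Sz(c_0\widehat\otimes_\pi C(K)) = \omega^{\xi+1}$ by Corollary \ref{ms}, being $\xi$-$p$-AUS-renormable for any $1<p\le\infty$ is consistent with the Szlenk index, so one cannot simply read off the failure from the Szlenk index; instead one must exhibit a quantitative lower bound on $\varrho^\xi$ for \emph{every} equivalent norm. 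The strategy is therefore: fix an arbitrary equivalent norm $|\cdot|$ on $X:=c_0\widehat\otimes_\pi C(K)$, and produce, for each $n$, a weakly null tree of rank $\omega^\xi$ in $B_X^{|\cdot|}$ along whose branches the relevant convex combinations behave like the unit vector basis of $\ell_2^n$ (up to the equivalence constant), forcing $\varrho^\xi_X(\sigma) \gtrsim \sigma^2$ rather than $\sigma^p$.

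First I would identify the building blocks: inside $c_0$ one has the unit vectors $(e_m)_{m=1}^\infty$, which are weakly null, and inside $C(K)$, since $\omega^\xi < CB(K)$, one can find (by Theorem \ref{Sz1} and standard facts about $C(K)$ for scattered $K$, or by working with a clopen copy of $[0,\omega^{\omega^\xi}]$ sitting inside $K$) a weakly null tree $(h_t)_{t\in T}$ of rank $\omega^\xi$ in $B_{C(K)}$ which is ``$\ell_\infty$-like'' along branches --- that is, the functions along a branch have essentially disjoint supports, so any sum $\sum_{s\le t} c_s h_s$ has norm $\max_s |c_s|$. Tensoring, the collection $(e_{|t|}\otimes h_t)_{t\in T}$ is weakly null in $X$ (each factor is weakly null and the tensor map is weak-to-weak continuous on bounded sets in the appropriate sense), and along a branch $(s\le t)$ the elementary tensors $e_{|s|}\otimes h_s$ have the key property that, testing against bilinear forms $\sum a_m b_\kappa \,\delta_{e_m^*}\otimes\delta_\kappa$, one gets exactly the behavior of a sequence that is weakly $2$-summing but not better --- this is the content of Grothendieck's inequality running in the ``wrong'' direction. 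Concretely, I would show that for a suitable branch, $\sum_{s\le t}\frac{1}{\sqrt n}\,e_{|s|}\otimes h_s$ (with $n$ terms) has projective tensor norm comparable to $\sqrt n \cdot \frac{1}{\sqrt n} = 1$, while it behaves like $\frac{1}{\sqrt n}\sum e_i$ in $\ell_2^n$, i.e. the natural $\ell_2$-averages have norm bounded below independently of $n$.

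The heart of the argument is then to convert this into a lower bound for $\varrho^\xi_X$ under an arbitrary renorming. Here I would argue by contradiction: suppose $|\cdot|$ is an equivalent norm with $(X,|\cdot|)$ being $\xi$-$p$-AUS for some $p>2$, with $\sup_{\sigma>0}\varrho^\xi_{(X,|\cdot|)}(\sigma)/\sigma^p = c < \infty$. Apply Proposition \ref{chuff}(iii) (or directly Lemma \ref{bost}) with $G_\varnothing$ a suitable compact set of vectors $y_i$ representing the ``previous'' steps and $G_a$ the singletons from the weakly null tree above (transported to a $\Gamma_{\xi,1}.D$-indexed normally weakly null collection via Remark \ref{r2}); iterating this $n$ times as in the proof of Theorem \ref{renorm1}$(i)\Rightarrow(ii)$ produces a single branch along which $\big|\sum_{i=1}^n c_i v_i\big|^p \le \|\cdot\|^p + c_1^p\sum|c_i|^p + \varepsilon$ where $v_i$ are the successive normalized blocks of tensors. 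This is an upper $\ell_p$-estimate. On the other hand, the Grothendieck-inequality lower bound shows these same blocks satisfy a lower $\ell_2$-estimate (in the original, hence the equivalent, norm): $\big\|\sum c_i v_i\big\| \gtrsim \big(\sum |c_i|^2\big)^{1/2}$ up to a constant, because one can test against a single norming bilinear functional simultaneously for the $\ell_2$-average. Plugging $c_i = n^{-1/2}$ and letting $n\to\infty$ gives $n^{1/p}\cdot(\text{const}) \gtrsim n^{1/2}$, i.e. $n^{1/p - 1/2} \gtrsim \text{const} > 0$, which is false for $p>2$. The main obstacle, and the step needing the most care, is precisely the \emph{simultaneity} in the lower bound: one must ensure that the $\ell_2$-lower estimate for the convex/averaged combinations survives passing to a branch selected by the AUS upper estimate and survives the renorming, which is why one wants the lower estimate in the form ``for \emph{all} coefficient choices along the branch, tested by a norming functional'' --- this is where Corollary \ref{kane} and the structure of $\mathcal{E}^2_C$ must be invoked in reverse, using that Player $V$ (who realizes the bad lower bound) can play against whatever branch-selection Player $S$ uses.
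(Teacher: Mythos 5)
Your overall strategy is the same as the paper's: produce, from the weakly null trees witnessing $Sz(C(K))>\omega^\xi$, a normally weakly null collection in $c_0\widehat{\otimes}_\pi C(K)$ whose branch-blocks obey a lower $\ell_2$-estimate with constant independent of the block length, and then pit this against the upper $\ell_p$-estimate ($p>2$) that $\xi$-$p$-AUS-renormability would force via Theorem \ref{renorm1}/Corollary \ref{tree2}. The arithmetic at the end ($n^{1/p-1/2}\to 0$ versus a constant) and the use of Lemma \ref{bost}/Proposition \ref{chuff} are correct.

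The genuine gap is in the construction of the lower $\ell_2$-estimate. You tensor unit vectors $e_m$ of $c_0$ with disjointly supported functions $h_t$ and assert, with only a gesture towards ``Grothendieck running in the wrong direction,'' that the resulting sequence is weakly $2$-summing but not better. This does not go through as stated: the bilinear functionals you name ($\sum a_m b_\kappa\,\delta_{e_m^*}\otimes\delta_\kappa$) are rank-one and give nothing, and any naive test element against $u=\sum_{i=1}^n c_i\,e_i\otimes h_i$ using point evaluations on the $C(K)$ side (i.e.\ $T:e_i\mapsto\bar c_i\,\delta_{\kappa_i}/(\sum|c_j|^2)^{1/2}$) has operator norm of order $\sqrt n$, because on the $c_0$-side the coordinate functionals $e_i^*$ give an $\ell_1$-sum, not an $\ell_2$-sum. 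So the estimate you obtain is only $\|u\|\gtrsim n^{-1/2}(\sum|c_i|^2)^{1/2}$, which is useless and indeed cannot be improved for raw coordinate tensors: there is no obvious $O(1)$-bounded bilinear form that sees the $\ell_2$-average of the $c_i$ on the diagonal of $c_0\widehat{\otimes}_\pi C(K)$.

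The paper fixes precisely this point by not using the coordinate unit vectors of $c_0$ at all. Instead it uses the Rademacher functions $\ee_1,\ldots,\ee_n$ in $L_\infty(2^n)$ (which sits isometrically and $1$-complementedly in $c_0$). The crucial device is then the test functional $T=\sum_i a_i\,\ee_i\otimes\mu_i\in L_1(2^n)\widehat{\otimes}_\ee C(K)^*\subset(L_\infty(2^n)\widehat{\otimes}_\pi C(K))^*$, where the $\mu_i$ are biorthogonal to a basic sequence of branch-block averages from the $C(K)$-side tree. For $f\in B_{C(K)}$ one bounds $\|Tf\|_{L_1(2^n)}\leq\|Tf\|_{L_2(2^n)}=(\sum|a_i\langle\mu_i,f\rangle|^2)^{1/2}$ by Jensen and orthogonality of $(\ee_i)$ in $L_2(2^n)$; this is what produces an $O(1)$ (in $n$) bound on $\|T\|$. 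With coordinate vectors of $c_0$ in place of $(\ee_i)$ no such $L_2$-orthogonality is available, and the functional norm necessarily blows up like $\sqrt n$. So you need to replace your $e_m$-factors with Rademachers (or some other bounded orthonormal system embedded in $c_0$); merely insisting that the $h_t$ have disjoint supports on the $C(K)$-side is not enough. A secondary, smaller point: the concluding game-theoretic step is simpler than you suggest --- the paper constructs a single normally weakly null collection all of whose branches fail the weak-$q$-summing estimate for every $q<2$, which by Corollary \ref{tree2} directly refutes any winning strategy for Player $S$ in the $\mathcal{E}^p_C$ game; there is no need to argue that Player $V$ can react to an arbitrary branch-selection strategy.
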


\begin{proof} Let $\xi$ be such that $\omega^\xi<CB(K)<\omega^{\xi+1}$.  Then $Sz(C(K))=\omega^{\xi+1}>\omega^\xi$. This means there exists $\ee>0$ such that, if $D$ is a fixed weak neighborhood basis at $0$ in $X$, there exist $\ee>0$ and a normally weakly null collection $(g_a)_{a\in \Gamma_{\xi,1}.D}\subset B_{C(K)}$ such that \[\underset{a\in MAX(\Gamma_{\xi,1}.D)}{\inf} \Bigl\|\sum_{b\leqslant a}\mathbb{P}_{\xi,1}(b) g_b\Bigr\| =\ee>0.\]   We note that $\ee$ can be taken to be $1$, but this is not necessary for the proof.  

Fix $n\in\nn$.  We next extend the collection from the previous paragraph to a normally weakly null collection $(h_a)_{a\in \Gamma_{\xi,n}.D}\subset B_{C(K)}$ by letting \[ h_{(\omega^\xi(n-1)+a_1)\smallfrown (\omega^\xi(n-2)+a_2) \smallfrown \ldots \smallfrown (\omega^\xi(n-i)+a_i)}= g_{a_i}.\] That is, $(g_a)_{a\in \Gamma_{\xi,1}.D}$ is simply repeated on each level of the tree.  Now by a standard pruning argument, we may extract from the branches of this collection some new normally weakly null collection $(f_a)_{a\in \Gamma_{\xi,n}.D}\subset B_{C(K)}$ such that for each $a\in MAX(\Gamma_{\xi,n}.D)$, $(f_b)_{b\leqslant a}$ is basic with basis constant not more than $2$. Moreover, by the property of $(g_a)_{a\in \Gamma_{\xi,1}.D}$ from previous paragraph, for any $a\in MAX(\Gamma_{\xi,n}.D)$ and $1\leqslant i\leqslant n$, \[\Bigl\|\sum_{\lambda_{i-1}(a)<b\leqslant \lambda_i(a)} \mathbb{P}_{\xi,n}(b) f_b\Bigr\|\geqslant \ee.\]

Fix $n\in\nn$.  Let $2^n=\{\pm 1\}^n$ and for $1\leqslant i\leqslant n$, let $\ee_i:2^n\to \rr$ be given by $\ee_i(\varpi_1, \ldots, \varpi_n)=\varpi_i$.  Endow $2^n$ with the uniform probability measure.    Note that $(\ee_i)_{i=1}^n$ is an orthonormal system in $L_2(2^n)$, so by Jensen's inequality, for any scalars $(a_i)_{i=1}^n$, \[\Bigl\|\sum_{i=1}^n a_i \ee_i \Bigr\|_{L_1(2^n)} \leqslant \Bigl\|\sum_{i=1}^n a_i\ee_i\Bigr\|_{L_2(2^n)} = \Bigl(\sum_{i=1}^n |a_i|^2\Bigr)^{1/2}.\]     Now define the weakly null collection $(u_a)_{a\in \Gamma_{\xi,n}.D}\subset B_{L_\infty(2^n)\widehat{\otimes}_\pi C(K)}$ by letting $u_a= \ee_i\otimes f_a$, where $1\leqslant i\leqslant n$ is such that $a\in \Lambda_{\xi,n,i}.D$.   Note that this collection is weakly null but not normally weakly null. However, this collection could be made normally weakly null by another pruning.

Fix $a\in MAX(\Gamma_{\xi,n}.D)$ and let $(\mu^*_i)_{i=1}^n$ be the Hahn-Banach extensions to the biorthogonal functionals of the basic sequence $\Bigl(\sum_{\lambda_{i-1}(a)<b\leqslant \lambda_i(a)} \mathbb{P}_{\xi,n}(b) f_b\Bigr)_{i=1}^n$.  Note that $\|\mu^*_i\|\leqslant 6/\ee$ for all $1\leqslant i\leqslant n$.  Fix a scalar sequence $(b_i)_{i=1}^n$ and let \[u=\sum_{i=1}^n b_i\sum_{\lambda_{i-1}(a)<b\leqslant \lambda_i(a)} \mathbb{P}_{\xi,n}(b)u_b= \sum_{i=1}^n b_i\ee_i\otimes \Bigl(\sum_{\lambda_{i-1}(a)<b\leqslant \lambda_i(a)} \mathbb{P}_{\xi,n}(b)f_b\Bigr).\]  Let $(a_i)_{i=1}^n$  sequence such that $1=\sum_{i=1}^n |a_i|^2=1$ and $\sum_{i=1}^n a_ib_i=\Bigl(\sum_{i=1}^n |b_i|^2\Bigr)^{1/2}$.    Let \[T=\sum_{i=1}^n b_i \ee_i\otimes \mu_i\in L_1(2^n)\widehat{\otimes}_\ee C(K)^* \subset (L_\infty(2^n)\widehat{\otimes}_\pi C(K))^*=\mathfrak{L}(C(K), L_1(2^n)).\]    We claim that $\|u^*\|\leqslant 6/\ee$. To see this, fix $f\in B_{C(K)}$.   Then \begin{align*} \|Tf\| & =\Bigl\|\sum_{i=1}^n a_i\langle \mu_i, f\rangle  \ee_i\Bigr\|_{L_1(2^n)} \leqslant \Bigl(\sum_{i=1}^n |a_i\langle \mu_i, f\rangle|^2\Bigr)^{1/2} \leqslant (6/\ee)\Bigl(\sum_{i=1}^n |a_i|^2\Bigr)^{1/2}.  \end{align*}

Note that \begin{align*} |\langle T,u\rangle| & = \sum_{i=1}^n a_ib_i \Bigl\langle \mu_i, \sum_{\lambda_{i-1}(a)<b\leqslant \lambda_i(a)} \mathbb{P}_{\xi,n}(b) f_b\Bigr\rangle \langle \ee_i, \ee_i\rangle = \sum_{i=1}^n a_ib_i = \Bigl(\sum_{i=1}^n |b_i|^2\Bigr)^{1/2}. \end{align*}   From this it follows that $\|u\| \geqslant (\ee/6)\Bigl(\sum_{i=1}^n |b_i|^2\Bigr)^{1/2}$.   Therefore we have shown that for any $a\in MAX(\Gamma_{\xi,n}.D)$, $\Bigl(\sum_{\lambda_{i-1}(a)<b\leqslant \lambda_i(a)} \mathbb{P}_{\xi,n}(b) u_b\Bigr)_{i=1}^n$ satisfies a lower $\ell_2^n$ estimate with constant $\ee/6$.   Note that the constant $\ee/6$ does not depend on $n$. 

Let $0=s_0$ and let $s_n=s_{n-1}+n$ for $n\in\nn$.  Since $L_\infty(2^n)$ is isometric to a $1$-complemented subspace of $c_0$, we can build a full normally weakly null collection $(v_b)_{b\in \Gamma_{\xi,\infty}.D}\subset B_{c_0\widehat{\otimes}_\pi C(K)}$ by letting the first $n$ level be built as in the previous paragraph for $n=1$, the next two levels built as in the previous paragraphs for $n=2$, etc.  Then for any $\alpha\in [\Gamma_{\xi,\infty}].D$, the sequence $\Bigl(\sum_{\lambda_{i-1}(\alpha)<b\leqslant \lambda_i(\alpha)} \mathbb{P}_{\xi,\infty}(b) v_b\Bigr)_{i=1}^\infty$ is not weakly $q$-summing for any $1<q<2$, since for each $n\in\nn$, $\Bigl(\sum_{\lambda_{i-1}(\alpha)<b\leqslant \lambda_i(\alpha)} \mathbb{P}_{\xi,\infty}(b) v_b\Bigr)_{i=s_{n-1}+1}^{s_n}$ is a sequence of length $n$ satisfying an $\ee/6$ lower $\ell_2^n$ estimate.

\end{proof}

If $K$ is as in the previous theorem, $C(K)$ is $\xi$-AUF renormable, which implies that it is $\xi$-$p$-AUS-renormable for all $2<p<\infty$. Since these properties are isomorphic invariants and pass to subspaces and quotients, we deduce the first part of the following corollary. The second part of the following corollary follows from the fact that if $L$ is an infinite, compact, Hausdorff space, then $c_0$ is isometrically isomorphic to a $1$-complemented subspace of $C(L)$, so $c_0\widehat{\otimes}_\pi C(K)$ embeds isomorphically into $C(L)\widehat{\otimes}_\pi C(K)$.

\begin{corollary} Let $K,L$ be compact, Hausdorff topological spaces such that $L$ is infinite and $K$ is scattered.  Then neither $c_0\widehat{\otimes}_\pi C(K)$ nor $C(L)\widehat{\otimes}_\pi C(K)$ is isomorphic to any subspace of any quotient of $C(K)$. 
\label{cake}
\end{corollary}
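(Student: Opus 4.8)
The plan is to obtain both assertions by combining the preceding theorem with the fact that $C(K)$ itself is $\xi$-$p$-AUS-renormable. First I would dispose of the case $K$ finite: then $C(K)$ is finite dimensional, so every quotient of $C(K)$ and every subspace of such a quotient is finite dimensional, whereas $c_0\widehat{\otimes}_\pi C(K)$ and $C(L)\widehat{\otimes}_\pi C(K)$ are infinite dimensional (the latter since $C(L)$ is), so neither embeds. So assume $K$ is infinite and let $\xi$ be the unique ordinal with $\omega^\xi<CB(K)<\omega^{\xi+1}$; such $\xi$ exists since, for $K$ infinite scattered compact Hausdorff, $CB(K)$ is a successor ordinal strictly greater than $1$ and hence cannot equal a power of $\omega$. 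By Theorem~\ref{Sz1} and the characterization of $\xi$-AUF-renormability recalled in the introduction, $Sz(C(K))=\omega^{\xi+1}$ and $C(K)$ admits an equivalent $\xi$-AUF norm, which is a fortiori $\xi$-$p$-AUS for every $1<p<\infty$ (the modulus $\varrho^\xi_X$ is $1$-Lipschitz in $\sigma$ with $\varrho^\xi_X(0)=0$, and by convexity of the norm $\varrho^\xi_X(\sigma_0)=0$ forces $\varrho^\xi_X(\sigma)\leqslant 0$ for $\sigma\leqslant\sigma_0$, so $\sup_{\sigma>0}\varrho^\xi_X(\sigma)/\sigma^p\leqslant\sigma_0^{1-p}<\infty$). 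Thus $C(K)$ is $\xi$-$p$-AUS-renormable for every $2<p<\infty$; I fix one such $p$.

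Next I would invoke that $\xi$-$p$-AUS-renormability is an isomorphic invariant that passes to subspaces (a tree inside a subspace $Z$ is weakly null in $Z$ whenever it is weakly null in the ambient space, and the sup defining $\varrho^\xi_Z$ is taken over a subfamily of the weakly null trees seen by $\varrho^\xi_X$) and to quotients. For quotients, the point is that for a quotient $X/Y$ the weak$^*$ topology of $(X/Y)^*=Y^\perp$ is the restriction of that of $X^*$, so an easy induction on $\zeta$ gives $s_\ee^\zeta(B_{(X/Y)^*})\subseteq s_\ee^\zeta(B_{X^*})\cap Y^\perp$ for all $\zeta$ and all $\ee>0$; consequently the power-type Szlenk estimate $s_\ee^{\omega^\xi}(B_{X^*})\subseteq(1-c\ee^q)B_{X^*}$ that characterizes $\xi$-$p$-AUS-renormability (Remark~\ref{accordion} together with \cite{Causey}) transfers from $X$ to $X/Y$. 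Hence every subspace of every quotient of $C(K)$ is $\xi$-$p$-AUS-renormable, while by the preceding theorem $c_0\widehat{\otimes}_\pi C(K)$ is not; therefore $c_0\widehat{\otimes}_\pi C(K)$ is not isomorphic to any subspace of any quotient of $C(K)$.

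Finally, for $C(L)\widehat{\otimes}_\pi C(K)$ I would use that, since $L$ is infinite compact Hausdorff, $C(L)$ contains a $1$-complemented isometric copy of $c_0$: picking $\iota\colon c_0\to C(L)$ isometric and $P\colon C(L)\to c_0$ of norm one with $P\iota=\mathrm{id}_{c_0}$, the operator $\iota\otimes I_{C(K)}\colon c_0\widehat{\otimes}_\pi C(K)\to C(L)\widehat{\otimes}_\pi C(K)$ is an isomorphic embedding with left inverse $P\otimes I_{C(K)}$ (using $\|S\otimes T\|=\|S\|\,\|T\|$). So if $C(L)\widehat{\otimes}_\pi C(K)$ embedded isomorphically into a subspace of a quotient of $C(K)$, then so would its subspace $c_0\widehat{\otimes}_\pi C(K)$, contradicting the second paragraph. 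The only step that genuinely needs care is the stability of $\xi$-$p$-AUS-renormability under quotients; everything else is a direct assembly of the preceding theorem, the Szlenk index computation for $C(K)$, and the classical complementation of $c_0$ inside $C(L)$.
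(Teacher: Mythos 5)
Your proof is correct and follows essentially the same route the paper takes: disposing of finite $K$, invoking the $\xi$-AUF renormability of $C(K)$ to get $\xi$-$p$-AUS-renormability for $2<p<\infty$, using that this property is preserved by subspaces and quotients to contradict the preceding theorem, and reducing the $C(L)\widehat{\otimes}_\pi C(K)$ assertion to the $c_0\widehat{\otimes}_\pi C(K)$ assertion via the $1$-complemented copy of $c_0$ in $C(L)$. The one place you add genuine content is the quotient-invariance step, which the paper simply asserts; your route through the Szlenk derivation inclusion $s_\ee^\zeta(B_{(X/Y)^*})\subseteq s_\ee^\zeta(B_{X^*})\cap Y^\perp$, combined with the power-type Szlenk characterization of $\xi$-$p$-AUS-renormability (one direction being Remark~\ref{accordion}, the converse being a theorem from \cite{Causey}), is a valid way to justify it, though it does lean on that converse characterization rather than on a direct argument with the modulus or with the games of Section~5.
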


\begin{rem}\upshape We conclude by stating known results analogous to Corollary \ref{cake} for projective tensor products with more than two factors, where the picture is far from complete.  As shown in \cite{CD2}, for any $n\in\nn$ and any scattered, compact, Hausdorff spaces $K_1, \ldots, K_n$, \[\Bigl(\widehat{\otimes}_{\pi,i=1}^n C(K_i)\Bigr)^* = \widehat{\otimes}_{\ee,i=1}^n \ell_1(K_i)\Bigr.\]  There it was also shown that for integers $m,n$ with $m<n$, $\widehat{\otimes}_\ee^n \ell_1$ is not isomorphic to any subspace of $\widehat{\otimes}_\ee^m \ell_1$. From this it follows that if $m,n$ are not equal and if $K_1 ,\ldots, K_n$, $L_1, \ldots, L_m$ are infinite, scattered, compact, Hausdorff spaces, then $\widehat{\otimes}_{\pi,i=1}^n C(K_i)$ is not isomorphic to any quotient of $\widehat{\otimes}_{\pi,i=1}^m C(L_i)$. However, this does not yield that $\widehat{\otimes}_{\pi,i=1}^n C(K_i)$ cannot be isomorphic to a subspace of $\widehat{\otimes}_{\pi,i=1}^m C(L_i)$. 

It was also shown in \cite{CD2} that for any integer $n$, $\widehat{\otimes}_\pi^{2n}c_0$, the $2n$-fold projective tensor product of $c_0$, admits an equivalent $(n-1)$-$2$-AUS norm, and $\widehat{\otimes}_\pi^{2n-1}c_0$ admits an equivalent $(n-1)$-AUF norm.  Moreover, both of these are sharp, which implies that $\widehat{\otimes}_\pi^n c_0$ is not isomorphic to any subspace of any quotient of $\widehat{\otimes}_\pi^m c_0$ for integers $m,n$ with $m<n$.

\end{rem}

\end{document}